\makeatletter \@addtoreset{equation}{section} \makeatother
\renewcommand\thetable{\thesection.\@arabic\c@table}
\theoremstyle{plain}
\newtheorem{maintheorem}{Theorem}
\newtheorem{maincorollary}{Corollary}
\newtheorem{theorem}{Theorem }[section]
\newtheorem{proposition}[theorem]{Proposition}
\newtheorem{lemma}[theorem]{Lemma}
\newtheorem{corollary}[theorem]{Corollary}
\newtheorem{claim}{Claim}
\theoremstyle{definition} \theoremstyle{remark}
\newtheorem{remark}[theorem]{Remark}
\newtheorem{example}[theorem]{Example}
\newtheorem{definition}[theorem]{Definition}
\newtheorem{question}{Question}
\newtheorem{questions}{Question}
\newcommand{\eqdef}{\stackrel{\scriptscriptstyle\rm def.}{=}}
\newcommand{\eps}{\varepsilon}
\newcommand{\orb}{\mathrm{orb}^X}
\newcommand{\Z}{\mathbb{Z}}
\newcommand{\N}{\mathbb{N}}
\newcommand{\R}{\mathbb{R}}
\newcommand{\zero}{\operatorname{Zero}}
\newcommand{\mundo}{\mathfrak{X}^1(M)}
\newcommand{\cent}{\mathfrak{C}}
\newcommand{\information}{{
  \bigskip
  \footnotesize
  	\textbf{Martin Leguil}: \textsc{Department of Mathematics, University of Toronto, 40 St George
St. Toronto, M5S 2E4, ON, Canada} \par\nopagebreak
\textsc{CNRS-Laboratoire de Math\'ematiques d'Orsay, UMR 8628, Universit\'e Paris-Sud 11, Orsay Cedex 91405, France } \par\nopagebreak
  	\textit{E-mail:} \texttt{martin.leguil@utoronto.ca / martin.leguil@math.u-psud.fr}
  	
  	\medskip
	\textbf{Davi Obata}: \textsc{CNRS-Laboratoire de Math\'ematiques d'Orsay, UMR 8628, Universit\'e Paris-Sud 11, Orsay Cedex 91405, France } \par\nopagebreak
  \textsc{Instituto de Matem\'atica, Universidade Federal do Rio de Janeiro, P.O. Box 68530, 21945-970, Rio de Janeiro Brazil}\par\nopagebreak
  \textit{E-mail:} \texttt{davi.obata@math.u-psud.fr}

  \medskip
  \textbf{Bruno Santiago}: \textsc{Instituto de matem\'atica e estat\'istica, Universidade Federal Fluminense, Rua Professor Marcos Waldemar de Freitas Reis, s/n, Bloco G Gabinete 72, 4º andar, Campus do Gragoat\'a, 
  	S\~ao Domingos - Niter\'oi - RJ - CEP: 24.210-201} \par\nopagebreak
  \textit{E-mail:} \texttt{brunosantiago@id.uff.br}
}}
\begin{document}

\title{On the centralizer of vector fields: criteria of triviality and genericity results}

\author{Martin Leguil$^*$}
\thanks{$^*$M.L. was supported by the NSERC Discovery grant 502617-2017 of  Jacopo De Simoi.}

\author{Davi Obata$^\dagger$}
\thanks{$^\dagger$D.O. was supported by the ERC project 692925 NUHGD}
%\address{Universidade Federal do Rio de Janeiro}
%\email{davi.obata@im.ufrj.br}

\author{Bruno Santiago$^\ddagger$}
\thanks{$^\ddagger$B.S. was supported by Fondation Louis D-Institut de France (project coordinated by M. Viana)}
%\address{The streets of Niter\'oi}
%\email{bsantisgo@gmail.com}

\begin{abstract}
	In this paper we study the problem of knowing when the centralizer of a vector field is ``small''. We obtain several criteria that imply different types of ``small'' centralizers, namely \textit{collinear, quasi-trivial} and \textit{trivial}. There are two types of results in the paper: general dynamical criteria that imply one of the ``small'' centralizers above; and genericity results about the centralizer.
	
	Some of our general criteria imply that the centralizer is trivial in the following settings: non-uniformly hyperbolic conservative $C^2$ flows; transitive separating $C^1$ flows; Kinematic expansive $C^3$ flows on $3$ manifolds whose singularities are all hyperbolic.
	
	For genericity results, we obtain that $C^1$-generically the centralizer is quasi-trivial, and in many situations we can show that it is actually trivial.  
\end{abstract}

\date{\today}

\maketitle
\tableofcontents

\section{Introduction}

Given a dynamical system, it is natural to try to understand the symmetries that it may have. Oftentimes, they may give extra information which can be used to understand the dynamical behaviour. For example, towards the end of the 19th century, Lie was able to use the symmetries of some differential equations to derive their solutions, ans it was actually during this work that he introduced the notion of Lie groups. There are several different notions of symmetries that one may consider for a dynamical system. One of them is the so-called \textit{centralizer}, which is the main object of interest in this paper. Let us give a brief account of the study of centralizers in dynamics.

\subsection*{Centralizers of diffeomorphisms}

Let $M$ be a compact riemannian manifold and for each $r\geq 1$ we consider $\mathrm{Diff}^r(M)$ to be the set of $C^r$-diffeomorphisms of $M$. For a given $f\in \mathrm{Diff}^r(M)$ and $s\in [1,r]$ we define its $C^s$-\textit{centralizer} as
\[
\cent^s(f):= \{ g\in \mathrm{Diff}^s(M): f\circ g = g\circ f\}.
\]
In other words, it is the set of diffeomorphisms that commutes with $f$. Observe that some trivial solutions of the equation $f\circ g = g\circ f$ are given by $g= f^n$, for any $n\in \Z$. A natural question is to know when these are the only solutions of such equation. Whenever the centralizer is generated by $f$, we say that $f$ has \textit{trivial centralizer}. We remark that, whenever the centralizer of $f$ is non-trivial, then $f$ embeds into a non-trivial $\Z^2$-action. 

Kopell in her Ph.D. thesis in $1970$ proved that for $r\geq 2$ and when $M$ is the circle $S^1$, there is an open and dense subset of $C^r$-diffeomorphisms with trivial centralizer (see \cite{Kopell}). Motivated by Kopell's result, Smale asked the following question:

\begin{questions}[\cite{Smale0}, \cite{Smale}]
Is the set of $C^r$-diffeomorphisms with trivial centralizer a residual (or generic) subset? That is, does it contain a dense $G_{\delta}$-subset of the space of $C^r$-diffeomorphisms? Is it open and dense?
\end{questions}

This question remains open in this generality, but there are several partial answers. We refer the reader to \cite{BakkerFisher, Burslem, Fisher2008, Fisher2009, PalisYoccoz01, PalisYoccoz02, Plykin, Rocha, Rocha2, RochaVarandas} for some related results in the hyperbolic and partially hyperbolic setting. 

Bonatti-Crovisier-Wilkinson gave a positive answer to Smale's question for the $C^1$-topology. They proved that a $C^1$-generic diffeomorphism has trivial $C^1$-centralizer (see \cite{BonattiCrovisierWilkinson}). From their result, a natural question is to know if for the $C^1$-topology the property of having trivial $C^1$-centralizer is open and dense. It turns out that the answer is no. This is given by Bonatti-Crovisier-Vago-Wilkinson (\cite{BonattiCrovisierVagoWilkinson}), where they proved that any manifold admits a $C^1$-open set $\mathcal{U} \subset \mathrm{Diff}^1(M)$ such that there exists a subset $\mathcal{D} \subset \mathcal{U}$, which is $C^1$-dense in $\mathcal{U}$, with the property that any diffeomorphism $f\in \mathcal{D}$ has non-trivial centralizer.

A great portion of this paper is dedicated to extend the result of Bonatti-Crovisier-Wilkinson in \cite{BonattiCrovisierWilkinson} for flows. As we will see, there are some difficulties that arises when one studies the centralizer of flows, which do not appear for diffeomorphisms. 

\subsection*{Centralizers of flows}

Let us now turn our attention to the symmetries of a continuous dynamical system. Since we will restrict our study to the $C^1$-category, we can represent a flow by the vector field that generates it. Let $\mathfrak{X}^r(M)$ be the set of $C^r$ vector fields of $M$. Recall that a smooth manifold carries a Lie bracket operator $[\cdot,\cdot]$ that acts on $\mathfrak{X}^r(M) \times \mathfrak{X}^r(M)$. For $X,Y \in \mathfrak{X}^r(M)$, it is defined by $[X, Y] := XY-YX$. If $[X,Y] = 0$, we say that $X$ and $Y$ commute. 

Let $X \in \mathfrak{X}^r(M)$ and $1\leq s \leq r$, we define the \textit{$C^s$-centralizer} of $X$ by
\[
\cent^s(X) := \{ Y\in \mathfrak{X}^s(M): [X,Y] = 0\}.
\]
This is the set of vector fields that commute with $X$.

Given $X$, the equation $[X,Y] = 0$ has some trivial solutions. Indeed, for any $c\in \R$, the vector field $Y= cX$ commutes with $X$. More generally, for any function $f\colon M \to \R$ such that $Xf = 0$, then the vector field $Y = fX$ also commutes with $X$. In what follows we will define different types of ``triviality'' for the centralizer of flows.

A $C^r$ vector field $X$ has \textit{$C^s$-collinear centralizer} if for any $Y\in \cent^s(X)$, for any point $x\in M$ the space generated by the vectors $X(x)$ and $Y(x)$ has dimension at most $1$. This definition says that if $Y$ commutes with $X$, then $Y$ has the ``same direction'' of $X$.

Recall that two vector fields commute if and only if their flows commute, that is, for any $t_X, t_Y\in \R$ we have that $X_{t_X} \circ Y_{t_Y}(.) = Y_{t_Y} \circ X_{t_X} (.)$. Two commuting flows induce an $\R^2$-action. If $X$ has collinear centralizer, then the flow generated by $X$ does not embed into a non-trivial $\R^2$-action, that is, there are no orbits of the action with dimension $2$.

A slightly stronger notion of triviality is the following: we say that $X$ has $C^s$-\textit{quasi-trivial centralizer} if for any $Y\in \cent^s(X)$ there is a continuous function $f\colon M\to \R$, which is differentiable along $X$-orbits, such that $Y=fX$.

At last, we say that $X$ has \textit{$C^s$-trivial centralizer} if the centralizer is given by the set $\{cX: c\in \R\}$. Observe that this is the smallest possible centralizer that a vector field may have. It is natural to ask in this context the following version of Smale's questions for the vector field centralizer.

\begin{questions}
\label{q.smale}
Is the set of $C^r$-vector fields with trivial (quasi-trivial or collinear) centralizer a residual (or generic) subset? Is it open and dense?
\end{questions}

There are several works that study the different types of triviality of the vector field centralizer. In 1973, Kato-Morimoto proved that the centralizer of an Anosov flow is quasi-trivial (see \cite{KatoMorimoto}). The main feature used in their proof is a topological property called (Bowen-Walters) expansivity. We remark that there are several different notions of expansivity for flows.

A few years later in \cite{Oka}, Oka extended Kato-Morimoto's result for (Bowen-Walters) expansive flows. This type of expansivity is somehow restrictive, since it implies that every singularity is an isolated point of the manifold.

In \cite{Sad}, Sad in his Ph.D. thesis adapted the remarkable work of Palis-Yoccoz \cite{PalisYoccoz02} for flows. He proved that the triviality of the vector field centralizer holds for an $C^{\infty}$-open and dense subset of $C^{\infty}$ Axiom A vector fields that verify the strong transversality condition. The singularities of an Axiom A flow are dynamically isolated, meaning that they are not contained in a non-trivial transitive set. For these type of flows the singularities do not give any trouble in the proofs of triviality of the centralizer.

Much more recently, in 2018, Bonomo-Rocha-Varandas (\cite{BonomoRochaVarandas}) studied the centralizer for Komuro expansive flows. We remark that Komuro expansivity allows the presence of singularities, which includes for instance Lorenz attractors. They prove the triviality of the centralizer of transitive $C^{\infty}$-Komuro expansive transitive flows whose singularities are hyperbolic and verify a non-resonance condition. 

Bonomo-Varandas proved in \cite{BonomoVarandas} that a $C^1$-generic divergence free vector field has trivial vector field centralizer (they also obtain a generic result for Hamiltonian flows in the same paper). In a different paper, \cite{BonomoVarandasaxioma}, Bonomo-Varandas obtain that $C^1$-generic sectional Axiom A vector fields have trivial vector field centralizer (see the introduction of \cite{BonomoVarandasaxioma} for the definition of sectional Axiom A).

In this paper, there are two types of results: general results which study dynamical conditions on $X$ that imply ``triviality'' of its centralizer, and genericity results. In what follows we will state our main results. 

\subsection*{Quasi-trivial centralizers}

We obtain some easy criteria that imply collinearity of the centralizer. A natural problem is to know when collinearity can be promoted to quasi-triviality. If $Y$ commutes with $X$ and $Y$ is collinear to $X$, it is easy to see that there is a continuous function $f$, defined on regular (or non-singular) points such that $Y= fX$. The problem of going from collinearity to quasi-triviality is a problem of extending continuously the function $f$ to the entire manifold. This is not always the case; indeed, in Section \ref{sec.quasitrivial} we construct an example of a vector field with collinear centralizer which is not quasi-trivial.

Nevertheless, when all the singularities of a $C^1$ vector field are hyperbolic, collinearity can actually be promoted to quasi-triviality:

\begin{maintheorem}\label{res qu tri}
Let $M$ be a compact manifold. If $X\in \mathfrak{X}^1(M)$ has collinear $C^1$-centralizer and all the singularities of $X$ are hyperbolic, then $X$ has quasi-trivial $C^1$-centralizer.
\end{maintheorem}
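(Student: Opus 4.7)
The plan is to take an arbitrary $Y \in \cent^1(X)$ and produce a continuous function $f\colon M \to \R$, differentiable along $X$-orbits, with $Y = fX$ everywhere. On the regular set $R := M \setminus \text{Sing}(X)$, collinearity already provides a continuous (indeed $C^1$) function $f$ with $Y = fX$, and the identity $0 = [X,Y] = X(f)\,X$ together with $X \neq 0$ on $R$ forces $X(f) \equiv 0$ there. Since $M$ is compact and every singularity is hyperbolic (hence isolated, by invertibility of $DX(p)$ and the inverse function theorem), $\text{Sing}(X)$ is finite, and the task reduces to extending $f$ continuously to each hyperbolic singularity $p$.

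First I would show $Y(p)=0$. Commutation gives $X_s(Y_t(p)) = Y_t(X_s(p)) = Y_t(p)$ for all $s,t$, so each $Y_t(p)$ is a singularity of $X$; by isolatedness of $p$ and continuity of $t \mapsto Y_t(p)$ near $t=0$, one gets $Y_t(p) \equiv p$, whence $Y(p)=0$.

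Next I would derive $DY(p) = \lambda_p\, DX(p)$ for some $\lambda_p \in \R$. Write $A := DX(p)$ and $B := DY(p)$; hyperbolicity makes $A$ invertible, so there exists $c>0$ with $|Au| \ge c$ for every unit vector $u$. The Taylor expansions at $p$ read
\begin{equation*}
X(p+h) = Ah + |h|\,\epsilon_X(h), \qquad Y(p+h) = Bh + |h|\,\epsilon_Y(h),
\end{equation*}
with $\epsilon_X(h),\epsilon_Y(h) \to 0$ as $h \to 0$. From $|f(x)| = |Y(x)|/|X(x)|$ on $R$ together with these expansions, $f$ is bounded on a punctured neighborhood of $p$. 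Fixing a unit vector $v$, substituting $x = p + tv$ into $Y(x) = f(x)X(x)$, dividing by $t$ and letting $t \to 0^+$ along a subsequence on which $f(p+tv)$ converges, one extracts $Bv = \mu(v)\,Av$ for some scalar $\mu(v)$; since $Av \neq 0$ this scalar is uniquely determined, so the full limit exists. Every nonzero $v$ is therefore an eigenvector of $A^{-1}B$, which forces $A^{-1}B = \lambda_p\,I$.

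Finally, set $f(p) := \lambda_p$. For any sequence $x_n = p + h_n \to p$ in $R$, plugging the Taylor expansions into $Y(x_n) = f(x_n)X(x_n)$ and dividing by $|h_n|$ yields
\begin{equation*}
(\lambda_p - f(x_n))\,A(h_n/|h_n|) = f(x_n)\,\epsilon_X(h_n) - \epsilon_Y(h_n),
\end{equation*}
whose right-hand side tends to $0$; combined with $|A(h_n/|h_n|)| \ge c$, this forces $f(x_n) \to \lambda_p$, proving continuity at $p$. Differentiability of $f$ along the $X$-orbit through $p$ is automatic since this orbit is $\{p\}$, while on $R$ we already have $X(f) = 0$. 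The main obstacle is the preceding step: collinearity is hypothesized only at regular points, and one must promote it to a first-order algebraic identity at the singularity via controlled limits — this is exactly where hyperbolicity (invertibility of $DX(p)$) enters, guaranteeing both the boundedness of $f$ near $p$ and the existence and uniqueness of the eigenvalue $\mu(v)$.
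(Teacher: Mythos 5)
Your limit computation at a hyperbolic zero is correct as far as it goes: boundedness of $f$ near $\sigma$, the identity $DY(\sigma)=\lambda_\sigma DX(\sigma)$ (using only invertibility of $DX(\sigma)$), and the resulting continuity of $f$ at $\sigma$ are all valid, and this is a genuinely different, more unified route than the paper's (which treats saddles via the $\lambda$-lemma in Proposition~\ref{prop.quasisaddles} and sinks/sources via the limit-direction sets $V(p)$ in Proposition~\ref{p.sinkextension}). The problem is that it proves less than the theorem asserts. In this paper ``quasi-trivial $C^1$-centralizer'' is formalized in Definition~\ref{def.quasitrivial} and Theorem~\ref{res qu tri} is restated as Theorem~\ref{thm.quasitriviality}: for every $Y\in\mathfrak{C}^1(X)$ one must produce a function $f$ that is $C^1$ on all of $M$ with $X\cdot f\equiv 0$, i.e.\ a genuine $C^1$ first integral; this stronger conclusion is what is used downstream (e.g.\ Lemma~\ref{lemme periodic points} and Theorem~\ref{thm.sardinvariantfunction} need $\nabla f(\sigma)=0$). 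Your $f$ is only shown to be continuous at the singularities, with the vacuous remark about differentiability along the constant orbit; that matches the informal wording of the introduction but not the definition against which the theorem is proved.

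At a sink or source the gap closes automatically, although you do not say so: continuity of $f$ at $\sigma$ plus $X$-invariance forces $f\equiv f(\sigma)$ on the whole (open) basin, so $f$ is locally constant, hence $C^1$ with vanishing gradient there — this recovers Proposition~\ref{p.sinkextension}. At a saddle, however, $f$ is constant only on $W^s(\sigma)\cup W^u(\sigma)$ and can genuinely vary on orbits passing near $\sigma$, so differentiability of the extension at $\sigma$ is a real statement; it is exactly the content of Proposition~\ref{prop.quasisaddles2}, which the paper singles out as its main novelty here. Proving it requires controlling $\nabla f$ at regular points $x$ close to $\sigma$: the paper does this by constructing through each such $x$ a pair of transverse disks obtained by pushing disks near $W^u_{\mathrm{loc}}(\sigma)$ and $W^s_{\mathrm{loc}}(\sigma)$ with the flow ($\lambda$-lemma), and then using the invariance relation $Df(x)\cdot DX_t(y)\cdot w=Df(y)\cdot w$ together with $\|DX_{t^u}(x^u)\cdot w\|\to\infty$ and $\|DX_{-t^s}(x^s)\cdot w\|\to 0$ to show $\nabla f(x)\to 0$ as $x\to\sigma$. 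Nothing in your argument, which only takes limits along rays into the singularity itself, gives any control on $\nabla f$ at nearby points, so as written the proof does not yield the $C^1$ extension at saddle singularities and hence does not establish Theorem~\ref{res qu tri} in the sense of Definition~\ref{def.quasitrivial}.
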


We stress that we do not require any regularity nor absence of resonance like conditions on the singularity. This is an important improvement compared with previous results \cite{BonomoRochaVarandas,BonomoVarandas,BonomoVarandasaxioma} and \cite{Sad}.  

A significant part of this paper is dedicated to the proof of the $C^1$-genericity of quasi-trivial centralizer. This is given in the following theorem:

\begin{maintheorem}\label{theo d}
Let $M$ be  a compact manifold.
There exists a residual subset $ \mathcal{R} \subset \mathfrak{X}^1(M)$ such that any $X\in \mathcal{R}$ has quasi-trivial $C^1$-centralizer. 
\end{maintheorem}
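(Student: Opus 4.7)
The strategy is to combine the quasi-triviality criterion of Theorem~\ref{res qu tri} with a $C^1$-generic mechanism producing collinearity of the centralizer. By Theorem~\ref{res qu tri}, it suffices to exhibit a residual set $\mathcal{R}\subset\mathfrak{X}^1(M)$ on which every singularity of $X$ is hyperbolic and every $Y\in\cent^1(X)$ is collinear to $X$; hyperbolicity of singularities (and of periodic orbits) follows from the classical Kupka--Smale theorem, so the heart of the argument is the collinearity.

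First, I would restrict to the $C^1$-residual Kupka--Smale set $\mathcal{R}_{KS}$ and intersect it with a residual set $\mathcal{R}_{\mathrm{conn}}$ on which the $C^1$ connecting and closing lemmas (Pugh, Hayashi, Bonatti--Crovisier) apply: on $\mathcal{R}_{\mathrm{conn}}$ hyperbolic periodic orbits are dense in the non-wandering set, and there is a countable family $\{\gamma_n\}$ of hyperbolic periodic orbits of $X$ whose invariant manifolds satisfy that $\bigcup_n\bigl(W^s(\gamma_n)\cup W^u(\gamma_n)\bigr)$ is dense in $M$.

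Next, I would exploit the rigidity imposed by the commutation relation. For a hyperbolic periodic orbit $\gamma$ of $X$ and $Y\in\cent^1(X)$, commutativity of $(X_t)$ and $(Y_s)$ implies that $Y_s(\gamma)$ is a periodic orbit of the same period as $\gamma$; by hyperbolicity $\gamma$ is locally isolated among such orbits, and continuity in $s$ with $Y_0(\gamma)=\gamma$ forces $Y_s(\gamma)=\gamma$ for all $s$, so $Y$ is collinear to $X$ along $\gamma$. To propagate collinearity to $W^u(\gamma)$, I would work on a local Poincaré section $\Sigma$ through some $p\in\gamma$: the commutation makes the $\Sigma$-component of $Y$ invariant under the Poincaré return map $f$, and it vanishes at the hyperbolic fixed point $p$; an inclination-lemma argument applied to the iterates $Df^{-n}$ along $W^u(p,f)=W^u(\gamma)\cap\Sigma$ then yields vanishing on all of $W^u(p,f)$. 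The symmetric argument handles $W^s(\gamma)$, and density of $\bigcup_n\bigl(W^s(\gamma_n)\cup W^u(\gamma_n)\bigr)$ together with continuity of $Y$ gives collinearity of $Y$ and $X$ on all of $M$; Theorem~\ref{res qu tri} then promotes this to quasi-triviality.

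The main obstacle lies in arranging for a single residual set $\mathcal{R}$ to work simultaneously for all $X\in\mathcal{R}$ and all $Y\in\cent^1(X)$, as opposed to producing a statement on an $X$-by-$X$ basis: the family $\{\gamma_n\}$ depends on $X$, and collinearity quantifies over the infinite-dimensional space $\cent^1(X)$. Following the blueprint of Bonatti--Crovisier--Wilkinson, I would handle this via a Baire-category argument over a countable basis of open sets of $M$, selecting in a semicontinuous manner hyperbolic periodic orbits with uniform marking properties (transverse homoclinic points, prescribed density of invariant manifolds, sufficient spectral gap) that allow one to decouple the choice of $X$ from that of $Y$. Intersecting all these residual sets with $\mathcal{R}_{KS}$ and $\mathcal{R}_{\mathrm{conn}}$ and applying Theorem~\ref{res qu tri} yields the desired $\mathcal{R}$.
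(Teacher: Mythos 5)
Your overall skeleton (generic collinearity plus Theorem~\ref{res qu tri}, with Kupka--Smale giving hyperbolicity of singularities, and the local-isolation argument giving collinearity along hyperbolic critical elements and hence on $\overline{\mathrm{Per}(X)}$) matches the paper's reduction. The genuine gap is the propagation step off the recurrent part: the claim that, since the section-component $N$ of $Y$ satisfies $N\circ f=Df\cdot N$ for the return map $f$ and vanishes at the hyperbolic fixed point $p$, an inclination-lemma argument forces $N\equiv 0$ on $W^u(p,f)$. This is false. Boundedness of $N(f^{-n}(x))=Df^{-n}(x)\cdot N(x)$ only forces $N(x)\in T_xW^u(p,f)$; a nonzero invariant section tangent to $W^u$ is perfectly compatible with the commutation relation. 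Concretely, in the local model $X=\partial_\theta+(\log\lambda)(u\,\partial_u-s\,\partial_s)$ one has $[X,u\,\partial_u]=0$, and $u\,\partial_u$ vanishes on the periodic orbit but is non-collinear to $X$ along its unstable manifold. A global, Kupka--Smale counterexample to your chain of implications is the rotationally symmetric north--south gradient field on $S^2$ (two hyperbolic singularities, $\Omega(X)=\overline{\mathrm{Per}(X)}$, and $W^u(N)\cup W^s(S)$ dense): the rotation field commutes with $X$, vanishes at both singularities, yet is nowhere collinear off the poles. So Kupka--Smale together with the closing/connecting lemmas and density of invariant manifolds cannot yield generic collinearity; killing such ``invariant sections'' along wandering orbits requires a quantitative generic property. (A secondary issue: the density of $\bigcup_n\bigl(W^s(\gamma_n)\cup W^u(\gamma_n)\bigr)$ in $M$ is not a standard $C^1$-generic fact, and the paper only needs, and only proves, statements on $M-\mathcal{CR}(X)$ together with density of hyperbolic periodic orbits in $\mathcal{CR}(X)$.)

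This missing mechanism is precisely what the paper's \emph{unbounded normal distortion} property (Definition~\ref{def.und}) provides: by Proposition~\ref{res collin}, unbounded normal distortion plus hyperbolicity of critical elements plus $\mathcal{CR}(X)=\overline{\mathrm{Per}(X)}$ gives collinearity, because two non-collinear commuting fields would produce distinct wandering orbits whose linear Poincar\'e flows have uniformly comparable Jacobians for all time. The real content of the proof of Theorem~\ref{theo d} is then Theorem~\ref{t.unboundednormalgeneric}, i.e.\ that unbounded normal distortion is $C^1$-generic; this occupies Section~\ref{proof generic und} and rests on the linear-cocycle perturbation of Bonatti--Crovisier--Wilkinson (Proposition~\ref{prop.proposition9}) together with the realization Lemma~\ref{lemma.fundamentalperturbation}, which converts perturbations of a long sequence of Poincar\'e maps into a $C^1$-small perturbation of the vector field supported in a flow tube, plus a Baire argument over countably many compact sets $D$ and base points $x$. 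Your closing paragraph gestures at a Baire-category/semicontinuity scheme with ``marking properties,'' but it does not supply any perturbative mechanism that breaks the symmetry exhibited by the examples above, so the proposal does not close the gap.
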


This result is a version for the vector field centralizer of Bonatti-Crovisier-Wilkinson's result \cite{BonattiCrovisierWilkinson}.

\subsection*{Trivial centralizers}
Next we see in which situations we can conclude the triviality of the centralizer. It is easy to construct examples of vector fields whose centralizer is quasi-trivial  but not trivial. In Section \ref{sec.trivial} we explain how Example \ref{example.separating} has quasi-trivial centralizer, but not trivial. 

The problem of knowing if a vector field with quasi-trivial centralizer has trivial centralizer is reduced to the problem of knowing when an $X$-invariant function\footnote{A function is $f$ is $X$-invariant if $Xf=0$, which amounts to saying that $f\circ X_t = f$, $\forall\, t\in \R$.} is constant. This problem will be studied in Section \ref{sec.trivial}.

Our first criterion to obtain triviality is based on the notion of \textit{spectral decomposition}. We say that $X$ admits a \emph{countable spectral decomposition} if the non-wandering set, $\Omega(X)$, satisfies $\Omega(X) = \displaystyle \sqcup_{i\in \N} \Lambda_i$, where the sets $\Lambda_i$ are pairwise disjoint, each of which is compact, $X$-invariant and transitive, i.e., contains a dense orbit. 

\begin{maintheorem}\label{theo e}
Let $M$ be a compact connected manifold and let $X\in \mathfrak{X}^1(M)$. Assume that all the singularities of $X$ are hyperbolic, that  $X$ admits a countable spectral decomposition and that the $C^1$-centralizer of $X$ is collinear. Then $\mathfrak{C}^1(X)$ is trivial. 
\end{maintheorem}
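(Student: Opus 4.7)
The plan is to apply Theorem \ref{res qu tri} to reduce triviality to rigidity of continuous $X$-invariant functions, and then combine the countable spectral decomposition with a connectedness argument.

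First, since every singularity is hyperbolic and $\mathfrak{C}^1(X)$ is collinear, Theorem \ref{res qu tri} yields quasi-triviality: every $Y\in\mathfrak{C}^1(X)$ can be written $Y=fX$ for some continuous $f\colon M\to\R$ that is differentiable along $X$-orbits. The commutation relation $[X,fX]=(Xf)\cdot X=0$ forces $Xf\equiv 0$ on the set of regular points and hence, by continuity of the orbit derivative, on all of $M$. Therefore it suffices to show that every continuous $X$-invariant function $f\colon M\to\R$ is constant.

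Second, write the countable spectral decomposition $\Omega(X)=\bigsqcup_{i\in\N}\Lambda_i$. On each $\Lambda_i$ the restriction $f|_{\Lambda_i}$ is continuous, $X$-invariant, and $\Lambda_i$ contains a dense orbit; hence $f\equiv c_i$ on $\Lambda_i$ for some $c_i\in\R$. Thus $f$ takes at most countably many values on $\Omega(X)$. I now propagate this to all of $M$ using $\omega$-limits. For any $x\in M$, the set $\omega(x)$ is non-empty, compact, connected, $X$-invariant, and contained in $\Omega(X)$. Decomposing
\[
\omega(x)=\bigsqcup_{i\in\N}\bigl(\omega(x)\cap\Lambda_i\bigr)
\]
expresses the compact connected set $\omega(x)$ as a countable disjoint union of closed subsets. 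By Sierpi\'nski's theorem (a compact connected Hausdorff space cannot be partitioned into more than one but countably many non-empty closed subsets), exactly one piece is non-empty, i.e.\ $\omega(x)\subset\Lambda_{i(x)}$ for a unique index $i(x)$. Picking $y\in\omega(x)$ and a sequence $t_n\to+\infty$ with $X_{t_n}(x)\to y$, the $X$-invariance and continuity of $f$ give $f(x)=\lim_n f(X_{t_n}(x))=f(y)=c_{i(x)}$.

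Finally, $f(M)\subset\{c_i:i\in\N\}$ is countable, while $f(M)$ is a connected subset of $\R$ because $M$ is connected and $f$ is continuous; a countable interval is a single point, so $f$ is constant and $Y=cX$ for some $c\in\R$. The only non-formal ingredient in this plan is the invocation of Sierpi\'nski's countable partition theorem to trap $\omega(x)$ inside a single spectral piece; the rest is a direct consequence of Theorem \ref{res qu tri}, the transitivity of the $\Lambda_i$, and the connectedness of $M$.
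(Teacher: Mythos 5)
Your proof is correct and follows essentially the same route as the paper: Theorem \ref{res qu tri} reduces triviality to showing that every continuous $X$-invariant function is constant, and that constancy is obtained exactly as in the paper's Theorem \ref{thm.continuousfinitetrivial} from transitivity of the basic pieces, countability of the piece-values, $\omega$-limit sets, and connectedness (the paper merely phrases it as a contradiction with a value outside the countable set $C$ rather than directly showing $f(M)$ is countable). The only difference is cosmetic: your appeal to Sierpi\'nski's theorem is unnecessary, since to get $f(x)=c_{i(x)}$ it suffices that $\omega(x)$ is non-empty and contained in $\Omega(X)$, hence meets some $\Lambda_i$, and then invariance plus continuity of $f$ already give $f(x)=c_i$.
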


With the assumption of a very weak type of expansivity, called \textit{separating} (see Definition \ref{def.separating}), we can obtain the following result:
\begin{maintheorem}
If $X$ is a transitive, separating $C^1$ vector field, then $X$ has trivial $C^1$-centralizer. 
\end{maintheorem}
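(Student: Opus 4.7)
The plan is to show that any $Y \in \cent^1(X)$ is a constant multiple of $X$, proceeding in three stages: (i) exploit the separating property to force $Y$ to be collinear to $X$; (ii) on the open set of regular points, write $Y = fX$ with $f$ continuous and $X$-invariant; (iii) invoke transitivity to force $f$ constant, then extend to the singular set by continuity.

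For stage (i), let $\epsilon > 0$ be the constant from Definition~\ref{def.separating}. Since $M$ is compact and $Y_0 = \mathrm{id}$, there exists $s_0 > 0$ such that $d(Y_s(z), z) < \epsilon$ for all $z \in M$ and $|s| \leq s_0$. As $X$ and $Y$ have commuting flows, for every $x \in M$ and every $t \in \R$,
\[
d(X_t(Y_s(x)), X_t(x)) \,=\, d(Y_s(X_t(x)), X_t(x)) \,<\, \epsilon.
\]
The separating property (applied with the trivial time-reparametrization $\alpha = \mathrm{id}$) then forces $Y_s(x)$ to lie on the $X$-orbit of $x$; that is, $Y_s(x) = X_{\tau(x,s)}(x)$ for some $\tau(x,s) \in \R$ with $\tau(x,0) = 0$. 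At a regular point $x$, a flow-box argument shows that $\tau(x, \cdot)$ is $C^1$ in $s$ near $0$, and differentiating at $s = 0$ yields $Y(x) = f(x)\, X(x)$ where $f(x) := \partial_s \tau(x,0)$. Collinearity at singular points is automatic since $X$ vanishes there.

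For stage (ii), the scalar $f : M \setminus \zero(X) \to \R$ defined by $Y = fX$ is continuous because $X$ does not vanish there, and the identity $0 = [X, fX] = (Xf)\, X$ combined with $X \neq 0$ gives $Xf \equiv 0$ on the regular set. For stage (iii), transitivity provides $x_0 \in M$ with dense $X$-orbit $\mathcal{O}(x_0)$; a singularity having trivial orbit forces $x_0$, and hence every point of $\mathcal{O}(x_0)$, to be regular. Since $Xf = 0$, $f$ is constant on $\mathcal{O}(x_0)$, with value $c := f(x_0)$. Density of $\mathcal{O}(x_0)$ and continuity of $f$ on $M \setminus \zero(X)$ give $f \equiv c$ on the entire regular set, so $Y = cX$ on this dense set; continuity of both $X$ and $Y$ on $M$ then extends the identity to every point, proving $\cent^1(X) = \R \cdot X$.

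The main technical point is stage (i): making the shadowing argument rigorous, and in particular verifying that $\tau(x,s)$ is $C^1$ in $s$ near $0$ over regular $x$ so that the differentiation producing $Y = fX$ is justified. Once collinearity on the regular set is secured, stages (ii) and (iii) are essentially formal; notice in particular that transitivity allows one to bypass Theorem~\ref{res qu tri} and thus avoid any hyperbolicity assumption on singularities, since $f$ is already forced to be constant on the regular set before the continuity argument extends the identity $Y = cX$ to $\zero(X)$.
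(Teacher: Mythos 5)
Your overall strategy is correct and is essentially the proof the paper intends (the paper never writes out a separate argument for this theorem): separating forces collinearity, one then writes $Y=fX$ on $M_X$ with $f$ continuous and $X$-invariant as in Lemma~\ref{lemme facile}, transitivity makes $f$ constant on the dense orbit (which consists of regular points since $\dim M\geq 2$) and hence on $M_X$, and $Y=cX$ propagates to $\zero(X)$ by density and continuity. Your stages (ii) and (iii) are fine, and you are right that this bypasses Theorem~\ref{res qu tri} and any hypothesis on the singularities; one cosmetic remark: to write $[X,fX]=(Xf)X$ you need $f$ differentiable, which does hold since $f=(Y,X)/\|X\|^{2}$ is $C^1$ on $M_X$, but it is cleaner to get the invariance $f\circ X_t=f$ directly from \eqref{eq cmommut}, as in the proof of Lemma~\ref{lemme facile}, with no differentiation of $f$ at all. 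Also, the separating property of Definition~\ref{def.separating} involves no time-reparametrization, so the remark about $\alpha=\mathrm{id}$ is superfluous.

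The one step you must actually close is the one you flag yourself: the regularity of $\tau(x,\cdot)$. A priori the orbit of a regular point is only an injectively immersed line, possibly accumulating on itself, so knowing only that $Y_s(x)\in\orb(x)$ for all small $s$ does not immediately rule out that the $C^1$ curve $s\mapsto Y_s(x)$ jumps between pieces of $\orb(x)$ that are close in $M$ but far apart in orbit time, in which case a branch of $\tau$ need not even be continuous and $\partial_s\tau(x,0)$ would be meaningless. The claim is nevertheless true, and the proof is the same flow-box/countability mechanism as in the paper's Proposition~\ref{premier theorem}: take a flow box $(\varphi,U)$ at $x$ with $\varphi_* X=(0,1)$; the connected components of $\orb(x)\cap U$ are countably many vertical plaques with pairwise distinct transverse coordinates, the transverse coordinate of $\varphi(Y_s(x))$ is a continuous function of $s$ with values in this countable set, hence constant near $s=0$; therefore $Y_s(x)$ stays on the plaque through $x$, and $\tau(x,s)$ coincides with the (manifestly $C^1$) shift in the vertical coordinate, so differentiating at $s=0$ indeed gives $Y(x)=\partial_s\tau(x,0)\,X(x)$. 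Note that the paper runs this argument in contrapositive form (if $Y(x)\notin\langle X(x)\rangle$, the curve $s\mapsto Y_s(x)$ meets each plaque at most once, so only countably many $s$ have $Y_s(x)\in\orb(x)$, contradicting the separating estimate), which avoids discussing the regularity of $\tau$ altogether; citing Proposition~\ref{premier theorem} together with Lemma~\ref{lemme facile} would let you delete stage (i) entirely.
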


We remark that the separating property is much weaker than Komuro expansiveness. In particular, our results generalize to a much larger class of vector fields the results about centralizers of flows from \cite{KatoMorimoto, Oka, BonomoRochaVarandas}. After this work was completed, Bakker-Fisher-Hasselblatt in \cite{BakkerFisherHasselblatt} were able to prove a similar result in the $C^0$-category. However, their result uses a type of expansiveness stronger than separating, called kinematic expansiveness.

In higher regularity, Pesin's theory in the non-uniformly hyperbolic case and Sard's theorem give us two useful tools to verify triviality of the centralizer. Using Pesin's theory as a tool, we obtain the following result:

\begin{maintheorem}
\label{thm.NUH}
Let $M$ be a compact manifold of dimension $d \geq 2$. 
Let $X\in \mathfrak{X}^2(M)$ be a vector field with finitely many singularities and let $\mu$ be a $X$-invariant probability measure such that $\mathrm{supp}\mu = M$. If $\mu$ is non-uniformly hyperbolic for $X$, then $X$ has trivial $C^1$-centralizer.
\end{maintheorem}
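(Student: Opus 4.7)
The plan is to show that any $Y\in\cent^1(X)$ equals $cX$ for some constant $c\in\R$, in three steps. First I would use the Oseledets splitting of the tangent cocycle of $X_t$ with respect to $\mu$ to establish pointwise collinearity $Y(x)\parallel X(x)$ on all of $M$. Second, on the open set $\Omega:=M\setminus\mathrm{Sing}(X)$ of regular points I would write $Y=fX$ with $f$ of class $C^1$ and use Pesin's non-uniform hyperbolicity theory to force $df\equiv 0$. Third, connectedness of $\Omega$ (which requires $\dim M\geq 2$ and $\mathrm{Sing}(X)$ finite) would yield that $f$ is constant, giving $Y=cX$ on $M$ by continuity.

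For collinearity, the commutation $[X,Y]=0$ yields the identity $DX_t(x)\cdot Y(x)=Y(X_tx)$, which is uniformly bounded in $t$ since $M$ is compact. Oseledets' theorem applied to the tangent cocycle $DX_t$ over $(X_t,\mu)$ gives, for $\mu$-a.e.\ $x$, a splitting $T_xM=E^s(x)\oplus\langle X(x)\rangle\oplus E^u(x)$, where $E^s$ is exponentially contracted and $E^u$ exponentially expanded by $DX_t$ as $t\to+\infty$, the central direction being the flow line (carrying the zero exponent); non-uniform hyperbolicity guarantees no additional neutral summands. Decomposing $Y(x)$ in this splitting, a nonzero $E^u$-component would blow up in forward time and a nonzero $E^s$-component in backward time, so boundedness forces both to vanish and $Y(x)\in\langle X(x)\rangle$ for $\mu$-a.e.\ $x$. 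Since $Y\wedge X=0$ is a closed condition in $\Lambda^2 TM$ and $\supp\mu=M$, collinearity extends to every $x\in M$.

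On $\Omega$, collinearity yields a well-defined $C^1$ function $f:=\langle Y,X\rangle/\langle X,X\rangle$ with $Y=fX$, and the computation $[X,fX]=(Xf)X$ together with $[X,Y]=0$ gives $Xf\equiv 0$ on $\Omega$. By Pesin's stable manifold theorem for the $C^2$ flow $X_t$ and the NUH measure $\mu$, for $\mu$-a.e.\ $x$ there exist local strong stable and unstable manifolds $W^s_{\mathrm{loc}}(x), W^u_{\mathrm{loc}}(x)$ tangent to $E^s(x), E^u(x)$, on which $d(X_tx,X_ty)$ decays exponentially as $t\to+\infty$ or $t\to-\infty$ respectively. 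Since $\mu$-a.e.\ $x$ is Poincaré recurrent and $\mathrm{Sing}(X)$ is finite, I can extract return times $t_n\to+\infty$ for which $X_{t_n}x$ remains in a fixed compact set $K\Subset\Omega$; uniform continuity of $f$ on $K$ then gives $|f(x)-f(y)|=|f(X_{t_n}x)-f(X_{t_n}y)|\to 0$ for any $y\in W^s_{\mathrm{loc}}(x)$, so $f$ is constant on each $W^s_{\mathrm{loc}}$, and symmetrically on each $W^u_{\mathrm{loc}}$. Hence at every Pesin regular $x\in\Omega$ the gradient $df(x)$ annihilates $E^s(x)$, $E^u(x)$, and $\langle X(x)\rangle$, which together span $T_xM$; so $df(x)=0$. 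The Pesin regular points in $\Omega$ form a $\mu$-full-measure set, hence are dense in $\Omega$ by $\supp\mu=M$, so continuity of $df$ on $\Omega$ yields $df\equiv 0$; since $\Omega$ is connected (as $\dim M\geq 2$ and $\mathrm{Sing}(X)$ is finite), $f$ is constant on $\Omega$, giving $Y=cX$ on $\Omega$ and, by continuity, on all of $M$. The main technical obstacle will be the Pesin-constancy step when orbits accumulate on singularities; this is handled by using Poincaré recurrence together with the finiteness of $\mathrm{Sing}(X)$ to confine the relevant orbit segments to a compact region of $\Omega$ where $f$ is uniformly continuous.
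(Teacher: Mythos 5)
Your proposal is correct and takes essentially the same route as the paper: collinearity via the boundedness of $Y(X_t(x))=DX_t(x)\cdot Y(x)$ against the exponential growth transverse to the flow on a dense set of Oseledets-regular points (the paper's Proposition~\ref{thm.expcollinear}), then $Y=fX$ with $f$ a $C^1$ invariant function on $M_X$, then Pesin stable/unstable manifolds at a regular point to force $\nabla f=0$ (you do this directly on a dense set and use continuity of $\nabla f$, while the paper phrases it as a level-set contradiction), and finally connectedness of $M_X$. Your recurrence argument guaranteeing that $f$ is constant along Pesin manifolds despite $f$ being defined only on $M_X$ is a careful treatment of a step the paper leaves implicit.
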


Theorem E can be applied for non-uniformly hyperbolic geodesic flows, like the ones constructed by Donnay \cite{Donnay} and Burns-Gerber \cite{BurnsGerber}. In particular, we obtain that non-uniformly hyperbolic geodesic flows have trivial centralizer.

In dimension three, under higher regularity assumptions, we are also able to obtain triviality, for a slightly stronger notion of expansiveness called \textit{kinematic expansive}, which is stronger than separating (see Definition \ref{Def.kinematic}).

\begin{maintheorem}\label{thm.dim3triviality}
Let $M$ be a compact $3$-manifold and consider $X\in \mathfrak{X}^3(M)$. If $X$ is Kinematic expansive and all its singularities are hyperbolic, then its $C^3$-centralizer is trivial.
\end{maintheorem}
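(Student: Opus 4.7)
The plan is to proceed in three steps: deduce collinearity of the $C^3$-centralizer from kinematic expansivity, upgrade to quasi-triviality by invoking Theorem~\ref{res qu tri}, and then exploit the $C^3$ regularity in the three-dimensional setting to conclude triviality through Sard's theorem.

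For the first step, let $Y\in \cent^3(X)$. The commutation $[X,Y]=0$ gives $X_t\circ Y_s = Y_s\circ X_t$ for all $s,t\in \R$. Since $M$ is compact and $Y$ is continuous, $Y_s\to \mathrm{id}_M$ uniformly as $s\to 0$. Given a small $\epsilon>0$, let $\delta>0$ be the associated constant of kinematic expansivity, and choose $s_0>0$ so that $\sup_{z\in M} d(z,Y_s(z))<\delta$ whenever $|s|<s_0$. Then for every $x\in M$, $|s|<s_0$, and $t\in\R$,
\[
d(X_t(x),X_t(Y_s(x))) = d(X_t(x),Y_s(X_t(x))) < \delta,
\]
so kinematic expansivity forces $Y_s(x)=X_{\tau_s(x)}(x)$ for some $|\tau_s(x)|<\epsilon$. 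At a regular point $x$, $\tau_s(x)$ depends smoothly on $s$ by the implicit function theorem, and differentiating at $s=0$ yields $Y(x)=\tau'_0(x)\,X(x)$; at singularities of $X$ collinearity is automatic. Hence $X$ has collinear $C^3$-centralizer. Since all singularities are hyperbolic, Theorem~\ref{res qu tri} applied to the (in particular collinear) $C^1$-centralizer produces a continuous $f\colon M\to \R$, differentiable along $X$-orbits, with $Y=fX$ and $X(f)=0$; as $X$ is a nonvanishing $C^3$ field on $M\setminus \mathrm{Sing}(X)$, the function $f=Y/X$ is $C^3$ on this open set.

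The main obstacle is to show $f$ must be constant. This is where the $C^3$ hypothesis in dimension three becomes decisive through Sard's theorem, which for $C^k$ maps $M^3\to \R$ applies precisely when $k\geq 3$. Hyperbolic singularities being isolated on the compact $M$, the set $\mathrm{Sing}(X)$ is finite and $f(\mathrm{Sing}(X))\subset \R$ is finite. Supposing for contradiction that $f$ is not constant, connectedness of $M\setminus\mathrm{Sing}(X)$ makes $f(M)$ a nondegenerate interval, and Sard's theorem guarantees a dense set of regular values of $f|_{M\setminus\mathrm{Sing}(X)}$; pick such a regular value $c\notin f(\mathrm{Sing}(X))$. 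Then $L_c:=f^{-1}(c)$ is a compact $C^3$ embedded $2$-submanifold of $M\setminus\mathrm{Sing}(X)$, $X$-invariant (since $X(f)=0$), and containing no singularities of $X$. By Poincar\'e--Hopf $\chi(L_c)=0$, so each connected component of $L_c$ is a $2$-torus or a Klein bottle, and the classical classification of nonsingular $C^2$ flows on such surfaces (via Poincar\'e return maps and Denjoy's theorem) shows $X|_{L_c}$ is either a rotation flow with all orbits periodic or topologically conjugate to an irrational linear flow---neither of which is kinematic expansive. Since the restriction of $X$ to the $X$-invariant compact set $L_c$ inherits kinematic expansivity with the same constants, this contradicts the hypothesis; hence $f$ is constant and $\cent^3(X)=\{cX:c\in\R\}$. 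The delicate part to make fully rigorous is precisely this final step: carefully transferring kinematic expansivity to each component of $L_c$, and systematically ruling out every possibility allowed by the surface-flow classification.
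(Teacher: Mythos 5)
Your overall strategy is the same as the paper's: collinearity from the expansiveness hypothesis, quasi-triviality via Theorem~\ref{res qu tri} (hyperbolic zeros), and then Sard's theorem in the $C^3$, three-dimensional setting to produce a regular level set $L_c$, which is a nonsingular invariant closed surface of zero Euler characteristic carrying a kinematic expansive restriction. Steps one and two are fine (the paper gets collinearity from Proposition~\ref{premier theorem}/\ref{thm.expcollinear}, you re-derive it directly from kinematic expansivity; either works), and the Sard set-up is exactly the paper's.

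The genuine gap is the step you yourself flag as delicate: ruling out nonsingular kinematic expansive flows on $L_c$. Your stated dichotomy for nonsingular $C^2$ flows on the torus is wrong: the Denjoy--Schwartz trichotomy (see \cite{denjoyschwartz} and Proposition~\ref{p.noaseparatingt2}) also allows flows with periodic orbits and wandering orbits spiralling between them, a case you omit and which the paper excludes by a separate Lipschitz return-time summation estimate. More seriously, in the irrational case the classification only gives that $X|_{L_c}$ is a suspension of a circle diffeomorphism with irrational rotation number over a \emph{nonconstant} roof function, i.e.\ the flow is topologically \emph{equivalent}, not time-preservingly conjugate, to a linear flow; kinematic expansivity is sensitive to the time parametrization, so non-expansivity of the linear flow cannot be transferred through an orbit equivalence. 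Excluding this case is precisely where the $C^3$ hypothesis is consumed: the return map is a $C^3$ circle diffeomorphism, hence conjugate to a rotation by Denjoy, and one needs a Denjoy--Koksma-type inequality (\cite{NavasTriestino}, \cite{AvilaKocsard}) applied to the $C^1$ return-time function to find points $x$ and $f^{q_n}(x)$ whose orbits never separate. That this is not a soft fact is underlined by the paper's remark that it is unknown whether a separating suspension of an irrational rotation exists in general. Without an argument of this kind (the content of Proposition~\ref{p.noaseparatingt2}), your final contradiction is not established.
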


The technique we use in the above theorem, which relies on Sard's Theorem, also leads to a criterion  to obtain triviality from a collinear centralizer of high regularity.   

\begin{maintheorem}\label{thm.CDtrivial}
Let $M$ be a compact, connected Riemannian manifold of dimension $d \geq 1$, and let $X\in \mathfrak{X}^d(M)$.  Assume that every singularity and periodic orbit of $X$ is hyperbolic, that $\Omega(X) = \overline{\mathrm{Per}(X)}$ and that the $C^d$-centralizer of $X$ is collinear. 
Then $X$ has trivial $C^d$-centralizer. 
\end{maintheorem}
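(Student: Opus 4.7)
The plan is to combine Theorem \ref{res qu tri} with a Sard-theoretic argument applied to the scalar factor arising from quasi-triviality. Since every singularity of $X$ is hyperbolic and the $C^d$-centralizer of $X$ is collinear, Theorem \ref{res qu tri} allows us to write any $Y \in \mathfrak{C}^d(X)$ as $Y = fX$ for some continuous function $f \colon M \to \mathbb{R}$, differentiable along $X$-orbits. The identity $[X,Y] = 0$ yields $X(f)\cdot X = 0$, so $f$ is constant on each $X$-orbit and, by continuity, on its closure. Moreover, because $X$ and $Y$ are of class $C^d$ and $X(x) \neq 0$ on $M \setminus \mathrm{Sing}(X)$, the function $f$ is actually $C^d$ on this open set (as one sees locally by rectifying $X$ in a chart, or via $f = \langle Y, X\rangle/\|X\|^2$).

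The central claim is that every $p \in \mathrm{Per}(X)$ is a critical point of $f|_{M \setminus \mathrm{Sing}(X)}$. Let $\gamma$ be the hyperbolic periodic orbit through $p$, on which $f \equiv c_\gamma$. For any $x \in W^s(\gamma)$, the convergence $\mathrm{dist}(X_t(x),\gamma) \to 0$ together with the $X$-invariance and continuity of $f$ forces $f(x) = c_\gamma$; the same holds on $W^u(\gamma)$. Hence $df_p$ vanishes on $T_p W^s(\gamma) + T_p W^u(\gamma) = (E^s_p \oplus \mathbb{R} X(p)) + (E^u_p \oplus \mathbb{R} X(p)) = T_p M$, by hyperbolicity, so $df_p = 0$.

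The set $\mathrm{Crit}(f|_{M\setminus\mathrm{Sing}(X)})$ is closed in $M \setminus \mathrm{Sing}(X)$, so combining the previous step with the hypothesis $\Omega(X) = \overline{\mathrm{Per}(X)}$ yields $\Omega(X)\cap (M\setminus \mathrm{Sing}(X)) \subseteq \mathrm{Crit}(f|_{M\setminus \mathrm{Sing}(X)})$. Since $f|_{M\setminus \mathrm{Sing}(X)}$ is $C^d$ on a $d$-dimensional manifold, Sard's theorem gives that $f\bigl(\mathrm{Crit}(f|_{M\setminus \mathrm{Sing}(X)})\bigr)$ has Lebesgue measure zero in $\mathbb{R}$. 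Adding the finite image $f(\mathrm{Sing}(X))$ (the hyperbolic singularities are isolated and $M$ is compact), we conclude that $f(\Omega(X))$ has Lebesgue measure zero.

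To finish, for any $x \in M$ the $\omega$-limit set $\omega(x)$ is nonempty and contained in $\Omega(X)$; by $X$-invariance and continuity $f \equiv f(x)$ on $\omega(x)$, so $f(x) \in f(\Omega(X))$. Thus $f(M) \subseteq f(\Omega(X))$ has measure zero. But $M$ is connected, so $f(M)$ is a connected subset of $\mathbb{R}$, and the only connected sets of measure zero are singletons; hence $f \equiv c$ for some constant $c$, i.e.\ $Y = cX$, proving triviality of $\mathfrak{C}^d(X)$. The main obstacle is the second step: producing enough \emph{automatic} critical points of $f$ for Sard's theorem to bite. This is precisely where both the hyperbolic structure of the periodic orbits and the sharp regularity exponent $C^d$ enter, and it explains why the passage through quasi-triviality via Theorem \ref{res qu tri} is indispensable.
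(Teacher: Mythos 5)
Your proof is correct and follows essentially the same route as the paper: quasi-triviality (Theorem \ref{res qu tri}) gives $Y=fX$ with $f$ of class $C^d$ on $M_X$, the vanishing of $\nabla f$ at hyperbolic periodic orbits via the spanning of $T_pM$ by stable and unstable manifolds is exactly the paper's Lemma \ref{lemme periodic points}, and Sard's theorem at regularity $C^d$ combined with $\Omega(X)=\overline{\mathrm{Per}(X)}$ forces $f$ to be constant (the paper's Theorem \ref{thm.sardinvariantfunction}). The only difference is cosmetic packaging: the paper fixes a single regular value whose level set must meet $\Omega(X)$ and derives a contradiction with Lemma \ref{lemme periodic points} there, whereas you show directly that $f(M)\subseteq f(\Omega(X))$ has Lebesgue measure zero and conclude by connectedness.
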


This criterion is not sufficient if we want to obtain a generic result, due to the lack of a $C^d$-closing lemma. However, following the arguments of \cite{Mane,Hurley}, we can show that $C^d$-generically the triviality of the $C^d$-centralizer is equivalent to the collinearity of the $C^d$-centralizer. 
\begin{maintheorem}\label{thm.equivanlentcentralizer}
Let  $M$ be a compact, connected Riemannian manifold of dimension $d \geq 1$. There exists a  residual set $\mathcal{R}_T\subset \mathfrak{X}^d(M)$ such that for any $X \in \mathcal{R}_T$, the $C^d$-centralizer $\mathfrak{C}^d(X)$ of $X$ is collinear if and only if it is trivial. 
\end{maintheorem}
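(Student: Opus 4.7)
The plan is to exhibit $\mathcal{R}_T$ as a residual subset of $\mathfrak{X}^d(M)$ on which the hypotheses of Theorem \ref{thm.CDtrivial} hold up to the collinearity assumption, so that the direction ``collinear $\Rightarrow$ trivial'' follows immediately from that theorem; the reverse implication ``trivial $\Rightarrow$ collinear'' is clear from the definitions, since any $cX(x)$ is by definition collinear with $X(x)$. Concretely, I would set $\mathcal{R}_T := \mathcal{R}_{KS} \cap \mathcal{R}_\Omega$, where $\mathcal{R}_{KS} \subset \mathfrak{X}^d(M)$ is the Kupka--Smale residual set (all singularities and periodic orbits hyperbolic) and $\mathcal{R}_\Omega$ is a residual set on which $\Omega(X) = \overline{\mathrm{Per}(X)}$. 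The residuality of $\mathcal{R}_{KS}$ is the classical Kupka--Smale theorem in the $C^d$ topology, valid for all $d \geq 1$.

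The construction of $\mathcal{R}_\Omega$ is the technical heart of the proof, and the main obstacle. In the $C^1$ category, this is Ma\~n\'e's general density theorem, obtained via Pugh's closing lemma; for $d \geq 2$, no $C^d$-closing lemma is available, so one must argue more abstractly, following \cite{Mane,Hurley}. The idea is to exploit the semicontinuity properties of the set-valued maps $X \mapsto \Omega(X)$, which is upper semicontinuous on $\mathfrak{X}^d(M)$ for the Hausdorff topology on compact subsets of $M$, and $X \mapsto \overline{\mathrm{Per}(X)}$, which is lower semicontinuous on $\mathcal{R}_{KS}$ since hyperbolic periodic orbits persist under $C^d$-perturbations. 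A standard Baire-category argument produces a residual subset of $\mathcal{R}_{KS}$ consisting of common continuity points of these two maps. At such a continuity point $X$, the Ma\~n\'e--Hurley argument forces the equality $\Omega(X) = \overline{\mathrm{Per}(X)}$: the delicate step is to transfer, by continuity rather than by direct perturbation, the information that periodic orbits accumulate near non-wandering behavior, using that the Kupka--Smale hypothesis keeps the already-existing periodic structure intact under any nearby perturbation.

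With $\mathcal{R}_\Omega$ in hand, $\mathcal{R}_T := \mathcal{R}_{KS} \cap \mathcal{R}_\Omega$ is residual and satisfies, for each $X \in \mathcal{R}_T$, the hypotheses (apart from collinearity) of Theorem \ref{thm.CDtrivial}; the stated equivalence then follows at once. The hard part of the whole argument is, as noted, the semicontinuity-based construction of $\mathcal{R}_\Omega$ without recourse to a closing lemma; everything else is essentially a matter of combining standard generic transversality with Theorem \ref{thm.CDtrivial}.
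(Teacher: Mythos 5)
There is a genuine gap, and it sits exactly where you yourself locate the ``technical heart'': the construction of a residual set $\mathcal{R}_\Omega\subset\mathfrak{X}^d(M)$ on which $\Omega(X)=\overline{\mathrm{Per}(X)}$. For $d\geq 2$ this equality is not known to be $C^d$-generic; the general density theorem rests on the closing lemma, which is only available in the $C^1$ topology, and the paper states this explicitly right before the theorem (``This criterion is not sufficient if we want to obtain a generic result, due to the lack of a $C^d$-closing lemma''). Your proposed workaround does not repair this: a Baire argument does give a residual set of common continuity points of the upper semicontinuous map $X\mapsto\Omega(X)$ and the lower semicontinuous map $X\mapsto\overline{\mathrm{Per}(X)}$, but continuity alone does not ``force'' the equality at such points. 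In the classical argument one still needs, for a non-wandering point $x\notin\overline{\mathrm{Per}(X)}$, a $C^d$-small perturbation creating a periodic orbit near $x$; it is precisely this closing step that is missing, and the Ma\~n\'e--Hurley papers do not supply it (they never prove generic density of periodic orbits in higher topologies). So your route through Theorem \ref{thm.CDtrivial} cannot be completed as written.

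The paper's proof avoids periodic orbits altogether. It takes $\mathcal{R}_T:=\mathcal{U}_{KS}\cap\mathcal{R}_d$, where $\mathcal{U}_{KS}$ is the Kupka--Smale set (hyperbolic singularities) and $\mathcal{R}_d$ is the residual set of continuity points of $X\mapsto\Omega(X)$ given by \cite{Takens}. For $X\in\mathcal{R}_T$ with collinear $C^d$-centralizer, Lemma \ref{lemme facile} and Theorem \ref{thm.quasitriviality} give $Y=fX$ with $f$ a $C^1$ first integral that is $C^d$ on $M_X$. If $f$ were non-constant, Sard's theorem yields a regular value $c$ with $f^{-1}(\{c\})$ disjoint from $\mathrm{Zero}(X)$, and Ma\~n\'e's perturbation (Theorem 1.2 in \cite{Mane}) applies: adding a $C^1$-small vector field with positive inner product with $\nabla f$ near the level set $M_c$ produces $X'$ close to $X$ with $\Omega(X')\cap U=\emptyset$ for a neighborhood $U$ of $M_c$, contradicting continuity of $\Omega(\cdot)$ at $X$. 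Hence $f$ is constant and $\mathfrak{C}^d(X)$ is trivial; the converse direction is immediate, as you say. If you want to salvage your write-up, replace the hypothesis ``$\Omega(X)=\overline{\mathrm{Per}(X)}$ generically'' (and the appeal to Lemma \ref{lemme periodic points} hidden inside Theorem \ref{thm.CDtrivial}) by this continuity-plus-gradient-perturbation argument, which only needs the regular level set to meet $\Omega(X)$ and the continuity of $\Omega(\cdot)$ at $X$.
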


In the next section we give all the precise definitions used in these theorems. Let us make a few remarks. In our $C^1$-generic result, we can actually obtain triviality in several scenarios, see Theorem \ref{thm.headinggenericsection}. What is missing to obtain the triviality of the centralizer for a $C^1$-generic vector field is to prove that a $C^1$-generic vector field does not admit any non-trivial $C^1$ first integral (see Section \ref{sec.trivial}). This is a conjecture made by Thom \cite{Thom}. With our result, a complete answer for Question \ref{q.smale} for $C^1$ vector fields is equivalent to answering Thom's conjecture.

A natural direction in this genericity type of results is to understand what happens in the generic case in higher regularity. We conclude this introduction with the following question:

\begin{questions}
Given any manifold $M$ with dimension $\dim M \geq 3$, does there exist $r>1$ sufficiently large and a $C^r$-open set $\mathcal{U} \subset \mathfrak{X}^r(M)$ such that for any $X\in \mathcal{U}$ the $C^s$-centralizer (for some $1\leq s \leq r$) of $X$ is not collinear? 
\end{questions}

\subsection{General notions on vector fields}

Before closing this section we recall some definitions and fix some notations that we shall use throughout the paper. 
Let  $M$ be a  smooth manifold of dimension $d \geq 2$,  which we assume to be compact and boundaryless. We shall also fix once and for all a Riemannian metric in $M$. For any $r \geq 1$, we denote by $\mathfrak{X}^r(M)$ the space of vector fields 
over $M$, endowed with the $C^r$ topology. A property $\mathcal{P}$ for vector fields in $\mathfrak{X}^r(M)$ is called $C^r$-generic if it is satisfied for any vector field in a  \textit{residual set} of $\mathfrak{X}^r(M)$. Recall that $\mathcal{R} \subset \mathfrak{X}^r(M)$ is \textit{residual} if it contains a dense $G_{\delta}$-subset of $\mathfrak{X}^r(M)$. In particular, it is dense in $ \mathfrak{X}^r(M)$, by Baire's theorem. 

In the following, given a vector field $X\in\mathfrak{X}^1(M)$, we denote  by $X_t$ the flow it generates. Recall that for any $Y \in \mathfrak{C}^1(X)$, and for any $s,t\in \R$, we have $Y_s \circ X_t = X_t \circ Y_s$.  Differentiating this relation with respect to $s$ at $0$, we get 
\begin{equation}\label{eq cmommut}
Y(X_t(x))= DX_t(x) \cdot Y(x),\quad \forall\, x \in M.
\end{equation} 
We denote by $\mathrm{Zero}(X):=\{x \in M: X(x)=0\}$ the set of \textit{zeros}, or \textit{singularities}, of the vector field $X$, and we set 
\begin{equation}
\label{def.MX}
M_X:=M-\mathrm{Zero}(X).
\end{equation}
For any $x \in M$ and any interval $I \subset \R$, we also let $X_I(x):=\{X_t(x):t \in I\}$. In particular, we denote by $\orb(x):=X_\R(x)$  the orbit of the point $x$ under $X$.  Note that if $x \in M_X$, then $\orb(x) \subset M_X$ too. \\

Let $X\in\mathfrak{X}^1(M)$ be some $C^1$ vector field. The \textit{non-wandering set} $\Omega(X)$ of $X$ is defined as the set of all points $x \in M$ such that for any open neighborhood $\mathcal{U}$ of $x$ and for any $T >0$, there exists a time $t >T$ such that $\mathcal{U}\cap X_t(\mathcal{U}) \neq \emptyset$.

Let us also recall another weaker notion of recurrence. Given two points $x,y \in M$, we write $x\sim_X y$ if for any $\varepsilon>0$ and $T>0$, there exists an $(\varepsilon,T)$-\textit{pseudo orbit} connecting them, i.e., there exist $n \geq 2$, $ t_1,t_2,\dots,t_{n-1} \in [T,+\infty)$,  and $x=x_1,x_2,\dots,x_{n}=y\in M$, such that $d(X_{t_j}(x_j),x_{j+1})< \varepsilon$, for $j \in \{1,\dots,n-1\}$.  The \textit{chain recurrent set} $\mathcal{CR}(X)\subset M$ of $X$ is defined as the set of all points $x \in M$ such that $x \sim_X x$. Restricted to $\mathcal{CR}(X)$, the relation $\sim_X$ is an equivalence relation. An equivalence class under the relation $\sim_X$ is called a \textit{chain recurrent class}: $x, y \in \mathcal{CR}(X)$ belong to the same chain recurrent class if $x \sim_X y$. In particular, chain recurrent classes define a partition of the chain recurrent set $\mathcal{CR}(X)$.

A point $x\in M$ is \emph{periodic} if there exists $T>0$ such that $X_T(x) = x$. The set of all periodic points is denoted by $\mathrm{Per}(X)$, observe that we are also including the singularities in this set.  

An $X$-invariant compact set $\Lambda$ is \textit{hyperbolic} if there is a continuous decomposition of the tangent bundle over $\Lambda$, $T_{\Lambda}M = E^s \oplus \langle X \rangle \oplus E^u$ into $DX_t$-invariant sub-bundles that verifies the following property: there exists $T>0$ such that for any $x\in \Lambda$, 
\[
\|DX_T(x)|_{E^s_x}\| < \frac{1}{2}, \textrm{ and } \|DX_{-T}(x)|_{E^u_x}\|< \frac{1}{2}.
\]   
A periodic point $x\in \mathrm{Per}(X)$ is \textit{hyperbolic}  if $\orb(x)$ is a hyperbolic set. Let $\gamma$ be a hyperbolic periodic orbit. We denote by $W^s(\gamma)$ the \textit{stable manifold} of the periodic orbit $\gamma$, which is defined as the set of points $y\in M$ such that $d(X_t(y),\gamma) \to 0$ as $t\to + \infty$. We define in an analogous way the \textit{unstable manifold} of $\gamma$. It is well known that the stable  and unstable manifolds are $C^1$-immersed submanifolds. A hyperbolic periodic orbit is a \textit{sink} if the unstable direction is trivial. It is a \textit{source} if the stable direction is trivial. A hyperbolic periodic orbit is a \textit{saddle} if it is neither a sink nor a source. For a hyperbolic periodic point $p$ we defined its \emph{index} by $\mathrm{ind}(p) := \dim E^s$.

{\bf Organization of the paper: }
The structure of this paper has two parts. The first part deals with general criteria for collinearity, quasi-triviality and triviality of the centralizer (Sections \ref{section collineaire}, \ref{sec.quasitrivial} and \ref{sec.trivial}). The second part deals with our generic results (Sections \ref{section generic} and \ref{proof generic und}).
In Section~\ref{section collineaire} we state and prove some criteria for collinear centralizer. In Section \ref{sec.quasitrivial} we prove Theorem \ref{res qu tri}. Theorems \ref{theo e}, \ref{thm.NUH}, \ref{thm.dim3triviality}, \ref{thm.CDtrivial} and \ref{thm.equivanlentcentralizer} are proved in Section \ref{sec.trivial}. Finally in Section \ref{section generic} we begin the proof of Theorem \ref{theo d}, which is completed in Section~\ref{proof generic und}.\\

{\bf Acknowledgments:} The authors would like to thank Javier Correa, Sylvain Crovisier, Alexander Arbieto, Federico Rodriguez-Hertz, Anna Florio, Martin Sambarino, Rafael Potrie, 
 and Adriana da Luz for useful conversations. We thank the anonymous referee for his suggestions and comments which helped to improve the presentation of this paper. During part of the preparation of this work, B.S. was supported by Marco Brunella's post-doctoral fellowship. We thank the Brunella family for their generous support.

\section{Collinearity}\label{section collineaire}

In this section we prove two main criteria which imply collinear centralizer. The first criterion appeals to the topological dynamics of the flow and is a very weak form of expansiveness called \textit{separating} (see Definition \ref{def.separating}). For instance, all the usual expansiveness-like notions for flows (Bowen-Walters expansive or Komuro expansive) imply that the flow is separating, see \cite{Artigue} for a throughout discussion. Our second criterion appeals to infinitesimal behaviour of the flow. We show that if the derivative is hyperbolic along critical elements (zeros and periodic orbits), the critical elements are dense in the chain recurrent set and along the wandering trajectories the flow has the unbounded normal distortion, then the centralizer is collinear.

Recall that $M$ is a compact Riemannian manifold. Let $r,k \geq 1$ be positive integers. 
Given $x\in M$ and $u,v\in T_xM$ we denote by $\langle u,v\rangle$ the subspace spanned by $u$ and $v$ in $T_xM$. 
\begin{definition}[Collinear centralizer]
	\label{def.collinear}
	We say that $X\in\mathfrak{X}^r(M)$ has a \emph{collinear} $C^k$-centralizer if 
	$$\dim\langle X(x),Y(x)\rangle\leq 1,$$ 
	for every $x\in M$ and every $Y\in\mathfrak{C}^k(X)$.
\end{definition} 

We have the following elementary result:

\begin{lemma}\label{lemme facile}
	Let $X\in\mathfrak{X}^r(M)$ and assume that the vector field $Y\in\mathfrak{C}^k(M)$ satisfies $\dim\langle X(x),Y(x)\rangle\leq 1$, 
	for every $x\in M$. Then, there exists a function $f \in C^s(M_X,\R)$, with   $s:=\min\{r,k\}$, such that 
	$$
	Y(x)=f(x)X(x),\quad \forall\, x \in M_X.
	$$
	Moreover,  the function $f$ is $X$-invariant, i.e.,
	$$
	f(X_t(x))=f(x),\quad \forall\, x \in M_X,\ \forall\, t \in \R.
	$$
\end{lemma}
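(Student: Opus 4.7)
The statement naturally splits into three steps: constructing $f$, verifying its regularity, and proving its invariance.

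First, I would define $f$ pointwise on $M_X$. Since $X(x) \neq 0$ for $x \in M_X$ and, by hypothesis, $Y(x)$ lies in the line spanned by $X(x)$, there is a unique scalar $f(x) \in \R$ such that $Y(x) = f(x) X(x)$. Using the Riemannian metric fixed on $M$, an explicit formula is
\[
f(x) = \frac{\langle Y(x), X(x) \rangle}{\|X(x)\|^2}, \quad x \in M_X.
\]

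Second, I would check the regularity. On $M_X$ the denominator $\|X\|^2$ is a $C^r$ function bounded away from zero locally, while the numerator $\langle Y, X\rangle$ is $C^{\min\{r,k\}}$ since $X \in \mathfrak X^r(M)$ and $Y \in \mathfrak X^k(M)$ and the metric is smooth. Hence $f \in C^s(M_X,\R)$ with $s = \min\{r,k\}$.

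Finally, for $X$-invariance I would appeal to the commutation relation recalled in the excerpt, namely equation (1.1):
\[
Y(X_t(x)) = DX_t(x) \cdot Y(x), \quad \forall\, x \in M,\ \forall\, t \in \R.
\]
Together with the tautological identity $X(X_t(x)) = DX_t(x) \cdot X(x)$ (which follows from differentiating $X_{s+t} = X_t \circ X_s$ at $s = 0$), and the relation $Y(x) = f(x) X(x)$ from the first step, this gives
\[
f(X_t(x)) X(X_t(x)) = Y(X_t(x)) = DX_t(x) \cdot (f(x) X(x)) = f(x) X(X_t(x)).
\]
Since $x \in M_X$ implies $X_t(x) \in M_X$ (the flow preserves the zero set), $X(X_t(x)) \neq 0$ and we conclude $f(X_t(x)) = f(x)$ for all $x \in M_X$ and all $t \in \R$.

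There is no real obstacle here: once one observes that the metric provides a canonical way of extracting the scalar coefficient and that the commutation relation transports $Y$ along the flow exactly as it transports $X$, the three steps are essentially formal. The only minor point requiring attention is to keep track of the regularities of $X$ and $Y$ separately when writing the formula for $f$, so that the advertised $C^s$ regularity with $s = \min\{r,k\}$ is obtained.
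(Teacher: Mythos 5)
Your proposal is correct and follows essentially the same route as the paper: the authors also define $f(x)$ as the orthogonal projection coefficient $\frac{(Y(x),X(x))}{(X(x),X(x))}$, note its $C^{\min\{r,k\}}$ regularity on $M_X$, and deduce invariance from the commutation relation \eqref{eq cmommut} applied to both $Y$ and $X$ along the flow, cancelling the nonvanishing factor $X(X_t(x))$.
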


\begin{proof}
	Let us denote by $(\cdot,\cdot)$ the   scalar product associated to the Riemannian structure on $M$. For any $x\in M_X$ and for any $v \in T_x M$, we set $\pi_X(x,v):=\frac{(X(x),v)}{(X(x),X(x))}$. In particular,  $\pi_X(x,v)X(x)$ is the orthogonal projection of the vector $v$ on the direction spanned by $X(x)$. Let $Y\in\mathfrak{C}^k(M)$ be a vector field that satisfies $\dim\langle X(x),Y(x)\rangle\leq 1$.  The function $f\colon M_X\to \R$, $x \mapsto \pi_X(x,Y(x))$ is of class $C^s$, with $s=\min\{r,k\}$. Moreover, by the collinearity of the vector fields $X$ and $Y$, we have $Y=fX$.
	
	By \eqref{eq cmommut}, it holds   $Y(X_t(\cdot))= DX_t \cdot Y(\cdot)$. Therefore, for any $x \in M_X$ and for any  $t \in \R$, we have
	$$
	f(X_t(x))X(X_t(x))=DX_t(x)  \cdot (f(x) X(x))=f(x) DX_t(x) \cdot X(x)=f(x) X(X_t(x)),
	$$
	where the last equality follows from \eqref{eq cmommut}, with $X$ in place of $Y$. 
	Since $X(X_t(x)) \neq 0$, we obtain $f(X_t(x))=f(x)$, which concludes the proof. 
\end{proof}

The following definition is a very weak form of expansiveness for flows. 

\begin{definition}
	\label{def.separating}
	A vector field $X\in\mathfrak{X}^1(M)$ is \emph{separating} if there exists $\eps>0$ such that the following holds: 
	if $d(X_t(x),X_t(y))<\eps$ for every $t\in\R$, then $y\in \orb(x)$. 
\end{definition}
We shall now prove the following topological criterion for collinearity, which generalises \cite{Oka}. The idea is very simple: the non collinearity of the centralizer gives rise to a continuum of non-separating trajectories.    

\begin{proposition}\label{premier theorem}
	If $X\in \mathfrak{X}^1(M)$ is separating, then $X$ has collinear $C^1$-centralizer.
\end{proposition}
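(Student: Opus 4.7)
The plan is to prove the contrapositive: if there exist $Y \in \mathfrak{C}^1(X)$ and $x_0 \in M$ such that $X(x_0)$ and $Y(x_0)$ are linearly independent, then $X$ fails to be separating. The main object I would introduce is the joint orbit map
$$
F : \R^2 \to M, \quad F(s,t) := X_t(Y_s(x_0)) = Y_s(X_t(x_0)),
$$
where the second equality uses the commutativity of the flows. At $(0,0)$ the partial derivatives of $F$ recover $X(x_0)$ and $Y(x_0)$, which are linearly independent, so $F$ is an immersion at the origin. Hence after restricting to a sufficiently small open neighborhood $V$ of $(0,0)$, the image $\Sigma := F(V)$ is a $C^1$-embedded $2$-dimensional submanifold of $M$ containing $x_0$.

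The next step is to use the separating property to force $\Sigma \subset \orb(x_0)$. Let $\varepsilon>0$ be the separating constant. By compactness of $M$ and continuity of $(s,z) \mapsto Y_s(z)$ at $s=0$, one can choose $\delta>0$ such that $\sup_{z \in M} d(z, Y_s(z)) < \varepsilon$ for every $|s|<\delta$. For such $s$, commutativity gives
$$
d(X_t(x_0), X_t(Y_s(x_0))) = d(X_t(x_0), Y_s(X_t(x_0))) < \varepsilon, \quad \forall\, t \in \R,
$$
so the separating property yields $Y_s(x_0) \in \orb(x_0)$. Since $\orb(x_0)$ is $X$-invariant, it follows that $F(s,t) \in \orb(x_0)$ for every $(s,t) \in V$ (after shrinking $V$ so that its $s$-projection lies in $(-\delta, \delta)$). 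In particular, $\Sigma \subset \orb(x_0)$.

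To close the argument I would invoke a dimension/measure comparison. On the one hand, $\Sigma$ is a $C^1$-embedded $2$-dimensional submanifold, hence has positive $2$-dimensional Hausdorff measure. On the other hand, $\orb(x_0)$ is the image of $\R$ under the locally Lipschitz map $t \mapsto X_t(x_0)$, so it has Hausdorff dimension at most $1$ and therefore $2$-dimensional Hausdorff measure zero. The inclusion $\Sigma \subset \orb(x_0)$ is thus impossible, producing the desired contradiction and forcing $\dim \langle X(x), Y(x)\rangle \leq 1$ for every $x \in M$.

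The step I expect to require the most care is the use of the separating property: it would be tempting to try to parameterize $Y_s(x_0)$ as $X_{\tau(s)}(x_0)$ with small $\tau(s)$ and differentiate at $s=0$, but this strategy breaks down when $\orb(x_0)$ is not locally embedded (for instance, when the orbit is dense), since then $\tau(s)$ may be forced to take arbitrarily large values. The idea that makes the proof painless is to avoid that parameterization altogether and instead produce a full $2$-parameter family of orbit points through the commuting flows, after which a coarse Hausdorff dimension comparison suffices.
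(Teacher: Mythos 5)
Your proof is correct, but it closes the argument by a genuinely different route than the paper. Both proofs begin identically: using commutativity and the uniform closeness $d_{C^0}(Y_s,\mathrm{id})<\eps$ for $|s|<\delta$, the separating property forces $Y_s(x_0)\in\orb(x_0)$ for all small $s$. At that point the paper refuses to accept this conclusion for \emph{every} small $s$: it works in a flowbox around $x_0$ and shows that the transverse curve $s\mapsto Y_s(x_0)$ meets each local orbit plaque at most once, so the set of $s$ with $Y_s(x_0)\in\orb(x_0)$ is at most countable; one then picks $s$ with $Y_s(x_0)\notin\orb(x_0)$ and contradicts separating directly. You instead accept the inclusion for all small $s$, sweep the points $Y_s(x_0)$ along the $X$-flow to get the locally embedded $2$-dimensional image $\Sigma=F(V)$ of the local $\R^2$-action, and contradict the inclusion $\Sigma\subset\orb(x_0)$ by a Hausdorff-measure comparison ($\mathcal{H}^2(\Sigma)>0$ versus $\mathcal{H}^2(\orb(x_0))=0$, the orbit being a Lipschitz image of $\R$). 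Both closings are sound; the details you rely on (local immersion/embedding theorem for the $C^1$ map $F$, positivity of $\mathcal{H}^2$ for an embedded $C^1$ surface, nullity for $\sigma$-compact Lipschitz curves) are standard, and your remark about why one should not try to write $Y_s(x_0)=X_{\tau(s)}(x_0)$ and differentiate is exactly the pitfall the flowbox argument is designed to circumvent. What the paper's route buys, beyond being more elementary (no Hausdorff measure), is a reusable intermediate statement: the countability of the set of $s$ with $Y_s(x)\in\orb(x)$ is invoked again later (in the proof of the unbounded-normal-distortion criterion) to produce points $Y_s(x)\notin\orb(x)$ with $s$ arbitrarily small, something your dimension count does not directly provide. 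What your route buys is a cleaner conceptual picture — a non-collinear element of $\mathfrak{C}^1(X)$ generates a local $2$-dimensional orbit of an $\R^2$-action, which cannot fit inside a single flow line — at the cost of slightly heavier measure-theoretic machinery.
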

\begin{proof}%[Proof of Proposition \ref{premier theorem}]
	Let $X\in \mundo $ be a separating vector field with separating constant $\varepsilon>0$ and suppose that there exists $Y\in \cent^1(X) $ that is not collinear to $X$. Thus there is a point $x\in M$ such that $\dim \langle X(x), Y(x) \rangle =2$. 
	
	We fix $\delta>0$ so that for each $s\in (-\delta,\delta) $, we have $d_{C^0}(Y_s,\mathrm{id}) <\varepsilon$, and $\dim \langle Y(Y_s(x)), X(Y_s(x))\rangle = 2$. Consider a point $y=Y_s(x)$, for some $s\in(-\delta,\delta)$. Since $Y\in\mathfrak{C}^1(X)$, we have $X_t\circ Y_s=Y_s\circ X_t$, for every $s,t$, and thus   
	\begin{equation}
	\label{e.proximidade}
	d(X_t(y),X_t(x))=d(X_t(Y_s(x)),X_t(x)) = d(Y_s(X_t(x)),X_t(x)) < \varepsilon,\:\forall\: t\in \R,
	\end{equation}
	because $d_{C^0}(Y_s,\mathrm{id}) <\varepsilon$ due to our choice of $\delta$. To obtain a contradiction from \eqref{e.proximidade} it remains to prove that one can choose $s\in(-\delta,\delta)$ so that $y=Y_s(x)$ does not belong to the orbit of $x$.
	
	For that we apply the flowbox theorem \cite{PalisdeMelo}. Let $(\varphi, U)$  be a small flowbox for the vector field $X$ around the point $x$, that is, $\varphi\colon M \supset U  \to W \subset \R^d=\R^{d-1}\times \R$ is a local chart such that $\varphi_*X = (0,1)$. As $\{t\in\R : X_t(x)\in U\}\subset\R$ is an open set, there exists a countable 2-by-2 disjoint collection of intervals $\{I_j\}\subset\R$ so that 
	\[\{t\in\R : X_t(x)\in U\}=\bigcup_{j=1}^{\infty}I_j.\]
	Consider the set $J\eqdef\{t\in\R:\exists\:s\in(-\delta,\delta): X_t(x)=Y_s(x)\}$. Notice that, as $Y(Y_s(x))\neq 0$ for every $|s|<\delta$, then for each $t\in J$, there exists a unique $s\in(-\delta,\delta)$ so that $X_t(x)=Y_s(x)$. 
	
	We claim that $\#J\cap I_j\leq 1$, for every $j\in\N$. Let us show how to finish the proof from this claim: it implies that $J$ is at most countable, and so the there are at most countably many $s$ so that $y=Y_s(x)$ belongs to the $X$-orbit of $x$, which concludes. 
	
	We are left to prove the claim. Consider the decomposition  $\R^{d-1}\times \R$ of $\R^d$ into \textit{vertical} and \textit{horizontal} components, respectively. We have that $D\varphi(Y_s(x))\cdot Y(Y_s(x))= (Y_1(s), Y_2(s))$, with $Y_1(s) \in \R^{d-1}$ and $Y_2(s) \in \R$. Since $\varphi_*X$  and $\varphi_*Y$ are not collinear at $\varphi(x)$ and since $\varphi_*X=(0,1)$, there exists $\rho>0$ so that, for $\delta$ sufficiently small,  $\|Y_1(s)\|>\rho$, for every $|s|<\delta$. This implies that the curve
	\[s\in(-\delta,\delta)\mapsto\:\varphi(Y_s(x))\]
	meets each horizontal line $\{z\}\times\R$ at most once. As each orbit segment $X_{I_j}(x)$ is sent by $\varphi$ into horizontal segments we deduce that $\#J\cap I_j\leq 1$. This completes the proof.  
	%have that for every point $p\in U$ there is a positive number $\rho(p)>0$ such that 
	%$$
	%\varphi(X_{(-\rho(p),\rho(p))} (p)) \subset \varphi(p) + (-\rho(p),\rho(p)) \times \{0\}.
	%$$
	%
	%We can take $\delta>0$ smaller, if necessary, so that for each $s\in (-\delta,\delta) $, we have $Y_s(x) \in U$. Thus in the flow box, we have that $D\varphi(Y_s(x))\cdot Y(Y_s(x))= (Y_1(s), Y_2(s))$, with $Y_1(s) \in \R$ and $Y_2(s) \in \R^d  - \{0\}$. In particular, for each $t\in \R$ such that $X_t(x) \in Y_{(-\delta,\delta)}(x)$, there exists an open interval $I_t:=(t-\rho(X_t(x)),t+\rho(X_t(x)))\subset \R$ such that $\# X_{I_t}(x) \cap Y_{(-\delta,\delta)}(x) =1$.
	%
	%We conclude that the set $\orb(x) \cap Y_{(-\delta,\delta)}(x)$ is at most countable. Then, since $(-\delta, \delta)$ is uncountable, there is $s\in (-\delta,\delta)$ such that $Y_s(x) \notin \mathrm{orb}^X(x)$. Now, by commutativity, and by our choice of $\delta$, we obtain
	%$$
	%d(X_t(Y_s(x)),X_t(x)) = d(Y_s(X_t(x)),X_t(x)) < \varepsilon, \textrm{ for every $t\in \R$,}
	%$$
	%which is a contradiction.
	%
\end{proof}

It is important to notice that the separating property is too weak to imply the quasi-triviality of the centralizer, as the simple example below demonstrates. 

\begin{example}
	\label{example.separating}
	Fix two positive real numbers $0<a<b$ and consider the annulus on $\mathbb{R}^2$ given by $A:=
	\{(x,y) \in \mathbb{R}^2: a\leq \|(x,y)\| \leq b \}$. Using polar coordinates $(r, \theta)$ on $A$, we consider the vector field $X(r,\theta) := \frac{\partial}{\partial \theta}$. Observe that every orbit of $X$ is periodic with different period. It is easy to see that this flow is separating. 
\end{example}
\begin{figure}[h!]
	\begin{center}
		\includegraphics [width=9cm]{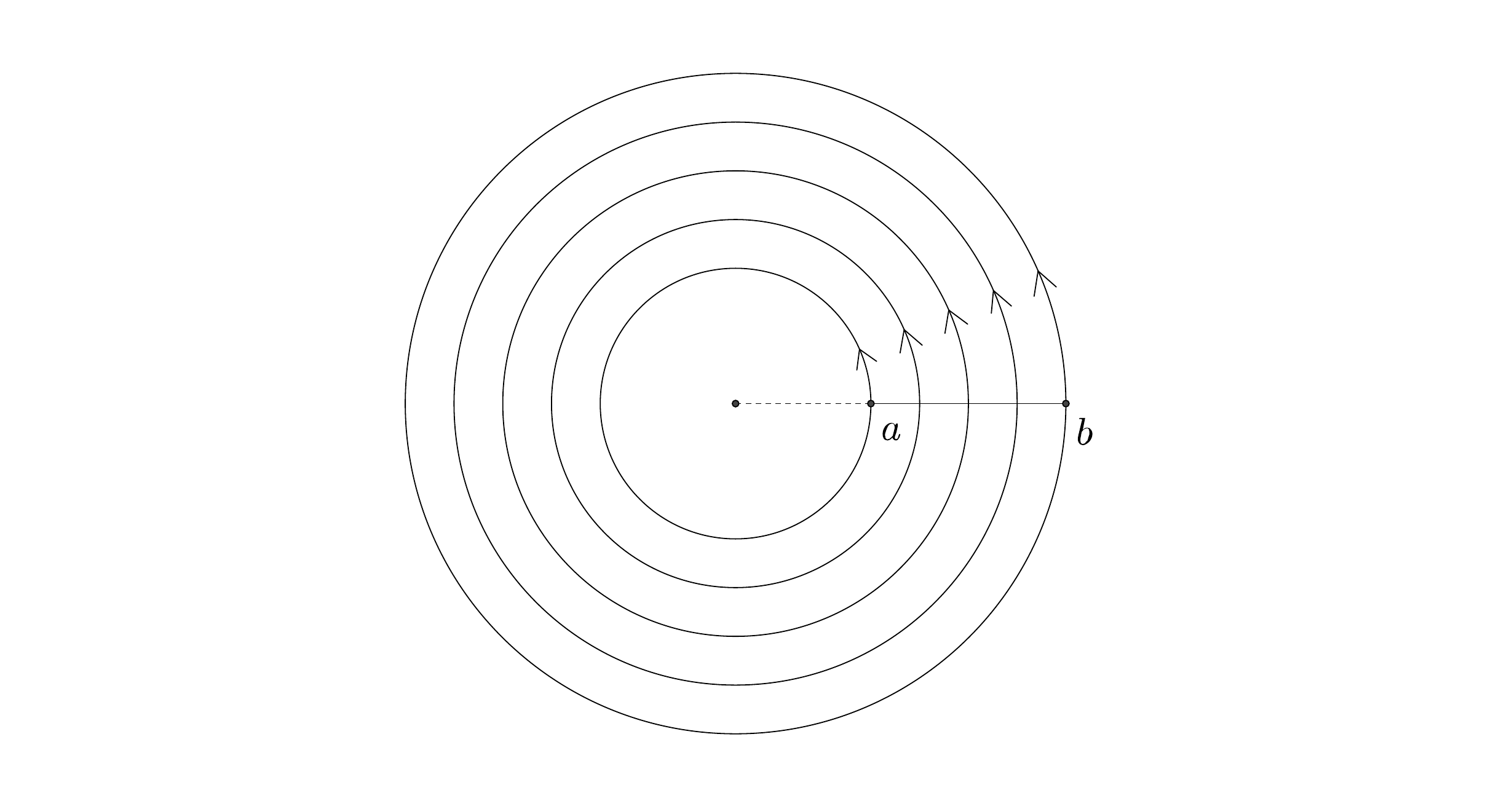}
	\end{center}
	\caption{Example \ref{example.separating}.}
\end{figure}
Given any smooth function $f\colon [a,b]\to\R$, the vector field $Y(r,\theta)=f(r)X(R,\theta)$ commutes with $X$ but is not a constant multiple of $X$. Thus, in view of Lemma~\ref{lemme facile}, the conclusion of Proposition~\ref{premier theorem} is optimal.

We remark that the separating property is not generic (see Appendix \ref{append sep}). So to obtain that the $C^1$-centralizer of a $C^1$-generic vector field is collinear we will need another criterion.

To overcome this, we shall define the notion of unbounded normal distortion. Let $X\in \mathfrak{X}^1(M)$ and let $M_X:=M-\mathrm{Zero}(X)$ be as in (\ref{def.MX}). Over $M_X$ we may consider the normal vector bundle $N_X$ defined as $N_{X,p} := \langle X(p)\rangle^{\perp}$, for $p\in M_X$, where $\langle X(p)\rangle^{\perp}$ is the orthogonal complement of the direction $\langle X(p)\rangle$ inside $T_pM$. Let $\Pi^X\colon TM_X \to N_X$ be the orthogonal projection on $N_X$. On $N_X$ we have a well defined flow, called the \emph{linear Poincar\'e flow}, which is defined as follows: for any $p \in M_X$, any $v\in N_{X,p}$, and $t\in \mathbb{R}$, the image of $v$ by the linear Poincar\'e flow is
\begin{equation}\label{eq.LPF}
P^X_{p,t}(v) := (\Pi^X_{X_t(p)} \circ DX_t(p))\cdot v.
\end{equation}  

The key criterion to study the centralizer of $C^1$-generic vector fields is based on the following property.

\begin{definition}[Unbounded normal distortion]
	\label{def.und}
	Let $X\in \mathfrak{X}^1(M)$ be a $C^1$ vector field. We say that $X$ verifies the \emph{unbounded normal distortion} property if the following holds: there exists a dense subset $\mathcal{D} \subset M- \mathcal{CR}(X)$, such that for any $x\in \mathcal{D}$, $y\in M-\mathcal{CR}(X)$ such that $y\notin \orb(x)$ and $K \geq 1$, there is $n\in (0,+\infty)$, such that
	\[
	|\log \det P^X_{x,n} - \log \det P^X_{y,n}| > K.
	\] 
\end{definition}

This is an adaptation for flows of the definition of unbounded distortion used in \cite{BonattiCrovisierWilkinson} to prove the triviality of the $C^1$-centralizer of a $C^1$-generic diffeomorphism. Using this property we obtain the following proposition. 

\begin{proposition}\label{res collin}
	Let $X\in \mathfrak{X}^1(M)$. Suppose that $X$ verifies the following properties:
	\begin{itemize}
		\item $X$ has unbounded normal distortion;
		\item every singularity and periodic orbit of $X$ is hyperbolic;
		\item $\mathcal{CR}(X) = \overline{\mathrm{Per}(X)}$.
	\end{itemize}
	Then $X$ has collinear $C^1$-centralizer.
\end{proposition}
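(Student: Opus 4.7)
The plan is to show that for every $Y \in \mathfrak{C}^1(X)$ the normal component
$$Y^\perp(x) := \Pi^X_x Y(x), \quad x \in M_X,$$
vanishes identically. This suffices since at a singularity $p$ of $X$, $\dim\langle X(p), Y(p)\rangle = \dim\langle Y(p)\rangle \leq 1$ is automatic. Using the commutation relation \eqref{eq cmommut} and the block-triangular form of $DX_t$ with respect to the splitting $TM_X = \langle X\rangle \oplus N_X$, one immediately gets the fundamental equivariance
$$Y^\perp(X_t(x)) = P^X_{x,t}(Y^\perp(x)), \quad \forall\, x \in M_X,\ \forall\, t \in \R.$$

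First I would handle the chain recurrent part. Let $p \in \mathrm{Per}(X) \cap M_X$ be a hyperbolic periodic point of period $T$. Applying the equivariance at $t = T$ yields $Y^\perp(p) = P^X_{p,T}(Y^\perp(p))$. Hyperbolicity of the orbit forces the linear Poincar\'e return map $P^X_{p,T} : N_{X,p} \to N_{X,p}$ to have no eigenvalue on the unit circle, so its only fixed vector is $0$. Hence $Y^\perp(p) = 0$ for every hyperbolic periodic point; since $\mathcal{CR}(X) = \overline{\mathrm{Per}(X)}$ and $Y^\perp$ is continuous on $M_X$, we conclude that $Y^\perp \equiv 0$ on $\mathcal{CR}(X)\cap M_X$.

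Now suppose, towards a contradiction, that $Y^\perp \not\equiv 0$ on $M \setminus \mathcal{CR}(X)$. The set $\{Y^\perp \neq 0\}$ is open, and $\mathcal{D}$ is dense in the open set $M \setminus \mathcal{CR}(X)$, so we may pick $x_0 \in \mathcal{D}$ with $Y^\perp(x_0) \neq 0$. Since $Y$ is then transverse to $X$ at $x_0$, the flow-box argument of Proposition~\ref{premier theorem} shows that $y_s := Y_s(x_0) \notin \orb(x_0)$ for all but countably many small $s$; shrinking $|s|$ further keeps $y_s \in M \setminus \mathcal{CR}(X)$. The unbounded normal distortion hypothesis then yields times $n_k \to +\infty$ along which
$$\bigl|\log|\det P^X_{x_0,n_k}| - \log|\det P^X_{y_s,n_k}|\bigr| \longrightarrow +\infty.$$

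The bulk of the work is to derive a contradictory $t$-uniform upper bound on the same quantity. Differentiating $X_t \circ Y_s = Y_s \circ X_t$ at $y_s$ and passing to determinants gives
$$\log|\det DX_t(y_s)| - \log|\det DX_t(x_0)| = \log|\det DY_s(X_t(x_0))| - \log|\det DY_s(x_0)|,$$
whose right-hand side is bounded uniformly in $t$ by $2\sup_M|\log|\det DY_s||$. Combined with the block-triangular factorization
$$\log|\det DX_t(x)| = \log\|X(X_t(x))\| - \log\|X(x)\| + \log|\det P^X_{x,t}|,$$
this converts into a uniform bound on $\bigl|\log|\det P^X_{y_s,t}| - \log|\det P^X_{x_0,t}|\bigr|$, up to the balance term $\log\|X(X_t(y_s))\| - \log\|X(X_t(x_0))\|$. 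The main obstacle is to control this last term uniformly, since $X_t(x_0)$ may accumulate on the hyperbolic singularities of $X$, near which $\|X\|$ collapses. The remedy uses that commutation forces $Y(p) = 0$ at every hyperbolic singularity (from $Y(p) = e^{tA}Y(p)$ for all $t \in \R$ with $A = DX(p)$ hyperbolic, whose only fixed vector is $0$). Thus $Y_s$ fixes $p$, and its linearization $e^{sB}$ at $p$ commutes with $e^{tA}$; for small $|s|$ this gives $\|Y_s(z) - z\| \leq C|s|\,\|z-p\|$ near $p$, while $\|X(z)\| \geq c\,\|z - p\|$ by hyperbolicity of $A$. Combined with the trivial control on compact subsets of $M_X$, we obtain $|\log\|X(Y_s(z))\| - \log\|X(z)\|| = O(s)$ uniformly in $z$, yielding the desired $t$-uniform bound and contradicting the divergence above.
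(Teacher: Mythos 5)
Your argument is correct, and it reaches the contradiction with unbounded normal distortion by a genuinely different mechanism than the paper's proof. Both proofs share the same skeleton: pick $x_0\in\mathcal{D}$ where $Y$ is transverse to $X$, use the flow-box argument of Proposition~\ref{premier theorem} to get $y_s=Y_s(x_0)\notin\orb(x_0)$ with $y_s$ still outside $\mathcal{CR}(X)$, and then contradict the distortion property by a bound on $\bigl|\log|\det P^X_{x_0,t}|-\log|\det P^X_{y_s,t}|\bigr|$ uniform in $t$; the chain recurrent set is handled separately via hyperbolic periodic orbits. The difference lies in how the uniform bound is produced. The paper works entirely inside the normal bundle: it pulls $N_{X,y_s}$ back by $DY_s(x_0)^{-1}$ to the graph of a small linear map $L_{s,x}$ over $N_{X,x_0}$, and compares the four determinant factors (projection, $DY_s$, $DX_t$ restricted to the graph, $DY_s^{-1}$) with their counterparts for $x_0$, keeping each ratio close to $1$ uniformly in $t$; the flow direction only ever enters through $\|X(X_t(x_0))\|$ in a numerator, so no control near singularities is needed. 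You instead compare full Jacobians via $\det DX_t(y_s)\det DY_s(x_0)=\det DY_s(X_t(x_0))\det DX_t(x_0)$ and convert to normal determinants through $|\det DX_t(x)|=\tfrac{\|X(X_t(x))\|}{\|X(x)\|}|\det P^X_{x,t}|$, which is cleaner but forces you to bound the balance term $\log\|X(Y_s(z_t))\|-\log\|X(z_t)\|$ uniformly; your treatment of this near singularities is sound, since hyperbolicity gives $Y(p)=0$, hence $Y_s(p)=p$, $\|Y_s(z)-z\|\le C|s|\,\|z-p\|$ (this follows already from $\|DY_s-\mathrm{id}\|_{C^0}=O(|s|)$ and $Y_s(p)=p$, without invoking the commuting linearizations), and $\|X(z)\|\ge c\|z-p\|$, so the term is $O(|s|)$ globally. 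The price of your route is that the wandering part now also uses hyperbolicity of the singularities, which the paper's normal-bundle computation avoids; what you gain is a shorter determinant bookkeeping and transparency about where the singularities could hurt. On the chain recurrent set you also argue differently: you show $Y^\perp(p)=0$ at every hyperbolic periodic point because the hyperbolic return map $P^X_{p,T}$ fixes only $0$, and extend by continuity using $\mathcal{CR}(X)=\overline{\mathrm{Per}(X)}$ and finiteness of the (hyperbolic) singularities, whereas the paper argues by contradiction through the unbounded growth of $\|DX_t(x)\cdot Y(x)\|$ as in Proposition~\ref{thm.expcollinear}; both are valid, and your version is slightly more direct.
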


Before giving the proof of Proposition~\ref{res collin} we give a criterion of collinearity which assumes that the derivative ``blows up" the size of any vector transverse to the flow direction. The proof, which is simple, will be used several times, for instance in the course of proving Proposition~\ref{res collin} and Theorem \ref{thm.NUH}. 

\begin{proposition}
\label{thm.expcollinear}
Let $X\in \mathfrak{X}^1(M)$. Suppose that $X$ verifies the following condition: there exists a dense set $\mathcal{D} \subset M$ such that for any $x\in \mathcal{D}$ and any non zero vector $v\in T_xM- \langle X(x) \rangle$, it holds
\[
\|DX_t(x)\cdot v\| \to +\infty, \textrm{ for $t\to +\infty$ or $t\to -\infty$.}
\]
Then $X$ has collinear centralizer.
\end{proposition}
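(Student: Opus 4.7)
The plan is to fix an arbitrary $Y \in \cent^1(X)$ and prove that $\dim \langle X(x), Y(x) \rangle \leq 1$ for every $x \in M$. The driving observation is the commutation identity \eqref{eq cmommut}: $Y(X_t(x)) = DX_t(x) \cdot Y(x)$ for all $x \in M$ and all $t \in \R$. Since $M$ is compact and $Y$ is a continuous vector field, the norm $\|Y\|$ is uniformly bounded on $M$, so for every $x\in M$ the function $t \mapsto \|Y(X_t(x))\|$ is bounded on $\R$.

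First I would argue by contradiction on the dense set $\mathcal{D}$. Pick $x \in \mathcal{D}$ and assume that $Y(x) \notin \langle X(x) \rangle$; in particular $Y(x) \neq 0$, so the standing hypothesis applies with the vector $v := Y(x)$, yielding $\|DX_t(x) \cdot Y(x)\| \to +\infty$ either as $t \to +\infty$ or as $t \to -\infty$. Combining this with the commutation identity above gives $\|Y(X_t(x))\| \to +\infty$ along the corresponding sequence of times, which contradicts the uniform bound on $\|Y\|$ coming from compactness of $M$. Hence $Y(x) \in \langle X(x) \rangle$, and in particular $\dim \langle X(x), Y(x) \rangle \leq 1$, for every $x \in \mathcal{D}$.

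To conclude, I would promote this from $\mathcal{D}$ to all of $M$ by continuity. In any local chart, the condition $\dim \langle X(x), Y(x) \rangle \leq 1$ amounts to the simultaneous vanishing of all $2\times 2$ minors of the matrix with columns $X(x), Y(x)$, and is therefore a closed condition on $M$. Since $\mathcal{D}$ is dense and both $X$ and $Y$ are continuous, the inequality extends from $\mathcal{D}$ to its closure, which is $M$; this is precisely collinearity in the sense of Definition \ref{def.collinear}.

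The argument is essentially free of obstacles — the only mild point is that the hypothesis offers a choice between $t \to +\infty$ and $t \to -\infty$, and one uses whichever direction produces the divergence for the pair $(x, Y(x))$. Compactness of $M$ is what ensures that any such divergence is incompatible with the boundedness of the continuous vector field $Y$, and it is also the mechanism by which the argument never needs to address points where $X$ vanishes (at a singularity $x_0$, one has $\langle X(x_0), Y(x_0) \rangle = \langle Y(x_0) \rangle$, which is already at most one-dimensional for free).
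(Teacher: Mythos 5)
Your proposal is correct and follows essentially the same argument as the paper: the commutation identity $Y(X_t(x))=DX_t(x)\cdot Y(x)$, boundedness of $\|Y\|$ by compactness of $M$, and the divergence hypothesis at a point of $\mathcal{D}$ give the contradiction, while your passage from $\mathcal{D}$ to $M$ via closedness of the collinearity condition is just the contrapositive of the paper's remark that non-collinearity is open so the bad point may be taken in $\mathcal{D}$.
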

\begin{proof}
Let $Y\in \mathfrak{C}^1(X)$. Then, by \eqref{eq cmommut}, for any $x\in M$, and $t \in \R$, it holds
\[
Y(X_t(x)) = DX_t(x)\cdot Y(x).
\]
Assume that $Y(x)$ is not collinear to $X(x)$. Since this is an open condition, we can take $x$ belonging to the set $\mathcal{D}$. By compactness of $M$, we also have $\sup_{p\in M} \|Y(p)\| < +\infty$. However, by hypothesis,
\[
\|DX_t(x)\cdot Y(x)\| \to + \infty, \textrm{ for $t\to +\infty$ or $t\to -\infty$,}
\]
which is a contradiction.
\end{proof}

Some examples of vector fields that verify the conditions of Proposition \ref{thm.expcollinear} globally in the manifold are non-uniformly hyperbolic divergence-free vector fields, such as suspensions of \cite{katok} and quasi-Anosov flows \cite{robinho}. However, as the proof demonstrates in any region where the derivative behaves as in the statement we can conclude that any element of the centralizer must be collinear with $X$ in that region. This observation, as we mentioned above, will be used several times in this paper.

We also remark that the global assumptions in Propositions \ref{premier theorem} and \ref{thm.expcollinear} are not generic (see Appendix \ref{append sep}). Therefore, to overcome this and to obtain that the $C^1$-centralizer of a $C^1$-generic vector field is collinear we will need the criterion given by Proposition \ref{res collin}.

Let us  now give the proof of Proposition~\ref{res collin}.
\begin{proof}[Proof of Proposition \ref{res collin}]
Let $X\in \mathfrak{X}^1(M)$ be a vector field with the unbounded normal distortion property and let $\mathcal{D}\subset M-\mathcal{CR}(X)$ be the set given in Definition \ref{def.und}. Take $Y\in \mathfrak{C}^1(X)$. Assume by contradiction that $Y$ is not collinear with $X$ on $M-\mathcal{CR}(X)$. The set of points $x \in M$ such that $X(x)$ and $Y(x)$ are non-collinear is open,  hence by density of the set $\mathcal{D}$, there exists a point $x\in \mathcal{D}$ such that $Y(x)$ and $X(x)$ are not collinear. 

By the same argument as in the proof of Proposition \ref{premier theorem}, we can always find $s>0$ arbitrarily close to $0$ such that $Y_s(x) \notin \mathrm{orb}^X(x)$. Observe that for any $t\in \mathbb{R}$, it holds
\[
|\det P^X_{Y_s(x),t}| = |\det\Pi^X_{X_t(Y_s(x))} .\det DX_t(Y_s(x))|_{N_{X,Y_s(x)}}|.
\]

Since $X$ commutes with $Y$, we have that 
\begin{equation}
\label{eq.undsimmetry}
DX_t(Y_s(x)) = DY_s(X_t(x))\circ DX_t(x)\circ (DY_s(x))^{-1}.
\end{equation}
Using the coordinates $N_X \oplus \langle X \rangle$ on $TM_X$, for each $s\in \mathbb{R}$, we obtain  a linear map $L_{s,x}\colon N_{X,x} \to \langle X(x) \rangle$ such that
\[
(DY_s(x))^{-1}(N_{X,Y_s(x)}) = \mathrm{graph}(L_{s,x}).
\] 
Furthermore, $\|L_{s,x}\|$ can be made arbitrarily small as $s\to 0$, since $Y_s$ is $C^1$-close to the identity. Using the coordinates $N_{X,x} \oplus \langle X(x) \rangle$, any vector $v\in \mathrm{graph}(L_{s,x})$ can be written as $v= (v_N, L_{s,x}(v_N))$, where $v_N := \Pi^X_x(v)$. For any such vector $v$, for each $t\in \mathbb{R}$ and using the coordinates $N_{X,X_t(x)} \oplus \langle X(X_t(x)) \rangle$, we have
\begin{equation}
\label{eq.und1}
\displaystyle DX_t(x) \cdot v = \left(P^X_{x,t} (v_N), L_{s,x}(v_N)\frac{\|X(X_t(x))\|}{\|X(x)\|} + \left( DX_t(x)\cdot  v_N, \frac{X(X_t(x))}{\|X(X_t(x))\|}\right) \right),
\end{equation}
where $(\cdot,\cdot)$ inside the second coordinate of the right side of (\ref{eq.und1}) denotes the  scalar product given by the Riemannian structure. 

On the other hand, for any vector $v_N \in N_{X,x}$ and any $t\in \mathbb{R}$, we have
\begin{equation}
\label{eq.und2}
DX_t(x)\cdot v_N = \left( P^X_{x,t}( v_N), \left( DX_t(x) \cdot v_N, \frac{X(X_t(x))}{\|X(X_t(x))\|}\right) \right).
\end{equation}

Set $c:=\|X(x)\| >0$, and let $\tilde{c}\geq 1$ be a constant such that $\sup_{p \in M}\|X(p)\|<\tilde{c}$. For any vector $v_N\in N_{X,x}$, we obtain
\[
|L_{s,x}(v_N)|\frac{\|X(X_t(x))\|}{\|X(x)\|} < \|L_{s,x}\|\cdot \|v_N\| \frac{\tilde{c}}{c},
\]
which can be made arbitrarily close to $0$ by taking $s$ small enough. This holds for any $t\in \mathbb{R}$. Hence, comparing \eqref{eq.und1} and \eqref{eq.und2} we conclude that $DX_t(x)|_{\mathrm{graph}(L_{s,x})}$ is arbitrarily close to $DX_t(x)|_{N_{X,x}}$, for any $t\in \mathbb{R}$.

By \eqref{eq.undsimmetry}, we obtain   
\[\arraycolsep=1.2pt\def\arraystretch{1.6}
\begin{array}{l}
\left|\det P^X_{Y_s(x),t}\right|=\\
 \big|\det \Pi^X_{Y_s(X_t(x))}|_{DY_s(X_t(x))DX_t(x)\cdot \mathrm{graph}(L_{s,x})}\big|\cdot \left|\det DY_s(X_t(x))|_{DX_t(x)\cdot \mathrm{graph}(L_{s,x})}\right|\cdot \\
 \cdot \left|\det DX_t(x)|_{\mathrm{graph}(L_{s,x})}\right|\cdot \left|(\det (DY_s(x))^{-1}|_{N_{X,Y_s(x)}}\right|=:A\cdot B\cdot C\cdot D.
\end{array}
\]
Observe that 
\[
\left| \det P^X_{x,t}\right| = \left|\det \Pi^X_{X_t(p)}|_{DX_t(x)N_{X,x}}\right|.\left| \det DX_t(x)|_{N_{X,x}}\right| =: \text{I}\cdot \text{II}.
\]

Notice that $B$ and $D$ are arbitrarily close to $1$ if $s\in \mathbb{R}$ is small enough. By our previous discussion, for any $t\in \mathbb{R}$ the value of $C$ is arbitrarily close to the value of II, for $s$ sufficiently small. 

Our previous discussion also implies that $DY_s(X_t(x))DX_t(x)\cdot \mathrm{graph}(L_{s,x})$ is close to $DX_t(x)\cdot N_{X,x}$, since $Y_s(X_t(x))$ is close to $X_t(x)$. Thus, the value of $A$ can be made arbitrarily close to the value of I, for $s\in \mathbb{R}$ small enough. Hence, we can take $s$ small such that $Y_s(x) \notin \orb(x)$ and 
\[
\displaystyle \frac{1}{2} < \frac{\left|\det P^X_{Y_s(x),t}\right|}{\left| \det P^X_{x,t}\right|} <2,\textrm{ for any $t\in \mathbb{R}$}.
\]
This is a contradiction with the unbounded normal distortion property. We conclude that any vector field $Y\in \mathfrak{C}^1(X)$ verifies that $Y|_{M-\mathcal{CR}(X)}$ is collinear to $X|_{M-\mathcal{CR}(X)}$. 

Suppose that for some $x\in \mathcal{CR}(X)$ we have that $Y(x)$ is not collinear to $X(x)$. Since this is an open condition and the hyperbolic periodic points are dense in $\mathcal{CR}(X)$, we can suppose that $x$ is a periodic point. By a calculation similar to the one made in the proof of Proposition \ref{thm.expcollinear} we would then have that $\|Y(X_t(x))\| \to + \infty$ for  $t\to +\infty$ or $t\to -\infty$, which contradicts the fact that $\sup_{p\in M} \|Y(p)\| < +\infty$. Thus we have that $Y$ is also collinear to $X$ on $\mathcal{CR}(X)$.  
\end{proof}

\section{Quasi-triviality}
\label{sec.quasitrivial}

This section has two parts. In the first part we construct an example of a vector field whose centralizer is collinear but not quasi-trivial. In the second part we prove that under the condition that every singularity is hyperbolic we can promote the collinearity to quasi-triviality.

%One of our main results solves the above \emph{extention problem} in an important case. Indeed, we prove that when all the singularities of a $C^1$ vector field are hyperbolic, collinearity can actually be promoted to quasi-triviality. This is the statement of Theorem \ref{res qu tri} in the introduction. A significant part of the present paper is dedicated to the proof of the $C^1$-genericity of the unbounded normal distortion property (see Section \ref{section generic}). Since the other assumptions of Proposition \ref{res collin} and Theorem \ref{res qu tri}  are already known to be $C^1$-generic, this will allow us to conclude Theorem \ref{theo d}.
\subsection{Collinear does not imply quasi-trivial} Let us recall in precise terms the notion of quasi-triviality.
\begin{definition}[Quasi-trivial centralizer]
	\label{def.quasitrivial}
	Given two positive integers  $1 \leq k \leq r$, we say that $X\in\mathfrak{X}^r(M)$ has a \emph{quasi-trivial} $C^k$-centralizer if for every $Y\in\mathfrak{C}^k(X)$, there exists a $C^1$ function $f\colon M\to\R$ such that $X\cdot f\equiv 0$ and $Y(x)=f(x)X(x)$, for every $x\in M$.
\end{definition}

This notion was referred to as \textit{unstable centralizer} in the work of \cite{Oka}, in the $C^0$ category. We use the same terminology as \cite{BonomoRochaVarandas}. 

It is easy to see that quasi-trivial centralizer implies collinear centralizer. Moreover, arguing as in Lemma \ref{lemme facile}, if $X\in \mathfrak{X}^r(M)$ has a quasi-trivial $C^k$-centralizer, then for any $Y\in \mathfrak{C}^k(X)$, the function $f$ in Definition \ref{def.quasitrivial} is in fact  of class $C^k$ in restriction to $M_X$.
%, where $s:=\min \{r,k\}$. 

On the other hand, Lemma~\ref{lemme facile} also shows that to obtain a quasi-trivial centralizer from a collinear centralizer is an issue of knowing whether an invariant function $f\colon M_X\to\R$ admits a $C^1$ extension to the set $\zero(X)$. In full generality this is not always possible, as we demonstrate below.

\begin{example}
\label{exem.notcolinear}	
In this example we shall exhibit a vector field $X\in\mathfrak{X}^{\infty}(\mathbb{T}^3)$ so that $\mathfrak{C}^1(X)$ is collinear but not quasi-trivial. The construction produces a vector field with an uncountable spectral decomposition since it is a fiberwise dynamics. Moreover, the restriction on almost each fiber is separating but the separation is taking longer and longer times to take place as the fibers converge to some singular fiber, where the vector field is identically zero. This property will be used to show that, when comparing different fibers, most orbits will separate. We push a little further the argument given in Proposition~\ref{premier theorem} to show that even this very weak version of the \emph{separating} property is still enough to imply that the centralizer is collinear. By making the singular fiber highly degenerate, we shall obtain a centralizer which is not quasi trivial.    

We consider $\mathbb{S}^1=\R/\Z$, and $\mathbb{T}^3=\mathbb{S}^1\times\mathbb{T}^2$ endowed with the maximum distance. To start the construction, let $V\in\mathfrak{X}^{\infty}(\mathbb{T}^2)$ generate an irrational flow. We assume that the inclination of $V$ is smaller than $1$. This will simplify some estimations later. Fix $p=(1/2,1/2)\in\mathbb{T}^2$ and consider a smooth function $\psi\colon\mathbb{T}^2\to[0,1]$ such that $\psi^{-1}(0)=\{p\}$. Let $Z\in\mathfrak{X}^{\infty}(\mathbb{T}^2)$ be defined by $Z=\psi V$. As it is described in Example $2.8$ in \cite{Artigue}, $Z$ is separating. Take any pair of smooth functions $f,g\colon\mathbb{S}^1\setminus\{1/2\}\to[1,+\infty)$ satisfying 
$$f(s)=\frac{1}{(1-2s)^2}\:\:\:\textrm{and}\:\:\:\:g(s)=\frac{1}{(1-4s^2)^2},$$
for $|s-1/2|<1/4$. We assume that for $|s-1/4|\geq1/4$ both functions are positive. In this way, they diverge to $+\infty$ when $s\to 1/2$, but the function $\frac{f}{g}$ extends smoothly to $\mathbb{S}^1$. 
Extend $Z$ to $\mathbb{T}^3$ by $Z(s,x)=(0,Z(x))$.

Define the vector field $X(s,x)=\frac{1}{g(s)}Z(s,x)$. By Theorem $3.8$ in \cite{Artigue} the restriction of $X$ to each fiber $\{s\}\times\mathbb{T}^2$, with $s\neq1/2$, is separating. The separation constant decreases to $0$ as $s\to1/2$ though. Notice that $X$ is not separating, as it has infinitely many singularities. 

Nevertheless, we claim that $X$ has collinear $C^1$ centralizer.

To prove this claim, we need to introduce some notation. Recall that $\mathbb{S}^1\times\{0\}$ is a global transverse section for the irrational flow $V_t$ on $\mathbb{T}^2$, and the first return map is an irrational rotation with angle given by the inclination of the constant vector $V$. We shall use first return maps to analyse the separation properties of $X$. For the sake of clarity, it will sometimes to be convenient use the notation $\mathbb{T}^2=\mathbb{S}^1\times\mathbb{S}^1$ and we shall use both notations $\mathbb{T}^3=\mathbb{S}^1\times\mathbb{S}^1\times\mathbb{S}^1$ and $\mathbb{T}^3=\mathbb{S}^1\times\mathbb{T}^2$ without further mention.  

\begin{figure}[h!]
	\centering
	\includegraphics[width=290pt,height=160pt]{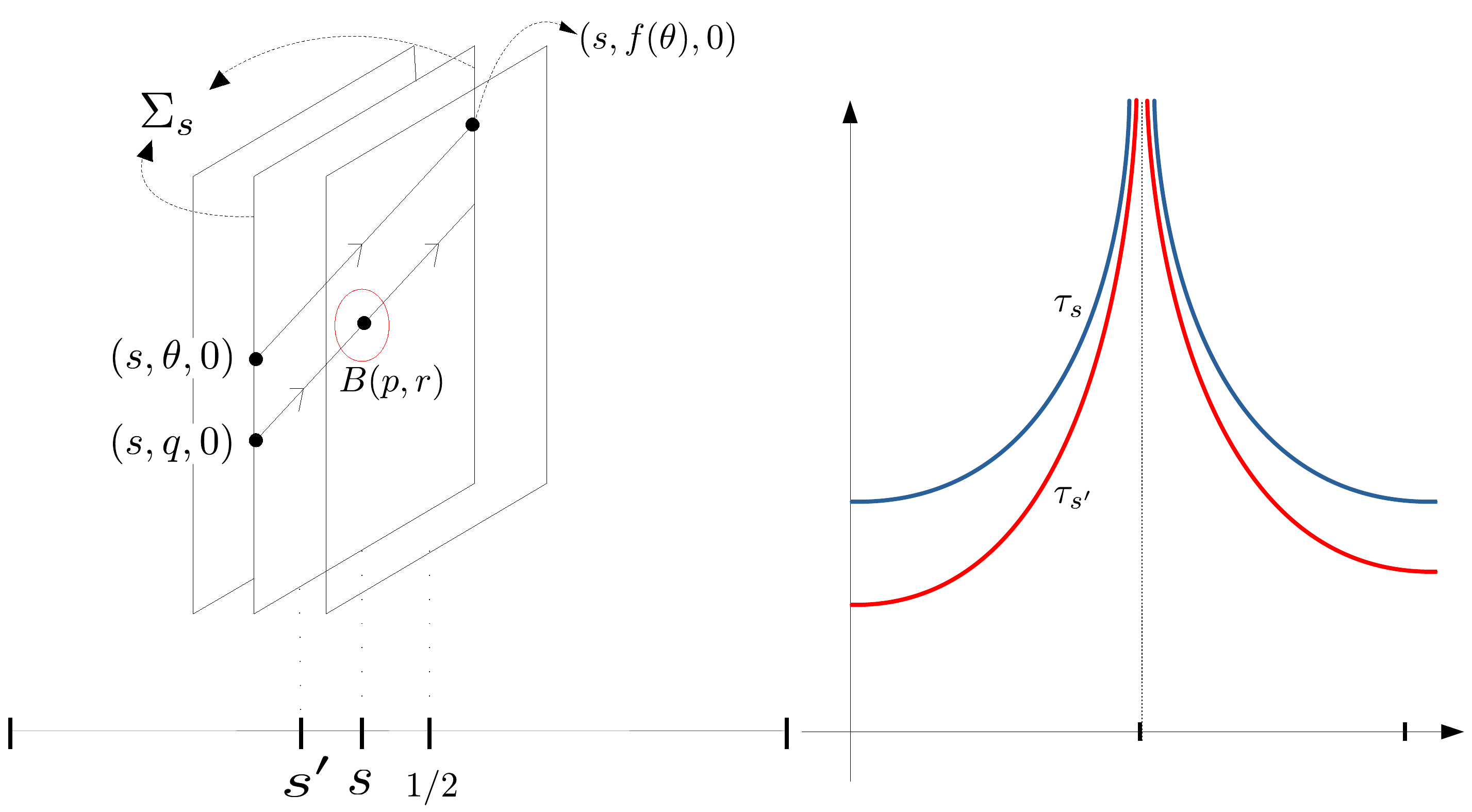}
	\caption{Example~\ref{exem.notcolinear}: two fibers close to the singular fibers and their corresponding return time functions.}
	\label{f.examplo}
\end{figure}

Let $(q,0)\in\mathbb{S}^1\times\{0\}$ be the first negative hit of $p$ under the irrational flow $V_t$. Consider the set $\Sigma\eqdef(\mathbb{S}^1\setminus\{1/2\})\times(\mathbb{S}^1\setminus\{q\})\times\{0\}\subset\mathbb{T}^3$. For each $\Sigma_s\eqdef\{s\}\times(\mathbb{S}^1\setminus\{q\})\times\{0\}$, with $s\neq 1/2$, there exists a well-defined return time function $\tau_s\colon \mathbb{S}^1\setminus\{q\}\to(0,\infty)$ and a well-defined first return map $f_s\colon \mathbb{S}^1\setminus\{q\}\to\mathbb{S}^1\setminus\{q\}$ defined by
\[X_{\tau_s(\theta)}(s,\theta,0)\eqdef(s,f_s(\theta),0)\] 
Then, the globally defined\footnote{that  is, $\tau(s,x)=\tau_s(x)$ and $f(s,x)=f_s(x)$.} maps $\tau\colon \Sigma\to(0,\infty)$ and $f\colon \Sigma\to\Sigma$ are smooth. Observe that $f_s$ is the same irrational rotation for every $s$, which is determined by the inclination if the constant vector field $V$. This is because the trajectories of $X|_{\{s\}\times\mathbb{T}^2}$, with $s\neq1/2$ are the same as the trajectories of $V$ but travelled with different speeds, and the speed is zero precisely at $(s,p)$. For this reason we shall make an abuse of notation and write simply $f\colon \mathbb{S}^1\setminus\{q\}\to\mathbb{S}^1\setminus\{q\}$ instead of $f_s$. Moreover, the trajectories on each fiber are not only the same, but travelled with smaller and smaller speeds, decreasing to zero as $s$ approaches $1/2$. This implies that $\tau_s(\theta)>\tau_{s^{\prime}}(\theta)$, for every $\theta\in\mathbb{S}^1\setminus\{q\}$, and every $s,s^{\prime}$ such that 
$0<|s-1/2|<|s^{\prime}-1/2|$. Also we have that 
\[\lim_{\theta\to q}\tau_s(\theta)=+\infty,\:\:\textrm{for every}\:\:s\neq 1/2.\]
Take $r<1/4$ and $\eps<1/16$. Thus, for each $s$, if $(s,x)\in\Sigma_s$ and $(s,y)$ satisfies $d\left((s,x),(s,y)\right)<4\eps$ then $y\notin B(p,r)$. Also, for each $s\neq 1/2$ there exists $\rho(s)>0$ such that if $x\notin B(p,r)$ then $\|X(s,x)\|\geq\rho(s)$. Since $X(s,x)=(1-4s^2)Z(s,x)$, we see that $\rho(s)$ decreases monotonically to 0 as $s\to 1/2$.  

Now, take $Y\in\mathfrak{C}^1(X)$ and a point $(s,x)\in\mathbb{T}^3$. Assume by contradiction that $\dim\langle Y(s,x),X(s,x)\rangle=2$. Since this is an open condition invariant under the flow of $X$ we can assume that $s\neq1/2$ and that $x=(\theta,0)$, with $\theta\neq q$.

Then, as in \eqref{e.proximidade}, there exists $\xi_0>0$ small enough so that for every $\xi\in(-\xi_0,\xi_0)$ and $(s^{\prime},y)=Y_{\xi}(s,x)$ we have 
\begin{equation}
\label{e.juntoseshalownow}
d(X_t(s^{\prime},y),X_t(s,x))<\eps,\:\:\:\textrm{for every}\:\:t\in\R.  
\end{equation}
Up to replacing $Y$ with  $-Y$, we can assume that $0<|s-1/2|\leq|s^{\prime}-1/2|$. As $\theta\neq q$, if $\xi_0$ is small enough we can ensure that $(s^{\prime},y)$ is not on the same orbit as $(s^{\prime},q)$. Thus, we can
consider the point $y^{\prime}=(\theta^{\prime},0)$ which is the first negative hit of the point $(s^{\prime},y)$, under the flow $X_t$, to the section $\Sigma_{s^{\prime}}$. In particular, $(s^{\prime},y^{\prime})=X_{\delta}(s^{\prime},y)$ with $|\delta|$ as small as we please, provided that $\xi_0$ is small enough. Therefore, we can assume without loss of generality that the point $y$ in \eqref{e.juntoseshalownow} is of the form $(\theta^{\prime},0)\in\Sigma_s$. 

We consider first the case $\theta=\theta^{\prime}$. Then, necessarily we must have $0<|s-1/2|<|s^{\prime}-1/2|$ and thus $\tau_s(f^{\ell}(\theta))-\tau_{s^{\prime}}(f^{\ell}(\theta))>0$ for every $\ell\geq 0$. Also, as $f$ is an irrational rotation, there is a subsequence $\ell_j$ such that $f^{\ell_j}(\theta)\to b\neq q$. This implies that 
\[\lim_{n\to\infty}\sum_{\ell=0}^{n-1}\tau_s(f^{\ell}(\theta))-\sum_{\ell=0}^{n-1}\tau_{s^{\prime}}(f^{\ell}(\theta))=+\infty.\] 
Denote $T=\sum_{\ell=0}^{n-1}\tau_s(f^{\ell}(\theta))$ and $t=\sum_{\ell=0}^{n-1}\tau_{s^{\prime}}(f^{\ell}(\theta))$, for some $n$ large enough so that $\rho(s)(T-t)>2\eps$. Notice that, \eqref{e.juntoseshalownow} and our choice of $\eps$ imply that the orbit segment $X_{[t,T]}(s^{\prime},y)$ is a line segment whose length is smaller than $2\eps$ and which is disjoint from $B((s^{\prime},p),r)$. Therefore, $\|X|_{X_{[t,T]}(s^{\prime},y)}\|\geq\rho(s^{\prime})>\rho(s)$ and thus
\[2\eps<\rho(s^{\prime})(T-t)\leq2\eps,\]   
a contradiction. 

So it remains to consider the case $\theta\neq\theta^{\prime}$. In this case, as $f$ is an irrational rotation, one finds a sequence $n_j\to\infty$ such that $f^{n_j}(\theta)\to q$ while $f^{n_j}(\theta^{\prime})\to b$, for some $b\neq q$. This implies that
\[\tau_s(f^{n_j}(\theta))-\tau_{s^{\prime}}(f^{n_j}(\theta^{\prime}))>\frac{4\eps}{\rho(s)},\:\:\textrm{for every}\:j\:\textrm{large enough}.\]
Denote similarly as before $T=\sum_{\ell=0}^{n_j-1}\tau_s(f^{\ell}(\theta))$ and $t=\sum_{\ell=0}^{n_j-1}\tau_{s^{\prime}}(f^{\ell}(\theta^{\prime}))$, for some $j$ large enough. Arguing as in the previous case, we can estimate
\[|T-t|\leq \frac{2\eps}{\rho(s)}.\]
Consider $\widetilde{T}:=T+\tau_s(f^{n_j}(\theta))$ and $\widetilde{t}:=t+\tau_{s^{\prime}}(f^{n_j}(\theta^{\prime}))$.
Again, \eqref{e.juntoseshalownow} allows us to estimate $\rho(s)(\widetilde{T}-\widetilde{t})\leq 2\eps$. On the other hand, 
\[\widetilde{T}-\widetilde{t}=T-t+\tau_s(f^{n_j}(\theta))-\tau_{s^{\prime}}(f^{n_j}(\theta^{\prime}))>\frac{2\eps}{\rho(s)},\]
a contradiction again. This establishes our claim. 

On the other hand, the vector field $Y=\frac{f}{g}Z$ is smooth and commutes with $X$. Indeed, both vector fields vanish at the fiber $\{1/2\}\times\mathbb{T}^2$. Moreover, both $f$ and $g$ are constant on each fiber and for $s\neq 1/2$ one has 
$$Y(s,x)=f(s)X(s,x).$$ 
As $X$ is tangent to each fiber $\{s\}\times\mathbb{T}^2$, we conclude that $[X,Y]=0$. Since $f(s)\to\infty$ as $s\to 1/2$, this proves that $X$ does  not have a quasi-trivial centralizer.	
\end{example} 

The vector field of Example~\ref{exem.notcolinear} is not separating and has uncountably many singularities. We believe that it is an important question to know whether of not this can be relaxed.

\begin{question}
Is there a separating vector field whose centralizer is not quasi-trivial? What about a vector field with just finitely many singularities?
\end{question}
    
We do not know what to expect as an answer to this question. Also, to illustrate the difficulty of this problem, let us mention another question, which remains open, despite of being a very special case of the former. 

\begin{question}
	\label{q.extention}
	Let $X\in\mathfrak{X}^1(M)$ be a Komuro-expansive vector field, having non-hyperbolic singularities. Is it true that $\mathfrak{C}^1(X)$ is quasi-trivial?
\end{question} 

We do not give the precise definition of Komuro-expansiveness (see for instance \cite{BonomoRochaVarandas} or \cite{Artigue}). Let us only mention that it is a notion of geometric orbit separation, which is stronger than \emph{separating} and \emph{kinematic expansiveness} (see Definitions~\ref{def.separating} and \ref{Def.kinematic}, respectively).

\subsection{The case of hyperbolic zeros}
    
The main result of this section is Theorem \ref{thm.quasitriviality} below, in which we obtain the quasi-triviality from collinearity of $\mathfrak{C}^1(X)$ assuming only that all the singularities of $X$ are hyperbolic. 
%such that any zero $\sigma \in \mathrm{Zero}(X)$ is isolated. 
\begin{definition} 
A function $f \colon M \to \R$ is called a \textit{first integral} of $X$ if it is of class $C^1$ and satisfies  $X \cdot f\equiv 0$. 
%the following properties:
%\begin{itemize}
%\item the retriction $f|_{M_X}$  of $f$ to $M_X$ is  of class $C^1$;
%\item we have $X \cdot f=0$ on $M_X$; 
%\item for any $\sigma \in \mathrm{Zero}(X)$, we have $\lim_{x \to \sigma} Df(x)X(x)=0$.
%\end{itemize}
We denote by $\mathfrak{I}^1(X)$ the set of all such maps.
\end{definition} 
In particular, for any $c \in \R$, the   constant map $\underline{c}(x): = c$  is in $\mathfrak{I}^1(X)$, and then, we always have $\R\simeq \{\underline{c}: c \in \R\} \subset \mathfrak{I}^1(X)$. The following theorem is a reformulation in terms of $\mathfrak{I}^1(X)$ of Theorem \ref{res qu tri}. 

\begin{theorem}
\label{thm.quasitriviality}
Let $X\in \mathfrak{X}^1(M)$. If $X$ has collinear centralizer and all the singularities of $X$ are hyperbolic, then $X$ has quasi-trivial $C^1$-centralizer, in the sense of Definition \ref{def.quasitrivial}. More precisely, we have
$$
\mathfrak{C}^1(X)=\{f X: f \in \mathfrak{I}^1(X)\}. 
$$
\end{theorem}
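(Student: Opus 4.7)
Let $Y\in \mathfrak{C}^1(X)$. By the collinearity hypothesis and Lemma~\ref{lemme facile}, there exists a $C^1$, $X$-invariant function $f\colon M_X\to \R$ with $Y=fX$ on $M_X$. The whole task reduces to extending $f$ to a $C^1$ function on all of $M$; the only issue is at the (finitely many) hyperbolic singularities. I would fix $\sigma\in \zero(X)$ and show that $f$ extends across $\sigma$ with $C^1$ regularity. Set $A:=DX(\sigma)$ and $B:=DY(\sigma)$. Applying $Y(X_t(x))=DX_t(x)\cdot Y(x)$ at $x=\sigma$ gives $Y(\sigma)=e^{tA}Y(\sigma)$ for all $t$, and hyperbolicity of $A$ forces $Y(\sigma)=0$. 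Differentiating the same relation in $x$ at $\sigma$ (using $Y(\sigma)=0$) then yields $[A,B]=0$.

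The next step is to pin down the candidate value $\tilde f(\sigma)$. The collinearity $\dim\langle X(x),Y(x)\rangle\leq 1$ applied along a ray $\sigma+\varepsilon v$ and divided by $\varepsilon$ yields, in the limit $\varepsilon\to 0$, that $Bv$ and $Av$ are collinear for every $v\in T_\sigma M$. Since $A$ is invertible, the endomorphism $L:=BA^{-1}$ has every vector as an eigenvector, so $L=c\cdot\mathrm{Id}$ for a unique $c\in\R$, i.e., $B=cA$. Setting $\tilde f(\sigma):=c$, continuity of the extension follows from the formula $f(x)=\langle X(x),Y(x)\rangle/\|X(x)\|^2$ on $M_X$ together with the first-order expansions $X(\sigma+w)=Aw+o(|w|)$ and $Y(\sigma+w)=cAw+o(|w|)$: after cancellation in the ratio one finds $f(\sigma+w)=c+o(1)$. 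Thus $\tilde f\in C^0(M)$ and remains $X$-invariant.

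The hardest step is to upgrade the continuous extension to $C^1$ at $\sigma$. Writing $h:=\tilde f-c$, the identity $Y-cX=hX$ holds, and the left-hand side is a $C^1$ vector field vanishing to first order at $\sigma$, since $DY(\sigma)=cA$. The $X$-invariance of $h$ together with hyperbolicity of $A$ forces $dh(\sigma)=0$ whenever it exists, because any $e^{tA}$-invariant linear form on $T_\sigma M$ must vanish. Moreover, on each of the stable and unstable manifolds $W^s(\sigma)$ and $W^u(\sigma)$, orbits limit to $\sigma$, so by continuity and orbit-constancy $h$ vanishes identically on these invariant manifolds. Combining these structural facts with the hyperbolic product structure of the local dynamics and the first-order vanishing of $Y-cX$, I would derive the quantitative estimate $h(x)=o(|x-\sigma|)$ together with continuity of the derivative at $\sigma$, yielding $\tilde f\in C^1(M)$. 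The main obstacle lies precisely in this last quantitative step: a naive $C^1$-Taylor estimate of $Y-cX$ and $X$ only delivers continuity of $h$, and one must genuinely exploit the hyperbolic invariant-manifold structure together with the orbit-invariance of $h$ in order to control its decay toward $\sigma$.
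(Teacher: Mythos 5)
Your opening moves are correct, and in fact cleaner than the paper's at two points: expanding the collinearity condition along rays $\sigma+\varepsilon v$ to get $DY(\sigma)\cdot v \wedge DX(\sigma)\cdot v=0$ for all $v$, hence $DY(\sigma)=c\,DX(\sigma)$, is valid (it only uses invertibility of $DX(\sigma)$), and your ratio argument then gives continuity of $f$ at \emph{every} hyperbolic zero in one stroke, which replaces the $\lambda$-lemma argument of Proposition \ref{prop.quasisaddles}. Moreover, once continuity at $\sigma$ is known, your remark that $h:=\tilde f-c$ vanishes on $W^s(\sigma)\cup W^u(\sigma)$ actually \emph{finishes} the sink and source cases: for a sink the stable manifold contains a neighborhood of $\sigma$, so $f\equiv c$ there and the extension is locally constant, which is what Proposition \ref{p.sinkextension} obtains by a different (compact-sets/linear-independence) argument. (For the stated set equality you should also record the trivial converse $[X,fX]=(X\cdot f)X=0$, as in the paper.)

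The genuine gap is the $C^1$ extension at saddle-type zeros, which you explicitly defer (``the main obstacle lies precisely in this last quantitative step'') without supplying an argument, and this is exactly the hard and novel part of the theorem. Near a saddle, $h$ is only known to vanish on $W^s_{\mathrm{loc}}\cup W^u_{\mathrm{loc}}$, while the points you must control lie on wandering orbits that pass near $\sigma$ for unboundedly long times; nothing in your first-order data ($Y-cX$ vanishing to first order at $\sigma$, $dh(\sigma)=0$ ``whenever it exists'') bounds how fast $f$ approaches $c$ transversally to $W^s\cup W^u$, and note that even the estimate $h(x)=o(|x-\sigma|)$ you aim for would only give differentiability at $\sigma$, whereas $C^1$-ness requires $\nabla f(x)\to 0$. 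The paper's Proposition \ref{prop.quasisaddles2} is precisely this missing step: by the $\lambda$-lemma every $x$ near $\sigma$ is written as $x=X_{t^u}(x^u)=X_{-t^s}(x^s)$ with $x^u,x^s$ at a definite distance from $\sigma$ (where $\|\nabla f\|$ is uniformly bounded) and $t^s,t^u\to+\infty$; splitting $T_xM=E^s_x\oplus E^u_x$ with angle bounded below and using the invariance identity $Df(y)\cdot w=Df(X_t(y))\cdot DX_t(y)\cdot w$, unit vectors of $E^u_x$ (resp.\ $E^s_x$) are normalized images under $DX_{t^u}$ (resp.\ $DX_{-t^s}$) of vectors based at $x^u$ (resp.\ $x^s$), and the blow-up of these norms forces $\|\nabla f(x)\|\leq\varepsilon$ on both factors, hence $\nabla f(x)\to 0$. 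Without an argument of this type (and note the paper deliberately avoids any linearization or non-resonance hypothesis, so you cannot shortcut via Sternberg-type normal forms in the $C^1$ setting), your proof is incomplete at saddles.
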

This theorem is an immediate consequence of Propositions \ref{prop.quasisaddles}, \ref{prop.quasisaddles2}, and \ref{p.sinkextension} below. We divide the proof into two subsections to emphasize that the technique to deal with singularities that are saddles is different from the technique to deal with sinks and sources. We also remark that Theorem \ref{thm.quasitriviality} gives a significant improvement compared with previous works on centralizers of vector fields, since we only need $C^1$ regularity. The  results that were known previously used Sternberg's linearisation results, which require higher regularity of the vector field and non-resonant conditions on the eigenvalues of the singularity, see for instance \cite{BonomoRochaVarandas} and \cite{BonomoVarandas}. 

\addtocontents{toc}{\protect\setcounter{tocdepth}{1}}
\subsection{When the  singularity is of saddle type}
Given any vector field $X\in \mathfrak{X}^1(M)$, and $Y \in \mathfrak{C}^1(X)$, by Lemma \ref{lemme facile}, we know that $Y|_{M_X}=fX|_{M_X}$, for some $C^1$, $X$-invariant function $f \colon M_X\to \R$. Assume that $\sigma \in \mathrm{Zero}(X)$ is a saddle type singularity. In Propositions \ref{prop.quasisaddles} and \ref{prop.quasisaddles2}, we show that $f$ can be extended to a $C^1$ function in a neighborhood of $\sigma$.

\begin{proposition}\label{prop.quasisaddles}
Let $X\in \mathfrak{X}^1(M)$ and let $f\colon M_X \to \mathbb{R}$ be an $X$-invariant continuous function. If $\sigma \in \mathrm{Zero}(X)$ is a saddle type singularity, then $f$ admits a continuous extension to $\sigma$.
\end{proposition}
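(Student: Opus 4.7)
The plan is to exploit the saddle structure at $\sigma$. Since $\sigma$ is of saddle type, both invariant manifolds $W^s(\sigma)$ and $W^u(\sigma)$ have positive dimension, and my goal is to produce a single common value $c$ that $f$ takes on $(W^s(\sigma) \cup W^u(\sigma)) \setminus \{\sigma\}$, and to show that $f(x_n) \to c$ along every sequence $x_n \to \sigma$ with $x_n \in M_X$. This $c$ will then be the continuous extension of $f$ to $\sigma$.

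The key step is a \emph{connection lemma}: for every $y \in W^s_{\mathrm{loc}}(\sigma) \setminus \{\sigma\}$ and every $p \in W^u_{\mathrm{loc}}(\sigma) \setminus \{\sigma\}$, there exist points $z_n \in M_X$ and times $t_n \to +\infty$ with $z_n \to y$ and $X_{t_n}(z_n) \to p$. To see this, I would use adapted local coordinates at $\sigma$ coming from the Hartman--Grobman theorem for flows: in those coordinates $y$ corresponds to $(y^s,0)$ and $p$ to $(0,p^u)$; choosing $z_n$ to correspond to $(y^s, e^{-t_n} p^u)$, the linearized flow pushes $z_n$ at time $t_n$ to $(e^{-t_n} y^s, p^u)$, which converges to $p$, while $z_n \to y$ by construction. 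Once this is in place, the $X$-invariance of $f$ gives $f(z_n) = f(X_{t_n}(z_n))$ for each $n$; passing to the limit, using the continuity of $f$ at $y, p \in M_X$, yields $f(y) = f(p)$. Fixing $p$ and varying $y$, this shows that $f \equiv c := f(p)$ on $W^s_{\mathrm{loc}}(\sigma) \setminus \{\sigma\}$, and symmetrically on $W^u_{\mathrm{loc}}(\sigma) \setminus \{\sigma\}$. Invariance of $f$ together with $W^s(\sigma) = \bigcup_{t \in \R} X_t(W^s_{\mathrm{loc}}(\sigma))$ (and similarly for $W^u$) then propagates this constancy to the full global invariant manifolds.

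To obtain continuity at $\sigma$, I would take an arbitrary sequence $x_n \to \sigma$ in $M_X$ and argue that $f(x_n) \to c$. Points lying in $W^s(\sigma) \cup W^u(\sigma)$ are already covered, so I may assume the unstable coordinate of $x_n$ in the adapted chart is nonzero. Choosing $t_n > 0$ so that the unstable component of $X_{t_n}(x_n)$ has a prescribed fixed norm $\delta > 0$, hyperbolicity forces the stable component of $X_{t_n}(x_n)$ to tend to $0$. Passing to a subsequence (by compactness of the $\delta$-sphere in the unstable direction), $X_{t_n}(x_n) \to p \in W^u_{\mathrm{loc}}(\sigma) \setminus \{\sigma\}$; continuity of $f$ at $p$ together with the $X$-invariance gives $f(x_n) = f(X_{t_n}(x_n)) \to f(p) = c$. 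Since every subsequence admits such a further subsequence tending to the same $c$, the full sequence converges to $c$.

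The main obstacle is the connection lemma, and it is precisely where the saddle hypothesis is indispensable: the presence of both a nontrivial $W^s(\sigma)$ and a nontrivial $W^u(\sigma)$ is what allows orbits to be built that pass near $\sigma$ and link one invariant manifold to the other. For sinks or sources this mechanism degenerates, since one of the invariant manifolds reduces to $\{\sigma\}$, and a different argument is needed, as carried out in Proposition~\ref{p.sinkextension}. I note that only topological hyperbolicity is used in the above, so no smoothness beyond $C^1$ and no non-resonance condition on the eigenvalues of $DX(\sigma)$ enter, which is consistent with the improvement over \cite{BonomoRochaVarandas,BonomoVarandas,Sad} advertised in the introduction.
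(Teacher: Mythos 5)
Your proof is correct, but it runs along a genuinely different track from the paper's. The paper also reduces everything to showing that $f$ takes a single value $c$ on $\bigl(W^s(\sigma)\cup W^u(\sigma)\bigr)\setminus\{\sigma\}$, but it produces the connecting orbits with the $\lambda$-lemma: it takes small discs $D^u_n$ transverse to $W^s_{\mathrm{loc}}(\sigma)$ at $p_s$ and $D^s_n$ transverse to $W^u_{\mathrm{loc}}(\sigma)$ at $q_u$, and the inclination lemma gives times $t_n$ with $X_{t_n}(D^u_n)\pitchfork D^s_n\neq\emptyset$, hence points $x_n\to p_s$ with $X_{t_n}(x_n)\to q_u$; invariance and continuity of $f$ on $M_X$ then force $f(p_s)=f(q_u)$. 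Your connection lemma achieves the same thing through the Hartman--Grobman conjugacy, which is perfectly adequate here precisely because $f$ is only continuous, so a $C^0$ conjugacy loses nothing; the $\lambda$-lemma route avoids introducing a chart altogether, while your route makes the "pass near the saddle" mechanism completely explicit. Two small points to tidy in your write-up: the formula $z_n\leftrightarrow(y^s,e^{-t_n}p^u)\mapsto(e^{-t_n}y^s,p^u)$ implicitly assumes the linear part is $\pm\mathrm{id}$ on each factor, so for a general hyperbolic linear part you should write $z_n\leftrightarrow(y^s,e^{-Bt_n}p^u)$ flowing to $(e^{At_n}y^s,p^u)$ and, using an adapted norm making the stable norm decreasing and the unstable norm increasing along the linear flow, observe that the whole orbit segment stays in the conjugacy neighborhood so that the conjugation relation may be applied for all of $[0,t_n]$; neither is a gap, just bookkeeping. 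Finally, your third paragraph (continuity of the extension along arbitrary sequences $x_n\to\sigma$, via normalizing the unstable component to norm $\delta$ and extracting a limit on $W^u_{\mathrm{loc}}(\sigma)\setminus\{\sigma\}$) is an argument the paper leaves implicit after establishing constancy on the invariant manifolds (it refers to case 2 of Lemma 3.6 in \cite{BonomoRochaVarandas}); spelling it out as you do makes the proof more self-contained.
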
 
The argument below was already given in case 2 of Lemma 3.6 in \cite{BonomoRochaVarandas}. We include it here for the sake of completeness.

\begin{proof}
Recall that $M$ has dimension $d \geq 0$. For simplicity, denote $d^s\eqdef\operatorname{ind}(\sigma)$ and $d^u\eqdef d-d^s$. Fix a point $p_s \in W^s_{\mathrm{loc}}(\sigma)$. We claim that for any point $q_u\in W^u(\sigma)$ we have that $f(p_s) = f(q_u)$. By the $X$-invariance of $f$, it is enough to consider $q_u \in W^u_{\mathrm{loc}}(\sigma)$. Let $(D^s_n)_{n\in \N}$ be a sequence of discs of dimension $d^s$, centered at $q_u$, with radius $\frac{1}{n}$ and transverse to $W^u_{\mathrm{loc}}(\sigma)$. Similarly, consider a sequence $(D^u_n)_{n\in \N}$  of discs of dimension $d^u$, centered at $p_s$, with radius $\frac{1}{n}$, and transverse to $W^s_{\mathrm{loc}}(\sigma)$.

For each $n\in \N$, by the $\lambda$-lemma (see \cite{PalisdeMelo}, Chapter 2.7) there exists $t_n>0$ such that $X_{t_n}(D^u_n) \pitchfork D_n^s \neq \emptyset$. In particular, there exists a point $x_n \in D^u_n$ that verifies $X_{t_n}(x_n) \in D^s_n$. It is immediate that $x_n \to p_s$, as $n\to + \infty$. Since the function $f$ is continuous on $M_X$, we have that $f(x_n) \to f(p_s)$. We also have that $X_{t_n}(x_n) \to q_u$ as $n\to +\infty$. Hence, $f(X_{t_n}(x_n)) \to f(q_u)$. By the $X$-invariance of $f$,  we have
$$
f(p_s) = \displaystyle \lim_{n\to + \infty} f(x_n) = \lim_{n\to +\infty} f(X_{t_n}(x_n)) = f(q_u).
$$
Analogously, we can prove that for a fixed $q_u'\in W^u_{\mathrm{loc}}(\sigma)$ and for any $p_s'\in W^s(\sigma)$, it is verified $f(p_s') = f(q_u')$. We conclude that $f|_{W^s(\sigma) - \{\sigma\}} = f|_{W^u(\sigma)- \{\sigma\}} = c$, for some constant $c\in \R$. In particular, we can define  a continuous extension of $f$ to the singularity $\sigma$ by setting $f(\sigma):= c$. 
\end{proof}

\begin{figure}[h!]
	\centering
	\includegraphics[width=14.5cm]{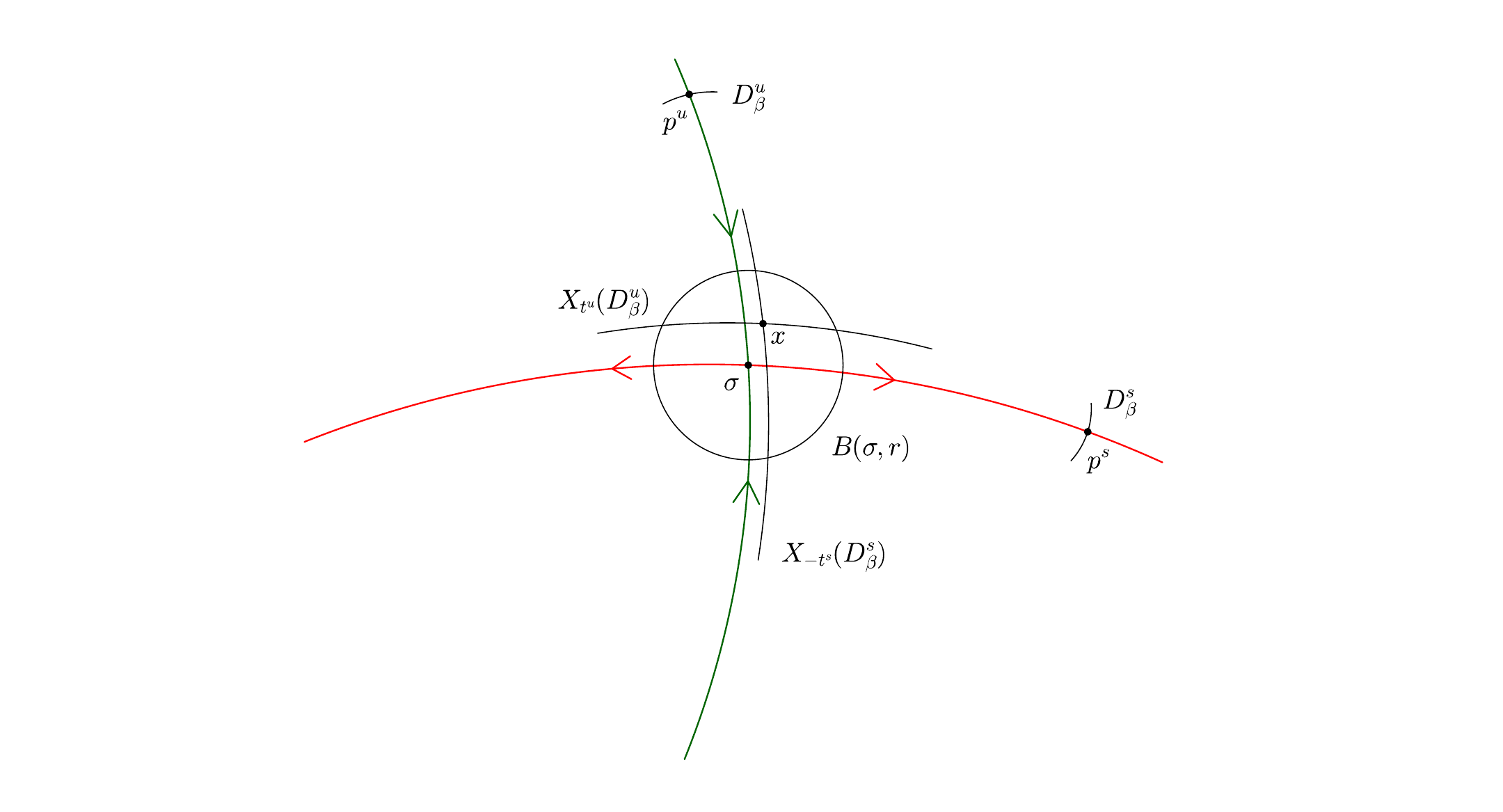}
    \caption{Proposition \ref{prop.quasisaddles2}.}
	\label{f.Internet}
\end{figure}  	 

The proposition below is our main novelty regarding the extension problem for saddle type singularities. We stress that the $C^1$ extension even in this case was not done in previous works.

\begin{proposition}\label{prop.quasisaddles2}
Let $X\in \mathfrak{X}^1(M)$ and let $f\colon M_X \to \mathbb{R}$ be an $X$-invariant function of class $C^1$. If $\sigma \in \mathrm{Zero}(X)$ is a saddle type singularity, then
\[\lim_{x\to\sigma}\nabla f(x)=0.\]
In particular, $f$ can be extended to a $C^1$ function in a neighborhood of $\sigma$, by setting $\nabla f(\sigma):=0$. 
\end{proposition}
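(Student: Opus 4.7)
The plan is to exploit the hyperbolicity of $\sigma$ to make $Df$ contract to zero along the flow, leveraging the $X$-invariance of $f$ together with the fact, already established in Proposition \ref{prop.quasisaddles}, that $f$ is constant on $W^s(\sigma) \setminus \{\sigma\}$ and on $W^u(\sigma) \setminus \{\sigma\}$.

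First, I would set up in a small compact neighborhood $U$ of $\sigma$ a continuous $DX_t$-invariant splitting $T_xM = E^s(x) \oplus E^u(x)$ extending the hyperbolic splitting $T_\sigma M = E^s_\sigma \oplus E^u_\sigma$, with $E^s(x) = T_x W^s_{\mathrm{loc}}(\sigma)$ for $x \in W^s_{\mathrm{loc}}(\sigma)$ (and symmetrically for $E^u$). Standard hyperbolic theory at an isolated hyperbolic equilibrium (invariant cone fields together with a graph transform argument) yields constants $\lambda, \mu, C > 0$ such that
\[
\|DX_t(x)|_{E^s(x)}\| \leq C e^{-\lambda t} \quad \text{and} \quad \|DX_{-t}(x)|_{E^u(x)}\| \leq C e^{-\mu t}, \qquad t \geq 0,
\]
whenever the relevant orbit segment remains in $U$. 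Using the dual splitting I would decompose $Df(x) = \alpha^s(x) + \alpha^u(x)$, where $\alpha^s(x)$ annihilates $E^u(x)$ and $\alpha^u(x)$ annihilates $E^s(x)$; note $\|Df(x)\| \leq \|\alpha^s(x)\| + \|\alpha^u(x)\|$ up to a uniform constant depending on the angle between $E^s$ and $E^u$.

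Differentiating the relation $f \circ X_t = f$ gives $Df(x) = Df(X_t(x)) \circ DX_t(x)$, and the invariance of the splitting yields
\[
\alpha^s(x)|_{E^s(x)} = \alpha^s(X_t(x))|_{E^s(X_t(x))} \circ DX_t(x)|_{E^s(x)},
\]
with the analogous identity for $\alpha^u$ using $t<0$. By Proposition \ref{prop.quasisaddles} the function $f$ is constant on $W^s_{\mathrm{loc}}(\sigma)\setminus\{\sigma\}$, so $Df$ annihilates $TW^s_{\mathrm{loc}} = E^s$ there, giving $\alpha^s \equiv 0$ on $W^s_{\mathrm{loc}}$; symmetrically $\alpha^u \equiv 0$ on $W^u_{\mathrm{loc}}$. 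For a point $x \in U \setminus W^s_{\mathrm{loc}}$ close to $\sigma$, the forward exit time $t^+(x)$ from $U$ is finite with $t^+(x) \to +\infty$ as $x \to \sigma$ (the unstable coordinate must grow from $O(\mathrm{dist}(x,\sigma))$ to a fixed size at exponential rate $\mu$). Setting $y := X_{t^+(x)}(x)\in\partial U$ and using that $\|Df\|$ is bounded on the compact set $\partial U$, the identity above yields
\[
\|\alpha^s(x)\| \leq C\, \|\alpha^s(y)\|\, e^{-\lambda\, t^+(x)} \xrightarrow[x\to\sigma]{} 0,
\]
and this bound holds trivially on $W^s_{\mathrm{loc}}$ since $\alpha^s$ vanishes there. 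A fully symmetric argument, using the backward exit time $t^-(x)\to-\infty$, shows $\|\alpha^u(x)\|\to 0$. Combining, $\|Df(x)\| \to 0$, so extending $\nabla f(\sigma):=0$ produces a continuous extension of $\nabla f$, and combined with the continuity of $f$ from Proposition \ref{prop.quasisaddles} this gives a $C^1$ extension of $f$ to a neighborhood of $\sigma$.

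The main obstacle is setting up the continuous $DX_t$-invariant splitting and the associated uniform hyperbolic estimates in a neighborhood of $\sigma$ (as opposed to just at the fixed point itself); this requires the standard construction via cone fields, but must be done carefully to guarantee that $E^s(x)$ actually coincides with $T_x W^s_{\mathrm{loc}}$ on the stable manifold, which is what makes the vanishing of $\alpha^s$ on $W^s_{\mathrm{loc}}$ \emph{exact} rather than merely approximate. A secondary technical point is the quantitative lower bound $t^\pm(x)\to\pm\infty$ as $x\to\sigma$, which follows from hyperbolic escape estimates. Once this framework is in place, the proof is essentially a one-line exploitation of exponential contraction against the bounded quantity $\|Df\|$ on $\partial U$.
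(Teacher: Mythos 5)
Your proof is correct in substance, but it follows a genuinely different route from the paper's. The paper builds no splitting on a neighborhood of $\sigma$: it fixes the compact sets $K^\star=W^\star_{\mathrm{loc}}(\sigma)\cap\partial B(\sigma,r_0)$, $\star=s,u$, where $\|Df\|$ is bounded, and uses the $\lambda$-lemma to realize each point $x$ close to $\sigma$ as the intersection of a forward image $X_{t^u}(D^u_\beta)$ of a small disk transverse to $W^s_{\mathrm{loc}}(\sigma)$ through a point $x^u$ near $K^s$ with a backward image $X_{-t^s}(D^s_\beta)$ of a disk transverse to $W^u_{\mathrm{loc}}(\sigma)$ through a point $x^s$ near $K^u$; since $DX_{t^u}(x^u)$ strongly expands vectors tangent to $D^u_\beta$ (and $DX_{-t^s}(x^s)$ those tangent to $D^s_\beta$), the identity $Df(x^u)=Df(x)\circ DX_{t^u}(x^u)$, transported from $x^u$ where $\|Df\|\le 2C$, gives $|Df(x)\cdot v|<\eps$ on two almost-hyperbolic subspaces spanning $T_xM$ — with no use of the constancy of $f$ on the invariant manifolds. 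You instead transport the bound from the exit points on $\partial U$ via exit times and cone-type contraction, and you invoke the constancy of $f$ on $W^s(\sigma)$ and $W^u(\sigma)$ (which indeed follows from Proposition \ref{prop.quasisaddles} together with invariance) exactly where the exit-time argument degenerates, namely on $W^s_{\mathrm{loc}}$ and $W^u_{\mathrm{loc}}$. Your version makes the points on the local invariant manifolds explicit (in the paper they are covered only implicitly, e.g.\ by density of their complement and continuity of $\nabla f$ on $M_X$), at the cost of the extra input from the previous proposition and of the splitting construction you flag as the main obstacle.

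One caution about that construction: a continuous, exactly $DX_t$-invariant splitting on all of $U$ is more than standard hyperbolic theory provides, and more than you need. Your argument only uses (i) a uniform angle bound between $E^s(x)$ and $E^u(x)$, (ii) the contraction $\|DX_t(x)|_{E^s(x)}\|\le Ce^{-\lambda t}$ for $0\le t\le t^+(x)$ (and its backward analogue on $E^u$), and (iii) $E^s=TW^s_{\mathrm{loc}}$, $E^u=TW^u_{\mathrm{loc}}$ on the local manifolds. All of this comes from invariant cone fields alone: off $W^s_{\mathrm{loc}}$ define $E^s(x)$ as the preimage under $DX_{t^+(x)}(x)$ of any $d^s$-plane inside the stable cone at the exit point; backward invariance and backward expansion of the stable cone then give both the contraction estimate along the whole segment and the fact that $E^s(x)$ lies in the stable cone, hence is uniformly transverse to $E^u(x)$. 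Continuity of $x\mapsto E^s(x)$ plays no role, and invariance is only needed along each single orbit segment (which the pullback definition provides); even that can be avoided by using the cruder bound $\|Df(x)|_{E^s(x)}\|\le\|Df(y)\|\cdot\|DX_{t^+(x)}(x)|_{E^s(x)}\|$ at the exit point $y$, which requires no decomposition of $Df(y)$. With that adjustment your proof is complete.
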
 
\begin{proof}
Given $\eps>0$	we shall find $r>0$ so that if $d(x,\sigma)<r$ then $\|\nabla f(x)\|<\eps$. For simplicity of notation, consider, for each $x\in M_X$, the linear map $Df(x)\colon T_xM\to\R$, given by 
\[Df(x)v=\left(\nabla f(x),v\right).\]
Fix $r_0>0$ so that $B(\sigma,2r_0)\cap\zero(X)=\{\sigma\}$ and also that $K^{\star}\eqdef W^{\star}_{\mathrm{loc}}(\sigma)\cap\partial B(\sigma,r_0)$ is a non-empty compact subset of $W^{\star}_{\mathrm{loc}}(\sigma)$, for $\star=s,u$. Consider
\[C\eqdef\sup\left\{\|Df(p)\cdot v\|:v\in T_pM,\:\|v\|=1,\, p\in K^s\cup K^u\right\}.\]
Then, since $f\colon M_X\to\R$ is $C^1$ and since $(K^s\cup K^u)\cap\zero(X)=\emptyset$, there exists $\beta_0>0$ such that is $x\in B(p,\beta_0)$, for some $p\in K^s\cup K^u$ and if $v\in T_xM$ has $\|v\|=1$ then $\|Df(x)\cdot v\|\leq 2C$.

Now, by the $\lambda$-lemma, given $0<r<r_0$ there exists $0<\beta<\beta_0$ so that the following property holds: given points $p^s\in K^s$ and $p^u\in K^u$, and given embedded disks $D^s_{\beta}$ and $D^u_{\beta}$, of dimension $d^s$ and $d^u$, centered in $p^u$ and $p^s$, with diameter smaller than $\beta$ and transverse to $W^u_{\mathrm{loc}}(\sigma)$ and $W^s_{\mathrm{loc}}(\sigma)$, respectively, there exists $t^s, t^u>0$ such that the set 
\[D^{su}_{\beta}\eqdef X_{t^u}(D^u_{\beta})\cap X_{-t^s}(D^s_{\beta})\]
is a singleton contained in $B(\sigma,r)$.  

Moreover, for each $x\in B(\sigma,r)\setminus\{\sigma\}$ there exists a choice of $0<\beta<\beta_0$, and $p^{\star}\in K^{\star}$, $\star=s,u$, so that $D^{su}_{\beta}=\{x\}$. For that, it suffices to work in local coordinates and extend the embedded disks $W^s_{\mathrm{loc}}(\sigma)$ and $W^s_{\mathrm{loc}}(\sigma)$ to a pair of transverse foliations by embedded disks and then take pre-images and images by the flow. Notice that $t^s,t^u\to+\infty$ uniformly in $x$ as $r\to 0$. 

Thus, given a point $x\in B(\sigma,r)\setminus\{\sigma\}$ we take the aforementioned points $p^s,p^u$ and the disks $D^u_{\beta}$ and $D^s_{\beta}$. Since $D^{su}_{\beta}=\{x\}$, there are points $x^u\in D^u_{\beta}$ and $x^s\in D^s_{\beta}$ so that $x=X_{t^u}(x^u)=X_{-t^s}(x^s)$. With them, define the subspaces $E^u_x\eqdef DX_{t^u}(x^u)T_{x^u}D^u_{\beta}$ and $E^s_x\eqdef DX_{-t^s}(x^s)T_{x^s}D^s_{\beta}$. The $\lambda$-lemma also implies that $\angle(E^{\star}_x,E^{\star}(\sigma))\to 0$ as $r\to 0$, where $T_{\sigma}M=E^s(\sigma)\oplus E^u(\sigma)$ is the hyperbolic decomposition. In particular, one has $T_xM=E^s_x\oplus E^s_x$. 

By continuity of the derivative $DX_t$ and by the hyperbolicity of the splitting $E^s(\sigma)\oplus E^u(\sigma)$, for every $w\in T_{x^u}D^u_{\beta}$ with $\|w\|=1$, we have $\|DX_{t^u}(x^u)\cdot w\|\to\infty$ uniformly as $r\to 0$ (recall that $t^u,t^s\to+\infty$ uniformly as $r\to 0$). Similarly, if $w\in T_{x^s}D^s_{\beta}$ has $\|w\|=1$ then $\|DX_{-t^s}(x^s)\cdot w\|\to 0$ uniformly as $r\to 0$. Therefore, for $r>0$ sufficiently small if $w\in T_{x^u}D^u_{\beta}$ is a unit vector then $\|DX_{t^u}(x^u)\cdot w\|>2C/\eps$ and similarly $\|DX_{-t^s}(x^s)\cdot w\|>2C/\eps$, if $w\in T_{x^s}D^s_{\beta}$ has $\|w\|=1$.  

We now use the fact that $f$ is an invariant function. Indeed, the equation $f=f\circ X_t$, which holds true in $M_X$ and for every $t\in\R$, by assumption, implies that 
\[Df(y)\cdot w=Df(X_t(y))DX_t(y)\cdot w,\:\:\:\textrm{for every}\:\:y\in M_X,\:t\in\R.\]
Since every unit vector $v\in E^u_x$ can be written as $v=\frac{DX_{t^u}(x^u)\cdot w}{\|DX_{t^u}(x^u)\cdot w\|}$, for some unit vector $w\in T_{x^u}D^u_{\beta}$, one deduces that if $r>0$ is small enough then
\begin{align*}
\|Df(x)\cdot v\|&=\frac{Df(X_{t^u}(x^u))DX_{t^u}(x^u)\cdot w}{\|DX_{t^u}(x^u)\cdot w\|}=\frac{Df(x_u)\cdot w}{\|DX_{t^u}(x^u)\cdot w\|} \\
&<\frac{2C}{2C/\eps}=\eps,%\nonumber
\end{align*}       
for every $v\in E^u_x$, with $\|v\|=1$. In a similar way we show that $\|Df(x)\cdot v\|<\eps$ for every unit vector $v\in E^s_x$. As $\angle(E^s_x,E^u_x)$ is uniformly bounded away from zero, from the cosine law we deduce that $\|Df(x)\cdot v\|<4\eps$, for every unit vector $v\in T_xM$. This completes the proof. 
\end{proof}

\subsection{When the  singularity is type sink or source}

We now deal with hyperbolic singularities of type sink or source. We stress that, together with Proposition~\ref{prop.quasisaddles2}, the result below is  the main novelty of our paper regarding criteria for quasi-triviality. We are able to obtain a $C^1$ extension without any generic or non-resonance assumption. We solve the extension problem only using the hyperbolicity of the singularity. Compare with \cite{Sad,BonomoRochaVarandas,BonomoVarandas}.  

\begin{proposition}
\label{p.sinkextension}
Let $X,Y\in\mathfrak{X}^1(M)$ such that $[X,Y]=0$ and $\dim\langle X(x),Y(x)\rangle \leq 1$, for every $x\in M$. Assume that $\sigma\in\zero(X)$ is a hyperbolic sink. Then, there exists $c\in\R$ such that $Y(x)=cX(x)$, for every $x\in W^s(\sigma)$. 
\end{proposition}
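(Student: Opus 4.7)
The plan is to exploit that the basin $W^s(\sigma)$ of the hyperbolic sink is open and that every orbit in it converges to $\sigma$ in forward time, combined with the $X$-invariance of the scalar coefficient provided by Lemma~\ref{lemme facile}. That lemma yields an $X$-invariant $C^1$ function $f \colon M_X \to \R$ with $Y = fX$ on $M_X$. The strategy is to show that $f$ admits a continuous extension to $\sigma$ taking a \emph{single} value $c \in \R$, which by $X$-invariance will then force $f \equiv c$ throughout $W^s(\sigma) \setminus \{\sigma\}$, completing the proof.

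First I would establish that $Y(\sigma) = 0$. The commutation of flows gives $X_t(Y_s(\sigma)) = Y_s(X_t(\sigma)) = Y_s(\sigma)$ for all $s,t \in \R$, so each $Y_s(\sigma)$ is a fixed point of every $X_t$, i.e.\ a zero of $X$. As $\sigma$ is hyperbolic, it is isolated in $\zero(X)$, and continuity of $s \mapsto Y_s(\sigma)$ together with $Y_0(\sigma) = \sigma$ forces $Y_s(\sigma) \equiv \sigma$; differentiating at $s=0$ gives $Y(\sigma) = 0$. Consequently the linearizations $A \eqdef DX(\sigma)$ and $B \eqdef DY(\sigma)$ are both well-defined, and $A$ is invertible by hyperbolicity.

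Next I would compare $X$ and $Y$ to first order at $\sigma$. In local coordinates, $X(x) = Ax + o(\|x\|)$ and $Y(x) = Bx + o(\|x\|)$. For any unit $v \in T_\sigma M$ and small $\varepsilon > 0$, $X(\varepsilon v)$ and $Y(\varepsilon v)$ are collinear by assumption; dividing by $\varepsilon$ gives the pair $Av + o(1)$, $Bv + o(1)$, still collinear since ``rank $\leq 1$'' is a closed condition. Letting $\varepsilon \to 0$ yields $Av \parallel Bv$ for every $v$. Since $A$ is invertible, the endomorphism $BA^{-1}$ has every nonzero vector as an eigenvector, which is a standard linear-algebra argument forcing $BA^{-1} = c \cdot \mathrm{Id}$ for some $c \in \R$; hence $B = cA$. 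Plugging the expansions $X(y) = Ay + o(\|y\|)$, $Y(y) = cAy + o(\|y\|)$ into $f(y) = \langle X(y), Y(y)\rangle / \|X(y)\|^2$ and using the coercivity bound $\|Ay\| \geq \lambda \|y\|$ from the invertibility of $A$, one obtains $\lim_{y \to \sigma} f(y) = c$. Finally, since $W^s(\sigma)$ is open and for every $x \in W^s(\sigma) \setminus \{\sigma\}$ the positive orbit $\{X_t(x)\}_{t \geq 0}$ lies in $M_X$ and converges to $\sigma$, the $X$-invariance of $f$ gives $f(x) = \lim_{t \to +\infty} f(X_t(x)) = c$. Combined with $Y(\sigma) = 0 = c \cdot X(\sigma)$, this yields $Y \equiv cX$ on all of $W^s(\sigma)$.

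The main conceptual obstacle, resolved by the linearization step, is pinning down a \emph{single} constant $c$: a priori the $X$-invariant function $f$ could take different values on different orbits in $W^s(\sigma)$, and identifying a common limit at $\sigma$ is what ties them together. Hyperbolicity enters crucially through the invertibility of $A$, which simultaneously guarantees the linear lower bound $\|X(y)\| \gtrsim \|y\|$ (so that $\lim_{y \to \sigma} f(y)$ exists and equals $c$) and powers the ``every nonzero vector is an eigenvector of $BA^{-1}$'' argument that isolates $c$. Unlike previous approaches (\cite{Sad,BonomoRochaVarandas,BonomoVarandas}), no Sternberg-type linearization or non-resonance hypothesis on the eigenvalues of $\sigma$ is needed.
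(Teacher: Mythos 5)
Your proof is correct, but it takes a genuinely different route from the paper's. You linearize both fields at $\sigma$: after observing $Y(\sigma)=0$, you blow up the collinearity hypothesis along rays ($x=\varepsilon v$, $\varepsilon\to 0$) to get $\dim\langle DX(\sigma)v,\,DY(\sigma)v\rangle\leq 1$ for every $v$, and invertibility of $DX(\sigma)$ (hyperbolicity) forces $DY(\sigma)=c\,DX(\sigma)$; the first-order expansion of $f=(X,Y)/(X,X)$ together with the coercive bound $\|DX(\sigma)y\|\geq\lambda\|y\|$ then gives $f(y)\to c$ as $y\to\sigma$, and $X$-invariance spreads $c$ over the basin. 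The paper never identifies $DY(\sigma)$ as a multiple of $DX(\sigma)$: it fixes a small sphere $S\subset W^s(\sigma)$ and, for each $p\in S$, differentiates the single-orbit identity $Y(X_{t}(p))=f(p)X(X_{t}(p))$ to obtain $DY(\sigma)u=f(p)DX(\sigma)u$ only for $u$ in the set $V(p)$ of limit directions of the forward orbit of $p$ (Claim~\ref{claim.casoumd}); combining this with invertibility of $DX(\sigma)$, the resulting disjointness of the sets $V(p)$ for points with distinct $f$-values (Claim~\ref{claim.whatsapp}), and the auxiliary Lemma~\ref{l.compactos} on pairwise disjoint compacta of unit vectors, it rules out non-constant $f$ on $S$ by proving $Df\equiv 0$ there (Claim~\ref{claim.derivada}). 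Your argument is more direct and elementary: it dispenses with the combinatorial lemma and the contradiction with an interval of values, pins down the constant explicitly as the ratio of the linearizations, and yields the continuous limit of $f$ at $\sigma$ outright; the paper's orbitwise scheme, on the other hand, only exploits the relation $Y=fX$ along forward orbits and is stylistically aligned with its treatment of saddles (Proposition~\ref{prop.quasisaddles2}), though in the present proposition both proofs rely on exactly the same hypotheses. Two small points worth making explicit in your write-up: the expansions $X(y)=DX(\sigma)y+o(\|y\|)$ and $Y(y)=c\,DX(\sigma)y+o(\|y\|)$ should be taken in a fixed chart, and the Riemannian inner product appearing in the formula for $f$ varies continuously with the base point, which does not affect the limit; also $W^s(\sigma)\setminus\{\sigma\}\subset M_X$, so $f$ is indeed defined along all the orbits you use.
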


In the proof of Proposition~\ref{p.sinkextension} we shall use the following elementary lemma. 

\begin{lemma}
\label{l.compactos}
Let $(E,\|\cdot\|)$ be a finite-dimensional vector space endowed with a norm. Let $\Lambda$ be an infinite set and assume that for each $\lambda\in\Lambda$, there exists a non-empty compact subset $K_{\lambda}\subset\mathbb{S}:=\{v \in E: \|v\|=1\}$ of the sphere of unit vectors in $(E,\|\cdot\|)$, such that 
$$\lambda^{\prime}\neq\lambda\textrm{ in }\Lambda\quad\implies\quad K_{\lambda}\cap K_{\lambda^{\prime}}=\emptyset.$$
Suppose that $\dim E\geq 2$. Then, there exist a finite subset $\{\lambda,\lambda_1,\dots,\lambda_k\}\subset\Lambda$ and vectors  $\{u,u_1,\dots,u_k\}$ such that 
\begin{enumerate}
	\item $u\in K_{\lambda}$ and $u_\ell\in K_{\lambda_\ell}$, for each $\ell=1,\dots,k$;
	\item $u$ belongs to the subspace spanned by $\{u_1,\dots,u_k\}$;
	\item $\{u_1,\dots,u_k\}$ is a linearly independent set.
\end{enumerate} 
\end{lemma}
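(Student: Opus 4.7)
The plan is to reduce the problem to a simple linear algebra fact: any $d+1$ vectors in a $d$-dimensional space are linearly dependent. Set $d:=\dim E$. Since $\Lambda$ is infinite, I would first pick $d+1$ pairwise distinct indices $\mu_0,\mu_1,\dots,\mu_d\in\Lambda$ and, for each $i$, an arbitrary representative $v_i\in K_{\mu_i}$. The disjointness hypothesis automatically makes the $v_i$ pairwise distinct, although some $v_i$ could a priori be antipodal to another.

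Next I would invoke the elementary observation that from any linearly dependent finite collection of vectors one can extract a minimal (for inclusion) linearly dependent subcollection $\{v_j:j\in J\}$, with $J\subseteq\{0,1,\dots,d\}$. Because each $v_i$ is a nonzero unit vector, the set $J$ must have cardinality at least two. The key consequence of minimality is that, for any chosen index $j_0\in J$, the family $\{v_j:j\in J,\ j\neq j_0\}$ is linearly independent, while the dependence relation on $\{v_j:j\in J\}$ must express $v_{j_0}$ as a linear combination of these remaining vectors (if the coefficient of $v_{j_0}$ vanished, the strictly smaller set would already be dependent).

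Relabeling then yields the desired output: set $\lambda:=\mu_{j_0}$, $u:=v_{j_0}$, enumerate $J\setminus\{j_0\}=\{j_1,\dots,j_k\}$ with $k=|J|-1\geq 1$, and set $\lambda_\ell:=\mu_{j_\ell}$, $u_\ell:=v_{j_\ell}$ for $\ell=1,\dots,k$. Property (1) holds by construction, property (2) is the dependence of $v_{j_0}$ on the remaining vectors, and property (3) is the linear independence provided by minimality of $J$.

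The only delicate point to verify is that $k\geq 1$, i.e.\ that the minimally dependent subset is not a singleton; this is immediate since unit vectors are nonzero, so there is no substantial obstacle. The hypothesis $\dim E\geq 2$ does not seem to enter the argument directly: it only ensures that infinitely many pairwise disjoint nonempty compact subsets of the unit sphere can coexist (the unit sphere of a one-dimensional space has just two points, which would force $|\Lambda|\leq 2$).
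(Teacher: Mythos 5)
Your proof is correct and rests on the same mechanism as the paper's: once you take more representatives (from distinct $K_{\lambda}$'s) than $\dim E$, linear dependence is forced, and a vector from one set must lie in the span of a linearly independent family of vectors taken from other sets. The only structural difference is that the paper argues by contradiction (an induction showing the chosen $u_1,\dots,u_n$ would all remain independent, started with a small antipodal-vector observation), whereas you directly extract a minimal linearly dependent subfamily of $d+1$ representatives; this is a clean variant that automatically handles the $k=1$ (antipodal) case, which the conclusion as stated indeed allows and which suffices for the application.
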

\begin{proof}
We begin with a simple observation that we will use repeatedly in this proof: for each $u\in\mathbb{S}$, $-u$ is the only other vector in $\mathbb{S}$ which is collinear with $u$.
 	
Now, since $\Lambda$ is infinite, we can pick a sequence $(\lambda_n)_{n \geq 0}\subset\Lambda$, whose terms are distinct. For each $n$, choose a vector $u_n\in K_{\lambda_n}$. Since $\dim E\geq 2$, and the sets $K_{\lambda}$ are pairwise disjoint, by the simple observation above, we can assume without lost of generality that the set $\{u_1,u_2\}$ is linearly independent. Assume by contradiction that the conclusion does not hold. Then, it follows by induction that for every $n$ the set $\{u_1,u_2,u_3,\dots,u_n\}$ must be linearly independent. But this is absurd as $E$ is finite dimensional.    
\end{proof}

\begin{proof}[Proof of Proposition~\ref{p.sinkextension}]
By Lemma \ref{lemme facile}, for any $x \in M_X$, we have $Y(x)=f(x)X(x)$, for some $C^1$ function $f\colon M_X\to\R$. Notice that, as $\sigma$ is an isolated zero of $X$, we have $\sigma\in\zero(Y)$. Take $\eps>0$ small so that $\overline{B}(\sigma,\eps)\subset W^s(\sigma)$ and let $S:=\partial B(\sigma,\eps)$. In particular, notice that $x\in S$ implies $\lim_{t \to+\infty} X_t(x)=\sigma$.
 
Also notice that for every $x\in W^s(\sigma)$, there exists $T\in\R$ such that $X_T(x)\in S$. Therefore, since $f(X_t(x))=f(x)$ for  every $x\in M_X$ and $t\in\R$, the proof of the proposition is reduced to the proof of the following claim. 

\begin{claim}
\label{claim.derivada}
$Df(p)=0$ for every $p\in S$.
\end{claim}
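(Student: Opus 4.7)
The plan is to show the stronger statement that $f$ extends continuously to $\sigma$ with a single constant value $\lambda \in \R$; once this is done, the claim follows easily. Indeed, by the $X$-invariance of $f$ and the fact that $X_t(p) \to \sigma$ as $t \to +\infty$ for every $p \in S \subset W^s(\sigma)$, we would get $f(p) = \lim_{t \to +\infty} f(X_t(p)) = f(\sigma) = \lambda$, so $f|_S \equiv \lambda$; combined with $Df(p) \cdot X(p) = 0$ (already known from the invariance of $f$) and the transversality of $X$ to $S$ for $\eps$ small, this would force $Df(p) = 0$ for every $p \in S$.

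To establish the extension, I would work in a local chart around $\sigma = 0$ and Taylor expand both vector fields: $X(x) = Ax + R(x)$ and $Y(x) = Bx + S(x)$, with $A := DX(\sigma)$, $B := DY(\sigma)$, and $R(x), S(x) = o(|x|)$. By hyperbolicity of the sink, $A$ is invertible, so $|X(x)| \geq \kappa |x|$ on a small neighborhood of $\sigma$ for some $\kappa > 0$. For any unit vector $v$ and small $t > 0$, the identity $Y(tv) = f(tv)\, X(tv)$ divided by $t$ gives
\[
Bv + S(tv)/t = f(tv)\,\bigl(Av + R(tv)/t\bigr).
\]
Since $|f(tv)| = |Y(tv)|/|X(tv)|$ is bounded as $t \to 0^+$ (using the invertibility of $A$), taking $t \to 0^+$ forces the limit $\alpha_v := \lim_{t \to 0^+} f(tv)$ to exist in $\R$ and to satisfy $Bv = \alpha_v Av$.

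The next step, which I expect to be the main obstacle, is to prove that $\alpha_v$ is in fact independent of $v$, i.e.\ that $B = \lambda A$ for a single $\lambda \in \R$. This is precisely where Lemma~\ref{l.compactos} comes in: if $\alpha_v$ were non-constant, then for each $c$ in its range the set $K_c := \{v : \|v\| = 1,\ \alpha_v = c\}$ would be a compact subset of the unit sphere in $T_\sigma M$, and the family $\{K_c\}$ would be infinite and pairwise disjoint. The lemma would then produce distinct values $c_0, c_1, \dots, c_k$ and unit vectors $u_0 \in K_{c_0}$, $u_j \in K_{c_j}$, with $\{u_1, \dots, u_k\}$ linearly independent and $u_0 = \sum_{j=1}^k \beta_j u_j$. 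Applying $B$ using $B u_j = c_j A u_j$ yields $c_0\, A u_0 = \sum_j \beta_j c_j\, A u_j$, while applying $A$ gives $A u_0 = \sum_j \beta_j A u_j$. Subtracting $c_0$ times the second from the first, we get $\sum_j \beta_j (c_j - c_0)\, A u_j = 0$; by the invertibility of $A$ the vectors $\{A u_j\}$ are linearly independent, so $\beta_j (c_j - c_0) = 0$ for all $j$, and distinctness of the $c_j$'s forces $\beta_j = 0$ for all $j$, contradicting $\|u_0\| = 1$.

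With $B = \lambda A$ established, the residual $T(x) := Y(x) - \lambda X(x) = (B - \lambda A)x + S(x) - \lambda R(x) = o(|x|)$ is pointwise collinear with $X(x)$, and the identity $(f(x) - \lambda)\, X(x) = T(x)$ together with $|X(x)| \geq \kappa |x|$ yields the uniform bound $|f(x) - \lambda| = |T(x)|/|X(x)| = o(1)$ as $x \to \sigma$ in $M_X$. Hence $f$ extends continuously to $\sigma$ with value $\lambda$, which, by the reduction in the first paragraph, completes the proof of the claim. A notable feature of this approach is that it requires only the $C^1$ regularity and the hyperbolicity of $\sigma$, with no spectral or non-resonance assumption on $DX(\sigma)$.
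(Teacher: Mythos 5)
Your proof is correct, but it reaches the claim by a genuinely different route than the paper. The paper argues by contradiction directly on $S$: for $p\in S$ it considers the set $V(p)$ of asymptotic directions of $X$ along the forward orbit of $p$, differentiates the identity $Y(X_{t+s}(p))=f(p)X(X_{t+s}(p))$ to get $DY(\sigma)u=f(p)DX(\sigma)u$ only for $u\in V(p)$ (Claim~\ref{claim.casoumd}), observes that distinct values of $f$ give disjoint compact families $V(p_t)$ (Claim~\ref{claim.whatsapp}), and then plays Lemma~\ref{l.compactos} against the invertibility of $DX(\sigma)$ to contradict non-constancy of $f|_S$. You instead work infinitesimally at $\sigma$: the radial limits of $f(tv)$, bounded because $Y(\sigma)=0$ and $\|X(x)\|\geq\kappa\|x\|$, give the collinearity $DY(\sigma)v=\alpha_v\,DX(\sigma)v$ for \emph{every} direction $v$, not just the asymptotic ones; Lemma~\ref{l.compactos} (or, even more simply, the fact that a linear map for which every vector is an eigenvector is a scalar, applied to $DY(\sigma)DX(\sigma)^{-1}$) then yields $DY(\sigma)=\lambda DX(\sigma)$, and the estimate $|f(x)-\lambda|=\|Y(x)-\lambda X(x)\|/\|X(x)\|=o(1)$ shows that $f$ extends continuously at $\sigma$ with value $\lambda$. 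This conclusion is stronger and more local than what the paper extracts: it identifies the limit value, it proves Proposition~\ref{p.sinkextension} directly (constancy on the whole basin), and it uses only the invertibility of $DX(\sigma)$ together with $Y(\sigma)=0$, so the same computation gives continuity of $f$ at any hyperbolic singularity, sink or not. What the paper's argument buys in exchange is that it is coordinate-free and purely dynamical, with no chart or Taylor expansion, staying entirely within the flow-theoretic objects already introduced.

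Two small points to tighten. First, your final reduction invokes transversality of $X$ to $S$; this is not automatic for the round sphere $\partial B(\sigma,\varepsilon)$ in an arbitrary metric (for a non-normal $DX(\sigma)$ the field can be tangent to small spheres), but it is also unnecessary: once $f\equiv\lambda$ on $W^s(\sigma)\setminus\{\sigma\}$, which is an open neighborhood of every $p\in S$, the equality $Df(p)=0$ is immediate. Second, the expansion $Y(x)=Bx+S(x)$ presupposes $Y(\sigma)=0$; this is true and is recorded in the paper just before the claim (since $\sigma$ is an isolated zero of $X$ and $Y$ commutes with $X$), but it should be cited rather than assumed silently, and the remainder should be renamed to avoid the clash with the sphere $S$.
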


We shall postpone the proof of Claim~\ref{claim.derivada}. Take a point $p\in S$ and consider the set 
$$V(p):=\left\{u\in T_{\sigma}M:\exists\:t_n\to\infty,\, u=\lim_{n\to\infty}\frac{X(X_{t_n}(p))}{\|X(X_{t_n}(p))\|}\right\}.$$
By compactness, $V(p)$ is non-empty, and every $u\in V(p)$ is a unit vector; in particular, $0\notin V(p)$.
%Consider the eigenvalue $\lambda\in\mathbb{C}$  of $DX(\sigma)$ with biggest real part (which must be negative, because $\sigma$ is a sink), and let $F$ be the corresponding eigenspace. Then, there exists  a subspace $E\subset T_{\sigma}M$ such that $T_{\sigma}M=E\oplus F$ is a dominated splitting. As $X$ is a $C^1$ vector field, this implies that 
%$$V(p)\subset F-\{0\},\:\textrm{for every}\:\:p\in S.$$  
The following claims are the key arguments for this proof.
\begin{claim}
\label{claim.casoumd}
If $u\in V(p)$ then $DY(\sigma)\cdot u=f(p)DX(\sigma)\cdot u$.
\end{claim}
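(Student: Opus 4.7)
The plan is to combine the first-order Taylor expansions of $X$ and $Y$ at $\sigma$ with the identity $Y = fX$ on $M_X$ and the $X$-invariance of $f$. Fix a chart around $\sigma$ identifying $\sigma$ with the origin; in this chart $X$ and $Y$ become $C^1$ vector-valued maps, both vanishing at $0$, and $DX(\sigma)$, $DY(\sigma)$ are honest linear endomorphisms of $T_\sigma M$. Let $t_n \to +\infty$ be a sequence realizing $u$, so that $x_n := X_{t_n}(p) \to \sigma$, $x_n \in M_X$, and $X(x_n)/\|X(x_n)\| \to u$. Since $f$ is $X$-invariant, $f(x_n) = f(p)$ for all $n$, and therefore
\[
Y(x_n) \;=\; f(x_n) X(x_n) \;=\; f(p)\, X(x_n),\qquad \forall\, n\geq 1.
\]

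First I would take the Taylor expansions at $\sigma$,
\[
X(x_n) = DX(\sigma)(x_n-\sigma) + o(\|x_n-\sigma\|),\quad Y(x_n) = DY(\sigma)(x_n-\sigma) + o(\|x_n-\sigma\|),
\]
and subtract $f(p)$ times the first from the second to obtain
\[
\bigl(DY(\sigma) - f(p) DX(\sigma)\bigr)(x_n - \sigma) = o(\|x_n-\sigma\|).
\]
Setting $w_n := (x_n-\sigma)/\|x_n-\sigma\|$ and passing to a subsequence, I may assume $w_n \to w$ for some unit vector $w \in T_\sigma M$, and I deduce
\[
\bigl(DY(\sigma) - f(p) DX(\sigma)\bigr)w = 0.
\]

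Next I would identify $u$ in terms of $w$. Because $\sigma$ is a hyperbolic \emph{sink}, every eigenvalue of $DX(\sigma)$ has negative real part, so $DX(\sigma)$ is invertible and $DX(\sigma) w \neq 0$. Dividing $X(x_n)/\|x_n-\sigma\| = DX(\sigma)w_n + o(1) \to DX(\sigma) w$ by its norm yields
\[
u \;=\; \lim_{n \to \infty}\frac{X(x_n)}{\|X(x_n)\|} \;=\; \frac{DX(\sigma) w}{\|DX(\sigma) w\|}.
\]
Thus establishing the claim reduces to showing that $DX(\sigma) w$ is annihilated by $DY(\sigma) - f(p) DX(\sigma)$.

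The final ingredient is the linear commutation at the singularity: since $[X,Y]=0$ and $X(\sigma)=Y(\sigma)=0$, a direct expansion of the bracket in local coordinates gives $[DX(\sigma), DY(\sigma)] = 0$ as endomorphisms of $T_\sigma M$. Combining this with the previous step,
\[
\bigl(DY(\sigma) - f(p) DX(\sigma)\bigr) DX(\sigma) w \;=\; DX(\sigma)\bigl(DY(\sigma) - f(p) DX(\sigma)\bigr) w \;=\; 0,
\]
which gives $(DY(\sigma) - f(p) DX(\sigma))u = 0$, as desired. The only delicate point is the passage from a relation on $w$ to the same relation on $u$; the hyperbolicity of the sink (ensuring $DX(\sigma) w \neq 0$) and the infinitesimal commutation $[DX(\sigma), DY(\sigma)]=0$ are exactly what let this transfer go through, and these are the two places where the full hypotheses of the proposition are used.
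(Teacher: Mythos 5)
Your proof is correct, but it takes a genuinely different route from the paper's. The paper argues directly along the orbit: since $Y(X_{t+s}(p))=f(p)\,X(X_{t+s}(p))$ for all $s$, differentiating in $s$ at $s=0$ yields $DY(X_t(p))\cdot\tfrac{X(X_t(p))}{\|X(X_t(p))\|}=f(p)\,DX(X_t(p))\cdot\tfrac{X(X_t(p))}{\|X(X_t(p))\|}$, and one simply sets $t=t_n\to\infty$, using $X_{t_n}(p)\to\sigma$ and the continuity of $DX$, $DY$; this needs neither the invertibility of $DX(\sigma)$ nor the commutation of linearizations. You instead work statically at $\sigma$: the Taylor expansion shows that $DY(\sigma)-f(p)DX(\sigma)$ annihilates a radial accumulation direction $w$ of the $x_n$, invertibility of $DX(\sigma)$ (the sink hypothesis) identifies $u=DX(\sigma)w/\|DX(\sigma)w\|$, and $[DX(\sigma),DY(\sigma)]=0$ transfers the relation from $w$ to $u$. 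Both extra ingredients are legitimately available here: hyperbolicity is a standing hypothesis of Proposition~\ref{p.sinkextension} (and $Y(\sigma)=0$ is established just before the claim), and the commutation of the linearizations at a common zero does hold for merely $C^1$ fields --- but be careful how you justify it: one cannot differentiate $DY\cdot X-DX\cdot Y$ when $X,Y$ are only $C^1$; instead, expand the identity $DY(x)X(x)=DX(x)Y(x)$ to first order at $\sigma$, using differentiability of $X,Y$ at $\sigma$ and continuity of $DX,DY$, to get $DY(\sigma)DX(\sigma)v=DX(\sigma)DY(\sigma)v$ for every $v$. The trade-off: the paper's limit-along-the-orbit argument is shorter and would apply at any singularity attracting the forward orbit of $p$, whereas your static argument already consumes the hyperbolicity that the paper only needs for the subsequent claim, where invertibility of $DX(\sigma)$ is genuinely required.
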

\begin{proof}
Fix some $t\in\R$. Since $Y(X_{t+s}(p))=f(p)X(X_{t+s}(p))$ for every $s\in\R$, taking the derivative with respect to $s$ on both sides we obtain
$$DY(X_t(p))\cdot \left(\frac{X(X_t(p))}{\|X(X_t(p))\|}\right)=f(p)DX(X_t(p))\cdot \left(\frac{X(X_t(p))}{\|X(X_t(p))\|}\right).$$ 
By using this formula with $t=t_n$ and letting $n\to\infty$ we conclude that
$DY(\sigma)\cdot u=f(p)DX(\sigma)\cdot u,$ proving the claim. 
\end{proof}

\begin{claim}
\label{claim.whatsapp}	
If $p,q\in S$ and $V(p)\cap V(q)\neq\emptyset$ then $f(p)=f(q)$.
\end{claim}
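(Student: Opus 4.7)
My plan is direct: exploit Claim~\ref{claim.casoumd} twice, once at $p$ and once at $q$, using the same common vector $u$, and then use the invertibility of $DX(\sigma)$ which follows from the hyperbolicity of the sink.

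First I would fix a vector $u\in V(p)\cap V(q)$, which exists by hypothesis. By the very definition of $V(\cdot)$, $u$ is a unit vector in $T_\sigma M$, in particular $u\neq 0$. Applying Claim~\ref{claim.casoumd} to the point $p$ gives
$$ DY(\sigma)\cdot u = f(p)\, DX(\sigma)\cdot u,$$
while applying the same claim to the point $q$ (for the same $u$, since $u\in V(q)$ as well) yields
$$ DY(\sigma)\cdot u = f(q)\, DX(\sigma)\cdot u.$$
Subtracting these two identities I obtain
$$ \bigl(f(p)-f(q)\bigr)\, DX(\sigma)\cdot u \;=\; 0.$$

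To conclude I would invoke the hypothesis that $\sigma$ is a hyperbolic sink: every eigenvalue of $DX(\sigma)$ has strictly negative real part, so $DX(\sigma)$ is an invertible linear endomorphism of $T_\sigma M$. Since $u$ is a nonzero vector, $DX(\sigma)\cdot u\neq 0$, and the displayed equation forces $f(p)=f(q)$. There is essentially no obstacle in this step: the real work was done in setting up Claim~\ref{claim.casoumd}, which already packaged the compatibility between the commutation relation and the limits defining $V(\cdot)$; the present claim is then a one-line consequence, with hyperbolicity of the sink used only at the very last moment to guarantee that $DX(\sigma)\cdot u$ does not vanish.
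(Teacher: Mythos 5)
Your proof is correct and follows essentially the same argument as the paper: apply Claim~\ref{claim.casoumd} at both $p$ and $q$ with the common unit vector $u$, and use the invertibility of $DX(\sigma)$ (all eigenvalues have negative real part at the hyperbolic sink) to conclude $f(p)=f(q)$. The paper phrases the last step by cancelling $DX(\sigma)$ to get $(f(p)-f(q))u=0$, while you note directly that $DX(\sigma)\cdot u\neq 0$; the two are the same argument.
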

\begin{proof}
Assume that there exists $u\in V(p)\cap V(q)$. Then, by Claim~\ref{claim.casoumd}, one has
$$DY(\sigma)\cdot u=f(p)DX(\sigma)\cdot u=f(q)DX(\sigma)\cdot u.$$
As $DX(\sigma)$ is an invertible linear map (because all eigenvalues are negative) this implies that $(f(p)-f(q))u=0$, and since $u\neq 0$, the claim is proved.
\end{proof}
We are now in position to give the proof of Claim~\ref{claim.derivada}.
Assume by contradiction that the claim is not true. Then, there exist  $U\subset S$ and real numbers $a<b$ such that $f\colon U\to[a,b]$ is surjective. 
%{\color{red}Observe that, by Claim~\ref{claim.whatsapp}, we have that $\dim F\geq 2$. Indeed, if $F$ has dimension $1$ then it contains only two unit vectors and therefore Claim~\ref{claim.whatsapp} implies that $f$ can assume only two different values, which is absurd.} 

Now, for every $t\in[a,b]$, we choose some point $p_t\in U\cap f^{-1}(t)$, and we consider the family of compact subsets $\{V(p_t)\}_{t\in[a,b]}\subset T_{\sigma}M$ of unit vectors. As $t\neq s$ implies $f(p_t)\neq f(p_s)$, one obtains from Claim~\ref{claim.whatsapp} that the family $\{V(p_t)\}_{t\in[a,b]}$ satisfies all the assumptions of Lemma~\ref{l.compactos}.

Thus, there exists a finite set $\{p,p_1,\dots,p_k\}\subset U$ and vectors $u\in V(p)$, $u_\ell\in V(p_\ell)$, $\ell=1,\dots,k$, with $u\in\langle u_1,\dots,u_k\rangle$ and $\{u_1,\dots,u_k\}$ linearly independent, and such that $f(p_i)\neq f(p_j)\neq f(p)$, for every $i,j\in\{1,\dots,k\}$. 

%Indeed, suppose by contradiction that this does not hold. Then, we can pick $r_1\neq r_2\in [a,b]$, and $p_i\in U\cap f^{-1}(r_i)$, $i=1,2$. Since $V(p_1)$ is disjoint from $V(p_2)$, and since $\dim F\geq 2$, by changing $r_1$ if necessary we can choose $u_1\in V(p_1)$ and $u_2\in V(p_2)$ so that $\{u_1,u_2\}$ is linearly independent. We pick $r_3\in[a,b]\setminus\{r_1,r_2\}$ and $u_3\in V(p_3)$, for some $p_3\in U\cap f^{-1}(r_3)$. Then, as we assumed that our claim was not true, the set $\{u_1,u_2,u_3\}$ is linearly independent. Let $d=\dim F$. Proceeding in this way we shall obtain a linearly independent set $\{u_1,...u_{d+1}\}$, with $u_\ell\in V(p_\ell)$ and $p_\ell\in U$. Since $V(p)\subset F$, for every $p\in S$, and $d=\dim F$, this is absurd. Therefore, our claim holds.

Take $\alpha^1,\dots,\alpha^k\in\R$ such that $u=\sum_{\ell=1}^{k}\alpha^\ell u_\ell$. Using Claim~\ref{claim.casoumd} we can write
$$DY(\sigma)\cdot u=f(p)DX(\sigma)\cdot u=DX(\sigma)\cdot \left(\sum_{\ell=1}^k f(p)\alpha^\ell u_\ell\right).$$
Also
$$DY(\sigma)\cdot u_\ell=f(p_\ell)DX(\sigma)\cdot u_\ell,\:\forall\:\ell=1,\dots,k,$$
which implies that 
$$DY(\sigma)\cdot u=DX(\sigma)\cdot \left(\sum_{\ell=1}^kf(p_\ell)\alpha^\ell u_\ell\right).$$
Since $DX(\sigma)$ is invertible we must have $\sum_{\ell=1}^kf(p)\alpha^\ell u_\ell=\sum_{\ell=1}^kf(p_\ell)\alpha^\ell u_\ell$, and as $\{u_1,\dots,u_k\}$ are linearly independent, this gives 
$$f(p)\alpha^\ell=f(p_\ell)\alpha^\ell,\:\textrm{for every}\:\ell=1,\dots,k.$$ Since $u\neq 0$ there exists some $\alpha^\ell\neq 0$. However, this implies that $f(p)=f(p_\ell)$, a contradiction.      
\end{proof}

We now give the proof of Theorem~\ref{thm.quasitriviality}.

\begin{proof}[Proof of Theorem \ref{thm.quasitriviality}]
	Assume that  $\mathfrak{C}^1(X)$ is collinear and that each singularity $\sigma \in \mathrm{Zero}(X)$ is hyperbolic. Let us consider $Y \in \mathfrak{C}^1(X)$. By Lemma \ref{lemme facile}, there exists a $C^1$ function $f \colon M_X \to \R$ which satisfies $X \cdot f\equiv 0$ on $M_X$ and such that $Y(x)=f(x)X(x)$, for every $x \in M_X$. By assumption, the singularities of $X$ are hyperbolic, hence they are isolated, and  $Y(\sigma)=0$, for all $\sigma \in \mathrm{Zero}(X)$.  By Propositions \ref{prop.quasisaddles}, \ref{prop.quasisaddles2} and \ref{p.sinkextension}, we can extend $f$ to a $C^1$ invariant function on $M$. We conclude that $f$ is a first integral of $X$, and $Y=f X$. 
	
	Conversely, assume that $f \colon M \to \R$ is a first integral of $X$. We define a vector field $Y \in \mathfrak{X}^1(M)$ as $Y(x):=f(x) X(x)$, for every $x \in M$.  Indeed, both $f$ and $X$ are of class $C^1$, thus $Y$ is $C^1$ too. 
	%$D(fX)(x)=Df(x) X(x)+ f(x)DX(x)$, for all $x \in M_X$, and $\lim_{x \to \sigma} Df(x)X(x)=0$, for all $\sigma \in \mathrm{Zero}(X)$. 
	Moreover, we have  $Y \in \mathfrak{C}^1(X)$, since
	$$
	[X,Y]=(X\cdot f) X+f [X,X]=0.\textrm{  } \qedhere %\quad \forall x \in M,
	$$
\end{proof}

\addtocontents{toc}{\protect\setcounter{tocdepth}{2}}
\section{The study of invariant functions and trivial centralizers}
\label{sec.trivial}

%\subsection{Trivial centralizers}
%
%Let us recall in formal terms the notion of triviality for the centralizer.
%
%\begin{definition}[Trivial centralizer]
%	\label{def.trivial}
%	For any $1 \leq  k \leq r$, we say that $X\in\mathfrak{X}^r(M)$ has a \emph{trivial} $C^k$-centralizer if $\mathfrak{C}^k(X)$ is as small as it can be, i.e.,
%	$$\mathfrak{C}^k(X)=\left\{cX: c\in\R\right\}.$$
%\end{definition}
%
%
%%We remark that a trivial centralizer is the smallest possible centralizer one can get. 
%
%We have seen in Example \ref{example.separating} a flow with quasi-trivial centralizer, but not trivial. 
%
%%\begin{example}
%%Take $X\in\mathfrak{X}^1(M)$ with a trivial $C^1$-centralizer (for instance, an Anosov vector field), and take any non-constant $C^1$ function $g\colon\mathbb{S}^1\to\R$. 
%%Notice that $X$ admits a natural extension to 
%%$\mathbb{S}^1\times M$ by putting $X(t,x)=X(x)$. We define a vector field $Y \in \mathfrak{X}^1(\mathbb{S}^1\times M)$ by letting $Y(t,x):=g(t)X(t,x)$, for all $(t,x) \in \mathbb{S}^1\times M$. Then, $Y\in\mathfrak{C}^1(X)$, but $Y$ is not a constant
%%multiple of $X$. 
%%\end{example} 
%
%The problem of knowing if a quasi-trivial centralizer is trivial is reduced to the problem of knowing when a $X$-invariant function is constant. This problem will be addressed in Section \ref{sec.trivial}.

The main focus of this section is the study of invariant functions. An invariant function is also called a first integral of the system. There are several works that study the existence of non trivial (non constant) first integrals, see for instance \cite{Mane,Hurley,  FathiSiconolfi,Pageault,FathiPageault,ABC,BernardiFlorio,BernardiFlorioWiseman}. In this work we study dynamical conditions that imply the non-existence of first integrals. 

First, it is easy to obtain examples of vector fields with quasi-trivial $C^1$-centralizer which is not trivial. Indeed consider the vector field in example \ref{example.separating}. Since $X$ is separating, it has collinear $C^1$-centralizer. This flow is non-singular, hence it has quasi-trivial $C^1$-centralizer. Now take any non-constant $C^1$-function $f$ which is constant on each orbit, that is, a function which depends only on the coordinate $r$. The vector field $Y= fX$ belongs to the $C^1$-centralizer of $X$, therefore the centralizer of $X$ is only quasi-trivial. 

Let $X\in \mathfrak{X}^1(M)$. Recall that a compact set $\Lambda$ is a basic piece for $X$ if $\Lambda$ is $X$-invariant and transitive, that is, it has a dense orbit. We say that $X$ admits a countable spectral decomposition if $\Omega(X) = \displaystyle \sqcup_{i\in \N} \Lambda_i$, where the sets $\Lambda_i$ are pairwise disjoint basic pieces.

\begin{theorem}
\label{thm.continuousfinitetrivial}
Let $X\in \mathfrak{X}^1(M)$. If $X$ admits a countable spectral decomposition then any continuous $X$-invariant function is constant. 
\end{theorem}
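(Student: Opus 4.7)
The plan is to exploit three simple ingredients: the fact that a continuous $X$-invariant function must be constant on every transitive invariant set, the fact that $\omega$-limit sets of points in $M$ are non-empty and contained in the non-wandering set, and connectedness of $M$ (which although not stated explicitly is implicit in the context, compare Theorem~\ref{theo e}; otherwise the conclusion is that $f$ is constant on each connected component).

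First I would show that $f$ takes a single value $c_i\in\R$ on each basic piece $\Lambda_i$ of the spectral decomposition. Indeed, pick a point $x_i\in\Lambda_i$ whose orbit is dense in $\Lambda_i$; since $f$ is $X$-invariant, $f\equiv f(x_i)$ on $\orb(x_i)$, and by continuity $f\equiv f(x_i)=:c_i$ on $\Lambda_i=\overline{\orb(x_i)}$.

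Next I would observe that $f(M)\subset \{c_i\colon i\in\N\}$. This is the key step, and it reduces to extending the invariance across the wandering set by taking limits. Given any $x\in M$, by compactness the $\omega$-limit set $\omega(x)$ is non-empty; it is contained in $\Omega(X)=\sqcup_i\Lambda_i$, so we can pick some $y\in\omega(x)\cap\Lambda_i$ for some $i$. Choose $t_n\to+\infty$ with $X_{t_n}(x)\to y$; by $X$-invariance of $f$, $f(x)=f(X_{t_n}(x))$, and by continuity this converges to $f(y)=c_i$. Hence $f(x)=c_i$, which shows $f(M)\subset\{c_i\colon i\in\N\}$.

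Finally, since $M$ is connected and $f$ is continuous, $f(M)$ is a connected subset of $\R$, hence an interval. But the previous step shows it is countable, so it reduces to a single point, and $f$ is constant. The main subtlety lies in Step~2, where one must be careful that the (possibly uncountable) union $\Omega(X)=\sqcup_i\Lambda_i$ of compact pairwise disjoint basic pieces still forces every $\omega$-limit point of any $x\in M$ to lie in some specific $\Lambda_i$; this is immediate from the partition structure, but one should not try to argue via connectedness of $\omega(x)$, as the $\Lambda_i$ are not a priori open in $\Omega(X)$.
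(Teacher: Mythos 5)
Your proof is correct and follows essentially the same route as the paper: constancy of $f$ on each basic piece via transitivity and continuity, propagation of values to $\omega$-limit sets lying in $\Omega(X)=\sqcup_{i}\Lambda_i$, and connectedness of $M$ forcing the countable image $f(M)$ to be a single point. The paper merely phrases this as a contradiction, choosing a value $c$ outside the countable set $\{c_i\}$ and showing the level set $f^{-1}(\{c\})$ would have to meet some $\Lambda_i$; your direct version is the same argument and, as you note, only needs $\omega(x)$ to intersect some $\Lambda_i$, not to be contained in one.
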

\begin{proof}
Let $f\colon M \to \mathbb{R}$ be a continuous $X$-invariant function. Suppose that $f$ is not constant. Since $M$ is connected, there exist two real numbers $a<b$ such that $f(M) = [a,b]$. It is easy to see that in each basic piece the function $f$ is constant: this follows from the transitivity of each basic piece. For each $i\in \N$ define $c_i := f(\Lambda_i)$. Since $X$ admits a countable spectral decomposition, the set $C:=\{c_1, c_2,\dots\}$ is at most countable and in particular $[a,b] - C$ is non-empty. Take any value $c\in [a,b] - C$ and consider $\Lambda := f^{-1}(\{c\})$.

The set $\Lambda$ is compact and $X$-invariant. Hence, for any point $p\in \Lambda$ we must have $\omega(p) \subset \Lambda$, where $\omega(p)$ is the set of all accumulations points of the future orbit of $p$. By the countable spectral decomposition, $\omega(p)$ must be contained in some basic piece $\Lambda_i$, which implies that $\Lambda \cap \Lambda_i \neq \emptyset$. Since $\Lambda$ is a level set of $f$, this implies that $c_i =f(\Lambda_i) = f(\Lambda)=c$ and this is a contradiction with our choice of $c$.  
\end{proof}

Theorem \ref{theo e} follows easily from Theorems \ref{res qu tri} and \ref{thm.continuousfinitetrivial}. Let us now give some applications.

In \cite{Peixoto2}, Peixoto proved that a $C^1$-generic vector field on a compact surface is Morse-Smale. Recall that a vector field is Morse-Smale if the non-wandering set is the union of finitely many hyperbolic periodic orbits and hyperbolic singularities, and it verifies some transversality condition. In particular, the non-wandering set is finite. As a consequence of this result of Peixoto and Theorems \ref{theo d} and \ref{theo e}, we have the following corollary. 
\begin{maincorollary}\label{cor.surfaces}
	Let $M$ be a compact connected surface. Then, there exists a residual set $ \mathcal{R}_\dagger \subset \mathfrak{X}^1(M)$ such that for any $X \in \mathcal{R}_\dagger$,  the $C^1$-centralizer of $X$ is trivial. 
\end{maincorollary}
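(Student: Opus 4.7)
The plan is to intersect two residual sets: the one coming from Peixoto's theorem and the one provided by Theorem~\ref{theo d}. More precisely, by Peixoto's theorem \cite{Peixoto2}, on a compact connected surface $M$ there exists a residual set $\mathcal{R}_{MS} \subset \mathfrak{X}^1(M)$ consisting of Morse--Smale vector fields. For any $X\in\mathcal{R}_{MS}$, the non-wandering set $\Omega(X)$ is a finite union of hyperbolic periodic orbits and hyperbolic singularities. In particular, every singularity of $X$ is hyperbolic, and $X$ admits a (finite, hence countable) spectral decomposition $\Omega(X) = \bigsqcup_{i=1}^N \Lambda_i$, where each $\Lambda_i$ is either a hyperbolic singularity or a hyperbolic periodic orbit (trivially transitive).

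Now define $\mathcal{R}_\dagger := \mathcal{R}_{MS} \cap \mathcal{R}$, where $\mathcal{R}$ is the residual set of Theorem~\ref{theo d}. This is a residual subset of $\mathfrak{X}^1(M)$ as the intersection of two residual sets in a Baire space. For $X\in\mathcal{R}_\dagger$, Theorem~\ref{theo d} gives that $\mathfrak{C}^1(X)$ is quasi-trivial; in particular it is collinear. Since in addition $X$ has only hyperbolic singularities and admits a countable spectral decomposition, Theorem~\ref{theo e} applies and yields that $\mathfrak{C}^1(X)$ is trivial.

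An equivalent way to conclude (which is essentially what Theorem~\ref{theo e} does internally) is as follows: by quasi-triviality, every $Y\in\mathfrak{C}^1(X)$ is of the form $Y = fX$ for some $C^1$ first integral $f\colon M\to\R$. By Theorem~\ref{thm.continuousfinitetrivial} applied to the countable spectral decomposition of the Morse--Smale vector field $X$, any continuous $X$-invariant function on the connected manifold $M$ is constant, so $f\equiv c$ for some $c\in\R$. Hence $Y = cX$, showing $\mathfrak{C}^1(X) = \{cX : c\in\R\}$, which is the triviality of the centralizer.

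There is essentially no obstacle here: all the hard analytic work has already been done in Theorems~\ref{theo d} and~\ref{theo e}, and Peixoto's theorem provides precisely the surface-specific input (finiteness of $\Omega(X)$ with hyperbolic critical elements) needed to activate Theorem~\ref{theo e}. The only point to verify is that $\mathcal{R}_{MS}\cap\mathcal{R}$ is residual, which is immediate from Baire's theorem in $\mathfrak{X}^1(M)$.
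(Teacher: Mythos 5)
Your proposal is correct and follows exactly the paper's route: Peixoto's theorem supplies a residual set of Morse--Smale vector fields (hyperbolic singularities plus a finite, hence countable, spectral decomposition), which you intersect with the residual set of Theorem~\ref{theo d} to get collinearity/quasi-triviality, and then Theorem~\ref{theo e} (equivalently, Theorem~\ref{thm.continuousfinitetrivial} applied to the first integral $f$ with $Y=fX$) yields triviality. This is precisely how the paper deduces Corollary~\ref{cor.surfaces}, so there is nothing to add.
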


A $C^1$-vector field $X$ is \emph{Axiom A} if the non-wandering set is hyperbolic and $\Omega(X) = \overline{\mathrm{Per}(X)}$. It is well known that Axiom A vector fields admits a spectral decomposition, with finitely many basic pieces. As a corollary of our Theorems \ref{theo d} and \ref{theo e}, we obtain the following result which is the main theorem in \cite{BonomoVarandasaxioma}.

\begin{maincorollary}[Theorem $1.1$ in \cite{BonomoVarandasaxioma}]\label{cor.gentrivialityaxioma}
	A $C^1$-generic Axiom A vector field has trivial $C^1$-centralizer.
\end{maincorollary}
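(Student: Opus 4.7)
The plan is to obtain the corollary as a direct combination of Theorems~\ref{theo d} and \ref{theo e}. Let $\mathcal{AA}^1(M)\subset\mathfrak{X}^1(M)$ denote the set of Axiom A vector fields on $M$. I will first recall that $\mathcal{AA}^1(M)$ is a $C^1$-open subset of $\mathfrak{X}^1(M)$: this follows from the structural stability of hyperbolic invariant sets together with the fact that, under $C^1$-small perturbations, the non-wandering set cannot grow out of a small neighborhood of itself (see Smale's spectral decomposition theorem for flows). Let $\mathcal{R}\subset\mathfrak{X}^1(M)$ be the residual set provided by Theorem~\ref{theo d}. Then $\mathcal{R}_{\mathcal{AA}}:=\mathcal{R}\cap\mathcal{AA}^1(M)$ is residual in the open set $\mathcal{AA}^1(M)$, and any $X\in \mathcal{R}_{\mathcal{AA}}$ is our candidate for a $C^1$-generic Axiom A vector field.

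Next I will verify that every $X\in\mathcal{R}_{\mathcal{AA}}$ satisfies the hypotheses of Theorem~\ref{theo e}. First, by Theorem~\ref{theo d}, the $C^1$-centralizer $\mathfrak{C}^1(X)$ is quasi-trivial, hence in particular collinear, in the sense of Definition~\ref{def.collinear}. Second, since $X$ is Axiom A, the set $\Omega(X)$ carries a hyperbolic splitting; every singularity $\sigma$ of $X$ satisfies $X(\sigma)=0$ and belongs to $\Omega(X)$, so the flow-invariant splitting $T_\sigma M=E^s\oplus\langle X(\sigma)\rangle\oplus E^u$ collapses to $T_\sigma M=E^s\oplus E^u$, which means $\sigma$ is a hyperbolic singularity. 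Third, Smale's spectral decomposition for Axiom A flows yields a \emph{finite} partition $\Omega(X)=\Lambda_1\sqcup\cdots\sqcup\Lambda_N$ into pairwise disjoint, compact, $X$-invariant, transitive basic pieces; this is, a fortiori, a countable spectral decomposition.

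All three hypotheses of Theorem~\ref{theo e} being verified, I will apply it to conclude that $\mathfrak{C}^1(X)$ is trivial for every $X\in\mathcal{R}_{\mathcal{AA}}$, which yields the statement of the corollary. I do not expect any serious obstacle in this argument: the whole content lies in Theorems~\ref{theo d} and \ref{theo e}, and the role of the Axiom A assumption is simply to produce, ``for free'', the two structural ingredients (hyperbolicity of all singularities and finite spectral decomposition) that bridge quasi-triviality to triviality. The only point requiring mild care is the case where $M$ may be disconnected: one then works component by component, using the fact that the restriction of $X$ to each connected component is still Axiom A and that Theorem~\ref{theo e} can be applied on each component, producing constants $c_i\in\R$ on each connected piece; in the standard convention where the corollary concerns connected $M$, this issue does not arise.
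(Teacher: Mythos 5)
Your overall route is the same as the paper's: the corollary is obtained by feeding the generic quasi-triviality of Theorem~\ref{theo d} (quasi-trivial implies collinear) into Theorem~\ref{theo e}, whose remaining hypotheses — hyperbolicity of all singularities and a finite, hence countable, spectral decomposition — are exactly what the Axiom A assumption supplies, and your verifications of these points (singularities lie in the hyperbolic set $\Omega(X)$, so the splitting degenerates to $E^s\oplus E^u$ and they are hyperbolic zeros; Smale's spectral theorem gives finitely many basic pieces) are correct. The one step that is wrong is your opening claim that the set $\mathcal{AA}^1(M)$ of Axiom A vector fields is $C^1$-open. This is false in general: Axiom A systems with cycles are subject to $\Omega$-explosions, in which the non-wandering set of a $C^1$-small perturbation strictly exceeds the continuation of the original one, and the perturbed field may fail $\Omega=\overline{\mathrm{Per}}$ or hyperbolicity of $\Omega$; openness (via $\Omega$-stability) requires the additional no-cycle condition, so your appeal to "structural stability plus the non-wandering set cannot grow" does not hold. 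Fortunately the claim is not needed: the statement is to be read, as in the paper and in Bonomo--Varandas, as asserting that every Axiom A vector field belonging to the residual subset $\mathcal{R}\subset\mathfrak{X}^1(M)$ furnished by Theorem~\ref{theo d} has trivial $C^1$-centralizer. So you should simply drop the discussion of $\mathcal{R}\cap\mathcal{AA}^1(M)$ being residual in $\mathcal{AA}^1(M)$ and apply Theorem~\ref{theo e} directly to any Axiom A field $X\in\mathcal{R}$; with that adjustment your argument coincides with the paper's.
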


\begin{remark}
	Corollary \ref{cor.gentrivialityaxioma} actually holds for more a general type of hyperbolic system called \emph{sectional Axiom A}, in any dimension. We refer the reader to  Definition $2.14$ in \cite{metmorales} for a precise definition. In \cite{BonomoVarandasaxioma}, the authors also proved the triviality of the $C^1$-centralizer for sectional Axiom A flows in dimension three. 
\end{remark}

Another corollary is for $C^1$-vector fields far from homoclinic tangencies in dimension three. Let us make it more precise. Recall that a vector field $X \in \mathfrak{X}^1(M)$ has a \textit{homoclinic tangency} if there exists a hyperbolic non-singular closed orbit $\gamma $ and a non-transverse intersection between $W^s(\gamma)$ and $W^u(\gamma)$. By the proof of Palis conjecture in dimension three given in \cite{CrovisierYang},  a $C^1$-generic $X \in \mathfrak{X}^1(M)$ which cannot be approximated by such vector fields admits a finite spectral decomposition, hence:

\begin{maincorollary}\label{cor.dim3}
	Let $M$ be a compact connected $3$-manifold. Then there exists a residual subset $\mathcal{R}_\ddagger\subset \mathfrak{X}^1(M)$ such that any vector field $X \in \mathcal{R}_\ddagger$ which cannot be approximated by vector fields exhibiting a homoclinic tangency has trivial $C^1$-centralizer.  
\end{maincorollary}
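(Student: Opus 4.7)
The plan is to intersect three residual sets: the quasi-triviality residual set coming from Theorem \ref{theo d}, a Kupka--Smale-type residual set ensuring hyperbolicity of singularities and periodic orbits, and the Crovisier--Yang residual set which, in dimension three, provides a finite spectral decomposition for vector fields that are not approximable by ones exhibiting a homoclinic tangency. The conclusion will then follow by feeding this into Theorem \ref{theo e}.

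Concretely, I would define $\mathcal{R}_\ddagger := \mathcal{R} \cap \mathcal{R}_{KS} \cap \mathcal{R}_{CY}$, where $\mathcal{R}$ is given by Theorem \ref{theo d}, $\mathcal{R}_{KS}$ is the standard Kupka--Smale residual set (every singularity and every periodic orbit is hyperbolic), and $\mathcal{R}_{CY}$ is the residual subset of $\mathfrak{X}^1(M)$ provided by \cite{CrovisierYang}, outside of which a vector field not approximable by those with homoclinic tangencies admits a spectral decomposition with finitely many basic pieces. Since a finite disjoint union of basic pieces is in particular a countable one, the spectral decomposition hypothesis of Theorem \ref{theo e} is met.

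Now fix $X \in \mathcal{R}_\ddagger$ which cannot be approximated by vector fields exhibiting a homoclinic tangency. By membership in $\mathcal{R}$, the centralizer $\mathfrak{C}^1(X)$ is quasi-trivial, hence in particular collinear in the sense of Definition \ref{def.collinear}. By membership in $\mathcal{R}_{KS}$, every singularity of $X$ is hyperbolic. By membership in $\mathcal{R}_{CY}$ together with the hypothesis that $X$ is far from tangencies, $X$ admits a countable (indeed finite) spectral decomposition. All three hypotheses of Theorem \ref{theo e} are thus satisfied, and we conclude that $\mathfrak{C}^1(X)$ is trivial.

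The only conceptually non-trivial ingredient is the appeal to \cite{CrovisierYang}: one must check that their statement about $3$-dimensional diffeomorphisms/flows away from tangencies does yield a spectral decomposition into finitely many transitive compact invariant pieces that exhausts $\Omega(X)$, not merely the chain-recurrent structure. Granted this, the rest of the argument is essentially a bookkeeping of residual sets and a direct application of Theorems \ref{theo d} and \ref{theo e}; no further dynamical input is needed.
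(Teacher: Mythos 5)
Your argument is correct and is essentially the paper's own: the corollary is deduced there exactly by combining the generic quasi-triviality of Theorem \ref{theo d} (hence collinearity and hyperbolicity of singularities on a residual set) with the Crovisier--Yang result that a $C^1$-generic three-dimensional vector field far from homoclinic tangencies admits a finite spectral decomposition, and then applying Theorem \ref{theo e}. The caveat you raise about \cite{CrovisierYang} is the same input the paper assumes, so nothing further is needed.
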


As a simple application of Proposition \ref{premier theorem} and Theorem \ref{theo e}, we obtain the triviality of the centralizer of the flow introduced in \cite{Artigue0}. This example is a transitive Komuro expansive flow on the three-sphere such that all its singularities are hyperbolic. In particular, by the discussion in \cite{Artigue}, this flow is separating.  

\subsection{First integrals and trivial $C^1$-centralizers}
Recall that for any $X \in \mathfrak{X}^1(M)$, we let $\mathfrak{I}^1(X):=\{f \in C^1(M,\R): X \cdot f\equiv 0\}$ be the set of all $C^1$ functions  which are invariant under $X$. As an easy consequence of Theorem \ref{res qu tri}, we obtain the following lemma.

\begin{lemma}\label{deuxie cor}
Let  $X \in \mathfrak{X}^1(M)$. Assume that the singularities of $X$ are hyperbolic and that the $C^1$-centralizer of $X$ is collinear. Then $X$ has trivial $C^1$-centralizer if and only if the set of first integrals of $X$ is trivial, i.e., $ \mathfrak{I}^1(X)\simeq \R$. 
\end{lemma}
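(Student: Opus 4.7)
The plan is to deduce this lemma almost directly from Theorem \ref{res qu tri} (equivalently, from the explicit description $\mathfrak{C}^1(X)=\{fX : f\in\mathfrak{I}^1(X)\}$ given in Theorem~\ref{thm.quasitriviality}), which applies under the stated hypotheses. Both implications follow by unpacking this identification, once one verifies that the correspondence $f\mapsto fX$ between $\mathfrak{I}^1(X)$ and $\mathfrak{C}^1(X)$ is a bijection.

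For the ``if'' direction, I would assume $\mathfrak{I}^1(X)\simeq\R$, i.e.\ every first integral of $X$ is constant. By Theorem~\ref{res qu tri}, any $Y\in\mathfrak{C}^1(X)$ can be written as $Y=fX$ for some $f\in\mathfrak{I}^1(X)$. The assumption forces $f\equiv c$ for some $c\in\R$, so $Y=cX$, which is exactly the definition of a trivial centralizer.

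For the ``only if'' direction, I would assume $\mathfrak{C}^1(X)=\{cX:c\in\R\}$. Given any first integral $f\in\mathfrak{I}^1(X)$, the Leibniz rule for the Lie bracket gives
\[
[X,fX]=(Xf)X+f[X,X]=0,
\]
so $fX\in\mathfrak{C}^1(X)$. By the triviality hypothesis, $fX=cX$ for some $c\in\R$. Evaluating at any point of $M_X$ yields $f(x)=c$ on $M_X$.

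The only mild step is to promote $f\equiv c$ from $M_X$ to the whole of $M$. This uses the hyperbolicity assumption: hyperbolic singularities are isolated, so $\mathrm{Zero}(X)$ is a finite set and $M_X$ is open and dense in $M$ (recall $\dim M\geq 2$, so $M$ is connected). Continuity of $f$ then forces $f\equiv c$ on $M$, hence $\mathfrak{I}^1(X)=\R$. There is no serious obstacle here; the content of the lemma is entirely packaged in Theorem~\ref{res qu tri}, and the argument is just bookkeeping around the bijection $f\leftrightarrow fX$.
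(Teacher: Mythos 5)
Your proof is correct and follows exactly the route the paper intends: it states the lemma as an immediate consequence of Theorem \ref{res qu tri} (in the precise form $\mathfrak{C}^1(X)=\{fX: f\in\mathfrak{I}^1(X)\}$ of Theorem \ref{thm.quasitriviality}), and your two directions are just the unpacking of that identification together with the Leibniz computation $[X,fX]=(X\cdot f)X$. The only cosmetic remark is that in the last step what matters is the density of $M_X$ (hyperbolic zeros are isolated, hence finite), not connectedness of $M$, but you do invoke density, so the argument stands.
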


As an immediate consequence of Theorem \ref{thm.continuousfinitetrivial} and Lemma \ref{deuxie cor}, we obtain:
\begin{corollary}
Let  $X \in \mathfrak{X}^1(M)$ be such that $X$ admits a countable spectral decomposition and all its singularities are hyperbolic. 
If  the $C^1$-centralizer of $X$ is collinear, then it is trivial. 
\end{corollary}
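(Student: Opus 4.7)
The plan is to derive this corollary by directly chaining together the two results that immediately precede it in the text, namely Lemma~\ref{deuxie cor} and Theorem~\ref{thm.continuousfinitetrivial}. Since the hypotheses of the corollary precisely combine the hypotheses of those two statements, essentially no new work should be required.

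More concretely, I would start by invoking Lemma~\ref{deuxie cor}: because $X$ has only hyperbolic singularities and a collinear $C^1$-centralizer, the triviality of $\mathfrak{C}^1(X)$ is equivalent to the triviality of $\mathfrak{I}^1(X)$, i.e.\ to the statement that every $C^1$ first integral $f\colon M\to\R$ of $X$ is constant. So it suffices to show $\mathfrak{I}^1(X)\simeq\R$.

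For this, I would appeal to Theorem~\ref{thm.continuousfinitetrivial}. Any $f \in \mathfrak{I}^1(X)$ is in particular a continuous $X$-invariant function on $M$, and the assumption that $X$ admits a countable spectral decomposition is exactly the hypothesis of that theorem. Therefore $f$ must be constant, proving $\mathfrak{I}^1(X)\simeq\R$, which by the equivalence from Lemma~\ref{deuxie cor} yields the desired triviality of $\mathfrak{C}^1(X)$.

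There is no real obstacle here; the statement is a formal consequence of the two results stated just above it. The only thing to be careful about is keeping the logical direction straight: collinearity plus hyperbolicity of singularities reduces the problem to the rigidity of first integrals, and the countable spectral decomposition then kills the first integrals via the connectedness/transitivity argument already carried out in the proof of Theorem~\ref{thm.continuousfinitetrivial}.
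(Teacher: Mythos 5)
Your proposal is correct and is exactly the paper's argument: the corollary is stated there as an immediate consequence of Lemma~\ref{deuxie cor} (reduction to triviality of $\mathfrak{I}^1(X)$ via collinearity and hyperbolic singularities) combined with Theorem~\ref{thm.continuousfinitetrivial} (constancy of continuous invariant functions under a countable spectral decomposition). No further comments are needed.
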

The following lemma will be used several times in this section.

\begin{lemma}\label{lemme periodic points}
Let $M$ be a compact and boundaryless (closed) manifold of dimension $d \geq 1$ and let $X \in \mathfrak{X}^1(M)$. Then, for any $f \in \mathfrak{I}^1(X)$ and for any hyperbolic singularity or hyperbolic periodic point $p \in\mathrm{Zero}(X) \cup \mathrm{Per}(X)$, we have $\nabla f(p)= 0$. 
\end{lemma}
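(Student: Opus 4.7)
The plan is to exploit the invariance relation $f\circ X_t=f$ for every $t\in\R$, which follows from $X\cdot f\equiv 0$, and then to linearize at the critical element $p$. Differentiating the identity $f\circ X_t=f$ in the space variable and evaluating at a point $p$ that is fixed by the flow (either because $X(p)=0$ and hence $X_t(p)=p$ for every $t$, or because $p$ is periodic of period $T$ and we take $t$ a multiple of $T$) gives the key relation
\begin{equation}\label{eqn:plan1}
Df(p)\cdot DX_t(p)=Df(p).
\end{equation}
Thus the covector $Df(p)\in T_p^*M$ is a left $1$-eigenvector of $DX_t(p)$ for the appropriate $t$.

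For a hyperbolic singularity, \eqref{eqn:plan1} holds for every $t\in\R$. Writing $DX_t(p)=\exp(t\,DX(p))$ and differentiating at $t=0$ yields $Df(p)\cdot DX(p)=0$. The hyperbolicity of $p$ means that $DX(p)$ has no zero eigenvalue, so it is invertible, and therefore $Df(p)=0$, i.e. $\nabla f(p)=0$. This disposes of the singularity case in a couple of lines, without needing any normal form argument.

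For a hyperbolic periodic point $p$ of period $T>0$, \eqref{eqn:plan1} gives $Df(p)\cdot DX_T(p)=Df(p)$, so $Df(p)$ lies in the left $1$-eigenspace of $DX_T(p)$. Hyperbolicity of $p$ means that the linearized Poincaré return map on a transverse section to the orbit has no eigenvalue on the unit circle; combined with the tautological eigenvalue $1$ coming from the flow direction (because $DX_T(p)\cdot X(p)=X(p)$), this implies that $1$ is a \emph{simple} eigenvalue of $DX_T(p)$, with right eigenspace $\langle X(p)\rangle$. Dually, the left $1$-eigenspace is one-dimensional, spanned by some nonzero covector $\ell_0$ with $\ell_0(X(p))\neq 0$ (since the direct sum of the other generalized eigenspaces is annihilated by $\ell_0$). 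Hence $Df(p)=c\,\ell_0$ for some $c\in\R$. On the other hand, $X\cdot f\equiv 0$ read at $p$ gives $Df(p)\cdot X(p)=0$, which forces $c\,\ell_0(X(p))=0$, and therefore $c=0$. We conclude that $\nabla f(p)=0$.

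The only subtle point is the simplicity of the eigenvalue $1$ in the periodic case, which is a standard consequence of the hyperbolicity of the linearized Poincaré map; the rest is routine linear algebra. Both cases thus reduce to the same idea: $Df(p)$ must be a left $1$-eigenvector of the time-$T$ derivative, and hyperbolicity leaves no room for such a covector other than the zero covector.
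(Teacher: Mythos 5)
Your proof is correct, but it takes a genuinely different route from the paper's. Your mechanism is purely local linear algebra at the critical element: differentiating $f\circ X_t=f$ at a point fixed by $X_t$ gives $Df(p)\,DX_t(p)=Df(p)$; at a hyperbolic zero, invertibility of $DX(p)$ kills $Df(p)$, and at a hyperbolic periodic point the simplicity of the Floquet eigenvalue $1$ (right eigenspace $\langle X(p)\rangle$) forces any invariant covector to pair nontrivially with $X(p)$, which is ruled out by $Df(p)\cdot X(p)=(X\cdot f)(p)=0$. The paper instead deduces the singularity case from Propositions \ref{prop.quasisaddles2} and \ref{p.sinkextension}, which were proved for the harder extension problem where $f$ is a priori only $C^1$ on $M_X$ (there one shows $\nabla f(x)\to 0$ as $x\to\sigma$ via the $\lambda$-lemma at saddles and a separate eigenvector argument at sinks/sources), and it treats a hyperbolic periodic point geometrically: $f$ is constant on $W^s(p)$ and $W^u(p)$, so if $\nabla f(p)\neq 0$ the level set would locally be a codimension-one submanifold whose tangent space at $p$ contains $E^s(p)\oplus\langle X(p)\rangle\oplus E^u(p)=T_pM$, a contradiction. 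Your version is shorter, needs only differentiability of $f$ at $p$, and avoids invariant-manifold theory entirely; what it does not buy is the extension statements the paper reuses elsewhere (where $f$ lives only on $M_X$), and it is the paper's invariant-manifold mechanism that transfers to the nonuniformly hyperbolic setting of Theorem \ref{thm.NUH} via Pesin theory. The one step you should write out rather than just flag is the standard simplicity claim: with respect to $\langle X(p)\rangle\oplus N_{X,p}$ the map $DX_T(p)$ is block triangular with spectrum $\{1\}\cup\operatorname{spec}(P^X_{p,T})$, hyperbolicity keeps $\operatorname{spec}(P^X_{p,T})$ off the unit circle, hence $1$ is a simple eigenvalue and any left $1$-eigenvector annihilates $E^s(p)\oplus E^u(p)$, so it cannot vanish on $X(p)$.
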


\begin{proof}
Let $X \in \mathfrak{X}^1(X)$ be as above and let $f \in \mathfrak{I}^1(X)$. If $\sigma \in \mathrm{Zero}(X)$ is a hyperbolic singularity, then it follows from Propositions  \ref{prop.quasisaddles2} and \ref{p.sinkextension} that $\nabla f(\sigma)=0$.  Assume now that for some regular hyperbolic  periodic point $p \in \mathrm{Per}(X)$, we have $\nabla f(p) \neq 0$. Then, we have the hyperbolic decomposition along its orbit given by
\[
T_{\orb(p)}M = E^s \oplus \langle X \rangle \oplus E^u.
\]
Note that $f|_{W^s(p)} = f|_{W^u(p)} = f(p)$: this follows easily from the $X$-invariance of $f$. Since $\nabla f(p) \neq 0$, by the local form of submersion, we have that $\Sigma := f^{-1}(\{f(p)\})$ is locally contained in a submanifold $D$ of dimension $d-1$. In particular, $T_pD$ is a subspace of dimension $d-1$ contained in $T_pM$. However, our previous observation implies that $W^s_{\mathrm{loc}}(p)\subset \Sigma$ and $W^u_{\mathrm{loc}}(p) \subset \Sigma$. This implies that $E^s(p) \oplus \langle X(p) \rangle \oplus E^u(p) \subset T_pD$. By the hyperbolicity of $p$, we have that $T_pM =E^s(p)\oplus \langle X(p) \rangle \oplus E^u(p)$, but this is a contradiction with the fact that $T_pD$ has dimension $d-1$.  
\end{proof}

For surfaces where the Poincar\'e-Bendixson Theorem holds true, any level set of an invariant function $f$ has to contain a singularity or a periodic orbit, which forces $f$ to be constant in the generic case where the latter are hyperbolic. %unless $f$  is constant: %thanks to the existence we have the following result.
\begin{proposition}\label{proposition surfaces}
Let $M:=\mathbb{S}^2$ be the two dimensional sphere, %a compact connected surface embedded in $\R^2$ or $\mathbb{S}^2$,
 and let $X \in \mathfrak{X}^1(M)$ %. Suppose that $X$ 
%verifies the following conditions:
%\begin{itemize}
%\item 
be such that every singularity and periodic orbit of $X$ is hyperbolic.
%;
%\item $\Omega(X) = \overline{\mathrm{Per}(X)}$.
%\end{itemize}
%There exists a residual subset $\mathcal{R}_\dagger \subset \mathfrak{X}^1(M)$ such that for any $X \in \mathcal{R}_\dagger$, 
Then any continuous function that is invariant under the flow $X$ is constant.
\end{proposition}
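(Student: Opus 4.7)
The plan is to argue by contradiction on $\mathbb{S}^2$, combining the Poincar\'e--Bendixson dichotomy with a simple countability observation. I would suppose that $f\colon M\to\R$ is a continuous $X$-invariant function that is not constant. Since $M=\mathbb{S}^2$ is connected, $f(M)=[a,b]$ for some $a<b$, which is in particular uncountable.

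Next I would show that the set of critical values
$$V:=\{f(\sigma):\sigma\in\zero(X)\}\cup\{f(\gamma):\gamma\text{ a periodic orbit of }X\}$$
is at most countable. Hyperbolic singularities are isolated, so by compactness of $M$ there are only finitely many of them. For periodic orbits I would use that on the two-dimensional manifold $M$, the transverse hyperbolic splitting $E^s\oplus E^u$ over any periodic orbit has total dimension one, so every hyperbolic periodic orbit of $X$ is either an attracting (sink) or a repelling (source) closed orbit. Each such orbit has a nonempty open basin of attraction (resp.\ repulsion), and the basins of distinct sinks (resp.\ sources) are pairwise disjoint, so by second countability of $M$ there are at most countably many periodic orbits; hence $V$ is at most countable.

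Then I would pick any $c\in[a,b]\setminus V$, which is nonempty since $[a,b]$ is uncountable, and choose a point $x\in f^{-1}(\{c\})$. The Poincar\'e--Bendixson theorem on $\mathbb{S}^2$ asserts that $\omega(x)$ is either a singularity, a periodic orbit, or a \emph{polycycle}, that is, a finite union of singularities connected by regular orbits which are heteroclinic or homoclinic among them. Since $f$ is continuous and constant along each $X$-orbit, $f\equiv c$ on $\omega(x)$. In each of the three cases this forces $c$ to equal $f(\sigma)$ for some singularity $\sigma$, or $f(\gamma)$ for some periodic orbit $\gamma$; hence $c\in V$, contradicting the choice of $c$.

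The main obstacle to watch is the careful invocation of the full Poincar\'e--Bendixson theorem, including the polycycle case, and the observation that on a surface every hyperbolic periodic orbit is automatically a sink or a source; once these two facts are in hand, the argument reduces to the elementary observation that an uncountable interval cannot be covered by a countable set of values.
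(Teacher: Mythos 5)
Your proof is correct and follows essentially the same route as the paper: a contradiction via Poincar\'e--Bendixson combined with the fact that the uncountable interval $f(M)=[a,b]$ cannot be exhausted by the countably many values that $f$ takes on the singularities and the (hyperbolic, hence at most countably many) periodic orbits. The only cosmetic differences are that you exclude the periodic-orbit values up front, which obliges you to also treat the polycycle case of Poincar\'e--Bendixson (the paper's choice of $c\notin f(\mathrm{Zero}(X))$ rules that case out and yields a hyperbolic periodic orbit in each of uncountably many level sets), and that you justify the countability of periodic orbits via the sink/source basin argument, a point the paper simply asserts.
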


\begin{proof}
Let $X \in \mathfrak{X}^1(M)$ be as above, and let
%Let $\mathcal{R}_\dagger \subset \mathfrak{X}^1(M)$ be the set of all vector fields whose singularities are hyperbolic and such that $\overline{\mathrm{Per}(X)}=\Omega(X)$. It is residual by Theorem and by Kupka-Smale's Theorem. 
% Let $X \in \mathcal{U}_\dagger$ and let 
  $f \colon X \to \R$ be a continuous function which satisfies $f(X_t(x))=f(x)$ for all $x \in M$ and $t \in \R$. Assume that $f$ is non-constant. Then $f(M)=[a,b]$, with $a<b\in \R$. By assumption, each singularity of $X$ is hyperbolic, hence there are finitely many of them. %and $S:=[a,b]-f(\mathrm{Zero}(X))$ is uncountable. 
  Let $c \in [a,b]-f(\mathrm{Zero}(X))$. For any $x \in f^{-1}(\{c\})$, it follows from Poincar\'e-Bendixson Theorem that $\omega(x)$ is a closed orbit formed by regular points, and by our assumption, $\omega(x)$ is hyperbolic. Moreover, $\omega(x) \subset f^{-1}(\{c\})$, since $f$ is invariant under $X$.  In particular, for each $c \in  [a,b]-f(\mathrm{Zero}(X))$, the level set $f^{-1}(\{c\})$ contains a hyperbolic periodic orbit. This is a contradiction, since $ [a,b]-f(\mathrm{Zero}(X))$ is uncountable, while there can be at most countably many hyperbolic periodic orbits.
\end{proof}

\subsection{Some results in higher regularity}
Using Sard's theorem and Pesin's theory we can obtain more information about the invariant functions. 

\begin{theorem}
\label{thm.sardinvariantfunction}
Let $M$ be a closed and connected Riemannian manifold of dimension $d \geq 1$ and let $X\in \mathfrak{X}^1(M)$. Suppose that $X$ verifies the following conditions:
\begin{itemize}
\item every singularity and periodic orbit of $X$ is hyperbolic;
\item $\Omega(X) = \overline{\mathrm{Per}(X)}$.
\end{itemize}
Then any   function $f \colon M_X \to \R$ which is $X$-invariant and such that $f|_{M_X}$ is of class $C^d$  is constant.
\end{theorem}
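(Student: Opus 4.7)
The plan is to combine Sard's theorem with a dynamical analysis of $\omega$-limit sets. Assume for contradiction that $f$ (viewed as a continuous function on $M$, of class $C^d$ on $M_X$) is non-constant. Then $f(M)$ is a non-degenerate interval $[a,b]$ with $a<b$, and I will produce a measure-theoretic contradiction by showing that $f(M_X)$ must be contained in a Lebesgue-null subset of $\R$.

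The first step is to assemble the ``exceptional set'' of values. Sard's theorem, applied to the $C^d$ map $f|_{M_X}\colon M_X\to\R$ whose domain has dimension $d$, guarantees that the set $C$ of critical values of $f|_{M_X}$ has Lebesgue measure zero in $\R$. Since hyperbolic singularities are isolated and $M$ is compact, $\mathrm{Zero}(X)$ is finite, so $f(\mathrm{Zero}(X))$ is finite as well. Set $B\eqdef C\cup f(\mathrm{Zero}(X))$; this set still has measure zero.

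The main step is to prove that $f(M_X)\subset B$. Fix $x\in M_X$ and examine $\omega(x)$, which is non-empty, connected, compact and $X$-invariant, and is contained in $\Omega(X)=\overline{\mathrm{Per}(X)}$ by hypothesis. If $\omega(x)\subset\mathrm{Zero}(X)$, then since $\mathrm{Zero}(X)$ is discrete and $\omega(x)$ connected, $\omega(x)=\{\sigma\}$ for a single hyperbolic singularity $\sigma$; continuity of $f$ along the orbit $X_{t_n}(x)\to\sigma$, combined with $X$-invariance $f(X_{t_n}(x))=f(x)$, then forces $f(x)=f(\sigma)\in f(\mathrm{Zero}(X))\subset B$. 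Otherwise there exists $y\in\omega(x)\cap M_X$, and the same continuity argument gives $f(y)=f(x)$. Because $y\in\overline{\mathrm{Per}(X)}\cap M_X$ and $M_X$ is open, there is a sequence of regular periodic points $p_n\to y$, each of which is hyperbolic by hypothesis. Lemma~\ref{lemme periodic points} gives $\nabla f(p_n)=0$, and continuity of $\nabla f$ on $M_X$ (which is where the $C^d$, in particular $C^1$, hypothesis enters) yields $\nabla f(y)=0$, so $f(x)=f(y)\in C\subset B$.

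Having established $f(M_X)\subset B$, the contradiction is immediate: since $f(M)=f(M_X)\cup f(\mathrm{Zero}(X))=[a,b]$ and $f(\mathrm{Zero}(X))$ is finite, the set $f(M_X)$ contains $[a,b]$ minus a finite set and therefore has positive Lebesgue measure, while $B$ has measure zero. The heart of the argument, and where the hypotheses genuinely interact, is the $\omega$-limit analysis of step three: it is what transports the vanishing of $\nabla f$ supplied by Lemma~\ref{lemme periodic points} at hyperbolic periodic orbits to an arbitrary point of $M_X$, using $\Omega(X)=\overline{\mathrm{Per}(X)}$ as the bridge. The delicate point to track is that the convergence $p_n\to y$ must take place inside $M_X$, which requires choosing $y\in\omega(x)\cap M_X$ and invoking openness of $M_X$; the sharpness of the hypothesis $f|_{M_X}\in C^d$, rather than merely $C^1$, is dictated solely by the regularity requirement of Sard's theorem in dimension $d$.
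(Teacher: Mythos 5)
Your argument is, at bottom, the same as the paper's: both combine Sard's theorem for the $C^d$ map $f|_{M_X}$, Lemma~\ref{lemme periodic points} at hyperbolic regular periodic points, the hypothesis $\Omega(X)=\overline{\mathrm{Per}(X)}$, and the passage from arbitrary points to the non-wandering set through compact invariant sets. You run it contrapositively (every value attained on $M_X$ is either a critical value or a value at a zero, so $f(M_X)$ is Lebesgue-null, contradicting $f(M)=[a,b]$), whereas the paper fixes one regular value $c\notin f(\mathrm{Zero}(X))$, notes that the compact invariant level set $f^{-1}(\{c\})$ meets $\Omega(X)$ inside $M_X$ at a point where $\nabla f\neq 0$, and transports this to a nearby hyperbolic periodic point to contradict the lemma. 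These are the same ideas used in opposite directions, including the same use of continuity of $\nabla f$ on $M_X$; your local application of Lemma~\ref{lemme periodic points} to an $f$ that is only $C^1$ near the periodic point is legitimate (the lemma's proof is local, and the paper uses it in exactly the same way).

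The one genuine gap is the opening parenthesis ``viewed as a continuous function on $M$''. In the statement $f$ is only given on $M_X$, and the existence of a continuous extension to $\mathrm{Zero}(X)$ is not free: it is precisely the first step of the paper's proof, obtained there from Propositions~\ref{prop.quasisaddles} and~\ref{p.sinkextension}, and it is where hyperbolicity of the zeros is used beyond mere finiteness. Your proof genuinely needs it: Case~1 of your trichotomy ($\omega(x)=\{\sigma\}$) invokes continuity of $f$ at $\sigma$, and you also use $f(M)=[a,b]$ and the finiteness of $f(\mathrm{Zero}(X))$. Invariance alone does not provide continuity at a sink or source: for the north--south flow on $\mathbb{S}^2$ (two hyperbolic zeros, $\Omega(X)=\overline{\mathrm{Per}(X)}=\{N,S\}$), the function $f=x/\sqrt{x^2+y^2}$ is smooth on $M_X$, invariant (constant on meridians) and non-constant, with no continuous extension to the poles; so if $\omega(x)$ is a single sink, nothing forces $f(x)$ into a small set of values. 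Hence the extension must either be argued (for saddles, Proposition~\ref{prop.quasisaddles} gives it from invariance and continuity on $M_X$; at sinks and sources one needs the extra structure present in the paper's applications, where $f$ is already $C^1$ on all of $M$) or be recorded as part of the hypotheses; it cannot simply be assumed. Apart from this point, your steps are correct and match the paper's proof.
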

\begin{proof}
Let $f \colon M_X \to \R$ be an $X$-invariant function such that $f|_{M_X}$ is of class $C^d$. By assumption, each singularity $\sigma \in \mathrm{Zero}(X)$ is hyperbolic, thus by Propositions \ref{prop.quasisaddles}  and \ref{p.sinkextension}, $f$ admits a continuous  extension to the whole manifold $M$. Suppose that $f$ is not constant. Then, there exist two real numbers $a<b$ such that $f(M)=[a,b]$. All the singularities are hyperbolic, hence there are at most finitely many of them.
%, and $M_X$ is connected, since  $M$ has dimension $d \geq 2$. 
In particular, there exists a non-trivial open interval $I \subset f(M)-f(\mathrm{Zero}(X))$.  Since $f|_{M_X}$ is of class $C^d$, then by Sard's theorem, there exists a set $R\subset I$ of full Lebesgue measure, such that each $c\in R$ is a regular value of $f$, that is, any $x\in f^{-1}(\{c\})$ verifies $\nabla f(x) \neq 0$.

 Fix a value $c\in R - f(\mathrm{Zero}(X))$. By the same reason as in the proof of Theorem \ref{thm.continuousfinitetrivial}, we have that $f^{-1}(\{c\}) \cap \Omega(X) \neq 0$. The fact that $c$ is a regular value implies that there exists $y\in \Omega(X)\cap M_X$ such that $\nabla f(y) \neq 0$, thus by the continuity of $X$  and $\nabla f$, there exists a neighborhood $\mathcal{V} \subset M_X$ of $y$ such that  the gradient of $f$ is non-zero at any $q \in \mathcal{V}$.
Using the density of periodic points in the non-wandering set, we conclude that there exists a regular periodic point $p\in \mathrm{Per}(X)\cap \mathcal{V}$ such that $\nabla f(p) \neq 0$. By Lemma \ref{lemme periodic points}, we get a contradiction, since by assumption, the point $p$ is hyperbolic. 
%, we have the hyperbolic decomposition along its orbit given by
%\[
%T_{\orb(p)}M = E^s \oplus \langle X \rangle \oplus E^u.
%\]
%Note that $f|_{W^s(p)} = f|_{W^u(p)} = f(p)$: this follows easily from the $X$-invariance of $f$. Since $\nabla f(p) \neq 0$, by the local form of submersion, we have that $\Sigma := f^{-1}(\{f(p)\})$ is locally contained in a submanifold $D$ of dimension $d-1$. In particular, $T_pD$ is a subspace of dimension $d-1$ contained in $T_pM$. However, our previous observation implies that $W^s_{\mathrm{loc}}(p)\subset \Sigma$ and $W^u_{\mathrm{loc}}(p) \subset \Sigma$. This implies that $E^s(p) \oplus \langle X(p) \rangle \oplus E^u(p) \subset T_pD$. By the hyperbolicity of $p$, we have that $T_pM =E^s(p)\oplus \langle X(p) \rangle \oplus E^u(p)$, this is a contradiction with the fact that $T_pD$ has dimension $d-1$.  
\end{proof}

As a consequence of Theorem \ref{thm.sardinvariantfunction}, we can prove Theorem \ref{thm.CDtrivial}. 
\begin{proof}[Proof of Theorem \ref{thm.CDtrivial}]
Let $X\in \mathfrak{X}^d(M)$ be as above and let $Y \in  \mathfrak{C}^d(X)$. By the collinearity of $\mathfrak{C}^d(X)$, and since all the  singularities  of $X$ are hyperbolic, Lemma \ref{lemme facile} and Theorem  \ref{thm.quasitriviality} imply that $Y=fX$, where $f$ is a $X$-invariant $C^1$ function such that $f|_{M_X}$ is of class $C^d$. We deduce from  Theorem \ref{thm.sardinvariantfunction} that $f$ is constant. Therefore, $\mathfrak{C}^d(X)$ is trivial. 
\end{proof}

Using the ideas from \cite{Mane}, we are able to prove Theorem \ref{thm.equivanlentcentralizer}.
\begin{proof}[Proof of Theorem \ref{thm.equivanlentcentralizer}]
By Kupka-Smale Theorem (see Theorem $3.1$ in \cite{PalisdeMelo}), there exists an open and dense subset $\mathcal{U}_{KS}\subset \mathfrak{X}^d(M)$ such that for any $X \in \mathcal{U}_{KS}$, any singularity of $X$ is hyperbolic. Let $S(M)$ be the pseudometric space of subsets of $M$ with the Hausdorff
pseudometric. By \cite{Takens}, there exists a residual subset $\mathcal{R}_d\subset \mathfrak{X}^d(M)$ such that the function $\Omega \colon \mathcal{R}_d \to S(M)$ which assigns to $X \in \mathcal{R}_d$ its non-wandering set is continuous.  Let us define the residual set $\mathcal{R}_T:=\mathcal{U}_{KS} \cap \mathcal{R}_d\subset \mathfrak{X}^d(M)$, and let $X \in \mathcal{R}_T$. Notice that $X$ has finitely many singularities, since they are hyperbolic. 

Suppose that $X$ has collinear $C^d$-centralizer and let $Y \in \mathfrak{C}^d(X)$. By the collinearity, as a consequence of Lemma \ref{lemme facile} and Theorem  \ref{thm.quasitriviality}, we have $Y=fX$, for some $X$-invariant $C^1$ function $f$ such that $f|_{M_X}$ is of class $C^d$. Assume that $f$ is non-constant. Then, as in the proof of Theorem \ref{thm.sardinvariantfunction}, $f(M)-f(\mathrm{Zero}(X))$ contains a non-trivial open interval $I\subset \R$. Consider a regular value $c \in I$ (this set is non-empty by Sard's theorem) and let $M_c = f^{-1}(\{c\})$. We now describe Ma\~n\'e's argument from Theorem 1.2 in \cite{Mane}. Let $U$ be a small open neighborhood of $M_c$. Since $\Omega(X) \cap U \neq \emptyset$, by the continuity of $\Omega(\cdot)$ at $X$, for any $X'$ in a neighborhood of $X$ verifies $\Omega(X') \cap U \neq \emptyset$. Consider the gradient $\nabla f|_{M_c}$, since it is nonzero on $M_c$ we can extend it to a vector field $V\colon U \to TU$ without singularities. We can take a $C^1$ vector field $Z$ that is $C^1$-arbitrarily close to the zero vector field, with the following property: for any $x\in U$, $(Z(x), V(x))>0$. For the vector field $X' = X + Z$, it is easy to verify that $\Omega(X') \cap U =\emptyset$, a contradiction. We conclude that $f$ is constant, and thus,  $\mathfrak{C}^d(X)$ is trivial. 
\end{proof}

\subsubsection{$C^2$ flows with hyperbolic measures}
Consider a probability measure $\mu$ on $M$ and $X\in \mathfrak{X}^1(M)$. We say that $\mu$ is $X$-invariant if for any measurable set $A\subset M$ and any $t\in \mathbb{R}$ we have $\mu(A) = \mu(X_t(A))$. By Oseledets theorem, for $\mu$-almost every point $x$, there exist a number $1 \leq l(x)\leq d$ and $l(x)$-numbers $\lambda_1(x)<  \hdots< \lambda_{l(x)}(x) $ with the following properties: there exist $l(x)$-subspaces $E_1(x), \dots, E_{l(x)}(x)$ such that $T_xM = E_1(x) \oplus \cdots \oplus E_{l(x)}(x)$ and for each $i=1, \dots, l(x)$ and for any non zero vector $v\in E_i(x)$ we have
\[
\displaystyle \lim_{t\to \pm \infty} \frac{ \log \|DX_t(x)\cdot v\|}{t} = \lambda_i(x).
\]
The numbers $\lambda_i$ are called Lyapunov exponents. We say that $\mu$ is \emph{non-uniformly hyperbolic} if for $\mu$-almost every point all the Lyapunov exponents are non-zero except the direction generated by the vector field $X$. 

Using Pesin's theory and ideas similar to the proof of Lemma 
\ref{lemme periodic points}, we can prove Theorem \ref{thm.NUH}.

\begin{proof}[Proof of Theorem \ref{thm.NUH}]
Since the support of $\mu$ is the entire manifold, and by non-uniform hyperbolicity, we have that $X$ verifies the conditions of Proposition \ref{thm.expcollinear}, in particular, $\mathfrak{C}^1(X)$ is collinear. Let $Y$ be a vector field in the $C^1$-centralizer of $X$. there exists a $C^1$-function $f\colon M_X \to \mathbb{R}$ such that $Y=fX$ on $M_X$.

Notice that $M_X$ is a connected open and dense subset of $M$. If $f$ were not constant, then it would exist a point $p\in M_X$ such that $\nabla f(p) \neq 0$. Since this condition is open we may take the point $p$ to be a regular point of the measure $\mu$. By Pesin's stable manifold theorem, there exists a $C^1$-stable manifold, $W^s_{\mathrm{loc}}(p)$, which is tangent to $E^-(p) \oplus \langle X(p) \rangle$ on $p$. Similarly, there exists a $C^1$-unstable manifold which on $p$ is tangent to $ \langle X(p) \rangle \oplus E^+(p)$. The non-uniform hyperbolicity implies that $E^-(p) \oplus \langle X(p) \rangle \oplus E^+(p) = T_pM$.

Since $p$ is a non-singular point, we have that $f|_{W^s_{\mathrm{loc}}(p)} = f|_{W^u_{\mathrm{loc}}(p)} = f(p)$. An argument similar to the one in the proof of Theorem \ref{thm.sardinvariantfunction} gives a contradiction and we conclude that $f|_{M_X}$ is constant. This implies that the centralizer of $X$ is trivial. 
\end{proof}

%\begin{remark}
%The argument we applied above is in fact quite flexible, and we can apply it even en the following situation: let $Y$ be an Anosov vector field and let $K=\{\gamma_1,...\gamma_n\}$ be a finite set of periodic orbits. Choose $\psi: M\to [0,1]$ a smooth function such that $K=\psi^{-1}(0)$. Then, the vector field $X=fY$ has also trivial centraliser.  
%\end{remark}

\subsubsection{The $C^3$ centralizer of a $C^3$ Kinematic expansive vector field}

In dimension three, under enough regularity assumptions, we are also able to obtain triviality, for a slightly stronger notion of expansiveness.

\begin{definition}
	\label{Def.kinematic}	
	We say that $X\in\mundo$ is \emph{Kinematic expansive} if for every $\eps>0$ there exists $\delta>0$ such that if $x,y\in M$ satisfy $d(X_t(x),X_t(y))<\delta$, for every $t\in\R$ then there exists $0<|s|<\eps$ such that $y=X_s(x)$. 
\end{definition}  

The difference between the separating property and Kinematic expansiveness is that for the later even points on the same orbit must eventually separate. In \cite{Artigue} it is described a vector field on the M\"obius band which is separating but is not Kinematic expansive. 

Recall that in the statement of Theorem \ref{thm.dim3triviality}, we claim that a $C^3$-kinematic expansive flow in dimension $3$ has trivial $C^3$-centralizer. We remark that the Kinematic expansive condition does not imply that the system admits a countable spectral decomposition. Hence, we cannot use Theorem \ref{theo e} to conclude Theorem \ref{thm.dim3triviality}.

The proof of Theorem~\ref{thm.dim3triviality} is a combination of two results: Sard's Theorem and the proposition below.

\begin{proposition}
\label{p.noaseparatingt2}
Let $\mathbb{T}^2$ denote the two dimensional torus. If $X\in\mathfrak{X}^2(\mathbb{T}^2)$ and if $\zero(X)=\emptyset$ then $X$ is not Kinematic expansive.
\end{proposition}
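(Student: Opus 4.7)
My plan would use the classification of non-vanishing $C^2$ vector fields on $\mathbb{T}^2$. Since $X$ has no zeros, a standard construction on surfaces produces a simple closed $C^1$ transversal $\Sigma \cong S^1$ meeting every $X$-orbit, together with a $C^2$ Poincar\'e first-return map $P\colon \Sigma \to \Sigma$ and a bounded $C^1$ return-time function $\tau\colon \Sigma \to (0,\infty)$. The flow $X$ is then conjugate to the suspension of $P$ under $\tau$. The argument splits according to the rotation number $\rho(P)$ and aims, in each case, to exhibit a pair of distinct $X$-orbits that remain arbitrarily close for all $t \in \R$, contradicting Kinematic expansiveness.

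The irrational case $\rho(P) \notin \mathbb{Q}$ is where the $C^2$ regularity is essential: by Denjoy's theorem, there is a homeomorphism $h\colon \Sigma \to S^1$ conjugating $P$ to the rigid rotation $R_{\rho(P)}$. The plan is to transfer to $X$ the equicontinuity of the irrational linear flow on $\mathbb{T}^2$. Working in the suspension model, the conjugates $\rho_s \eqdef h^{-1} \circ R_s \circ h$ commute with $P$ (since rotations commute), so they lift to a continuous transverse $\R$-action on the suspension that commutes with the suspension flow. Transporting this action back to $\mathbb{T}^2$ yields, for every small $s$, a pair $(p, V_s(p))$ of points on distinct $X$-orbits satisfying $d(X_t(p), X_t(V_s(p))) = d(X_t(p), V_s(X_t(p))) \leq \omega_V(s)$ uniformly in $t$; letting $s \to 0$ violates Kinematic expansiveness.

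The rational case $\rho(P) \in \mathbb{Q}$ is conceptually simpler. After passing to an iterate we may assume $P$ has a fixed point, and we pick a maximal open arc $J=(p_0,p_1)\subset \Sigma$ without $P$-fixed points. The corresponding $X$-orbits $\gamma_0,\gamma_1$ are periodic, and by monotonicity of $P|_J$ every $X$-orbit starting on $J$ is asymptotic to one of them in forward time and to the other in backward time. When $\gamma_0,\gamma_1$ are hyperbolic, I would invoke the smooth asymptotic-phase (isochron) maps defined on their basins, which are continuous; for $x,y$ close in the band bounded by $\gamma_0$ and $\gamma_1$ the asymptotic phases agree up to small error, so $X_t(x)$ and $X_t(y)$ stay close as $t \to \pm\infty$, and uniform continuity of the flow on compact time intervals handles the intermediate range. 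The non-hyperbolic case is treated by a similar monotone-convergence argument, relying only on the order-preserving structure of $P$ on $J$.

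The main obstacle I anticipate is in the irrational case: the descent of the transverse rotation to the suspension is clean when $\tau$ is constant, but here $\tau$ need not be, so the identification $(z, \tau(z))\sim (P(z), 0)$ is not preserved by the naive formula $(z,t) \mapsto (\rho_s(z),t)$. A possible resolution is first to pass to the reparametrized field $\tilde X \eqdef \tau(\pi(\cdot))^{-1} X$, whose return time to $\Sigma$ is identically $1$, establish the failure of Kinematic expansiveness for $\tilde X$ via the commuting action above, and finally transfer the failure back to $X$. This last step requires controlling the drift $\sum_{k<n}[\tau(P^k(x))-\tau(P^k(y))]$ of Birkhoff sums of $\tau$ between nearby $P$-orbits; in the Denjoy setting, this follows from unique ergodicity of $P$ together with a Denjoy--Koksma type bound applied to the $C^1$ (hence bounded-variation) function $\tau$, ensuring that the reparametrization between $X$ and $\tilde X$ does not destroy the uniform closeness of the orbits produced in the suspension model.
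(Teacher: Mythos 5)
Your plan has two genuine gaps, and the first one is fatal to the irrational case. There you claim, via the commuting transverse maps $h^{-1}\circ R_s\circ h$ on a constant-return-time model plus a transfer back to $X$ controlled by Denjoy--Koksma, to produce two points on \emph{distinct} orbits with $d(X_t(x),X_t(y))<\varepsilon$ for all $t\in\R$. That would show suspensions of irrational circle maps are not even \emph{separating}, which is strictly stronger than the proposition and is exactly what the paper records as an open question (``we do not know if there exists a separating suspension of an irrational rotation''). The breakdown is in the transfer step: for the two orbits to stay close under $X$ at a common time, you need the drift $D_n=\sum_{k<n}\big(\tau(P^k x)-\tau(P^k y)\big)$ to be uniformly small in $n$. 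Denjoy--Koksma (or its Navas--Triestino refinement) controls Birkhoff sums of $\tau$ only along the denominators $q_n$ of the rotation number, and only up to $\mathrm{Var}(\tau)$ or, with more regularity, up to $o(1)$ \emph{at those times}; for intermediate $n$ there is no such control, and since $x,y$ lie on distinct orbits the gaps do not shrink (the invariant measure of the arc $[P^kx,P^ky]$ is constant in $k$), so the Lipschitz bound $C\sum_k|P^kx-P^ky|$ grows linearly and nothing forces $D_n$ to stay small. The paper's proof sidesteps this: it compares $x$ with $f^{q_n}(x)$, a point on the \emph{same} orbit but with a large time shift, and applies the Denjoy--Koksma-type estimate precisely at the times $q_n$; this contradicts kinematic expansiveness (which forbids same-orbit companions with a large shift) but says nothing about the separating property --- which is why the statement is about kinematic expansiveness and why the higher regularity enters.

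The second gap is at the very first step: a nonvanishing $C^2$ vector field on $\mathbb{T}^2$ need not admit a closed transversal meeting every orbit. A Reeb-type flow --- two periodic orbits traversed in opposite homotopy directions, all other orbits spiraling from one to the other --- has no global cross-section, so it is not a suspension and falls outside your dichotomy; the paper instead invokes the Denjoy--Schwartz trichotomy, whose middle case (two periodic orbits plus a connecting non-periodic orbit) covers precisely these flows. Relatedly, in your rational case the non-hyperbolic periodic orbits are exactly where asymptotic phase fails, and ``a similar monotone-convergence argument'' does not suffice: convergence of $P^n$ to a fixed point does not prevent an unbounded drift of return times between two nearby orbits, which would let the flow orbits drift apart along the limiting periodic orbit. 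The paper supplies the missing estimate with no hyperbolicity assumption: for $a,b$ in one fundamental domain of the return map near the attracting orbit, $\big|\sum_{\ell\le n}\tau(f^\ell a)-\sum_{\ell\le n}\tau(f^\ell b)\big|\le C\sum_{\ell}|f^\ell a-f^\ell b|\le C|I|$, because the images of the fundamental domain are pairwise disjoint, and this is made small by taking $a,b$ close; the same is done backwards near the repelling orbit. Your proposal needs this drift estimate (or an equivalent) in place of the isochron argument, and needs the Reeb case handled outside the suspension framework.
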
 
\begin{proof}
The argument follows closely some ideas in \cite{Artigue}. We present it here for the sake of completeness.
 	
Assume by contradiction that there exists $X\in\mathfrak{X}^2(\mathbb{T}^2)$ a non-singular kinematic expansive vector field. In particular it is separating. We fix $\eps>0$ to be the separation constant. Since $X$ is $C^2$ we can apply Denjoy-Schwartz's Theorem \cite{denjoyschwartz} and we have three possibilities for the dynamics:
\begin{enumerate}
	\item each orbit is periodic and $X$ is a suspension of the identity map $\mathrm{id}\colon\mathbb{S}^1\to\mathbb{S}^1$;
	\item there exist two distinct periodic orbits $\gamma^s,\gamma^u$ and a non-periodic point $x$ such that $\omega(x)=\gamma^s$ and $\alpha(x)=\gamma^u$;
	\item $X$ is a suspension of a $C^3$ diffeomorphism $f\colon\mathbb{S}^1\to\mathbb{S}^1$, which is topologically conjugate to an irrational rotation.
\end{enumerate} 
We shall prove that each case leads us to a contradiction. In the first case, let $\tau\colon\mathbb{S}^1\to(0,+\infty)$ be the first return time function. Then, $\tau(x)$ is the period of the orbit of $x$. As $\tau$ is a continuous function on the circle, there exists a maximum point $x_0$ and arbitrarily close to $x_0$ there are points $x_1,x_2$ such that $\tau(x_1)=\tau(x_2)$. This implies that one can choose those points so that 
$$d(X_t(x_1),X_t(x_2))\leq\eps,\:\:\forall\:t\in\mathbb{R},$$
a contradiction.

Let us deal now with case (2). Fix an arbitrarily small number $\delta>0$.

Take a small segment $I$ transverse to $X$ at a point $p\in\gamma^s$ and let $f\colon I\to I$ be the first return map, with $\tau\colon I\to(0,+\infty)$ the first return time function. There exists a time $T^s>0$ such that $X_{T^s}(x)\in I$. Consider the fundamental domain $I^s_0=[f(x),x]$  for the dynamics of $f$ and the sequence of image intervals $I^s_n=[f^{n+1}(x),f^n(x)]$, $n\geq 0$. Then, there exists $N^s>0$ such that for $n\geq N^s$, it holds that $I^s_n\subset B(p,\delta)$. Pick $a,b\in I^s_0$ arbitrarily close. 

Let $C>0$ be the Lipschitz constant of $\tau$. Then, 
$$\left|\sum_{\ell=0}^n\tau(f^\ell(a))-\sum_{\ell=0}^n\tau(f^\ell(b))\right|\leq C\sum_{\ell=0}^n|f^\ell(a)-f^\ell(b)|.$$
The hight-hand side of above inequality is bounded by $\sum_n|I^s_n|=|I|<\infty$. Therefore, the left-hand side converges. Moreover, by continuity of $f$, if $d(a,b)$ is small enough then 
$\sum_{\ell=0}^{N^s}|f^\ell(a)-f^\ell(b)|<\delta$. Since $I^s_n\subset B(p,\delta)$ for every $n\geq N^s$, we have $\sum_{\ell=N^s}^\infty|f^\ell(a)-f^\ell(b)|<\delta$. We conclude that 
$$\left|\sum_{\ell=0}^\infty\tau(f^\ell(a))-\sum_{\ell=0}^\infty\tau(f^\ell(b))\right|\leq 2C\delta.$$
Taking $\delta$ small enough, as the flow of $X$ is the suspension of $f$ with return time $\tau$, we conclude that $d(X_t(a),X_t(b))<\eps$, for every $t\geq 0$.

Considering a small transverse segment to a point $q\in\gamma^u$ and arguing similarly with backwards iteration we obtain two arbitrarily close points $a,b$ whose orbits are distinct and such that $d(X_t(a),X_t(b))<\eps$ for every $t\in\mathbb{R}$, a contradiction.

Finally, let us see that case (3) leads to a contradiction. This is essentially contained in the proof of Theorem 4.11 from \cite{Artigue} with a minor adaptation. We will sketch the main points of the proof. Let $f\colon\mathbb{S}^1\to\mathbb{S}^1$ be a $C^3$ diffeomorphism with irrational rotation number $\theta$, and let $\tau\colon\mathbb{S}^1\to(0,+\infty)$ be a $C^1$ function. It is well known that the Lebesgue measure is the only ergodic measure for an irrational rotation. Since $f$ is $C^3$ by the usual Denjoy's theorem on the circle, $f$ is conjugated with an irrational rotation, in particular, $f$ has only one ergodic $f$-invariant probability measure $\mu$. 

Write $T := \int_{S^1} \tau (x) d\mu(x)$ and let $\left(\frac{p_n}{q_n}\right)_{n\in \mathbb{N}}$ be the approximation of $\theta$ by rational numbers given by the continued fractions algorithm. From the corollary in \cite{NavasTriestino}, which is a version of Denjoy-Koksma inequality (Corollary C in \cite{AvilaKocsard}), we obtain the following
\[
\lim_{n\to +\infty}\displaystyle \sup_{x\in S^1} \left|\sum_{l=0}^{q_n-1} \tau(f^l(x)) - Tq_n\right|=0. 
\] 
Following the same calculations in the proof of Theorem 4.11 from \cite{Artigue}, for any $\epsilon>0$ and for $n\in \N$ large enough, the points $x$ and $f^{q_n}(x)$ are always $\epsilon$-close for the future. One can argue similarly for $f^{-1}$ and find points that are not separated for the past. Therefore, the flow cannot be Kinematic expansive. 
\end{proof}

\begin{remark}
	We do not know if there exists a separating suspension of an irrational rotation. The above proof shows that this is the only possibility for a separating non-singular vector field on $\mathbb{T}^2$.
\end{remark}

\begin{proof}[Proof of Theorem \ref{thm.dim3triviality}]
	Since all the singularities are hyperbolic, by Proposition \ref{thm.expcollinear} and Theorem \ref{thm.quasitriviality}, we have that $\mathfrak{C}^3(X)$ is quasi-trivial. Let $f\colon M\to \mathbb{R}$ be a $C^1$, $X$-invariant function such that $f|_{M_X}$ is $C^3$. We will prove that $f$ is constant. Suppose not.
	
	Since there are only finitely many singularities, then as in the proof of Theorem \ref{thm.sardinvariantfunction}, if $f$ were  not constant, we would have 
	%$f(M) =[a,b]$ and 
	$I \subset f(M)-f(\mathrm{Zero}(X))$, for some non-trivial open interval $I \subset \R$.  By Sard's theorem, almost every value in $I$ is a regular value. 
	
	Take a regular value $c\in I$. Hence, $S_c := f^{-1}(\{c\})$ is a compact surface that does not contain any singularity of $X$. Furthermore, since $f$ is $X$-invariant, we have that $X|_{S_c}$ is a $C^3$ non-singular vector field on $S_c$. Up to considering a double orientation covering, this implies that $S_c$ is a torus, since it is the only orientable closed surface that admits a non-singular vector field. 
	
	Notice that $X|_{S_c}$ induces a Kinematic expansive flow. However this contradicts Proposition~\ref{p.noaseparatingt2}. We conclude that $f$ is constant, and this implies that the $C^3$-centralizer of $X$ is trivial.  
\end{proof}

In the higher dimensional case, and at a point of continuity of $\Omega(\cdot)$, we also have:

\begin{proposition}\label{propo sepa triv}
Assume that $X \in \mathfrak{X}^d(M)$ is separating, that all its singularities are hyperbolic, and that $X$ is a point of continuity of the map $\Omega(\cdot)$. Then the $C^d$-centralizer of $X$ is trivial. 
\end{proposition}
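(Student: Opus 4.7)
The plan is to concatenate three ingredients already at our disposal: Proposition \ref{premier theorem} to obtain collinearity from the separating hypothesis, Theorem \ref{thm.quasitriviality} to promote collinearity to quasi-triviality using the hyperbolic singularities, and the Ma\~n\'e-type perturbation argument already used in the proof of Theorem \ref{thm.equivanlentcentralizer} to exclude non-constant first integrals, now using continuity of $\Omega(\cdot)$ at $X$ in place of a residual set.

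First, since $X$ is separating, Proposition \ref{premier theorem} yields that $\mathfrak{C}^1(X)$ is collinear, and in particular so is $\mathfrak{C}^d(X)$. Fix $Y\in \mathfrak{C}^d(X)$. The hyperbolicity of every singularity of $X$ then allows me to invoke Theorem \ref{thm.quasitriviality}, producing a $C^1$, $X$-invariant function $f\colon M\to\R$ with $Y=fX$; Lemma \ref{lemme facile} further guarantees that the restriction $f|_{M_X}$ is of class $C^d$. Thus the whole problem reduces to showing that such an $f$ must be constant.

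Suppose for contradiction that $f$ is non-constant. Since the singularities of $X$ are hyperbolic, $\mathrm{Zero}(X)$ is finite, so $f(M)\setminus f(\mathrm{Zero}(X))$ contains a non-trivial open interval $I$, and applying Sard's theorem to $f|_{M_X}$ (which is $C^d$) yields a regular value $c\in I$. The level set $M_c:=f^{-1}(c)$ is then a non-empty compact $X$-invariant $C^d$-hypersurface contained in $M_X$. By $X$-invariance of $f$, the $\omega$-limit of any point of $M_c$ is contained in $M_c\cap \Omega(X)$, so $\Omega(X)\cap M_c\neq\emptyset$. Now fix a small open neighborhood $U$ of $M_c$ with $\overline U\subset M_X$. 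By the continuity of $\Omega(\cdot)$ at $X$, there exists a $C^d$-neighborhood $\mathcal V$ of $X$ in $\mathfrak{X}^d(M)$ such that $\Omega(X')\cap U\neq\emptyset$ for every $X'\in\mathcal V$. Because $c$ is a regular value, $\nabla f$ is a non-singular continuous vector field on a neighborhood of $M_c$ inside $U$, which I denote by $V$. I then choose a $C^d$ vector field $Z$, supported in $U$ and arbitrarily $C^d$-small, satisfying $(Z(x),V(x))>0$ on $\overline U$: this is possible because $(\cdot,V)>0$ is a $C^0$-open condition, so one takes $Z=\varepsilon\,\chi\,\widetilde V$ with $\widetilde V$ a $C^d$ unit vector field $C^0$-close to $V/\|V\|$, $\chi$ a $C^d$ cut-off in $U$, and $\varepsilon>0$ small. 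Setting $X':=X+Z\in\mathcal V$, one has $X'\cdot f=(Z,\nabla f)>0$ throughout $U$, while outside $U$ the equality $X'=X$ gives $X'\cdot f=0$; hence $f$ is non-decreasing along every $X'$-orbit, and strictly increasing as long as the orbit remains in $U$. A standard and brief argument (as recalled in the proof of Theorem \ref{thm.equivanlentcentralizer}) then shows that $\Omega(X')\cap U=\emptyset$, contradicting the choice of $\mathcal V$.

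The main technical point, and the only one that is not immediate from the cited results, is ensuring that the perturbation $Z$ lies in $\mathfrak{X}^d(M)$ with arbitrarily small $C^d$ norm, despite the fact that $V=\nabla f$ is only $C^{d-1}$; the smoothing argument above takes care of this because $(Z,V)>0$ is preserved under $C^0$-small modifications of the direction of $Z$. Once this is granted, the proof is simply the concatenation of Propositions \ref{premier theorem}, \ref{prop.quasisaddles2}, \ref{p.sinkextension} (via Theorem \ref{thm.quasitriviality}), and Ma\~n\'e's perturbation technique.
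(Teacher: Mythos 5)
Your argument is correct and is essentially the paper's own proof: separating plus hyperbolic singularities give quasi-triviality via Proposition \ref{premier theorem} and Theorem \ref{res qu tri}, and then the Ma\~n\'e-type perturbation from the proof of Theorem \ref{thm.equivanlentcentralizer}, using continuity of $\Omega(\cdot)$ at $X$ and a regular value of $f|_{M_X}$ provided by Sard, excludes a non-constant first integral. One cosmetic fix: a field $Z$ supported in $U$ cannot satisfy $(Z(x),V(x))>0$ on all of $\overline U$, so require the positivity only on a smaller neighborhood of $M_c$ (where your cut-off equals $1$), keep $X'\cdot f\geq 0$ everywhere, and localize the contradiction with $\Omega(X')$ to that smaller neighborhood.
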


\begin{remark}
As noted in the proof of Theorem \ref{thm.equivanlentcentralizer}, the last two assumptions are satisfied by a residual subset of vector fields in $\mathfrak{X}^d(M)$. 
\end{remark}

\begin{proof}[Proof of Proposition \ref{propo sepa triv}]
Since $X$ is separating and its singularitis are hyperbolic, it follows from Proposition \ref{premier theorem} and Theorem \ref{res qu tri} that its $C^1$-centralizer is quasi-trivial. Take any vector field $Y$ in the $C^d$-centralizer of $X$. By the quasi-triviality, and by Lemma \ref{lemme facile}, there exists a $C^1$ function $f\colon M \to\R$ such that $f|_{M_X}$ is of class $C^d$ and $Y=fX$. If $f$ is not constant, then as in the proof of Theorem \ref{thm.equivanlentcentralizer}, by continuity of $\Omega(\cdot)$ at $X$, and by considering a regular value  $c\in f(M)-f(\mathrm{Zero}(X))$ of $f|_{M_X}$, we reach a contradiction. We conclude that the $C^d$-centralizer is trivial.
\end{proof}

%\section{Centralizers of Axiom A vector fields}\label{sec.axioma}
%
%In this section we prove Theorem \ref{thm.axioma}. This will be an easy application of several criteria that we have obtained.
%\begin{proof}[Proof of Theorem \ref{thm.axioma}]
%
%
%\end{proof}

\addtocontents{toc}{\protect\setcounter{tocdepth}{1}}
\section{The generic case}\label{section generic}
Our goal in this section is to prove the result below from which Theorem~\ref{theo d} follows immediately.

\begin{theorem}
\label{thm.headinggenericsection}
There exists a residual subset $\mathcal{R} \subset \mathfrak{X}^1(M)$ such that if $X\in \mathcal{R}$ then $X$ has quasi-trivial $C^1$-centralizer. Furthermore, if $X$ has at most countably many chain recurrent classes then its $C^1$-centralizer is trivial.
\end{theorem}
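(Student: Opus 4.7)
The plan is to assemble the theorem from three $C^1$-generic ingredients and then to invoke the previously established structural results (Propositions~\ref{res collin} and~\ref{thm.quasitriviality}, together with Theorem~\ref{theo e}). First, by the Kupka--Smale theorem there is a residual set $\mathcal{R}_{KS}\subset\mathfrak{X}^1(M)$ of vector fields all of whose singularities and periodic orbits are hyperbolic. Second, using the $C^1$-connecting lemma (in the spirit of Hayashi--Pugh--Crovisier) one obtains a residual set $\mathcal{R}_{cr}$ on which $\mathcal{CR}(X)=\overline{\mathrm{Per}(X)}$. The genuinely new ingredient is a residual set $\mathcal{R}_{und}$ on which $X$ has the unbounded normal distortion property of Definition~\ref{def.und}. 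Once these three are in hand, Proposition~\ref{res collin} yields that every $X\in \mathcal{R}_{KS}\cap\mathcal{R}_{cr}\cap\mathcal{R}_{und}$ has collinear $C^1$-centralizer, and then Theorem~\ref{thm.quasitriviality}, which requires only hyperbolicity of the zeros, promotes collinearity to quasi-triviality.

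The main obstacle is the construction of $\mathcal{R}_{und}$, which plays the role for flows of the unbounded distortion residual set of Bonatti--Crovisier--Wilkinson~\cite{BonattiCrovisierWilkinson}. The plan is to write $\mathcal{R}_{und}$ as a countable intersection of open and dense sets indexed by a countable basis $(V_k)_k$ of open subsets of $M$ and by integers $K\geq 1$. Each basic condition asserts, in a locally robust form, the existence of a time $n>0$ at which the linear Poincar\'e flow \eqref{eq.LPF} has log-determinants at suitably chosen representative points of $V_k\cap(M-\mathcal{CR}(X))$ and $V_\ell\cap(M-\mathcal{CR}(X))$ differing by more than $K$. Openness of each such condition is expected to be routine, via continuous dependence of the linear Poincar\'e flow on the vector field on compact pieces of wandering orbits. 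The hard step is density: it will be proved by a perturbative scheme producing small $C^1$ perturbations supported in disjoint flow-boxes along wandering orbit segments, which do not alter the segments topologically but change the normal Jacobian along them by a prescribed amount; care will be required to keep the perturbations away from the chain recurrent set (so as not to destroy the previously produced generic properties) and to arrange that the distortion accumulates on a dense subset of $M-\mathcal{CR}(X)$. This is the content that Sections~\ref{section generic} and \ref{proof generic und} will develop.

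Finally, for the second assertion I would further intersect $\mathcal{R}$ with the $C^1$-generic set of vector fields for which each chain recurrent class is transitive (a property that again follows from the connecting lemma machinery). Suppose $X\in \mathcal{R}$ has at most countably many chain recurrent classes. Since $\Omega(X)=\mathcal{CR}(X)$ by the choice of $\mathcal{R}_{cr}$, it decomposes as $\Omega(X)=\bigsqcup_{i\in\mathbb{N}}\Lambda_i$ with each $\Lambda_i$ a compact, invariant, transitive set, i.e.\ $X$ admits a countable spectral decomposition. Because the $C^1$-centralizer of $X$ is already known to be collinear and all its singularities are hyperbolic, Theorem~\ref{theo e} applies directly and yields triviality of $\mathfrak{C}^1(X)$. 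The whole argument is therefore reduced to the construction of $\mathcal{R}_{und}$, which is the technical heart of what follows.
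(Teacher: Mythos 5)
Your treatment of the quasi-triviality statement is essentially the paper's own route: combine Kupka--Smale, the generic identity $\mathcal{CR}(X)=\overline{\mathrm{Per}(X)}$, and the genericity of unbounded normal distortion (Theorem \ref{t.unboundednormalgeneric}, whose proof is indeed deferred to the technical section), then apply Proposition \ref{res collin} followed by Theorem \ref{thm.quasitriviality}. That part is fine.

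The ``furthermore'' part, however, has a genuine gap. You invoke ``the $C^1$-generic set of vector fields for which each chain recurrent class is transitive'' and claim it follows from the connecting lemma machinery. It does not. The connecting-lemma results give, generically, that $\Omega(X)=\mathcal{CR}(X)=\overline{\mathrm{Per}(X)}$, that chain classes containing a periodic orbit coincide with the corresponding homoclinic class (hence those are transitive), and --- this is item \eqref{iitem 3} of Theorem \ref{thm.genericbackground} --- that every chain class is a Hausdorff limit of periodic orbits. But for aperiodic chain classes, generic transitivity (or minimality) is not a consequence of any of this and is, to the best of current knowledge, an open problem; nothing in the background results you are allowed to use produces such a residual set. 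Consequently you cannot conclude that a generic $X$ with countably many chain classes ``admits a countable spectral decomposition'' in the sense required by Theorem \ref{theo e}, since that definition demands each piece be transitive, and Theorem \ref{theo e} is therefore not applicable.

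The step can be repaired without transitivity, and this is what the paper does: using the Hausdorff approximation of each chain class by periodic orbits, any continuous $X$-invariant function is constant on each chain recurrent class (two points $x,y$ of a class are limits of points $q_n,q_n'$ on the same periodic orbit, on which $f$ is constant; see Lemma \ref{lem.gentrivialitylemma1}). With $f$ constant on each of the at most countably many classes and $\Omega(X)=\mathcal{CR}(X)$, the level-set argument of Theorem \ref{thm.continuousfinitetrivial} runs verbatim with chain classes in place of basic pieces: a value $c$ outside the countable set of class-values would give a nonempty compact invariant level set, whose $\omega$-limit sets must meet some class, a contradiction. Hence every first integral is constant and, by quasi-triviality, $\mathfrak{C}^1(X)$ is trivial. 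Replace your appeal to generic transitivity of chain classes by this argument and the proof is complete.
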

To prove this theorem, we will use a few generic results. In the following statement we summarize all the results we shall need.
\begin{theorem}[\cite{BonattiCrovisier}, \cite{Crovisier}, \cite{PalisdeMelo} and \cite{PughRobinson}]
\label{thm.genericbackground}
There exists a residual subset $\mathcal{R}_* \subset \mathfrak{X}^1(M)$ such that if $X\in \mathcal{R}_*$, then the following properties are verified:
\begin{enumerate}
\item\label{iitem 1} $\overline{\mathrm{Per}(X)} = \Omega(X) = \mathcal{CR}(X)$;
\item\label{iitem 2} every periodic orbit, or singularity, is hyperbolic;
\item\label{iitem 3} if $\mathcal{C}$ is a chain recurrent class, then there exists a sequence of periodic orbits $(\gamma_n)_{n\in \N}$ such that $\gamma_n \to C$ in the Hausdorff topology.
\end{enumerate}
\end{theorem}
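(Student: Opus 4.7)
The plan is to define $\mathcal{R}_*$ as a countable intersection of three $C^1$-residual subsets, each responsible for one of the three conclusions, and then invoke Baire's theorem. Since every conclusion is already established in the literature, the whole argument is a compilation and a Baire intersection; the work lies in identifying the correct statement to cite for each property.

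First, for item (\ref{iitem 2}), I would invoke the Kupka-Smale Theorem for vector fields (Theorem 3.1 in \cite{PalisdeMelo}), which yields a $C^1$-residual subset $\mathcal{R}_{KS} \subset \mathfrak{X}^1(M)$ such that every $X \in \mathcal{R}_{KS}$ has all singularities and all periodic orbits hyperbolic (one even has density of vector fields whose closed orbits up to a given period are hyperbolic, and then intersects over all positive integers). Second, for the equality $\overline{\mathrm{Per}(X)} = \Omega(X)$ in item (\ref{iitem 1}), I would apply Pugh's General Density Theorem, itself derived from the $C^1$-Closing Lemma for flows of \cite{PughRobinson}; this gives a $C^1$-residual $\mathcal{R}_{GD}$ on which the non-wandering set coincides with the closure of the set of periodic orbits (singularities included).

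Third, for the remaining equality $\Omega(X)=\mathcal{CR}(X)$ in item (\ref{iitem 1}) and for item (\ref{iitem 3}), I would invoke the Bonatti-Crovisier connecting lemma for pseudo-orbits in the flow setting \cite{BonattiCrovisier}, which produces a $C^1$-residual subset on which every pair of chain-equivalent points can be joined by a true orbit after a small $C^1$-perturbation; a standard consequence, upgraded to a generic statement via Baire, is that $\Omega(X)=\mathcal{CR}(X)$. From the same machinery, Crovisier \cite{Crovisier} proves that generically every chain recurrence class is the Hausdorff limit of a sequence of hyperbolic periodic orbits, which is precisely item (\ref{iitem 3}). The set $\mathcal{R}_{CC} \subset \mathfrak{X}^1(M)$ thus obtained is residual.

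Setting $\mathcal{R}_* := \mathcal{R}_{KS}\cap \mathcal{R}_{GD}\cap \mathcal{R}_{CC}$, Baire's theorem ensures this is a residual subset of $\mathfrak{X}^1(M)$, and by construction every $X\in \mathcal{R}_*$ satisfies (\ref{iitem 1}), (\ref{iitem 2}) and (\ref{iitem 3}) simultaneously. The main \emph{conceptual} obstacle in this argument is nothing more than locating the right formulations of each ingredient; the main \emph{technical} obstacle, were one to re-prove the ingredients from scratch, is certainly the connecting lemma for pseudo-orbits, whose proof is a delicate $C^1$-perturbation argument implemented in the flow setting, while Kupka-Smale and Pugh's closing lemma are by now classical.
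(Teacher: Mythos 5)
Your proposal is correct and follows essentially the same route as the paper: this is a background theorem assembled by citation, with item (\ref{iitem 2}) from Kupka--Smale (\cite{PalisdeMelo}), the equality $\overline{\mathrm{Per}(X)}=\Omega(X)$ from the closing lemma / general density theorem of \cite{PughRobinson}, the equality $\Omega(X)=\mathcal{CR}(X)$ from \cite{BonattiCrovisier}, and a Baire intersection at the end. The one point where the paper does genuinely more than you acknowledge is item (\ref{iitem 3}): the approximation of chain recurrence classes by periodic orbits in \cite{Crovisier} is stated and proved for \emph{diffeomorphisms}, not flows, so it cannot simply be quoted in the vector field setting; the paper flags this and devotes Appendix \ref{append.croflow} to sketching the adaptation, observing that Crovisier's combinatorial selection of orbit segments is unchanged and that the perturbative ingredient, Hayashi's connecting lemma, is available for flows and is applied at finitely many non-singular points, so singularities cause no trouble. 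Your write-up silently assumes the flow version of Crovisier's theorem is already in the literature; to be complete you should either cite a flow version explicitly or include the (folklore) adaptation argument as the paper does.
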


Item \eqref{iitem 3} in Theorem \ref{thm.genericbackground} was proved for diffeomorphisms in \cite{Crovisier}. However, the same statement has been used many times for vector fields (for instance \cite{GanYang}). It is folklore that the same proof given by Crovisier in \cite{Crovisier} works for vector fields. In Appendix \ref{append.croflow} we briefly explain this adaptation.

We first prove that $C^1$-generically the centralizer is collinear. This proof is an adaptation for flows of Theorem A in \cite{BonattiCrovisierWilkinson}. Once we have collinearity, using the criterion for quasi-triviality given by Theorem \ref{thm.quasitriviality}, we conclude that quasi-triviality of the $C^1$-centralizer is a $C^1$-generic property.  At the end of this section we will show that for a $C^1$-generic vector field $X$ that has at most countably many chain recurrent classes has trivial $C^1$-centralizer.

\subsection{Unbounded normal distortion and its consequences}
 
The following result is at the core of the $C^1$-generic results obtained in this paper.  Its proof is rather  technical and occupies   Section \ref{proof generic und} below.  
\begin{theorem}
\label{t.unboundednormalgeneric}
There exists a residual subset of $ \mathcal{R} \subset \mathfrak{X}^1(M)$ such that if $X\in \mathcal{R}$ then $X$ has unbounded normal distortion.
\end{theorem}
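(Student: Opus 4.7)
The approach is the standard Baire strategy: express unbounded normal distortion as a countable intersection of open dense subsets of $\mathfrak{X}^1(M)$. Fix once and for all a countable basis $\{B_i\}_{i \in \mathbb{N}}$ of open sets of $M$. For each pair of indices $i,j$ with $\overline{B_i} \cap \overline{B_j} = \emptyset$, each $K \in \mathbb{N}$, and each $N \in \mathbb{N}$, I would define
\[
\mathcal{U}_{i,j,K,N} := \{X \in \mathfrak{X}^1(M) : B_i \cap \mathcal{CR}(X) \neq \emptyset \text{ or } B_j \cap \mathcal{CR}(X) \neq \emptyset,
\]
\[
\text{or } \exists\, x \in B_i,\, y \in B_j,\, n \in (0,N) \text{ with } |\log \det P^X_{x,n} - \log \det P^X_{y,n}| > K\}.
\]
Openness of $\mathcal{U}_{i,j,K,N}$ follows from the upper semi-continuity of $X \mapsto \mathcal{CR}(X)$ in the Hausdorff topology, together with the joint continuity of the linear Poincar\'e flow in $X$ and in the base point on compact subsets of $M_X$.

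\textbf{The main step} is proving that each $\mathcal{U}_{i,j,K} := \bigcup_N \mathcal{U}_{i,j,K,N}$ is dense, and this rests on a local perturbation lemma. Given $X$ with $B_i \cup B_j \subset M - \mathcal{CR}(X)$, pick $x \in B_i$ and a finite wandering orbit arc $X_{[0,T]}(x)$, chosen so that $X_{[-\eta,T+\eta]}(x)$ is disjoint from $\overline{B_j}$ and from a finite segment of the orbit of a fixed $y \in B_j$. In a tubular flow box around this arc, introduce coordinates $(s,u) \in (-\eta, T+\eta) \times D$ with $X = \partial_s$. Replace $X$ on this box by $X + \rho(s,u) A u$, where $\rho$ is a $C^\infty$ bump supported in the box and equal to $1$ on $[0,T] \times D_\delta$, and $A$ is a small linear endomorphism of the normal fiber. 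For $\|A\|$ small the perturbation is $C^1$-small; its solenoidal effect on the normal monodromy along $X_{[0,T]}(x)$ is a multiplicative factor of the form $\exp(T\,\mathrm{tr}(A)) + o(\|A\|)$, while along the orbit of $y$, which remains in the region where $X$ is unchanged, the linear Poincar\'e flow is untouched. By iterating this perturbation on disjoint flow boxes along a sufficiently long orbit arc through $x$, one can make $|\log \det P^{X'}_{x,n} - \log \det P^X_{x,n}|$ exceed any prescribed $L > 0$, which places $X'$ into $\mathcal{U}_{i,j,K,N}$ for some $N$.

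\textbf{Assembling the residual set.} Let $\mathcal{R}$ be the intersection of all $\mathcal{U}_{i,j,K}$ over admissible triples $(i,j,K)$, intersected with the residual set of vector fields satisfying the generic properties of Theorem \ref{thm.genericbackground}. For $X \in \mathcal{R}$, define
\[
\mathcal{D} := \{x \in M - \mathcal{CR}(X) : \forall\, j,\, \forall\, K,\, \exists\, y \in B_j \cap (M - \mathcal{CR}(X)),\, n > 0, \, |\log \det P^X_{x,n} - \log \det P^X_{y,n}| > K\}.
\]
Using that $M - \mathcal{CR}(X)$ is open and a union of basis elements $B_i \subset M - \mathcal{CR}(X)$, and that for each such $B_i$ and each $(j,K)$ the condition defining $\mathcal{U}_{i,j,K}$ produces witness points $x \in B_i$, a short Baire argument within $B_i$ shows that $\mathcal{D} \cap B_i$ is dense in $B_i$; hence $\mathcal{D}$ is dense in $M - \mathcal{CR}(X)$. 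Finally, given $x \in \mathcal{D}$ and $y \in M - \mathcal{CR}(X)$ with $y \notin \mathrm{orb}(x)$, choose disjoint basis elements $B_i \ni x$ and $B_j \ni y$ with $\overline{B_i} \cap \overline{B_j} = \emptyset$ and contained in the wandering set; the definition of $\mathcal{D}$ together with continuity of the linear Poincar\'e flow gives the required distortion bound.

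\textbf{Main obstacle.} The delicate point is the perturbation lemma: producing a genuine $C^1$-small \emph{vector field} perturbation whose effect on $\log \det P^X_{\cdot,n}$ is bounded below, while keeping the support inside the wandering region (so as not to create new recurrence) and disjoint from the orbit of any comparison point $y$. Unlike the diffeomorphism setting of \cite{BonattiCrovisierWilkinson}, one cannot simply compose with a local diffeomorphism; the perturbation must be an infinitesimal one, and controlling simultaneously its $C^1$ size, its support, and the cumulative change it induces on the normal Jacobian along long orbit segments is the principal technical burden.
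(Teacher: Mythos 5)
There is a genuine gap, and it is structural rather than technical: your open--dense sets do not encode the right quantifiers. Definition \ref{def.und} requires, for each $x$ in a dense subset of $M-\mathcal{CR}(X)$, the distortion estimate against \emph{every} $y\in M-\mathcal{CR}(X)$ off $\mathrm{orb}^X(x)$ and every $K$. Your sets $\mathcal{U}_{i,j,K,N}$ only record the existence of \emph{some} pair $(x,y)\in B_i\times B_j$, and your set $\mathcal{D}$ only asks, for each $j$, for \emph{some} witness $y'\in B_j$. The final step, ``the definition of $\mathcal{D}$ together with continuity of the linear Poincar\'e flow gives the required distortion bound,'' does not work: knowing $|\log\det P^X_{x,n}-\log\det P^X_{y',n}|>K$ for some witness $y'\in B_j$ at some time $n=n(y')$ says nothing about the prescribed point $y$, since the times $n$ are unbounded (so no uniform continuity transfers the strict inequality from $y'$ to $y$), and shrinking $B_j$ around $y$ changes the witness and the time. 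This is exactly why the paper's genericity sets $\mathcal{V}_{x,D,K}$ demand the estimate for \emph{all} $y$ in a compact set $D$ (openness then follows from compactness of $D$, with finitely many times), so that at the end one may choose $D\ni y$; correspondingly the perturbation statements (Propositions \ref{prop.proposition8} and \ref{prop sept}) must create distortion simultaneously for all points of $D$, which is where the cocycle result of \cite{BonattiCrovisierWilkinson} (Proposition \ref{prop.proposition9}) and the realization lemma (Lemma \ref{lemma.fundamentalperturbation}) are needed. Your flow-box perturbation along the orbit of $x$ cannot be upgraded to this ``for all $y\in D$'' version: to create a gap against an untouched $y$ you either need a change of $\log\det P_{x,n}$ of order $n\log C$ (which a $C^1$-small perturbation cannot produce, it only gives $O(\varepsilon n)$ against an a priori oscillation of order $n\log C$), or the dichotomy ``already distorted versus uniformly bounded'' of \cite{BonattiCrovisierWilkinson} — and a perturbation made at $x$ can destroy already-distorted pairs while creating others. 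Note also that for large $n$ the union of flow boxes along $X_{[0,n]}(x)$ may intersect the orbit of $y$, altering $P_{y,\cdot}$ as well; the paper avoids this by perturbing near the compact set $D$ and keeping $\mathrm{orb}(x)$ outside the support.

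A second, fixable but real, problem is your openness claim: ``$B_i\cap\mathcal{CR}(X)\neq\emptyset$'' is not an open condition in $X$, because $X\mapsto\mathcal{CR}(X)$ is only upper semicontinuous (isolated chain classes can disappear under perturbation); similarly ``$\mathrm{orb}(x)\cap D=\emptyset$'' is not open (see Figure \ref{spirale}). The paper circumvents this with robust trapping regions (the sets $\mathcal{O}_D$, coming from Conley theory) together with the interior/density Lemmas \ref{lem.criterioninterion} and \ref{lem.opendense}; your scheme has no substitute for these. Finally, the density of your set $\mathcal{D}$ inside $M-\mathcal{CR}(X)$ is asserted but not proved (existence of one witness in $B_i$ does not give density of witnesses; one must run the argument over all small basis elements and take a residual intersection in $M$) — this part can be patched, but the quantifier mismatch over $y$ described above is the essential missing idea.
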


\subsubsection{Collinearity}

Once we have established Theorem~\ref{t.unboundednormalgeneric}, by combining Proposition \ref{res collin} and some known generic results one obtains the collinearity of the centralizer of a $C^1$-generic vector field.    

\begin{theorem}
	\label{t.collineargeneric}
	There exists a residual subset of $ \mathcal{R} \subset \mathfrak{X}^1(M)$ such that if $X\in \mathcal{R}$ then the $C^1$-centralizer of $X$ is collinear.
\end{theorem}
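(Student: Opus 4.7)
The plan is to deduce Theorem \ref{t.collineargeneric} directly by combining the criterion of Proposition \ref{res collin} with the two genericity inputs already available: Theorem \ref{t.unboundednormalgeneric} (unbounded normal distortion is $C^1$-generic) and Theorem \ref{thm.genericbackground} (hyperbolicity of critical elements and density of periodic orbits in the chain recurrent set are $C^1$-generic). So the work is essentially packaging: identify the residual set on which all three hypotheses of Proposition \ref{res collin} hold simultaneously.

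First, I would take the residual sets $\mathcal{R}_*\subset\mathfrak{X}^1(M)$ from Theorem \ref{thm.genericbackground} and $\mathcal{R}_{UND}\subset \mathfrak{X}^1(M)$ from Theorem \ref{t.unboundednormalgeneric}, and set $\mathcal{R}:=\mathcal{R}_*\cap\mathcal{R}_{UND}$. Since a finite intersection of residual subsets of a Baire space is residual, $\mathcal{R}$ is a residual subset of $\mathfrak{X}^1(M)$ (recall that $\mathfrak{X}^1(M)$ is Baire, so Baire's theorem applies).

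Next, I would verify that every $X\in\mathcal{R}$ satisfies the three hypotheses of Proposition \ref{res collin}. Indeed, membership in $\mathcal{R}_{UND}$ yields the unbounded normal distortion property. Membership in $\mathcal{R}_*$ gives, by item \eqref{iitem 2} of Theorem \ref{thm.genericbackground}, that every singularity and every periodic orbit of $X$ is hyperbolic; and by item \eqref{iitem 1}, that $\mathcal{CR}(X)=\overline{\mathrm{Per}(X)}$, which is precisely the density condition required in the statement of Proposition \ref{res collin}. Applying Proposition \ref{res collin} then yields that $\mathfrak{C}^1(X)$ is collinear, which is the conclusion we want.

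The only non-trivial step in the argument is the fact that unbounded normal distortion is $C^1$-generic (Theorem \ref{t.unboundednormalgeneric}), whose proof occupies Section \ref{proof generic und}. Everything else is a clean assembly of known residual properties with Proposition \ref{res collin}. So in practice, there is essentially no additional obstacle to overcome here beyond the existence of Theorem \ref{t.unboundednormalgeneric}; the proof of Theorem \ref{t.collineargeneric} reduces to one line once those ingredients are in place. In the writeup I would simply state \emph{``set $\mathcal{R}:=\mathcal{R}_*\cap\mathcal{R}_{UND}$ and apply Proposition \ref{res collin}''} and move on.
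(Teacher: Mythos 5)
Your proposal is correct and coincides with the paper's own argument: the paper's proof of this theorem is exactly the one-line assembly you describe, intersecting the residual set of Theorem \ref{thm.genericbackground} with that of Theorem \ref{t.unboundednormalgeneric} and applying Proposition \ref{res collin}. Nothing further is needed.
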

\begin{proof}
The result follows directly from Proposition \ref{res collin} and Theorems \ref{thm.genericbackground}-\ref{t.unboundednormalgeneric}.
%	
%	
%By Theorem \ref{res collin} and Theorem~\ref{t.unboundednormalgeneric} if $Y\in \mathfrak{C}^1(M)$ then $Y$ is collinear to $X$ on $M-\mathcal{CR}(X)$. 
%
%Let us prove that $Y$ is also collinear to $X$ on $\mathcal{CR}(X)$. If not, by item $1$ of Theorem \ref{thm.genericbackground}, it would exist a point $p\in \mathrm{Per}(X)$ such that $Y(p)$ and $X(p)$ are not collinear. By item $2$ of the same theorem, we have that $p$ is a hyperbolic periodic point. For commuting vector fields it is verified for any $t\in \mathbb{R}$ and $x\in M$ that $DX_t(x) \cdot Y(x) = Y(X_t(x))$. Thus, by the hyperbolicity of the $X$-orbit of $p$, we have
%\[
%\|Y(X_t(p))\| = \|DX_t(p)\cdot Y(p)\| \to + \infty, \textrm{ for $t\to +\infty$ or $t\to -\infty$}.
%\]
%However, $\displaystyle \sup_{x\in M}\|Y(x)\|<+ \infty$. This is a contradiction and we conclude that $Y$ is also collinear on $\mathcal{CR}(X)$.	
\end{proof}

\subsubsection{Quasi-triviality}
By Theorem \ref{thm.genericbackground}, we have that $C^1$-generically all the singularities are hyperbolic. As a consequence of Theorem \ref{thm.quasitriviality}, since $C^1$-generically the $C^1$-centralizer is collinear and all the singularities are hyperbolic, we conclude that $C^1$-generically the $C^1$-centralizer is quasi-trivial. More precisely, we have 

\begin{theorem}\label{premier cor}
Let $M$ be  a compact manifold. There exists a residual subset $ \mathcal{R}_1 \subset \mathfrak{X}^1(M)$ such that if $X\in \mathcal{R}_1$, then any singularity and periodic orbit of  $X$ is hyperbolic, $\overline{\mathrm{Per}(X)}=\Omega(X) = \mathcal{CR}(X)$, and 
$$
\mathfrak{C}^1(X)=\{f X: f \in \mathfrak{I}^1(X)\},\text{ where }  \mathfrak{I}^1(X)=\{f\in C^1(M,\R),\ X\cdot f\equiv 0\}. 
$$ 
\end{theorem}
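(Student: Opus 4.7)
The statement is a direct synthesis of results already established in the paper: the generic properties of critical elements and recurrence from Theorem~\ref{thm.genericbackground}, the generic collinearity of the $C^1$-centralizer from Theorem~\ref{t.collineargeneric}, and the deterministic criterion of Theorem~\ref{thm.quasitriviality} promoting collinearity to quasi-triviality in the presence of hyperbolic singularities. The plan is therefore to define $\mathcal{R}_1$ as the intersection of the two residual sets supplied by Theorems~\ref{thm.genericbackground} and \ref{t.collineargeneric} and then to verify the claimed conclusions pointwise on $\mathcal{R}_1$.

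Concretely, let $\mathcal{R}_*\subset\mathfrak{X}^1(M)$ be the residual set produced by Theorem~\ref{thm.genericbackground}, and let $\mathcal{R}\subset\mathfrak{X}^1(M)$ be the residual set produced by Theorem~\ref{t.collineargeneric}. I would set $\mathcal{R}_1\eqdef\mathcal{R}_*\cap\mathcal{R}$, which is residual as a countable intersection of residual sets (each of them containing a dense $G_\delta$, so their intersection does too, by Baire's theorem). For any $X\in\mathcal{R}_1$, items \eqref{iitem 1}--\eqref{iitem 2} of Theorem~\ref{thm.genericbackground} immediately give that every periodic orbit and every singularity of $X$ is hyperbolic and that $\overline{\mathrm{Per}(X)}=\Omega(X)=\mathcal{CR}(X)$. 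At the same time, membership in $\mathcal{R}$ ensures that $\mathfrak{C}^1(X)$ is collinear in the sense of Definition~\ref{def.collinear}.

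With both hypotheses of Theorem~\ref{thm.quasitriviality} (collinearity of $\mathfrak{C}^1(X)$ and hyperbolicity of every singularity) in hand, that theorem yields exactly the identification
\[
\mathfrak{C}^1(X)=\{fX : f\in\mathfrak{I}^1(X)\},
\]
which is the last conclusion of the statement. No additional estimation or approximation step is needed: every inclusion is already packaged in the theorems cited. There is essentially no obstacle in this last stage, since the heavy lifting has already been done in the previous sections; the only thing to be careful about is that the residual set produced by Theorem~\ref{t.collineargeneric} is indexed by the same symbol $\mathcal{R}$ as the one in the statement of Theorem~\ref{theo d}, so a slight renaming in the write-up (using $\mathcal{R}_1$) is warranted to avoid any clash. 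The real difficulty lies upstream, in the proof of Theorem~\ref{t.unboundednormalgeneric} (unbounded normal distortion as a $C^1$-generic property), which is carried out separately in Section~\ref{proof generic und} and which is what actually powers Theorem~\ref{t.collineargeneric} via Proposition~\ref{res collin}.
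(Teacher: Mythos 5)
Your proposal is correct and follows exactly the paper's own proof: the paper likewise takes $\mathcal{R}_1:=\mathcal{R}\cap\mathcal{R}_*$ with $\mathcal{R}$ from Theorem~\ref{t.collineargeneric} and $\mathcal{R}_*$ from Theorem~\ref{thm.genericbackground}, and then applies Theorem~\ref{thm.quasitriviality}. No gaps to report.
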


\begin{proof}
By Theorem \ref{t.collineargeneric}, there exists a residual subset $\mathcal{R}\subset \mathfrak{X}^1(M)$ whose elements have collinear $C^1$-centralizer. Moreover, by Theorem \ref{thm.genericbackground}, there exists a residual subset $\mathcal{R}_*\subset \mathfrak{X}^1(M)$ such that for any $X \in \mathcal{R}_*$, any singularity and periodic orbit of  $X$ is hyperbolic, and $\overline{\mathrm{Per}(X)}=\Omega(X) = \mathcal{CR}(X)$. Then, $\mathcal{R}_1:= \mathcal{R}\cap  \mathcal{R}_*$ is residual, and any $X \in \mathcal{R}_1$ satisfies the hypotheses of Theorem \ref{thm.quasitriviality}, which concludes. 
\end{proof}

\subsubsection{Triviality}

We can now conclude the proof of the second part of Theorem \ref{thm.headinggenericsection} about $C^1$-generic triviality for systems with a countable number of chain recurrent classes. To prove that we need the following lemma.

\begin{lemma}
\label{lem.gentrivialitylemma1}
There exists a residual subset $\mathcal{R}_{\mathcal{CR}} \subset \mathfrak{X}^1(M)$ such that if $X\in \mathcal{R}_{\mathcal{CR}}$ and $f\in C^0(M)$ is an $X$-invariant function, then $f$ is constant on chain-recurrent classes.
\end{lemma}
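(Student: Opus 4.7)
The plan is to let $\mathcal{R}_{\mathcal{CR}}$ be precisely the residual set $\mathcal{R}_*$ furnished by Theorem~\ref{thm.genericbackground}. The whole argument then hinges on item~(3) of that theorem: for any $X\in\mathcal{R}_*$, every chain recurrent class $\mathcal{C}$ of $X$ is the Hausdorff limit of a sequence of periodic orbits $(\gamma_n)_{n\in\N}$ of $X$. This is the only non-trivial input; the rest reduces to continuity and compactness.

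The key observation is that a continuous $X$-invariant function $f$ is automatically constant along every orbit, hence constant on each periodic orbit $\gamma_n$, with some value $c_n\in\R$. To conclude that $f$ is constant on a given chain recurrent class $\mathcal{C}$, I would fix $x\in\mathcal{C}$ and, using Hausdorff convergence $\gamma_n\to\mathcal{C}$, produce points $x_n\in\gamma_n$ with $x_n\to x$; then continuity of $f$ gives $c_n=f(x_n)\to f(x)$. Since the sequence $(c_n)$ is defined independently of $x$, its limit is forced to coincide with $f(x)$ for every $x\in\mathcal{C}$, so $f$ is constant on $\mathcal{C}$.

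There is essentially no obstacle once Theorem~\ref{thm.genericbackground} is granted; the only mild subtlety is to notice that the full sequence $(c_n)$ actually converges, but this is automatic as soon as one fixes any point $x\in\mathcal{C}$ and a sequence $x_n\in\gamma_n$ with $x_n\to x$. In particular, no finer genericity than what is already encoded in $\mathcal{R}_*$ is needed, and the same residual set $\mathcal{R}_1$ used in Theorem~\ref{premier cor} will do, which is convenient for combining this lemma with the quasi-triviality statement in the subsequent argument for triviality when there are at most countably many chain recurrent classes.
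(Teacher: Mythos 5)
Your proposal is correct and is essentially the paper's own argument: both rest on the generic property (item (3) of Theorem~\ref{thm.genericbackground}, i.e.\ Crovisier's theorem) that every chain recurrent class is a Hausdorff limit of periodic orbits, combined with the fact that a continuous invariant function is constant on each periodic orbit and a passage to the limit by continuity. The small point you flag about convergence of the values $c_n$ is handled in the paper exactly as you suggest, by comparing, for two points $x,y$ in the class, approximating points on the same periodic orbit $\gamma_n$.
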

\begin{proof}
By Theorem $1$ in \cite{Crovisier}, there exists a residual subset $\mathcal{R}_{\mathcal{CR}}\subset \mathfrak{X}^1(M)$ that verifies the following: if $X\in \mathcal{R}_{\mathcal{CR}}$ and $C\subset \mathcal{CR}(X)$ is a chain-recurrent class, then there exists a sequence of periodic orbits $(O(p_n))_{n\in \mathbb{N}}$ that converges to $C$ in the Hausdorff topology.

By this property, for any two points $x,y\in C$, there exist two sequences of points $(q_n)_{n\in \mathbb{N}}$ and $(q_n')_{n\in \mathbb{N}}$, with $q_n, q_n' \in O(p_n)$, such that $q_n \to x$ and $q_n' \to y$ as $n \to +\infty$. Let $f$ be a continuous function which is $X$-invariant. By continuity,
\[
\displaystyle \lim_{n\to +\infty} f(q_n) = f(x) \textrm{ and } \lim_{n\to +\infty} f(q_n') = f(y).
\]
However, since $f$ is $X$-invariant and by our choice of $q_n$ and $q_n'$, we have that $f(p_n) = f(q_n) = f(q_n')$, which implies that $f(x) = f(y)$.  
\end{proof}

\begin{proof}[Proof of Theorem \ref{thm.headinggenericsection}]
Take $\mathcal{R}: = \mathcal{R}_1 \cap \mathcal{R}_{\mathcal{CR}}$, where $\mathcal{R}_1$ is the residual subset given by Theorem \ref{premier cor}. Using the conclusion of Lemma \ref{lem.gentrivialitylemma1} and arguments analogous to the proof of Theorem \ref{thm.continuousfinitetrivial} we can easily obtain the conclusion of Theorem \ref{thm.headinggenericsection}. 
\end{proof}

\section{Proof that the unbounded normal distortion is $C^1$-generic}\label{proof generic und}

In this part, we give the proof of Theorem \ref{t.unboundednormalgeneric} about the $C^1$-genericity of the unbounded normal distortion property. Since this section is very technical, let us first summarize the main steps of the proof.\\

\paragraph{\textbf{Idea of the proof}}
In \cite{BonattiCrovisierWilkinson} the authors prove that a version of the unbounded normal distortion holds $C^1$-generically for diffeomorphisms. Their proof can be divided in two steps. The first part is a key perturbative result made on a linear cocycle over $\mathbb{Z}$ (see Proposition~\ref{prop.proposition9} below). Then, they reduce the proof to this linear cocycle scenario, by using some change of coordinates that linearises the dynamics around an orbit segment of finite length. Both steps are quite delicate and involve careful control of estimates which appear along the way.

%\annotation{Davi: I am not sure I agree with the claim "The most difficult one is a key perturbative..." For me the reduction is as delicate (and difficult) as the linear perturbation.}

% after several reductions, they introduce some change of coordinates that linearizes the dynamics around a segment of orbit of a point. % for a finite time. Using the compactness of the manifold, they get uniform estimates on the $C^1$-norm of these changes of coordinates. 

Our strategy is also to reduce the problem to a perturbation of a linear cocycle over $\mathbb{Z}$, and then apply the result of \cite{BonattiCrovisierWilkinson}, so that we only need to translate to the vector field scenario the second step of Bonatti-Crovisier-Wilkinson's proof. It is clear that, in order to do that, one needs to discretize the dynamics and so we study the Poincar\'e maps between a sequence of transverse sections. The goal is then apply the perturbation of \cite{BonattiCrovisierWilkinson} to the Poincar\'e maps. The key difficulty we face with this strategy is that we need to prove that any finite family of perturbations of a long sequence of Poincar\'e maps, which verifies some conditions, can be realized as the corresponding sequence of Poincar\'e maps for a perturbed vector field. 

Apart from that, as in \cite{BonattiCrovisierWilkinson}, all the perturbations in the reduction procedure have to be done with precise control on the estimates that appear. 

It is also important to point out that, since we are dealing with wandering points, the Poincar\'e map can be defined for a sequence of times arbitrarily large. We also introduce some change of coordinates to linearize the dynamics given by these maps for a finite time. However, the space where this can be defined is no longer compact, since the Poincar\'e map is only defined over non-singular points. Nevertheless, we can obtain uniform estimates for the $C^1$-norm of these change of coordinates.

Once we have the realisation lemma, we can adapt the proof of Bonatti-Crovisier-Wilkinson in \cite{BonattiCrovisierWilkinson} and obtain that the unbounded normal distortion property is $C^1$-generic.

\subsubsection{Notation}
We summarize the main notations that will appear below; here, we let $X$ be a $C^1$ vector field, $p$ be a point in $M_X$, and $n \geq 0$ be an  integer:
\begin{itemize}
	\item $N_{X,p}$:  subspace of $T_p M$ orthogonal to $X(p)$;
	\item $\mathcal{N}_{X,p}$: image of $N_{X,p}$ under the exponential map $\exp_p$;
	\item  $(P_{p,t}^X)_{t \in \R}$: linear Poincar\'e flow, where for $t \in \R$, $P_{p,t}^X$ is the map  induced by $DX_t(p)$ between  $N_{X,p}$ and  $N_{X,X_t(p)}$;
	\item $\mathcal{P}_{X,p,n}^Y$: Poincar\'e map between $\mathcal{N}_{X,p}$ and $\mathcal{N}_{X,X_n(p)}$ for the flow generated by a $C^1$ vector field $Y$ close to $X$;   when $X=Y$, we drop $X$ in the bottom;
	\item $\widetilde{\mathcal{P}}^X_{p,n} = \exp_{X_n(p)}^{-1} \circ \mathcal{P}^X_{p,n} \circ \exp_p$: lifted Poincar\'e map;
	\item  $\mathcal{I}^X(p,U,n)$: set of pairs $(y,t)$ with $y$ in some  small neighborhood $U$ of $p$ in $\mathcal{N}_{X,p}$, and $t\geq 0$ a time before the trajectory through $y$ first hits $\mathcal{N}_{X,X_n(p)}$; 
	\item $\mathcal{U}^X(p,U,n)$:   image of $\mathcal{I}^X(p,U,n)$ in phase space; in other words, it is the tube of flow lines from $U$ to $\mathcal{N}_{X,X_n(p)}$;
	\item $\psi_{p,n}$: linearizing coordinates (to go from lifted  to linear Poincar\'e flow);
	\item $\Psi_{p,n}:=\psi_{p,n} \circ \exp_{p}^{-1}$.
\end{itemize}

\subsubsection{Linearizing coordinates} 

Let $X\in \mathfrak{X}^1(M)$, and as before, set  $M_X := M-\zero(X)$. For $p\in M_X$ and $t\in \mathbb{R}$, for any two submanifolds $\Sigma_1$ and $\Sigma_2$ which are transverse to the orbit segment  $O: = X_{[0,t]}(p)$,  each of which intersects $O$ only at one point, we define the \emph{Poincar\'e map} between these two transverse sections as follows: let $p_1 := O \cap \Sigma_1$ and $p_2 := O \cap \Sigma_2$. If a point $q\in \Sigma_1$ is sufficiently close to $p_1$, then $X_{[-t,2t]}(q)$ intersects $\Sigma_2$ at a unique point  $\mathcal{P}_{\Sigma_1,\Sigma_2}^X(q)$. The map $q\mapsto \mathcal{P}_{\Sigma_1,\Sigma_2}^X(q)$ is called the Poincar\'e map between $\Sigma_1$ and $\Sigma_2$.

This map is a $C^1$-diffeomorphism between a neighborhood of $p_1$ in $\Sigma_1$ and its image in $\Sigma_2$. It also holds that for any vector field $Y\in \mathfrak{X}^1(M)$ sufficiently $C^1$-close to $X$, the Poincar\'e map $\mathcal{P}_{\Sigma_1,\Sigma_2}^Y$ for $Y$ is well defined in some neighborhood of $p_1$ in $\Sigma_1$.  

Let $R>0$ be smaller than the radius of injectivity of $M$. Using the exponential map, for each $p\in M_X$ and $r\in (0, R)$, we define the submanifold $\mathcal{N}_{X,p}(r):= \exp_p( N_{X,p}(r))$, where $N_{X,p}(r)$ is the ball of center $0$ and radius $r$ contained in $N_{X,p}$. 

\begin{remark}
	\label{rmk.closetonormal}
	Considering $R$ to be small enough, for each $p\in M_X$ and for each $q\in \mathcal{N}_{X,p}(R)$ we have that the $C^1$-norm of $\Pi^X_q|_{T_q\mathcal{N}_{X,p}(R)}$ is close to $1$. 
\end{remark}

It is a result from \cite{GanYang} that for each $t\in \mathbb{R}$, there exists a constant $\beta_t = \beta(X,t)>0$ such that for any point $p\in M_X$, the Poincar\'e map is a $C^1$ diffeomorphism from $\mathcal{N}_{X,p}(\beta_t \|X(p)\|)$ to its image inside $\mathcal{N}_{X,X_t(p)}(R)$. We denote this map by $\mathcal{P}_{p,t}^X$. For a fixed $\delta>0$, we can choose $\beta \in (0,\beta_1)$ such that for any $p\in M_X$ and any $q\in \mathcal{N}_{X,p}(\beta\|X(p)\|)$, it holds
\begin{equation}
\label{eq.poincareclose}
\|D\mathcal{P}^X_{p,1}(q) - D\mathcal{P}^X_{p,1}(p)\| < \delta.
\end{equation}
The existence of $\beta_t$ and $\beta$ above is guaranteed by Lemmas $2.2$ and $2.3$ in \cite{GanYang}. It follows from the proof that these constants can be taken uniformly in a sufficiently small $C^1$-neighborhood of $X$. By our choices of transversals, we remark that $D\mathcal{P}_{p,1}^X(p) = P^X_{p,1}$, where $P^X_{p,t}$ is the linear Poincar\'e flow (see \eqref{eq.LPF}). 
%\annotation{Set later that we took beta small enough such that certain properties hold.}

\begin{definition}
	\label{def.boundedbyc}
	For any $C>1$ we say that a vector field $X\in \mathfrak{X}^1(M)$ is \emph{bounded by $C$} if it holds
	\begin{enumerate}[label=(\alph*)]
		\item\label{condition aa} $\displaystyle \sup_{x\in M} |X(p)| < C$;\\
		\item\label{condition bb} $ \sup_{p\in M} \|DX(p)\| < C$;\\
		\item\label{condition cc} $ C^{-1}< \displaystyle \inf_{x\in M} \inf_{t\in [-1,1]} \|(DX_t(x))^{-1}\|^{-1} \leq \sup_{x\in M} \sup_{t\in [-1,1]} \|DX_t(x)\| < C;$
		\item\label{condition dd} $C^{-1} < \displaystyle \inf_{p\in M_X} \inf_{t\in [-1,1]} \|(P^X_{p,t})^{-1}\|^{-1} \leq \sup_{p\in M_X} \sup_{t\in [-1,1]} \|P^X_{p,t}\| < C$;
		\item\label{condition ee} there exists $\beta>0$ small, such that 
		\[
		C^{-1} < \|(D\mathcal{P}_{p,1}^X(q))^{-1}\|^{-1} \leq \|D\mathcal{P}_{p,1}^X(q)\| < C, \textrm{ for any $q\in \mathcal{N}_{X,p}(\beta \|X(p)\|)$.}
		\]
	\end{enumerate}
\end{definition}
Next lemma justifies that for any $C^1$-vector field there is a constant $C>1$ such that this vector field is bounded by $C$.

\begin{lemma}\label{lem.boundedbyc}
	Let $X\in \mathfrak{X}^1(M)$ be a vector field such that $M_X \neq \emptyset$. There exists $C>1$ such that $X$ is bounded by $C$. Moreover, this constant can be taken uniform in a sufficiently small neighborhood of $X$.
	
%	\annotation{Possibly justify already that in this lemma we can choose $C$ such that any vector field Y close to X is also bounded by C. The only problem is that one would have to justify that beta can be taken uniform in a neighborhood of X as well so that condition (e) will also hold true}
\end{lemma}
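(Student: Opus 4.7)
The plan is to verify each of the five conditions (a)--(e) in turn by exploiting compactness of $M$ together with continuity properties of the flow, and then to observe that every bound obtained is an open condition in the $C^1$ topology, so that the same constant, up to a small enlargement, also works on a $C^1$-neighborhood of $X$.

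Conditions (a) and (b) are immediate from compactness of $M$ together with the $C^1$-regularity of $X$. For (c), the map $(x,t) \mapsto DX_t(x)$ is continuous on the compact set $M \times [-1,1]$, and both it and its pointwise inverse $DX_{-t}(X_t(x))$ attain finite extremes there, yielding a uniform upper bound on $\|DX_t(x)\|$ and a uniform lower bound on $\|(DX_t(x))^{-1}\|^{-1}$.

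The key observation for (d) is that $P^X_{p,t} = \Pi^X_{X_t(p)} \circ DX_t(p)|_{N_{X,p}}$, where $\Pi^X$ is the orthogonal projection and hence has operator norm at most one; thus $\|P^X_{p,t}\| \leq \|DX_t(p)\|$, which is bounded by (c). For the lower bound, the cocycle identity $(P^X_{p,t})^{-1} = P^X_{X_t(p),-t}$ combined with $\|P^X_{X_t(p),-t}\| \leq \|DX_{-t}(X_t(p))\|$ reduces matters to applying (c) again with $-t$ in place of $t$. For (e), I will invoke the estimate \eqref{eq.poincareclose} provided by Lemmas 2.2 and 2.3 of \cite{GanYang}: a constant $\beta \in (0,\beta_1)$, uniform in $p \in M_X$, can be chosen so that $\|D\mathcal{P}^X_{p,1}(q) - D\mathcal{P}^X_{p,1}(p)\| < \delta$ for all $q \in \mathcal{N}_{X,p}(\beta\|X(p)\|)$. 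Since $D\mathcal{P}^X_{p,1}(p) = P^X_{p,1}$, for $\delta$ sufficiently small the bounds from (d) propagate from $p$ to all such $q$.

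For the \emph{moreover} part, the authors have already remarked that the constants $\beta_1$ and $\beta$ from \cite{GanYang} may be chosen uniformly in a sufficiently small $C^1$-neighborhood of $X$. All remaining bounds depend continuously, in the $C^1$ topology, on the vector field over a compact set (either $M$ or $M \times [-1,1]$), so they are preserved under small perturbations at the cost of replacing $C$ by $C+1$, say. The main obstacle is that $M_X$ is not compact when $\mathrm{Zero}(X) \neq \emptyset$, so conditions (d) and (e) cannot be handled by a direct compactness argument on $M_X$; the cocycle identity $(P^X_{p,t})^{-1} = P^X_{X_t(p),-t}$ and the trivial norm bound $\|P^X_{p,t}\| \leq \|DX_t(p)\|$ are precisely what allow this to be bypassed, reducing every estimate to one on $DX_t$, for which compactness of $M$ alone suffices.
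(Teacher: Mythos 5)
Your proof is correct, and for conditions (a)--(c), (e) and the uniformity on a $C^1$-neighborhood it runs along the same lines as the paper's; the genuine difference is in the lower bound of condition (d), which is the only point where the non-compactness of $M_X$ really matters. The paper handles it geometrically: it introduces the continuous function $F(p,v,t)=\measuredangle\big(DX_t(p)\cdot v_p^{\perp},\,DX_t(p)\cdot v\big)$ on the compact set $SM\times[-1,1]$, whose strictly positive infimum bounds from below the angle between $DX_t(p)N_{X,p}$ and the flow direction $X(X_t(p))$ uniformly over $M_X$, and then combines this angle bound with condition (c). You instead use that the linear Poincar\'e flow is a flow, so that $(P^X_{p,t})^{-1}=P^X_{X_t(p),-t}$, together with the trivial estimate $\|P^X_{X_t(p),-t}\|\leq\|DX_{-t}(X_t(p))\|<C$ from (c); this is more elementary, and it indeed makes the ``moreover'' part transparent, since (d) for a nearby $Y$ then follows formally from (c) for $Y$, with no dependence on the location of $\mathrm{Zero}(Y)$. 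Two small points: you should state (or verify in one line) the cocycle identity for the linear Poincar\'e flow, since the facts that $DX_t(p)\cdot X(p)=X(X_t(p))$ and that $\Pi^X$ annihilates the direction $\langle X\rangle$ are exactly what make it exact, and it is the step playing the role that the angle bound plays in the paper; and note that the paper's function $F$ is not superfluous machinery, as it is reused later in the proof of Lemma \ref{lem.justify}, an estimate your route does not furnish. Finally, in (e) both you and the paper pass from $\|D\mathcal{P}^X_{p,1}(q)-P^X_{p,1}\|<\delta$ and $\|(P^X_{p,1})^{-1}\|<C$ to a bound on $\|(D\mathcal{P}^X_{p,1}(q))^{-1}\|$; this uses the standard perturbation fact that the inverse is then bounded by $C/(1-\delta C)$, absorbed by a slight enlargement of $C$, which is worth a word even if the paper glosses over it in the same way.
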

\begin{proof}
	Since the set $[-1,1] \times M$ is compact, the existence of a constant $C>1$ that verifies Conditions \ref{condition aa}, \ref{condition bb} and \ref{condition cc} in Definition \ref{def.boundedbyc} is immediate. 
	
	Let us justify Condition \ref{condition dd}. Recall that on $M_X$ the linear Poincar\'e flow is defined by $P^X_{p,t} = \Pi^X_{X_t(p)} \circ DX_t(p)|_{N_{X,p}}$. By Condition \ref{condition cc} and since $\|P^X_{p,t}\| \leq \|DX_t(p)\|$ we conclude the upper bound in Condition \ref{condition dd}. Since $M_X$ is not compact, the possible problem that could appear for the lower bound in Condition \ref{condition dd} is if the angle between the hyperplane $DX_t(p)N_{X,p}$ and $X(X_t(p))$ is not bounded from below for $t\in [-1,1]$ and $p\in M_X$. 
	
	Let $SM$ be the unit tangent bundle of $M$ and consider the application $F\colon SM \times \R  \to  \R$ defined by
	\begin{equation}\label{eq.defF}
	F(p,v,t)= \measuredangle(DX_t(p)\cdot v_p^\perp, DX_t(p)\cdot  v),
	\end{equation}
	where $v_p^\perp$ is the $d-1$-dimensional subspace in $T_pM$ which is orthogonal to the vector $v$, and $\measuredangle(DX_t(p)\cdot v_p^\perp, DX_t(p)\cdot v)$ is the angle between the subspace $DX_t(p) \cdot v_p^\perp$ and the vector $DX_t(p)\cdot v$.
	
	By the continuity of $DX_t$, we have that the map $F$ is also continuous. Moreover, for each $p\in M$ and $t\in \R$ the map $DX_t(p)$ is an isomorphism between $T_pM$ and $T_{X_t(p)}M$, therefore, $F(p,v,t)$ is positive for any $(p,v) \in SM$ and $t\in \R$. Define $\gamma := \inf \{F(p,v,t): (p,v,t) \in SM \times [-1,1]\}$, and observe that since $SM \times [-1,1]$ is compact and $F$ is continuous, $\gamma$ is strictly positive. 
	
	Note that if $p\in M_X$, we have that $F(p, \frac{X(p)}{\|X(p)\|}, t) = \measuredangle (DX_t(p) \cdot X_p^\perp, \frac{X(X_t(p))}{\|X(p)\|})$. Thus, for  $(p,t)\in M_X\times    [-1,1]$, we obtain $F(p,\frac{X(p)}{\|X(p)\|},t)>\gamma$. This together with the lower bound in Condition \ref{condition cc} gives the uniform lower bound in Condition \ref{condition dd}. 
	
	Condition \ref{condition ee} follows easily from Condition \ref{condition dd} and  (\ref{eq.poincareclose}) above. We conclude that for any vector field $X\in \mathfrak{X}^1(M)$, there is a constant $C>1$ such that $X$ is bounded by $C$, and the same is true for every $Y$ close enough to $X$.  \qedhere
	
\end{proof}

Let $X\in \mathfrak{X}^1(M)$ be a vector field bounded by $C>1$. Using the exponential map, for $p \in M_X$, we consider the \emph{lifted Poincar\'e map} 
\[
\widetilde{\mathcal{P}}^X_{p,1} = \exp_{X_1(p)}^{-1} \circ \mathcal{P}^X_{p,1} \circ \exp_p,
\]
which goes from $N_{X,p}(\beta \|X(p)\|)$ to $N_{X,X_1(p)}(R)$. The advantage of using the lifted Poincar\'e map is that we can perform perturbations using canonical coordinates. Observe that 
\begin{equation}
\label{eq.lowerboundvectorfield}
\|X(X_1(p))\|> C^{-1} \|X(p)\|.
\end{equation}
By (\ref{eq.lowerboundvectorfield}) and the last item in Definition \ref{def.boundedbyc}, for any $n \in \N$, the map $\mathcal{P}^X_{p,n}$ is well defined on $\mathcal{N}_{X,p}\big(\frac{\beta}{C^{n}}\|X(p)\|\big)$, while the lifted map $\widetilde{\mathcal{P}}^X_{p,n}$  is well defined on $V^X_{p,n} :=N_{X,p}\big(\frac{\beta}{C^{n}}\|X(p)\|\big)$. 

For each $n\in \N$ and $p\in M_X$, we define the change of coordinates $\psi_{p,n} = P^X_{X_{-n}(p),n} \circ (\widetilde{\mathcal{P}}^X_{X_{-n}(p),n})^{-1}$, which is a $C^1$ diffeomorphism from $\widetilde{\mathcal{P}}^X_{p,n}(V^X_{X_{-n}(p),n})$ to $P^X_{X_{-n}(p),n}(V^X_{X_{-n}(p),n})\subset N_{X,p}$. Observe that $\psi_{p,0} = \mathrm{id}$. The sequence $(\psi_{X_j(p),j})_{j\in \N}$  verifies the following equality:
\[
\psi_{X_n(p),n} \circ \widetilde{\mathcal{P}}^X_{p,n} = P^X_{p,n} \circ \psi_{p,0},
\]
which holds on $V_{p,n}^X$. In other words, this change of coordinates linearizes the dynamics of $\widetilde{\mathcal{P}}^X_{p,n}$:
\begin{mycapequ}[H]
\begin{equation*}
\ \xymatrixcolsep{3pc}\xymatrix{
	V_{X_{-1}(p),n+1} \ar[d]_-{\psi_{X_{n+1}(p),n+1}} \ar[r]^-{\widetilde{\mathcal{P}}^{X}_{X_{-1}(p),1}} & V_{p,n} \ar[d]_-{\psi_{X_n(p),n}} \ar[r]^-{\widetilde{\mathcal{P}}^{X}_{p,n}} & N_{X_{n}(p)}(R) \ar[d]^-{\imath_{p,n}}\\
	N_{X_{-1}(p)} \ar[r]_-{P^X_{X_{-1}(p),1}}  & N_p \ar[r]_-{P^{X}_{p,n}} & N_{X_{n}(p)} }
\end{equation*}
	\caption{Change of coordinates}
\end{mycapequ}
\noindent where $\imath_{p,n}\colon N_{X_{n}(p)}(R) \to N_{X_{n}(p)}$ stands for the inclusion map. 

For all $y \in \mathcal{N}_{X,p}(\frac{\beta}{C^n}\|X(p)\|)$, we define the \textit{hitting time} $\tau^X_{p,n}(y)$ as the first positive time where the trajectory starting at $y$ hits the transverse section $\mathcal{N}_{X,X_n(p)}(R)$:
$$
\tau^X_{p,n}(y):=\inf\{t \geq 0: X_t(y)\in \mathcal{N}_{X,X_n(p)}(R)\}.
$$

\begin{notation}
	Let $p\in M_X$ and $n\in \N$. Suppose that for $Y\in \mathfrak{X}^1(M)$ the submanifolds $\mathcal{N}_{X,p}\big(\frac{\beta}{C^{n}}\|X(p)\|\big)$ and $\mathcal{N}_{X,X_n(p)}(R)$ are transverse to $Y$, and that the Poincar\'e map for $Y$ between these transverse sections is well defined on $\mathcal{N}_{X,p}\big(\frac{\beta}{C^{n}}\|X(p)\|\big)$. Then we denote this Poincar\'e map for $Y$ by $\mathcal{P}^Y_{X,p,n}$. Accordingly, we denote its lift by $\widetilde{\mathcal{P}}^Y_{X,p,n}$ and its hitting time by $\tau^Y_{X,p,n}$. We also extend those notations for non-integer times: given an integer $n \geq 1$ and $t \in [n-1,n]$, we let $\mathcal{P}^Y_{X,p,t}$ be the  Poincar\'e map between the transversals $\mathcal{N}_{X,p}\big(\frac{\beta}{C^{n}}\|X(p)\|\big)$ and $\mathcal{N}_{X,X_t(p)}(R)$. 
\end{notation}

In the next definition we introduce   the type of perturbations of the Poincar\'e map that we will consider in the sequel. Observe that, in this definition, we are perturbing the nonlinear transverse dynamics of the flow.
\begin{definition}
	\label{def.perturbationpoincare}
	For each $\delta>0$ and given an open set $U\subset \mathcal{N}_{X,p}(\beta \|X(p)\|)$, a $C^1$ map $g\colon \mathcal{N}_{X,p}(\beta \|X(p)\|) \to \mathcal{N}_{X,X_1(p)}(R)$ is called a $\delta$\textit{-perturbation of $\mathcal{P}^X_{p,1}$ with support in $U$} if the following holds:
	\begin{itemize}
		\item $d_{C^1}(\mathcal{P}^X_{p,1},g)<\delta$;
		\item the image of $g$ coincides with the image of $\mathcal{P}^X_{p,1}$;
		\item the map $g$ is a $C^1$ diffeomorphism into its image;
		\item the support of $(\mathcal{P}^X_{p,1})^{-1} \circ g$ is contained in $U$.
	\end{itemize}   
\end{definition}

For any $n\in \N$ and any $U\subset \mathcal{N}_{X,p}\big(\frac{\beta}{C^n}\|X(p)\|\big)$, we define
\begin{equation}\label{defi set Idef}
\mathcal{I}^X(p,U,n):=\{(y,t): y\in U,\ t\in [0, \tau^X_{p,n}(y)]\},
\end{equation}
and we let $\mathcal{U}^X(p,U,n)$ be the image of $\mathcal{I}^X(p,U,n)$ under the map $(y,t) \mapsto X_t(y)$:
\begin{equation}\label{defi set Urond}
\mathcal{U}^X(p,U,n):=\displaystyle \bigcup_{y\in U} \bigcup_{t\in [0, \tau^X_{p,n}(y)]} X_t(y).
\end{equation}

We will need the following lemma, which translates the fact that $X_t\to Id$ when $|t|\to 0$ in terms of the linear Poincar\'e map and the hitting time.

\begin{lemma}\label{lem.justify}
	Let $X\in \mathfrak{X}^1(M)$. There exists a small constant $\alpha= \alpha(X)>0$ such that for any $t\in [-\alpha, \alpha]$ and $p\in M_X$, it holds that $|\det P^X_{p,t}-1| < \frac{\log 2}{2}$. Furthermore, take $C>1$ to be a constant such that $X$ is bounded by $C$. Then, we can fix $\beta>0$ small such that for any $p\in M_X$ and $q\in \mathcal{N}_{X,p}\big(\frac{\beta}{C^n}\|X(p)\|\big)$, it holds that $\tau^X_{p,n}(q) \in [n-\alpha, n+\alpha]$.
\end{lemma}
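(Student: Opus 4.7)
For Part 1, I would exploit the block-triangular structure of $DX_t(p)$ with respect to the decomposition $T_pM = N_{X,p} \oplus \mathbb{R} X(p)$. Picking an orthonormal basis of $N_{X,p}$, completing it by $X(p)/\|X(p)\|$, and performing the same construction at $X_t(p)$, the identity $DX_t(p)\cdot X(p) = X(X_t(p))$ will yield
\[
\det P^X_{p,t} = \det DX_t(p) \cdot \frac{\|X(p)\|}{\|X(X_t(p))\|}.
\]
By compactness of $M$ and continuity of $(t,x)\mapsto DX_t(x)$, the factor $\det DX_t(p)$ converges to $1$ uniformly in $p\in M$ as $t\to 0$. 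For the second factor, writing $\|X(X_t(p))\|/\|X(p)\| = \|DX_t(p)\cdot X(p)\|/\|X(p)\|$ and using that $DX_t\to \mathrm{Id}$ uniformly on $M$ shows that this ratio also converges to $1$ uniformly in $p \in M_X$. Combining the two produces the required $\alpha>0$.

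For Part 2, I would use the fact, already noted in the text preceding the lemma, that for $\beta$ small, $\mathcal{P}^X_{p,n}$ is well defined on $\mathcal{N}_{X,p}(\beta \|X(p)\|/C^n)$. The plan is to split the total hitting time into a sum of successive one-step hitting times,
\[
\tau^X_{p,n}(q) = \sum_{k=0}^{n-1} \tau^X_{X_k(p),1}(q_k), \qquad q_k := \mathcal{P}^X_{p,k}(q),
\]
and bound each deviation $|\tau^X_{X_k(p),1}(q_k)-1|$ by a uniform constant times the relative offset $\|q_k - X_k(p)\|/\|X(X_k(p))\|$. This estimate will come from the smoothness of $\tau^X_{X_k(p),1}$ at $X_k(p)$ (where it equals $1$), together with Remark \ref{rmk.closetonormal}, which controls the angle of $\mathcal{N}_{X,X_k(p)}$ to the flow, and Condition (d) of Definition \ref{def.boundedbyc}, which controls the normal speed from below. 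Iterating the $C$-Lipschitz bound of Condition (e) then controls $\|q_k - X_k(p)\|$, and combining with the lower bound $\|X(X_k(p))\| \geq \|X(p)\|/C^k$ gives geometric control of the successive relative offsets. I would then choose $\beta$ small enough that the resulting sum is at most $\alpha$, uniformly in $n$.

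The main obstacle will be precisely this last step: bounding $\sum_{k=0}^{n-1} |\tau^X_{X_k(p),1}(q_k) - 1|$ by $\alpha$ uniformly in $n$ and $p \in M_X$. A careless estimate produces a sum that grows with $n$, so the proof must exploit the calibration of the initial ball radius $\beta\|X(p)\|/C^n$ against the Lipschitz constant $C$ of the Poincar\'e map and the geometric bounds on $\|X\|$ along the orbit. This uniformity is what makes the lemma serviceable in the subsequent perturbation arguments, where it will be applied along orbit segments of arbitrary length.
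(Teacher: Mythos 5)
Your treatment of the first assertion is correct and is essentially the paper's argument in a slightly different packaging: the paper factors $\det P^X_{p,t}=\det\bigl(\Pi^X_{X_t(p)}|_{DX_t(p)N_{X,p}}\bigr)\cdot\det\bigl(DX_t(p)|_{N_{X,p}}\bigr)$ and controls the first factor through the angle function $F$ of \eqref{eq.defF}, whereas you use the exact identity $\det P^X_{p,t}=\det DX_t(p)\cdot\|X(p)\|/\|X(X_t(p))\|$; both reduce the claim to the uniform convergence $DX_t\to\mathrm{id}$ on the compact manifold $M$, so the non-compactness of $M_X$ causes no harm.

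The second assertion is where your proposal has a genuine gap, and it is exactly the one you flag without closing. With your bookkeeping, after $k$ steps one only gets $\|q_k-X_k(p)\|\le C^k\,\tfrac{\beta}{C^n}\|X(p)\|$, while \eqref{eq.lowerboundvectorfield} only gives $\|X(X_k(p))\|\ge C^{-k}\|X(p)\|$; hence the relative offset at step $k$ is bounded only by $\beta C^{2k-n}$, and the sum of the one-step time deviations is of order $\beta C^{n}$. No fixed $\beta$ makes this smaller than $\alpha$ for all $n$; worse, for $k>n/2$ the bound exceeds $\beta\|X(X_k(p))\|$, so the hypothesis needed to apply Condition (e) of Definition \ref{def.boundedbyc} at the next step is not even guaranteed by this accounting. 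The structural problem is that a per-step (telescoping) estimate compounds a factor $C^2$ per unit time (the Lipschitz constant $C$ of the Poincar\'e map times the possible decay $C^{-1}$ of $\|X\|$ along the orbit), whereas the calibration $\beta/C^n$ of the initial radius can absorb only a single factor $C^n$. So the route you outline would fail without a new idea, and your closing paragraph correctly identifies, but does not supply, that idea.

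The paper's proof avoids any accumulation: it never iterates one-step hitting times. It first fixes $\beta\ll\alpha$ (with $\beta C\ll R$) so that the flow-box inclusion $B(x,\beta\|X(x)\|)\subset\bigcup_{t\in[-\alpha,\alpha]}X_t(\mathcal{N}_{X,x}(R))$ holds for every $x\in M_X$ — uniform over $M_X$ because the radius of the ball scales with the local speed $\|X(x)\|$. Then it compares only the endpoints: for $q\in\mathcal{N}_{X,p}\bigl(\tfrac{\beta}{C^n}\|X(p)\|\bigr)$, Condition (c) gives $d(X_n(q),X_n(p))\le C^n d(q,p)\le\beta\|X(p)\|$, so the Lipschitz constant $C^n$ of the time-$n$ map is cancelled exactly once by the radius $\beta/C^n$, and the flow-box statement applied at the endpoint $X_n(p)$ converts this distance bound into the conclusion $\tau^X_{p,n}(q)\in[n-\alpha,n+\alpha]$. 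If you want to repair your write-up, replace the sum of one-step deviations by this single endpoint comparison followed by a flow-box argument; this is the step your proposal is missing.
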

\begin{proof}
	
	First observe that for $t$ small, $DX_t$ is uniformly close to $DX_0=\mathrm{id}$ for any point in $M$. Using the continuity of the function $F$ defined in (\ref{eq.defF}), for any $\varepsilon>0$, there exists $\alpha_1>0$ such that for any $t\in [-\alpha_1,\alpha_1]$ we have the following: for any $p\in M_X$, the angle between $DX_t(p) N_{X,p}$ and $N_{X,X_t(p)}$ is smaller than $\varepsilon$. Since $\det P^X_{p,t} = \det\Pi^X_{X_{t}(p)}|_{DX_t(p)N_{X,p}} . \det DX_t(p)|_{N_{X,p}}$, from these two observations above, by fixing $\alpha$ small enough we conclude the first part of Lemma \ref{lem.justify}. 
	
	Let us now prove the second part of the lemma. We may take $\beta$ much smaller than $\alpha$ such that $\beta C \ll 10\alpha C$ and $\beta C\ll R$. This implies that for any $p\in M_X$ we have that $B(p, \beta\|X(p)\|) \subset \displaystyle \bigcup_{t\in [-\alpha,\alpha]} X_t(\mathcal{N}_{X,p}(R))$. Observe that for any $n\in \N$, if $q\in \mathcal{N}_{X, p}(\frac{\beta}{C^n}\|X(p)\|)$, then we have
	\[
	d(X_n(p), X_n(q)) \leq C^nd(p,q) \leq \beta\|X(p)\|.
	\] 
	The conclusion then follows. 
\end{proof}

\begin{remark}
	From now on given $X$ we will always assume that $\alpha$ and $\beta$ verify the conclusion of Lemma \ref{lem.justify}. 
\end{remark}

\subsubsection{A realization lemma}

We state and prove below a lemma that allows us to realize a non-linear perturbation of the linear Poincar\'e flow as the lifted Poincar\'e map of a vector field nearby. This result is one of the key differences between our work and the diffeomorphism result of \cite{BonattiCrovisierWilkinson}. Moreover, it provides an efficient of converting the problem of perturbing a vector field into giving a sequence of perturbations of a discrete dynamics. 

\begin{lemma}
	\label{lemma.fundamentalperturbation}
	For any $C,\varepsilon >0$, there exists $\delta=\delta(C,\varepsilon)>0$ that verifies the following. For any vector field $X\in \mathfrak{X}^1(M)$ that is bounded by $C$, any $0< \delta_1< \delta$ and any integer $n\in \N$, there is $\rho=\rho(X,\varepsilon,\delta_1,n)>0$ with the following property.
	
	For any $p\in M_X$ and $U \subset \mathcal{N}_{X,p}(\rho\|X(p)\|)$   such that the map $(y,t) \mapsto X_t(y)$ is injective restricted to the set $\mathcal{I}^X(p,U,n)$, then the following holds:
	\begin{enumerate}
		\item\label{item un} Set $\widetilde{U} := \exp^{-1}_p(U)$. Then for every $i\in \{0, \dots, n\}$, the map $\Psi_{X_i(p),i} := \psi_{X_i(p), i} \circ \exp^{-1}_{X_i(p)}$ induces a $C^1$ diffeomorphism from $\mathcal{P}^X_{p,i}(U)$ onto $P^X_{p,i}(\widetilde{U})$ such that 
		\begin{equation}\label{controle Psi}
		\max \{ \|D\Psi_{X_i(p),i}\|, \|D\Psi^{-1}_{X_i(p),i}\|, |\det D\Psi_{X_i(p),i}|, | \det D\Psi^{-1}_{X_i(p),i}|\} <2.
		\end{equation}
		%\begin{displaymath}
		%\xymatrixcolsep{4pc}\xymatrix{
		%U \ar[d]_-{\exp_p^{-1}} \ar[r]^-{\mathcal{P}^{i-1}_{p}} &\mathcal{P}_p^{i-1}(U) \ar[d]_-{\Psi_{X_{i-1}(p), i-1}} \ar[r]^-{\mathcal{P}_{X_{i-1}(p), X_{i}(p)}}& \mathcal{P}_p^{i}(U) \ar[d]^-{\Psi_{X_{i}(p), i}}\\
		%\tilde{U} \ar[r]_-{P^{i-1}_{p}} & P_p^{i-1}(\tilde{U}) \ar[r]_-{P^{1}_{X_{i-1}(p)}} & P_p^{i}(\tilde{U})} 
		%\end{displaymath}
		
		\item\label{item deux} For $i\in \{1, \dots, n\}$, let $\tilde{g}_i\colon N_{X,X_{i-1}(p)} \to N_{X,X_i(p)}$ be any $C^1$ diffeomorphism  such that the support of $(P^X_{X_{i-1}(p),1})^{-1}\circ \tilde{g}_i$ is contained in $P^X_{p,i-1}(\widetilde{U})$, and which satisfies $d_{C^1}(\tilde{g}_i, P^X_{X_{i-1}(p),1}) < \delta_1$. Let $g_i$  be the map defined as follows:
		\begin{itemize}
			\item $g_i(y) := \mathcal{P}^X_{X_{i-1}(p),1}(y)$, if $y\notin \mathcal{P}^X_{p,i-1}(U)$;
			\item $g_i(y) := \Psi^{-1}_{X_i(p),i} \circ \tilde{g}_i \circ \Psi_{X_{i-1}(p), i-1}(y)$, if $y\in \mathcal{P}^X_{p,i-1}(U)$.
		\end{itemize}
		Then the map $g_i$ is a $\delta$-perturbation of $\mathcal{P}^X_{X_{i-1}(p),1}$ with support in $\mathcal{P}^X_{p,i-1}(U)$.  %\annotation{He says that there is something wrong with our definition of $g_i$, meaning that it could even fail to be continuous. I don't understand his comment and I don't see a problem with the definition.}
		
		\item\label{item trois}  There exists $Y\in \mathfrak{X}^1(M)$ such that $d_{C^1}(X,Y) < \varepsilon$, and the Poincar\'e map $\mathcal{P}^Y_{X,X_i(p),1}$ for the vector field $Y$ between $\mathcal{N}_{X_{i-1}(p)}(\rho \|X(p)\|)$ and $\mathcal{N}_{X_i(p)}( R)$ is well defined and is given by $g_i$, for each $i\in \{1, \dots, n\}$. Moreover, the support of $X-Y$ is contained in $\mathcal{U}^X(p,U,n)$ and the image of $\tau^Y_{X,p,n}$ is contained in $[n-\alpha, n+\alpha]$. 
	\end{enumerate} 
	
\end{lemma}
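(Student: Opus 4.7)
The three items will be proved in order, with item~(\ref{item trois}) requiring most of the work.

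For item~(\ref{item un}), the crucial observation is that $D\Psi_{X_i(p),i}(X_i(p)) = \mathrm{Id}$. Indeed, unwinding the definition $\psi_{X_i(p),i} = P^X_{p,i} \circ (\widetilde{\mathcal{P}}^X_{p,i})^{-1}$ and using $D\widetilde{\mathcal{P}}^X_{p,i}(0) = P^X_{p,i}$, one has $D\psi_{X_i(p),i}(0) = \mathrm{Id}$, while $D\exp^{-1}_{X_i(p)}(X_i(p)) = \mathrm{Id}$ by standard Riemannian geometry. The uniform control on higher derivatives of $\exp$, $\widetilde{\mathcal{P}}^X$ and $P^X$ provided by the boundedness of $X$ by $C$ then allows one to find $\rho = \rho(X, n)$ such that, for any $U \subset \mathcal{N}_{X,p}(\rho\|X(p)\|)$, each map $\Psi_{X_i(p),i}$ is a $C^1$-diffeomorphism from $\mathcal{P}^X_{p,i}(U)$ onto its image satisfying the bound~(\ref{controle Psi}).

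For item~(\ref{item deux}), the linearization identity $\mathcal{P}^X_{X_{i-1}(p),1} = \Psi^{-1}_{X_i(p),i} \circ P^X_{X_{i-1}(p),1} \circ \Psi_{X_{i-1}(p),i-1}$ (which follows directly from the definitions of $\psi$) expresses $g_i$ as a $\Psi$-conjugate of $\tilde g_i$. Applying the chain rule with the derivative bound $2$ from item~(\ref{item un}) I obtain $d_{C^1}(g_i, \mathcal{P}^X_{X_{i-1}(p),1}) \leq K\delta_1$ for some universal constant $K$ depending only on that bound, and the claim follows once $\delta = \delta(C,\varepsilon)$ has been chosen small enough to absorb the factor $K$.

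The substantive step is item~(\ref{item trois}), the realization of a discrete non-linear perturbation by a flow perturbation. The injectivity hypothesis on the map $(y,t) \mapsto X_t(y)$ on $\mathcal{I}^X(p,U,n)$ gives well-defined tubular coordinates on $\mathcal{U}^X(p,U,n)$ in which $X$ is straightened to $\partial/\partial s$. Set $h_i := g_i \circ (\mathcal{P}^X_{X_{i-1}(p),1})^{-1}$; each $h_i$ is a $\delta$-small $C^1$-perturbation of the identity on $\mathcal{N}_{X,X_i(p)}$ supported in $\mathcal{P}^X_{p,i}(U)$. I would then choose a smooth isotopy $(\phi^i_s)_{s\in[i-1,i]}$ from $\mathrm{id}$ to $h_i$, depending smoothly on $s$ and equal to the identity near both endpoints; its time-dependent generator $V_i$ has $C^1$-norm comparable to $d_{C^1}(h_i,\mathrm{id})$. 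Lifting this generator to the tube via the straightening coordinates and setting $Y = X + \widetilde V$, where $\widetilde V$ equals the lifted $V_i$ on the slab between the $(i-1)$-th and $i$-th transversal and vanishes outside $\mathcal{U}^X(p,U,n)$, defines the desired vector field. The hitting-time estimate $\tau^Y_{X,p,n} \in [n-\alpha, n+\alpha]$ then follows from Lemma~\ref{lem.justify} applied to $Y$ together with $d_{C^1}(X,Y) < \varepsilon$.

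\textbf{Main obstacle.} The principal difficulty is obtaining the $C^1$-estimate $d_{C^1}(X,Y)<\varepsilon$ uniformly in $n$. This is feasible because the perturbations on different slabs $[i-1,i]$ are spatially disjoint, each supported in its own tube segment, so the global $C^1$-norm of $\widetilde V$ is the supremum (not the sum) of the local norms. Controlling each local norm in terms of $\delta$---using the derivative bound from item~(\ref{item un}) and a careful choice of the isotopies $(\phi^i_s)$ with smooth matching at the transversals, so that the dependence on $C$ (but not on $n$) is absorbed into $\delta = \delta(C,\varepsilon)$---is the technical core of the lemma. A secondary subtlety is that $\rho$ must depend on $n$ (since iterated derivatives need to be controlled across $n$ successive transversals), which is allowed by the statement.
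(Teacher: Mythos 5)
Your overall architecture is the same as the paper's (linearize with $\Psi$, conjugate to get $g_i$, suspend the discrete perturbation slab-by-slab with a bump function, use disjointness of the slab supports so the $C^1$-size is a supremum and not a sum), and items \eqref{item un}--\eqref{item deux} are in the right spirit, but item \eqref{item trois} as sketched has two genuine gaps. First, in your straightening coordinates $(z,s)\mapsto X_s(z)$ the transversal $\mathcal{N}_{X,X_i(p)}(R)$ is \emph{not} the level set $\{s=1\}$: the per-slab hitting time only lies in $[1-\alpha,1+\alpha]$. Hence lifting the generator of an isotopy of $h_i=g_i\circ(\mathcal{P}^X_{X_{i-1}(p),1})^{-1}$ does not literally make the $Y$-Poincar\'e map between the true sections equal to $g_i$; you must either interpolate along the family of true sections $\mathcal{N}_{X,X_t(p)}$, $t\in[i-1,i]$ (this is exactly what the paper's maps $h^{(i)}_t$, the map $H$, and the appeal to Lemma 6.5 of \cite{PughRobinson} accomplish), or transport the isotopy by the flow so that the orbit leaves the active region on the $X$-orbit of $(\mathcal{P}^X_{X_{i-1}(p),1})^{-1}(g_i(z))$. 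This is repairable, but it is not a detail one can skip, since the statement demands exact equality of the Poincar\'e maps. Second, and more seriously, the hitting-time claim does not follow from ``Lemma \ref{lem.justify} applied to $Y$ together with $d_{C^1}(X,Y)<\varepsilon$'': a perturbation that is merely $\varepsilon$-small in $C^1$ but acts along an orbit segment of length $n$ can shift hitting times by an amount of order $n$, and in your construction the total traversal time is a sum of $n$ per-slab hitting times evaluated at points displaced by the perturbation, so the errors a priori accumulate with $n$ while $\alpha$ does not. The paper's definition of $Y$ as $\bigl(\partial_t\tau^X_{p,t}(y_0)\bigr)^{-1}\partial_t H(y_0,t)$ is precisely what forces the $Y$-hitting time of $y_0$ to equal $\tau^X_{p,n}(y_0)\in[n-\alpha,n+\alpha]$; with your construction you would instead need an explicit bound on the accumulated drift, absorbed by shrinking $\rho$ with $n$. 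As written, that argument is absent.

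Two further points. In item \eqref{item un}, the appeal to ``uniform control on higher derivatives'' is unavailable ($X$, hence the Poincar\'e maps, are only $C^1$), and it sidesteps the real issue behind \eqref{controle Psi}: the base point ranges over the non-compact set $M_X$, so the closeness of $D\widetilde{\mathcal{P}}^X_{p,i}$ to $P^X_{p,i}$ on the plaque of radius $\rho\|X(p)\|$, and the passage from ``$A-B$ small'' to ``$AB^{-1}$ close to the identity'' uniformly in $p$, must be extracted from the scale-invariant bounds in the definition of boundedness by $C$; the paper does this by viewing both maps inside the compact Grassmannian bundle $\mathrm{Gr}^{d-1}M$ and invoking compactness there. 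Finally, a small inconsistency: an isotopy from $\mathrm{id}$ to $h_i$ cannot be ``equal to the identity near both endpoints''; what you need (and what the paper's bump function $\chi$ provides) is that its \emph{generator} vanishes near the endpoints, so that consecutive slabs glue and $Y=X$ in a neighborhood of each section.
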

Before proving this lemma, let us say a few words on Items \eqref{item deux} and \eqref{item trois} in the statement. Item \eqref{item deux} states that we can obtain perturbations of the Poincar\'e map by perturbing its lift, with precise estimates on the size of each of these perturbations we consider. Observe that this only gives $C^1$ diffeomorphisms between certain transverse sections. Item \eqref{item trois} states that any such perturbation can be realized as the Poincar\'e map of a vector field $C^1$-close to $X$, with precise estimates on its distance to $X$. Furthermore, the  hitting time of $Y$ has the same image as the hitting time of $X$. 

This lemma will be very important in our proof. It will allow us to reduce the proof of the theorem to the perturbation of a linear cocycle over $\Z$. This is done after several steps and adaptations. One important remark is that we will find some number $n$ such that throughout our proof, the perturbations will happen in pieces of orbit of ``size'' $[0,n]$. So Lemma \ref{lemma.fundamentalperturbation} will give us the uniformity needed to realize the pertubations of the linear cocycles as the Poincar\'e map of a vector field.

\begin{proof}
	We will obtain $\delta$ later, as consequence of a finite number of inequalities. In the following, we always assume that $0<\rho\leq \frac{\beta}{C^n}$. By the previous discussion, this ensures that $\mathcal{P}_{p,n}^X$ is well defined on $\mathcal{N}_{X,p}\big(\rho\|X(p)\|\big)$, for all $p \in M_X$.
	
	For Item \eqref{item un}, first observe that 
	\begin{equation}\label{eq.chainrule}
	D\Psi_{X_i(p),i} = P^X_{p,i} \circ  D (\widetilde{\mathcal{P}}^X_{p,i})^{-1} \circ D \exp^{-1}_{X_i(p)}.
	\end{equation}
	Notice that since $i\in \{0,\cdots, n\}$, for any $l>0$ one may fix $\rho>0$ sufficiently small such that for any $p\in M_X$ we have $\rho \|X(X_i(p))\|< l$. In other words, the submanifolds $\mathcal{N}_{X,X_i(p)}(C^i \rho \|X(x)\|)$ can be made uniformly arbitrarily small. In particular, we obtain that for any $\gamma>0$, for $\rho$ sufficiently small, for any $p\in M_X$ the map $D\exp_{X_i(p)}^{-1}$ is $\gamma$-$C^1$-close to the identity in the ball of center $X_i(p)$ and radius $C^i \rho\|X(p)\|$. Hence, to control $D\Psi_{X_i(p),i}$ we are left to control the term $P^X_{p,i} \circ  D (\widetilde{\mathcal{P}}^X_{p,i})^{-1}$. This will follow from the following points:
		
	\begin{itemize}
		\item It holds
		\begin{equation}\label{control Ppt}
		\displaystyle C^{-n} < \inf_{p\in M_X,\ t\in [-n,n]} \|(P^X_{p,t})^{-1}\|^{-1} \leq \sup_{p\in M_X,\ t\in [-n,n]} \|P^X_{p,t}\| < C^n,
		\end{equation}
		and by \eqref{eq.poincareclose}, we have similar estimates for the Poincar\'e maps $\mathcal{P}^{X}_{p,t}$, uniformly in  $p \in M_X$ and $t\in [-n,n]$.
%		\item By choosing $\rho>0$ sufficiently small, the set 
%		$$
%		\bigcup_{y \in N_p(\rho\|X(p)\|)}\bigcup_{t \in [0,n]} P_{p,t}^X(y)
%		$$
%		can be made arbitrarily close to the $0$ section, uniformly in $p \in M_X$. Similarly, the set $\mathcal{U}^X(p,\mathcal{N}_{X,p}(\rho\|X(p)\|),n)$ in \eqref{defi set Urond} can be made arbitrarily close to the orbit segment $\{X_t(p):0\leq t \leq n\}$, uniformly in $p \in M_X$. 
%		\item The map $D \exp^{-1}_p$ is uniformly close to $\mathrm{id}$ in a neighborhood of $p \in M_X$. 
		%\item %Since the vector field $X$ is of class $C^1$ and by choosing $\rho>0$ sufficiently small, the maps $\psi_{X_i(p),i}$ used to linearize the dynamics can be made uniformly $C^1$-close to the identity  for $i \in \{0,\dots,n\}$ and $p \in M_X$. Therefore, the map $\Psi_{X_i(p),i}$ can be made arbitrarily $C^1$ close to  $\exp_{X_i(p)}^{-1}$. 
		\item 
		
		In order to have a uniform control on the norm of $D\Psi_{X_i(p),i}$ and its inverse, the difficulty to overcome is that the set $M_X$ where $p$ ranges is not compact. Let us note that by \eqref{eq.chainrule}, it is sufficient to control the map $P^X_{p,i} \circ D((\widetilde{\mathcal{P}}^X_{p,i})^{-1})$; moreover, by choosing $\rho>0$ sufficiently small, the linear maps $A=P^X_{p,i}$ and $B=D\widetilde{\mathcal{P}}^X_{p,i}$ from $N_p$ to $N_{X_i(p)}$ can be made arbitrarily close to each other. The point is thus to control the product $AB^{-1}$  knowing that the norm of the difference $A-B$ is small. The idea for  that is to find an extension in order to view $A$ and $B$ as two maps between the fibers at $p$ and $q:=X_i(p)$ of a compact bundle. We consider the  following commutative diagram:
\begin{equation*}
\ \xymatrixcolsep{3pc}\xymatrix{
	\mathrm{Gr}^{d-1}_p M  \ar[r]^-{D_p X_i} & \mathrm{Gr}^{d-1}_{q}  M \\
	 S_p M \ar[u]^-{\imath_p}  \ar[r]^-{D_p X_i} & \ar[u]_-{\imath_{q}}  S_{q}  M \\
	 N_p \ar[u]^-{\jmath_p} \ar[r]^-{P_{p,i}^X} & \ar[u]_-{\jmath_{q}} N_{q} }
\end{equation*}
where  $S M$, $\mathrm{Gr}^{d-1} M$ respectively denote the unit tangent bundle  and the bundle  of $(d-1)$-Grassmannians, $\imath_p$ is the natural inclusion from $S_p M$ to $\mathrm{Gr}_p^{d-1} M$, and  $\jmath_{p}$ denotes the map which associates to the hyperplane $N_p$  the unit normal vector to it (i.e., the unit vector tangent to  the flow). Here, by a slight abuse of notation, we denote by $D X_i$ the natural action of the differential of the flow both on $S M$ and  $\mathrm{Gr}^{d-1} M$.  By composition with the map  $\imath_p \circ \jmath_p$, we can see $P_{p,i}^X$ and $D\widetilde{\mathcal{P}}^X_{p,i}$ as two maps between the fibers at $p$ and $q$ of $\mathrm{Gr}^{d-1} M$. Besides, as   $\mathrm{Gr}^{d-1} M$ is compact,  for any $C>0$, and for any $\varepsilon>0$, there exists $\delta>0$ such that for any $p,q \in M$, 
\begin{align*}
\forall\, A \colon \mathrm{Gr}_p^{d-1} M \to \mathrm{Gr}_q^{d-1} M,\quad  & \|A\|< C,\\
\forall\, B \colon \mathrm{Gr}_p^{d-1} M \to \mathrm{Gr}_q^{d-1} M,\quad  & \|B\|< C,
\end{align*}
it holds:
$$
\|A-B\|< \delta \quad \Rightarrow \quad \|A B^{-1}- \mathrm{id}\|< \varepsilon. 
$$
From this fact, and our assumption that the vector field $X$ is bounded by $C$, we get the sought control on $P^X_{p,i}\circ D((\widetilde{\mathcal{P}}^X_{p,i})^{-1})$ over all $p \in M_X$. 
	\end{itemize}
	In particular, we obtain a uniform control of $\Psi_{X_i(p),i}$ for $p\in M_X$ and $i\in \{0,\dots,n\}$ even though the space  $M_X$ is not compact. 
	
	By Definition \ref{def.perturbationpoincare}, the proof of \eqref{item deux} follows easily from the first point. Indeed, given $i\in \{1,\dots,n\}$ and $p \in M_X$, we use the maps $\Psi_{X_{i-1}(p), i-1}$ and $\Psi_{X_{i}(p), i}$ to conjugate $\mathcal{P}_{X_{i-1}(p), 1}^X$ to the linear Poincar\'e map $P_{X_{i-1}(p),1}^X$.  By the previous discussion, for $\rho>0$ small enough, the   maps $\Psi_{X_{i-1}(p),i-1}$ and $\Psi_{X_i(p),i}$ are arbitrarily $C^1$-close to  $\exp_{X_{i-1}(p)}^{-1}$ and $\exp_{X_i(p)}^{-1}$ respectively. The estimate on the $C^1$ distance between $g_i$  and $\mathcal{P}_{X_{i-1}(p),1}^X$ follows, since we assume $d_{C^1}(\tilde{g}_i, P_{X_{i-1}(p),1}^X) < \delta_1$, and $\delta_1<\delta$. 
	
	The proof of Point \eqref{item trois}  follows from arguments similar to those presented in Pugh-Robinson \cite{PughRobinson} (see in particular Lemma $6.5$ in that paper).
	
	More precisely, let $i\in \{1,\dots,n\}$, and let $\tilde{g}_i\colon N_{X_{i-1}(p)} \to N_{X_i(p)}$ be a $C^1$ diffeomorphism satisfying the assumptions of Point \eqref{item deux}. We pull back $\tilde{g}_i$ to a $C^1$ diffeomorphism $\hat{g}_i\colon N_{X_{i-1}(p)}\to N_{X_{i-1}(p)}$ by letting $\hat{g}_i:=P_{X_i(p),-1}^{X}\circ \tilde{g_i}$. By assumption, the support of $\hat{g}_i$ is contained in $P^{X}_{p,i-1} (\widetilde{U})$, with $\widetilde{U} := \exp^{-1}_p(U)$ and $U \subset \mathcal{N}_{X,p}(\rho\|X(p)\|)$, hence by \eqref{control Ppt}, we get
	\begin{equation}\label{control czero de hat gi}
	d_{C^0}(\hat{g}_i,\mathrm{id})\leq 2 C\rho \max_{p \in M}\|X(p)\|.
	\end{equation}
	Then for all $t \in [i-1,i]$, we define a map $\tilde{g}_t\colon N_{X_{i-1}(p)} \to N_{X_t(p)}$ as $\tilde{g}_t:=P^{X}_{X_{i-1}(p),t-i+1} \circ \hat{g}_i$.  By the above estimate, and by \eqref{control Ppt},  we deduce that
	\begin{equation}\label{controle g chapeau t c0}
	d_{C^0}(\tilde{g}_t,P^{X}_{X_{i-1}(p),t-i+1})\leq 2 C^2\rho \max_{p \in M}\|X(p)\|,\qquad \forall\,  t \in [i-1,i].
	\end{equation}
	Moreover, for any $t \in [i-1,i]$, we have $D\tilde{g}_t=P^{X}_{X_{i-1}(p) ,t-i+1}\cdot D\hat{g}_i=P^{X}_{X_{i-1}(p),t-i+1}\circ P_{X_i(p),-1}^{X}\cdot  D\tilde{g_i}$. Since $d_{C^1}(\tilde{g}_i, P^X_{X_{i-1}(p),1}) < \delta_1$, we obtain
	\begin{equation}\label{controle g chapeau t}
	d_{C^1}(\tilde{g}_t,P^{X}_{X_{i-1}(p),t-i+1})\leq C^2 \delta_1,\qquad \forall\, t \in [i-1,i].
	\end{equation}  %we let $\theta_t \colon N_{X_t(p)}\to N_{X_i(p)}$ be a linear isomorphism between the normal spaces at $t$ and $i$. 
	
	Let us fix a $C^\infty$ bump function $\chi\colon \R \to [0,1]$ which is $0$ near $0$ and $1$ near $1$.  Fix $i\in \{1,\dots,n\}$ and set $\chi_{i-1}(\cdot):=\chi(\cdot-i+1)$. For $k \in \{0,\dots,n\}$, we also let $\mathcal{N}_{p,k}:=\mathcal{N}_{X,X_{k}(p)}\big(\frac{\beta}{C^{n-k}}\|X(p)\|\big)$. 
	Then for any $t \in [i-1,i]$, we let $h_t^{(i)}\colon \mathcal{N}_{p,i-1}\to \mathcal{N}_{p,i}$  be the map defined as
	\begin{itemize}
		\item $h_t^{(i)}(y) := \mathcal{P}_{X_{i-1}(p), t-i+1}^X(y)$, if $y\notin \mathcal{P}^{X}_{p,i-1}(U)$;
		\item $h_t^{(i)}(y) := \Psi^{-1}_{X_t(p),t} \circ \left(\chi_{i-1}(t)\tilde{g}_t+(1-\chi_{i-1}(t)) P^{X}_{X_{i-1}(p),t-i+1}\right) \circ \Psi_{X_{i-1}(p), i-1}(y)$, if $y\in \mathcal{P}^{X}_{p,i-1}(U)$,
	\end{itemize}
	where %$h_0=\mathrm{id}$, and where 
	we have extended the previous notation by setting 
	$$
	\Psi_{X_t(p),t}:=P^X_{p,t} \circ \widetilde{\mathcal{P}}^{X}_{X_t(p),-t} \circ \exp_{X_t(p)}^{-1}.
	$$  
	\begin{figure}[H]
		\begin{center}
			\includegraphics [width=13cm]{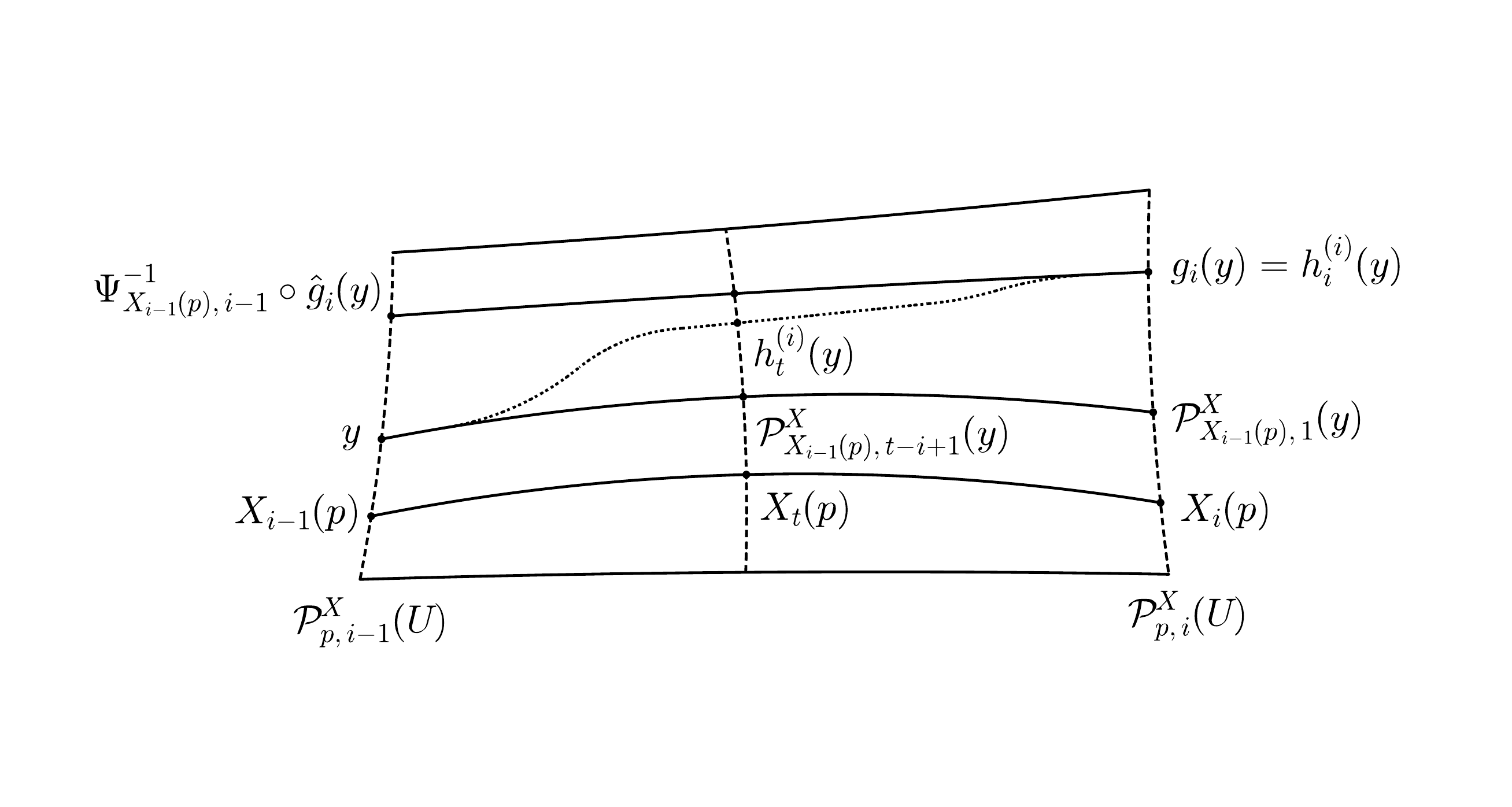}
		\end{center}
		\caption{Interpolation between the initial Poincar\'e map and $g_i$.}
	\end{figure}
	
	In particular, we note that for $t=i-1$,  we have $h_t^{(i)}=h_{i-1}^{(i)}=\mathrm{id}$, while for $t=i$, $h_t^{(i)}=h_i^{(i)}$ coincides with the map $g_i$ defined in Item \eqref{item deux}. 
	
	By \eqref{controle g chapeau t c0}, for all $t \in [i-1,i]$, we have
	$$
	d_{C^0}\left(\chi_{i-1}(t)\tilde{g}_t+(1-\chi_{i-1}(t)) P^{X}_{X_{i-1}(p),t-i+1} , P^{X}_{X_{i-1}(p),t-i+1}\right)\leq 2 C^2\rho \max_{p \in M}\|X(p)\|.
	$$
	Since $\mathcal{P}_{X_{i-1}(p), t-i+1}^X=\Psi^{-1}_{X_t(p),t} \circ P^{X}_{X_{i-1}(p),t-i+1}  \circ \Psi_{X_{i-1}(p), i-1}$, by the definition of $h_t^{(i)}$ and by \eqref{controle Psi}, we can thus make the $C^0$ distance between $h_t^{(i)}$ and $\mathcal{P}_{X_{i-1}(p), t-i+1}^X$ arbitrarily small, provided that $\rho>0$ is taken small enough. 
	
	For any $t\in [i-1,i]$, we have 
	\begin{gather*}
	D\left(\chi_{i-1}(t)\tilde{g}_t+(1-\chi_{i-1}(t)) P^{X}_{X_{i-1}(p),t-i+1} \right)\\
	=DP^{X}_{X_{i-1}(p),t-i+1}
	+\chi_{i-1}(t)\big(D\tilde{g}_t-P^{X}_{X_{i-1}(p),t-i+1}\big).
	\end{gather*}
	By \eqref{control Ppt} and \eqref{controle g chapeau t},  we thus get
	\begin{equation}\label{control diff ht}
	d_{C^1}(h_t^{(i)},\mathcal{P}_{X_{i-1}(p), t-i+1}^X)\leq 4C^2 \delta_1.
	\end{equation}
	
	For any $t \in [i-1,i]$, we also have:
	\begin{align*}
	&\partial_t\left(\chi_{i-1}(t)\tilde{g}_t+(1-\chi_{i-1}(t)) P^{X}_{X_{i-1}(p),t-i+1} \right)-\partial_t P^{X}_{X_{i-1}(p),t-i+1}\\
	=\ & \chi_{i-1}'(t)  \big(\tilde{g}_t- P^{X}_{X_{i-1}(p),t-i+1}\big)+\chi_{i-1}(t)  \partial_t\big(\tilde{g}_t- P^{X}_{X_{i-1}(p),t-i+1}\big)\\
	=\ & \chi_{i-1}'(t)P^{X}_{X_{i-1}(p),t-i+1} \circ  \big(\hat{g}_i- \mathrm{id}\big)+\chi_{i-1}(t) \partial_t P^{X}_{X_{i-1}(p),t-i+1}\circ \big(\hat{g}_i- \mathrm{id}\big).
	\end{align*}
	By 
	%$d_{C^1}(\tilde{g}_i, P_1(X_{i-1}(p)) < \delta_1$, and by 
	\eqref{controle Psi}, \eqref{control Ppt} and \eqref{control czero de hat gi}, we deduce that
	\begin{align}
	&\max_{t \in [i-1,i]}\max_{y \in U}|\partial_t \mathcal{P}^X_{X_{i-1}(p),t-i+1}(y)- \partial_t h_t^{(i)}(y)| \nonumber\\ 
	&\leq 8C \max\left(C,\sup_{t\in [0,1]} \|\partial_t P^X_{X_{i-1}(p),t}\|\right)\|\chi\|_{C^1}\rho \max_{p \in M}\|X(p)\|. \label{control partial t}
	\end{align}
	
	Recall that for $k \in \{0,\dots,n\}$, we denote $\mathcal{N}_{p,k}:=\mathcal{N}_{X,X_{k}(p)}\big(\frac{\beta}{C^{n-k}}\|X(p)\|\big)$. As in \eqref{defi set Idef}, given a set $V \subset \mathcal{N}_{p,0}$, we set
	\begin{equation*}
	\mathcal{I}^X(p,V,n):=\left\{(y,t): y\in V,\   t\in [0, \tau_{p,n}^X(y)] \right\}.%=\displaystyle \bigcup_{i=1}^{n}I_{i-1,p}(U),
	\end{equation*} 
	Let us assume  that $U \subset \mathcal{N}_{X,p}(\rho\|X(p)\|)$ is  such that the map $(y,t) \mapsto X_t(y)$ is injective on the set 
	%in the same way as in \eqref{defi set Idef}, i.e.,
	$\mathcal{I}^X(p,U,n)$.
	For $\rho>0$ small, the hitting time function $\tau_{p,n}^X$ is uniformly close to $n$ on $\mathcal{N}_{X,p}(\rho\|X(p)\|)$, and the $C^1$ distance between the maps $(y,t)\mapsto \mathcal{P}_{p,t}^X(y) $ and $(y,t) \mapsto X_t(y) $ restricted to $\mathcal{I}^X(p,\mathcal{N}_{X,p}(\rho\|X(p)\|),n) $ is  small.  Given $i\in \{1,\dots,n\}$, %Given a subset $V \subset \mathcal{N}_{p,i-1}$ We set $I_{i-1,p}:=\mathcal{N}_{p,i-1}\times [i-1,i]$   be defined  by:
	%\begin{equation*}
	%I_{p,i-1}(V):=V \times [i-1,i].
	%&=\left\{(y,t): y\in \mathcal{N}_{p,0},\   t\in [\tau_{p,i-1}(y), \tau_{p,i}(y)] \right\}.
	%\end{equation*}
	%where for $y\in \mathcal{N}_{p,i-1}$, we let
	%In the same way as before, given any $i \in \{1,\dots,n\}$ and $y \in \mathcal{N}_{p,0}$, we set
	%$$
	%\tau_{p,i}(y):=\min\{t \geq 0: X_t(y) \in \mathcal{N}_{p,i}\}.
	%$$
	let us consider the map $h^{(i)} \colon (y,t)\mapsto h_t^{(i)}(y)$ defined on $\mathcal{N}_{p,i-1}\times [i-1,i]$ as above. By \eqref{control diff ht} and \eqref{control partial t}, and since $0< \delta_1 < \delta$,  the maps $ \mathcal{N}_{p,i-1}\times [i-1,i]\ni(y,t)\mapsto \mathcal{P}_{X_{i-1}(p),t-i+1}^X(y)$ and $h^{(i)}$ can be made arbitrarily $C^1$-close by taking $\delta>0$  small enough. 
	For $\delta>0$ sufficiently small, we deduce that   the map $h^{(i)}$ is locally injective on the interior of $\mathcal{P}^{X}_{p,i-1}(U)\times [i-1,i]$. 
	Besides, as we have seen, $h_{i-1}^{(i)}|_{\mathcal{N}_{p,i-1}}=\mathrm{id}|_{ \mathcal{N}_{p,i-1}}$, while $h_i^{(i)}|_{\mathcal{N}_{p,i-1}}=g_i|_{\mathcal{N}_{p,i-1}}$ is a $C^1$ diffeomorphism. 
	
	Now, we define a map $H$ on $\mathcal{N}_{p,0} \times [0,n]$ by setting \begin{gather}\label{defin Hyt}
	H(y,t):=h^{(i)}_t \circ g_{i-1}\circ g_{i-2}\circ \cdots \circ g_{1}(y),\\ 
	\forall\, y\in \mathcal{N}_{p,0},\ \forall\, t \in [i-1,i], \  \forall\, i\in \{1,\dots,n\}. \nonumber
	\end{gather}
	By what precedes, the map $H$ is locally injective on the interior of the set $U \times [0,n]$. Moreover, $\partial \left(U \times [0,n]\right)=(U \times \{0\}) \cup ( U \times \{n\}) \cup (\partial U \times [0,n])$. On the one hand, we have $H(\cdot,0)|_{U}=\mathrm{id}|_{U}$, and by construction, the map $H(\cdot,n)|_{U}$  coincides with $g_{n}\circ g_{n-1}\circ \cdots \circ g_{1}|_{U}$, hence it is a $C^1$ diffeomorphism from $U$ to $\mathcal{P}_{p,n}^{X}(U)\subset \mathcal{N}_{p,n}$. On the other hand, by Point \eqref{item deux}, each diffeomorphism $g_i$ is a $\delta_2$-perturbation of $\mathcal{P}_{X_{i-1}(p), 1}^X$ with support in $\mathcal{P}^{X}_{p,i-1}(U)$. Therefore the restriction of $H$ to the set $\partial U \times [0,n]$ %:=\{(y,t): y \in \partial U,\ t \in [0,n]\}$ 
	coincides with the restriction of the map $(y,t)\mapsto \mathcal{P}^X_{p,t}(y)$. In particular, we deduce that the restriction $H|_{\partial \left(U \times [0,n]\right)}$ of $H$ to the boundary of $U \times [0,n]$ is injective.   From Lemma 6.5 in Pugh-Robinson \cite{PughRobinson}, we conclude that $H$ embeds $U \times [0,n]$  into the set $\mathcal{U}^X(p,U,n)$ introduced in \eqref{defi set Urond}. 
	\\
	
	In the same way as before, for any $y \in \mathcal{N}_{X,p}(\rho\|X(p)\|)$ and $t \in [0,n]$, we set
	$$
	\tau_{p,t}^X(y):=\min\{s \geq 0: X_s(y) \in \mathcal{N}_{X,X_t(p)}(R)\}.
	$$
	By definition,  $\mathcal{P}_{p,t}^X(y)=X_{\tau_{p,t}^X(y)}(y)$, for any $(y,t) \in \mathcal{N}_{X,p}(\rho\|X(p)\|)\times [0,n]$, thus
	\begin{equation}\label{remarque X P}
	X(\mathcal{P}_{p,t}^X(y))=(\partial_t \tau_{p,t}^X(y))^{-1}\partial_t \mathcal{P}_{p,t}^X(y).
	\end{equation}
	Moreover, $\tau_{p,\cdot}^X(p)=\mathrm{id}$, and the map $(y,t) \mapsto \tau_{p,t}^X(y)$ is $C^1$ on $\mathcal{N}_{X,p}(\rho\|X(p)\|)\times [0,n]$, hence for $\rho>0$ sufficiently small, we have %there  exists a constant $L> 1$ such that
	\begin{equation}\label{control Ppt der}
	\frac 12<|\partial_t \tau_{p,t}^X(y)| < 2, \qquad \forall\, p \in M_X,\ y \in \mathcal{N}_{X,p}(\rho\|X(p)\|),\ t \in [0,n].
	\end{equation}
	%Therefore, for $\rho >0$ small,  for all $p \in M_X$, $y \in \mathcal{N}_{X,p}(\rho\|X(p)\|)$, and $t \in [0,n]$, we have
	%\begin{equation}\label{ecart X Phol}
	%\left|X(\mathcal{P}_{p}^t(y))-\partial_t \mathcal{P}_{p}^t(y)\right| < 2L \rho \max_{q \in M}|X(q)|.
	%\end{equation}
	
	As we have noted above, on the complement of $U \times [0,n]$, the maps $H$   and $(y,t)\mapsto \mathcal{P}_{p,t}^X(y)$ coincide. 
	We thus define a vector field $Y\in \mathfrak{X}^1(M)$ on $M$ by setting
	\begin{itemize}
		
		\item $Y(q):=X(q)$, if $q \in M- \mathcal{U}^X(p,U,n)$;
		\item $Y(q):=(\partial_t|_{t=t_0} \tau_{p,t}^X(y))^{-1}\partial_t|_{t=t_0} H(y_0,t)$, if $q \in \mathcal{U}^X(p,U,n)$, where $(y_0,t_0):=H^{-1}(q)\in U \times [0,n]$.
	\end{itemize}
	
	For each $i\in \{1, \dots, n\}$, by the definition of $H$ in \eqref{defin Hyt} and since $h_i^{(i)}=g_i$, it follows that the Poincar\'e map $\mathcal{P}_{X,X_{i-1}(p),1}^Y$ for the vector field $Y$ between $\mathcal{N}_{p,i-1}$ and $\mathcal{N}_{p,i}$ is given by $g_i$. By definition, the support of $X-Y$ is contained in $\mathcal{U}^X(p,U,n)$. Moreover, given any point $q=\mathcal{P}^X_{p,t}(y)=H(y',t) \in \mathcal{U}^X(p,U,n)$, say $(y,t)\in U \times [i-1,i]$, letting $z:=\mathcal{P}_{p,i-1}^{X}(y)$ and $z':=g_{i-1}\circ g_{i-2}\circ \cdots \circ g_1(y')$, we obtain
	\begin{align*}
	\mathcal{P}^X_{p,t}(y)&=\mathcal{P}^{X}_{X_{i-1}(p),t-i+1}(z)\\
	&=\Psi^{-1}_{X_t(p),t} \circ P^{X}_{X_{i-1}(p),t-i+1}\circ \Psi_{X_{i-1}(p), i-1}(z);\\
	H(y',t)&=h_{t}^{(i)}(z')\\
	&= \Psi^{-1}_{X_t(p),t} \circ \left(\chi_{i-1}(t)\tilde{g}_t+(1-\chi_{i-1}(t)) P^{X}_{X_{i-1}(p),t-i+1}\right) \circ \Psi_{X_{i-1}(p), i-1}(z')\\
	&= \Psi^{-1}_{X_t(p),t} \circ P^{X}_{X_{i-1}(p),t-i+1}\circ \left(\chi_{i-1}(t)(\hat{g}_i-\mathrm{id})+\mathrm{id}\right) \circ \Psi_{X_{i-1}(p), i-1}(z').
	\end{align*}
	Set 
	$$
	w:=\Psi_{X_{i-1}(p), i-1}(z)=\left(\chi_{i-1}(t)(\hat{g}_i-\mathrm{id})+\mathrm{id}\right) \circ \Psi_{X_{i-1}(p), i-1}(z').
	$$
	We deduce that 
	\begin{align*}
	\partial_t \mathcal{P}^X_{p,t}(y)&=\partial_t\left(\Psi^{-1}_{X_t(p),t} \circ P^{X}_{X_{i-1}(p),t-i+1}\right)(w),\\
	\partial_t H(y',t) &=\partial_t\left(\Psi^{-1}_{X_t(p),t} \circ P^{X}_{X_{i-1}(p),t-i+1}\right)(w)
	+ D_w\left(\Psi^{-1}_{X_t(p),t} \circ P^{X}_{X_{i-1}(p),t-i+1}\right)\\
	&\quad \cdot \partial_t  \left(\left(\chi_{i-1}(t)(\hat{g}_i-\mathrm{id})+\mathrm{id}\right) \circ \Psi_{X_{i-1}(p), i-1}(z')\right)\\
	&= \partial_t \mathcal{P}^X_{p,t}(y)+\chi_{i-1}'(t)   D_{\Psi_{X_t(p),t}(q)}\Psi^{-1}_{X_t(p),t} \circ P^{X}_{X_{i-1}(p),t-i+1}\circ\\
	&\quad \circ \left(\hat{g}_i-\mathrm{id}\right) \left( \Psi_{X_{i-1}(p), i-1}(z')\right),
	\end{align*}
	and 
	\begin{align*}
	&Y(q)-X(q)=\\  
	&\frac{\chi_{i-1}'(t)}{\partial_t \tau_{p,t}^X(y)}  D_{\Psi_{X_t(p),t}(q)}\Psi^{-1}_{X_t(p),t} \circ P^{X}_{X_{i-1}(p),t-i+1}
	\circ \left(\hat{g}_i-\mathrm{id}\right) \left( \Psi_{X_{i-1}(p), i-1}(z')\right),
	\end{align*}
	where the last equality follows from \eqref{remarque X P} and the definition of $Y$. 
	In particular, the difference between the vector fields $X$ and $Y$ is essentially controlled by the $C^0$ distance between $\hat{g}_i$ and $\mathrm{id}$. 
	More precisely, by \eqref{controle Psi},  \eqref{control Ppt},  \eqref{control czero de hat gi}, and \eqref{control Ppt der}, we deduce that 
	$$
	|X(q)-Y(q)|\leq 8 \|\chi\|_{C^1} C^2 \rho \max_{p \in M}\|X(p)\|,
	$$
	and we argue similarly for the derivatives. Therefore, by taking $\rho$ sufficiently small, we can ensure that $d_{C^1}(X,Y) < \varepsilon$, which concludes the proof of point \eqref{item trois}, and then, of Lemma \ref{lemma.fundamentalperturbation}.  
\end{proof}

\subsubsection{Producing unbounded normal distortion by perturbation} We are now in position to prove the main perturbation result (Proposition~\ref{prop sept} below) that will allow us to obtain unbounded normal distortion generically. The key tool behind this is a perturbation result for linear cocycles taken from \cite{BonattiCrovisierWilkinson}.   The next proposition roughly says that for any $C^1$ vector field $X$, any compact subset $\Delta \subset M_X$ of a section transverse to the flow, any $x \in M_X$ whose orbit is far from $\Delta$, we can produce another vector field $Y$ that is $C^1$-close to $X$ and such that for any point $y \in \Delta$,  we see a large distortion between the images of $x$ and $y$ under the linear Poincar\'e flow of $Y$. Moreover, $Y$ can be made arbitrarily $C^0$-close to $X$ if the support of the perturbation is chosen large enough. 

\begin{proposition}
	\label{prop.proposition8}
	For any $d \geq 2$, $C>1$, $K,\varepsilon>0$, let  $\delta = \delta(C,\varepsilon)$  be the constant given by Lemma \ref{lemma.fundamentalperturbation}. There exists  $n_0=n_0(d,C,K,\varepsilon) \in \N$ with the following property.%\annotation{Theorem that we can reduce the dynamics to a linear neighborhood}
	
	For any $d$-dimensional manifold $M$, any vector field $X \in \mathfrak{X}^1(M)$ which is bounded by $C$, there exists $\rho_0 = \rho_0(d,C,K,\varepsilon, n_0)>0$ such that for any $\eta>0$, any compact set $\Delta \subset M_X$ and $x,p\in M_X$  satisfying:
	\begin{enumerate}[label=(\alph*)]
		\setlength{\itemsep}{0.2em}
		\item there exists an open set $U$ inside $\mathcal{N}_{X,p}(\rho_0\|X(p)\|)$, such that $\Delta \subset U$;
		\item the map $(y,t) \mapsto X_t(y)$ is injective on $\mathcal{I}^X(p,U,n_0)$ (see \eqref{defi set Idef});
		\item $\mathrm{\orb}(x) \cap U =\emptyset$,
	\end{enumerate}
	there exists  a vector field $Y\in \mathfrak{X}^1(M)$ such that 
	\begin{enumerate}
		\setlength{\itemsep}{0.2em}
		\item the support of $X-Y$ is contained in $\mathcal{U}^X(p, U, n_0)$ (see \eqref{defi set Urond});
		\item $d_{C^1}(X,Y) < \varepsilon$;
		\item for any $i\in \{0, \dots, n_0 -1\}$, it is verified $d_{C^1}(\mathcal{P}^X_{X_i(p),1}, \mathcal{P}^Y_{X,X_i(p),1}) < \delta $, where $\mathcal{P}^Y_{X,X_i(p),1}$ is the Poincar\'e map between $\mathcal{N}_{X,X_i(p)}(\beta \|X(X_i(p))\|)$ and $\mathcal{N}_{X,X_{i+1}(p)}(R)$;
		\item   $d_{C^0}(\mathcal{P}^X_{X_i(p),t}, \mathcal{P}^Y_{X,X_i(p),t}) < \eta$, for all $t \in [0,1]$;
		\item for all $y \in \Delta$, there exists an integer $n \in \{1,\dots,n_0\}$ such that
		$$
		|\log \det P^Y_{x,n}-\log \det P^Y_{y,n}|>K.
		$$
	\end{enumerate}
\end{proposition}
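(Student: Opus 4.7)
The plan is to reduce the problem to a perturbation of the discrete linear cocycle $(P^X_{X_{i-1}(p),1})_{i=1}^{n_0}$ on the normal bundles along the orbit of $p$, apply the cocycle perturbation theorem of \cite{BonattiCrovisierWilkinson} (their Proposition~9) at the discrete level, and then use Lemma~\ref{lemma.fundamentalperturbation} to lift the result to a vector field perturbation. The linearising coordinates $\Psi_{X_i(p),i}$ furnished in Item~(1) of Lemma~\ref{lemma.fundamentalperturbation} are exactly what makes the translation between these two pictures clean, since they conjugate the nonlinear Poincaré maps $\mathcal{P}^X$ to the linear ones $P^X$ while keeping all relevant Jacobians bounded in $[\tfrac12,2]$ by~(\ref{controle Psi}).

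First I would fix $\delta=\delta(C,\varepsilon)$ from Lemma~\ref{lemma.fundamentalperturbation} and pick some $\delta_1\in(0,\delta)$. The Bonatti--Crovisier--Wilkinson cocycle perturbation then produces the sought integer $n_0=n_0(d,C,K,\varepsilon)$: any length-$n_0$ sequence of linear isomorphisms on $\R^{d-1}$ bounded by $C$ admits a $\delta_1$-perturbation (coordinate-wise, with compactly supported modifications) achieving a spread of log-determinants larger than $K+O(1)$ across any prescribed compact region of starting points, while leaving the cocycle unchanged along any orbit disjoint from the supports. I would then set $\rho_0>0$ smaller than the $\rho$ provided by Lemma~\ref{lemma.fundamentalperturbation}, and shrink it further so that the whole tube $\mathcal{U}^X(p,U,n_0)$ lies in a uniformly small neighbourhood of the base orbit, which will be used to derive the $C^0$ estimate~(4) from $d_{C^1}(X,Y)<\varepsilon$.

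Passing to the linearising coordinates via the $\Psi_{X_i(p),i}$ and applying the cocycle perturbation to the compact sets $\Psi_{X_i(p),i}\circ\mathcal{P}^X_{p,i}(\Delta)\subset N_{X,X_i(p)}$, I obtain perturbed maps $\tilde g_i$ on the fibres $N_{X,X_{i-1}(p)}$, each supported inside $P^X_{p,i-1}(\widetilde U)$ and $\delta_1$-$C^1$-close to $P^X_{X_{i-1}(p),1}$. Feeding these $\tilde g_i$ into Item~(3) of Lemma~\ref{lemma.fundamentalperturbation} yields a vector field $Y$ with $d_{C^1}(X,Y)<\varepsilon$, supported in $\mathcal{U}^X(p,U,n_0)$, whose Poincaré maps coincide with the associated $g_i$ from Item~(2); conclusions~(1), (2), (3) follow immediately, and (4) from the choice of $\rho_0$. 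For~(5), the hypothesis $\orb(x)\cap U=\emptyset$ forces $\orb(x)\cap\mathcal{U}^X(p,U,n_0)=\emptyset$ (any point of the tube shares an orbit with some point of $U$), hence $P^Y_{x,n}=P^X_{x,n}$ for every $n$; meanwhile for $y\in\Delta$, the map $P^Y_{y,n}$ is conjugate via the $\Psi$'s to $D\tilde g_n\cdots D\tilde g_1$ at the corresponding point, so its log-determinant differs from that of $P^X_{x,n}$ by more than $K$, once the bounded error coming from~(\ref{controle Psi}) is absorbed by working with $K+\text{const}$ from the start.

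I expect the main obstacle to be purely technical: one must verify that the BCW cocycle perturbation, originally phrased for a sequence of diffeomorphisms of Euclidean space, produces $\tilde g_i$'s that meet the exact hypotheses of Lemma~\ref{lemma.fundamentalperturbation} (globally defined $C^1$ diffeomorphisms $N_{X,X_{i-1}(p)}\to N_{X,X_i(p)}$, $\delta_1$-close in $C^1$ to $P^X_{X_{i-1}(p),1}$, and with the prescribed supports), and that the $O(1)$ distortion inherited from the conjugation by $\Psi$ is uniform in all the data. Neither of these is conceptually hard, but both require careful bookkeeping of the constants $C$, $\delta_1$, $\rho_0$ and $n_0$.
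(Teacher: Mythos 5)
Your overall strategy is the same as the paper's (linearize with the coordinates $\Psi_{X_i(p),i}$ of Lemma \ref{lemma.fundamentalperturbation}, perturb the linear cocycle via Proposition \ref{prop.proposition9}, and realize the perturbed Poincar\'e maps as a nearby vector field), but your derivation of conclusion (5) has a genuine gap. Proposition \ref{prop.proposition9} only guarantees, for each $y\in\Delta$ and some $n$, a large difference between the \emph{perturbed and unperturbed Jacobians at the same point} $y$, i.e.\ between $\log\det D\tilde f^n(y)$ and $\log\det D\tilde g^n(y)$. What conclusion (5) requires is a large difference between $\log\det P^Y_{y,n}$ and $\log\det P^Y_{x,n}=\log\det P^X_{x,n}$, and the triangle inequality produces the extra term $|\log\det P^X_{y,n}-\log\det P^X_{x,n}|$, which is \emph{not} a bounded error controlled by \eqref{controle Psi}: it depends on the unperturbed dynamics along the two (unrelated) orbits of $x$ and $p$ and can be arbitrarily large, with the possible effect that the BCW perturbation moves $\log\det P^Y_{y,n}$ back towards $\log\det P^X_{x,n}$. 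Enlarging $K$ by a constant ``from the start'' cannot absorb this. The paper resolves it with a dichotomy you are missing: either $|\log\det P^X_{p,n}-\log\det P^X_{x,n}|>K+3\log 2$ already holds for some $n\le n_0-1$, in which case one takes $Y=X$ (no perturbation, the estimate transfers from $p$ to every $q\in\Delta$ using \eqref{controle Psi} and Lemma \ref{lem.justify}), or this quantity is $\le K+3\log 2$ for all such $n$ and all $q\in\Delta$, in which case applying Proposition \ref{prop.proposition9} with the strictly larger constant $K_0>2K+10\log 2$ (not $K+O(1)$) makes the triangle inequality close. Without this case analysis and the choice $K_0\approx 2K$, conclusion (5) does not follow.

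A secondary problem is conclusion (4): you attribute it to shrinking $\rho_0$, but $\rho_0$ is quantified \emph{before} $\eta$, so it cannot depend on $\eta$; and the crude bound coming from $d_{C^1}(X,Y)<\varepsilon$ over unit time only gives a $C^0$-error of order $\varepsilon$, not an arbitrary $\eta$. The correct mechanism, as in the paper, is to feed $\eta/2$ into the $C^0$-parameter of Proposition \ref{prop.proposition9} (the $C^0$-smallness being available there because the support is spread along a long tube), and then transport it through \eqref{controle Psi} to get $d_{C^0}(g_i,\mathcal{P}^X_{X_i(p),1})<\eta$. The rest of your outline (support, $C^1$-closeness, equality of the Poincar\'e maps with the $g_i$, and $\mathrm{orb}(x)\cap\mathcal{U}^X(p,U,n_0)=\emptyset$) matches the paper's argument.
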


	\begin{figure}[H]
	\begin{center}
		\includegraphics [width=10cm]{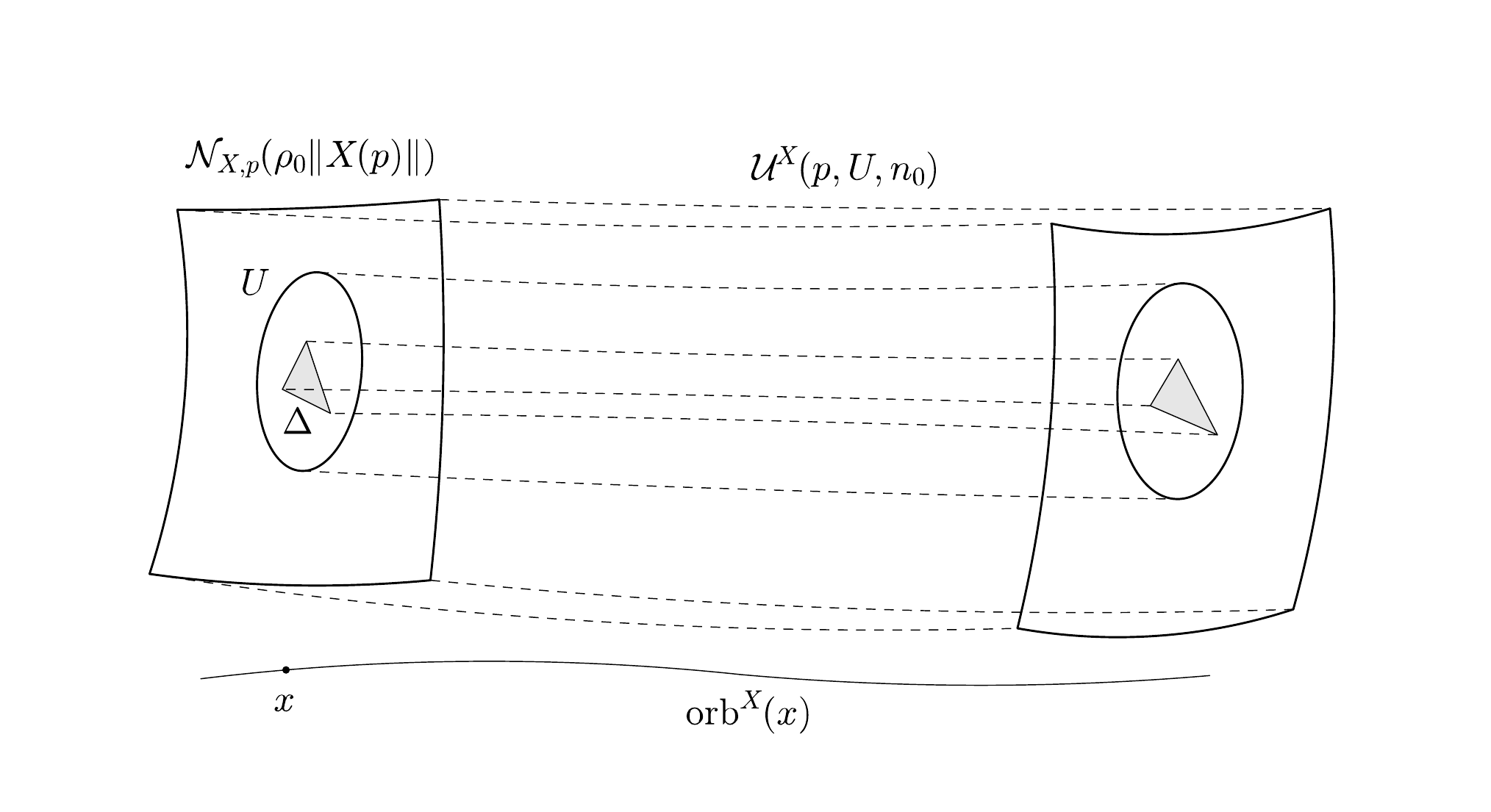}
	\end{center}
	%\caption{Selection of the perturbation times for the different tiles.}
\end{figure}

Proposition \ref{prop.proposition8} is the analogue for flows of Proposition 8 in \cite{BonattiCrovisierWilkinson}. Using Lemma \ref{lemma.fundamentalperturbation}, we will reduce the proof of this proposition to a discrete scenario where we can apply the following proposition from \cite{BonattiCrovisierWilkinson}. This result tells us that given a linear cocycle 
$\tilde f$ acting on $\Z \times \R^d$ for some integer $d \geq 1$, two sets $\Delta \subset U \subset \R^d$, with $\Delta$ compact, we can perturb $\tilde f$ into a new cocycle that is $C^1$-close to it, coincides with $\tilde f$ outside of the tube obtained by flowing $U$ for a finite time, and such that for any point $y\in\Delta$, we can find an iterate of $y$ for which the difference between the Jacobians of the cocycles is very large. Moreover, we can keep the $C^0$-distance between the two cocycles arbitrarily small provided that the tube is long enough. 

\begin{proposition}[Proposition 9 in \cite{BonattiCrovisierWilkinson}]
	\label{prop.proposition9}
	
	For any $d\geq 1$, and any $C,K,\varepsilon >0$, there exists $n_1 = n_1(d,C,K, \varepsilon)\geq 1$ with the following property.
	
	Consider any sequence $(A_i) \in \mathrm{GL}(d,\mathbb{R})$ with $\|A_i\|, \|A_i^{-1}\| <C$ and the associated cocycle $\tilde{f}$ on $\mathbb{Z} \times \mathbb{R}^d$ defined by $\tilde{f}(i,v):=(i+1, A_i  v)$. Then, for any open set $U\subset \mathbb{R}^d$, for any compact set $\Delta \subset U$ and any $\eta>0$, there exists a diffeomorphism $\tilde{g}$ of $\mathbb{Z} \times \mathbb{R}^d$ such that:
	\begin{itemize}
		\item $d_{C^1}(\tilde{f},\tilde{g}) < \varepsilon$;
		\item $d_{C^0}(\tilde{f},\tilde{g}) < \eta$;
		\item $\tilde{f}=\tilde{g}$ on the complement of $\bigcup_{i=0}^{2n_1-1} \tilde{f}^i(\{0\} \times U)$;
		\item for all $y\in \{0\} \times \Delta$, there exists $n\in \{1, \dots, n_1\}$ such that 
		\[
		|\log \det D\tilde{f}^n(y) - \log \det D\tilde{g}^n(y)|>K.
		\]
	\end{itemize}
\end{proposition}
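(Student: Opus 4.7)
I look for $\tilde g$ of the form $\tilde g(i,v) = (i+1, A_i\phi_i(v))$, where each $\phi_i \colon \mathbb{R}^d \to \mathbb{R}^d$ is a $C^\infty$ diffeomorphism that differs from the identity only inside the fiber slice $A_{i-1}\cdots A_0\,U$, and $\phi_i = \mathrm{id}$ for $i \notin \{0,\dots,2n_1-1\}$. A direct application of the chain rule gives, along the $\tilde g$-orbit of $(0,y)$,
\[
\log\bigl|\det D\tilde g^{\,n}(0,y)\bigr| - \log\bigl|\det D\tilde f^{\,n}(0,y)\bigr| = \sum_{i=0}^{n-1}\log\bigl|\det D\phi_i(y_i^g)\bigr|,
\]
where $y_i^g$ denotes the projection to $\mathbb{R}^d$ of the $i$-th $\tilde g$-iterate of $(0,y)$. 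The strategy is therefore to design each $\phi_i$ so that this log-det summand is at least a fixed $\delta>0$ for every $y\in\Delta$, and to take $n_1$ of the order $K/\delta$.

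The core construction is a \emph{mosaic bump lemma}: given any compact set $E$ inside a bounded open set $V\subset\mathbb{R}^d$ and any $\delta_0,\eta_0 > 0$, one builds a $C^\infty$ diffeomorphism $\phi\colon\mathbb{R}^d\to\mathbb{R}^d$ which equals the identity outside $V$, satisfies $\|\phi-\mathrm{id}\|_{C^1}\leq C_d\,\delta_0$ for some dimensional constant $C_d$, satisfies $\|\phi-\mathrm{id}\|_{C^0}\leq \eta_0$, and obeys $\log\det D\phi \geq \delta_0$ on an open neighborhood $E'$ of $E$. Choose a finite cover of $E$ by open balls $\{B_\alpha\}$ of small radius $r$, selected so that the doubled balls $\{2B_\alpha\}$ are pairwise disjoint and contained in $V$; on each $B_\alpha$ let $\phi$ be a radial dilation by the factor $e^{\delta_0/d}$ centered at the center of $B_\alpha$, and smoothly interpolate back to the identity on the annulus $2B_\alpha\setminus B_\alpha$. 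Since the individual bumps have pairwise disjoint supports, the $C^1$-norm of $\phi-\mathrm{id}$ equals the \emph{maximum} of the individual $C^1$-norms and is $O(\delta_0)$, independently of how many balls are used to cover $E$; the $C^0$-norm is of order $r\,\delta_0$, and is driven below $\eta_0$ by refining the cover.

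With this tool in hand, set $\delta := \varepsilon/(2C\,C_d)$, $n_1 := \lceil K/\delta\rceil + 1$, and for each $i\in\{0,\dots,2n_1-1\}$ apply the mosaic lemma to the compact set $E_i := A_{i-1}\cdots A_0\,\Delta$ inside the open set $V_i := A_{i-1}\cdots A_0\, U$. The bounds $\|A_{i-1}\cdots A_0\|\leq C^{i}$ and $\|(A_{i-1}\cdots A_0)^{-1}\|\leq C^{i}$ guarantee that $E_i$ is compactly contained in $V_i$ with a quantitative margin depending only on $C$, $n_1$, and the distance from $\Delta$ to $\partial U$. The $C^1$-estimate yields $d_{C^1}(\tilde f,\tilde g) \leq C\cdot C_d\,\delta = \varepsilon/2 < \varepsilon$. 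Pick the $C^0$-parameters $\eta_0=\eta_0(i)$ small enough that simultaneously $C\,\eta_0(i) < \eta/2$ (which secures $d_{C^0}(\tilde f,\tilde g) < \eta$) and $\sum_{j<i} C^{\,i-j}\eta_0(j)$ is smaller than the width of the neighborhood $E_i'$ of $E_i$; the latter bounds by a telescoping argument the cumulative drift of $y_i^g$ from $y_i^f := A_{i-1}\cdots A_0\,y$, and keeps $y_i^g\in E_i'$ throughout $i\in\{0,\dots,n_1-1\}$. Summing the log-det estimates along the orbit then yields
\[
\bigl|\log\det D\tilde g^{\,n_1}(0,y) - \log\det D\tilde f^{\,n_1}(0,y)\bigr|\;\geq\; n_1\,\delta \;>\; K,
\]
for every $y\in\Delta$, giving the conclusion with $n=n_1$.

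The main obstacle is the tight coupling between the three scales involved: $\delta$ (the $C^1$-budget per fiber, forced to be $\sim\varepsilon$), $n_1$ (the number of fibers, forced to be $\sim K/\varepsilon$), and $\eta_0$ (the $C^0$-budget per fiber, which has to dominate the cumulative drift potentially amplified by $C^{n_1}$). The crucial feature of the mosaic construction is that it decouples the $C^1$-norm, controlled solely by the amplitude $\delta_0$, from the $C^0$-norm, controlled by the ball radius $r$; this is what allows one to first fix $\delta$ based on the $C^1$-constraint $\varepsilon$, and only afterwards refine the mosaic (i.e., shrink $r$) to defeat the exponential drift factor $C^{n_1}$, while still guaranteeing the lower bound on $\log\det D\phi_i$ simultaneously at every $\tilde g$-iterate of every point of $\Delta$.
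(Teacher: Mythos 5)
There is a genuine gap, and it sits exactly at the heart of the matter. (Note also that the paper does not reprove this statement: it imports Proposition 9 verbatim from Bonatti--Crovisier--Wilkinson, so any self-contained argument has to face the real difficulty there.) Your ``mosaic bump lemma'' is false as stated, for two reasons. First, the construction cannot be carried out: finitely many balls $B_\alpha$ whose doubles $2B_\alpha$ are pairwise disjoint cannot cover a compact set $E$ of diameter larger than one ball (disjoint open balls covering $E$ would disconnect it), so the union of the inner balls, which is the only region where your radial dilations give $\det D\phi\geq e^{\delta_0}$, is not a neighborhood of $E$. Second, and decisively, no construction can repair this: if $\phi$ is a diffeomorphism of $\mathbb{R}^d$ equal to the identity outside the bounded open set $V$, then $\phi(V)=V$, hence $\int_V\det D\phi\,dx=\operatorname{vol}(V)$, and $\det D\phi\geq e^{\delta_0}$ on a neighborhood $E'$ of $E$ forces $\operatorname{vol}(E)\leq e^{-\delta_0}\operatorname{vol}(V)$. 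In the proposition the compact set $\Delta\subset U$ is arbitrary and is chosen \emph{after} $n_1$ (hence after your $\delta=\varepsilon/(2CC_d)$) is fixed, so one may take $\operatorname{vol}(\Delta)>e^{-\delta}\operatorname{vol}(U)$ and the required fiber maps $\phi_i$ simply do not exist. A symmetric averaging argument (using $\det D\phi_i\leq(1+C_d\delta)^d$ everywhere together with $\int_{F_i(U)}\det D\phi_i=\operatorname{vol}(F_i(U))$) shows that uniform \emph{contraction} on a volumetrically large part of the iterate of $U$ is equally incompatible with $C^1$-smallness, so switching the sign of the bump does not help.

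This is not a fixable bookkeeping issue but an obstruction to the whole scheme: you are trying to produce, at every step $i\leq n_1$, a log-Jacobian gain of a fixed sign and fixed size $\delta$ at \emph{every} point of (a neighborhood of) the image of $\Delta$, and to conclude at the single time $n=n_1$ for all $y\in\Delta$. The volume identity above shows this uniform-in-$y$ version is false in general; it is precisely why the statement only asserts the existence of \emph{some} $n\in\{1,\dots,n_1\}$ depending on $y$, and why the allowed support runs over $2n_1$ iterates of $U$. The actual proof in Bonatti--Crovisier--Wilkinson has to arrange for different points of $\Delta$ to pick up their distortion at different times, dealing separately with the (necessarily nonempty, but volumetrically controlled) set of points that escape the expanding regions at each step; any correct proof must contain an idea of this kind, which is exactly what your argument is missing. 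The parts of your write-up that are sound (the cocycle chain rule, the decoupling of the $C^1$ and $C^0$ budgets for disjointly supported bumps, the control of the orbit drift $|y_i^g-y_i^f|$ by the $C^0$ sizes) are useful ingredients, but they do not circumvent this obstruction.
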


Below we prove Proposition \ref{prop.proposition8} assuming Proposition \ref{prop.proposition9}. The idea of the proof is that we first perturb the discretized linearized dynamics using Proposition \ref{prop.proposition9}, then  we relate this perturbation with the dynamics of a vector field by using the realization lemma (Lemma \ref{lemma.fundamentalperturbation}).
\begin{proof}[Proof of Proposition \ref{prop.proposition8} from Proposition \ref{prop.proposition9}]
	
	Fix any $\delta_1 \in (0, \delta)$ and $K_0 > 2K + 10 \log 2$. Let $n_1 = n_1(d-1, C,K_0, \delta_1)$ be the constant given by Proposition \ref{prop.proposition9} for $d-1$, $C,K_0,\varepsilon$ and let $n_0 = 2n_1$. Let $X\in \mathfrak{X}^1(M)$ be a vector field bounded by $C$ and let $\rho>0$ be the constant given by Lemma \ref{lemma.fundamentalperturbation} for $C,$ $\varepsilon$, $\delta_1$, $n_0$ and $X$. Fix $\rho_0 \in (0, \frac{\rho}{C^{n_0}})$.
	
	Let $\Delta\subset M_X$, $x,p\in M_X$ and $\eta>0$ be such that conditions $(a)$, $(b)$ and $(c)$ in Proposition \ref{prop.proposition8} are verified. Let $U\subset \mathcal{N}_{X,p}(\rho_0\|X(x)\|)$ be the open set given by condition $(a)$. Consider  $O_{X_1}(p) =\{\dots, X_{-1}(p),  p, X_1(p), \dots\}$ and observe that this set is naturally identified with $\mathbb{Z}$. We consider the normal bundle, with respect to $X$, over $O_{X_1}(p)$ and the linear cocycle defined as follows: for $i\in \mathbb{Z}$ and $v\in N_{X,X_i(p)}$ set $\tilde{f}(i,v) := (i+1, P^X_{X_i(p),1} v)$.
	
	Recall that $\widetilde{U} = \exp^{-1} _p(U)$. By Item \eqref{item un} in Lemma \ref{lemma.fundamentalperturbation}, for any $i\in \{0, \dots, n_0\}$, we obtain $C^1$ diffeomorphisms $\Psi_i : = \Psi_{X_i(p),i}\colon \mathcal{P}^X_{p,i}(U) \to P^X_{p,i}(\widetilde{U})$, such that for any $q\in \mathcal{P}^X_{p,i}(U)$ it holds that 
	\begin{equation}
	\label{eq.conjugation}
	P^X_{X_i(p),1}(\Psi_i(q)) = \Psi_{i+1}(\mathcal{P}^X_{X_{i}(p),1}(q)).
	\end{equation}
	Write $\Psi\colon \bigcup_{i=0}^{n_0} \mathcal{P}^X_{p,i}(U) \to \bigcup_{i=0}^{n_0} P^X_{p,i}(\widetilde{U})$ as the $C^1$ diffeomorphism which is equal to $\Psi_i$ on $\mathcal{P}^X_{p,i}$.
	
	For the cocycle $\tilde{f}$, we apply Proposition \ref{prop.proposition9} and obtain a $\delta_1$-perturbation $\tilde{g}$ of $\tilde{f}$ supported on $\bigcup_{i=0}^{n_0-1} \tilde{f}^i(\{0\}\times\widetilde{U})$, such that for every $q\in \Psi_0(\Delta)$, it holds:
	\begin{itemize}
		\item $d_{C^0}(\tilde{f}, \tilde{g}) < \frac{\eta}{2};$
		\item $\tilde{f} = \tilde{g}$ on the complement of $\bigcup_{i=0}^{n_0-1} \tilde{f}^i(\{0\}\times\widetilde{U})$;
		\item for every $q\in \Psi_0(\Delta)$, there exists $n\in \{1, \dots, n_0\}$ such that
		\[
		|\log \det  D\tilde{f}^n(q) - \log \det  D\tilde{g}^n(q)|> K_0.
		\]
	\end{itemize}
	
	For each $i\in \{0, \dots, n_0-1\}$, let $\tilde{g}_i:= g|_{\{i\} \times N_{X,X_i(p)}}$ and observe that $d_{C^1}(\tilde{g}_i, P^X_{X_i(p),1})< \delta_1$. By Item \eqref{item deux} in Lemma \ref{lemma.fundamentalperturbation}, we obtain a $\delta$-pertubation $g_i$ of $\mathcal{P}^X_{X_{i}(p),1}$. By \eqref{controle Psi} and \eqref{eq.conjugation}, we have 
	\[
	d_{C^0}(g_i, \mathcal{P}^X_{X_i(p),1}) < 2 d_{C^0}(\tilde{g_i}, P^X_{X_i(p),1}) < \eta.
	\]
	Moreover, by the estimates in \eqref{controle Psi}, we conclude that for any $q\in \Delta$, there exists $n\in \{1, \dots, n_0-1\}$ such that 
	\begin{equation}
	\label{estimatedet1}
	|\log \det D\mathcal{P}^X_{p,n}(q) - \log \det  D (g^n)(q)| > K_0 - 4 \log 2,
	\end{equation}
	where $g^n(q) := g_n \circ \cdots \circ g_1(q)$.
	
	Recall that for $n\in \{ 0, \dots, n_0-1\}$, the maps $\mathcal{P}^X_{p,n}$ and $P^X_{p,n}$ are conjugated on $\Delta$ by $\Psi$. By \eqref{controle Psi}, we obtain that for any $q\in \Delta$, it holds 
	\begin{equation}
	\label{eq.estimateprop8.1}
	|\log \det  D\mathcal{P}^X_{p,n}(q) - \log \det  P^X_{p,n}| \leq 2\log 2.
	\end{equation}
	Suppose there exists $n\in \{0, \dots, n_0 -1\}$ such that $|\log \det  P^n_p - \log \det  P^n_x| > K + 3\log 2$. By (\ref{eq.estimateprop8.1}) and Remark \ref{rmk.closetonormal}, for any $q\in \Delta$ it holds that
	\[
	\left|\log \det  P^X_{q,\tau^X_{p,n}(q)} - \log \det  P^X_{x,n}\right| > K+ \log 2.
	\]
	By Lemma \ref{lem.justify} and item $3$ of Lemma \ref{lemma.fundamentalperturbation}, we conclude that
	\[
	|\log \det P^X_{q,n} - \log \det P^X_{x,n}| > K.
	\]
	In this case we do not make any perturbation. Suppose that for every $n\in \{0, \dots, n_0-1\}$ and every $q\in \Delta$ we have
	\[
	|\log \det  P^X_{q,n} - \log \det  P^X_{x,n} | \leq K + 3\log2. 
	\]
	Consider the maps $g_1, \dots ,g_{n_0}$ as it was explained above (obtained using Proposition \ref{prop.proposition9}). Applying Lemma \ref{lemma.fundamentalperturbation}, we obtain a $C^1$ vector field $Y$ that verifies the following properties:
	\begin{itemize}
		\item $d_{C^1}(X,Y)< \varepsilon$;
		\item the support of $X-Y$ is contained in $\mathcal{U}^X(p, U, n_0)$;
		\item for each $i\in \{1, \dots, n_0\}$, we have that $\mathcal{P}^Y_{X,X_i(p),1} = g_i$. 
	\end{itemize}
	By (\ref{estimatedet1}) and (\ref{eq.estimateprop8.1}), we conclude that for each $q\in \Delta$, there exists $n\in \{1, \dots, n_0\}$ such that
	\[\arraycolsep=1.4pt\def\arraystretch{2.5}
	\begin{array}{rcl}
	\displaystyle \left| \log \left( \frac{ \det  P^Y_{x,n}}{\det  P^Y_{q,n} }\right) \right| & \geq & \displaystyle \left| \log \left( \frac{\det  D\mathcal{P}^X_{p,n}(q)}{\det  P^Y_{q,n} }\right)\right| - \left| \log \left( \frac{\det  P^X_{x,n}}{ \det  D\mathcal{P}^X_{p,n}(q) }\right) \right|\\ 
	& \geq & \displaystyle  \left| \log \left( \frac{\det  D \mathcal{P}^X_{p,n}(q)}{ \det  Dg^n(q) }\right)\right| - \left| \log \left( \frac{\det  Dg^n(q)}{ \det  P^Y_{q,n} }\right)\right| -\\
	&-& \displaystyle \left| \log \left( \frac{ \det  P^X_{x,n}}{ \det P^X_{q,n} } \right)\right| - \left| \log \left( \frac{\det  P^X_{q,n}}{ \det  D\mathcal{P}^X_{p,n}(q) }\right)\right|\\
	&> & \displaystyle  K_0 - \log 2 - K - 4\log 2 - \log 2 > K.
	\end{array}
	\]
	This concludes the proof of Proposition \ref{prop.proposition8}.    
\end{proof}

The following proposition is the version for flows of Proposition $7$ in \cite{BonattiCrovisierWilkinson}. 

\begin{proposition}\label{prop sept}
	Consider a vector field  $X \in \mathfrak{X}^1(M)$, a compact set $D\subset M_X$, an open set $O \subset M_X$ and a point $x \in M_X$  satisfying:
	\begin{itemize}
		\item for any $y \in O$, any $t \geq 0$, we have $X_t(y) \in O$ and $X_1(\overline{O}) \subset O$;
		\item $D \subset O - X_1(\overline{O})$;
		\item $\mathrm{\orb}(x) \cap D=\emptyset$. 
	\end{itemize}
	Then for any $K,\varepsilon>0$, there exists a vector field $Y \in \mathfrak{X}^1(M)$ with $d_{C^1}(X,Y)<\varepsilon$ which satisfies the following property: for all $y \in D$, there exists $n \geq 1$ such that 
	$$
	|\log \det  P^Y_{x,n}-\log \det P^Y_{y,n}|>K.
	$$
	Moreover, the support of $X-Y$ is contained in the complement of the chain recurrent set of $X$.  
\end{proposition}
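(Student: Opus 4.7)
The plan is to adapt the strategy of Proposition~7 in~\cite{BonattiCrovisierWilkinson}, reducing to finitely many applications of Proposition~\ref{prop.proposition8} after first localizing the perturbations in $M - \mathcal{CR}(X)$.

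First I will verify that the ``fundamental domain'' $O - X_1(\overline{O})$ is disjoint from $\mathcal{CR}(X)$. Setting $\delta := d(X_1(\overline{O}), M - O) > 0$, and given $y \in O - X_1(\overline{O})$ with $\delta_y := d(y, X_1(\overline{O})) > 0$, any $(\varepsilon, T)$-pseudo-orbit $y = y_0,\dots,y_k = y$ with $\varepsilon < \min(\delta,\delta_y)$ and $T \geq 1$ must (by forward invariance of $O$ combined with $X_1(\overline{O}) \subset O$) satisfy $d(y_{i+1}, X_1(\overline{O})) < \varepsilon$ for all $i \geq 0$, contradicting $y_k = y$. By flow-invariance of $\mathcal{CR}(X)$, the forward orbits of points of $O - X_1(\overline{O})$ also avoid $\mathcal{CR}(X)$. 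Combined with $\omega(x) \cup \alpha(x) \subset \mathcal{CR}(X)$ and $\mathrm{orb}(x) \cap D = \emptyset$, this yields $\overline{\mathrm{orb}(x)} \cap D = \emptyset$. Now fix $C > 1$ with $X$ bounded by $C$, a target $K_0 > K + 10$, some $\varepsilon_0 \in (0,\varepsilon)$, and let $n_0, \rho_0$ be the constants of Proposition~\ref{prop.proposition8}. For each $p \in D$, the set $(O - X_1(\overline{O})) - \overline{\mathrm{orb}(x)}$ is an open neighborhood of the wandering point $p$; I choose an open $U_p \subset \mathcal{N}_{X,p}(\rho_0 \|X(p)\|)$ with $\overline{U_p} \subset (O - X_1(\overline{O})) - \overline{\mathrm{orb}(x)}$ and such that $(y,t) \mapsto X_t(y)$ is injective on $\mathcal{I}^X(p,U_p,n_0)$ (the returns of $\mathrm{orb}(p)$ to the section within time $n_0$ form a finite set of points distinct from $p$, hence avoidable after shrinking). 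Compactness of $D$ yields a finite subcover $U_1,\dots,U_m$ with base points $p_j \in D$.

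I will then iterate Proposition~\ref{prop.proposition8}: starting from $X_0 := X$, given $X_{j-1}$, apply the proposition to $(X_{j-1}, p_j, U_j, \Delta_j := \overline{U_j} \cap D, x)$ with $C^1$-error $\varepsilon/m$ and $C^0$-Poincar\'e error $\eta_j > 0$, producing $X_j$ with $\mathrm{supp}(X_j - X_{j-1}) \subset \mathcal{U}^{X_{j-1}}(p_j, U_j, n_0) \subset M - \mathcal{CR}(X)$ (by Step~1 together with continuity of flow tubes under $C^1$-perturbation); the hypotheses of Proposition~\ref{prop.proposition8} for $X_{j-1}$ are inherited from those of $X$ by continuity, possibly after slightly shrinking $U_j$. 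After step $j$, each $y \in \Delta_j$ admits $n_{j,y} \in \{1,\dots,n_0\}$ with $|\log\det P^{X_j}_{x,n_{j,y}} - \log\det P^{X_j}_{y,n_{j,y}}| > K_0$. Setting $Y := X_m$ then gives $d_{C^1}(X,Y) < \varepsilon$ and $\mathrm{supp}(Y - X) \subset M - \mathcal{CR}(X)$.

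The hard part will be to ensure that the distortion produced at step $j$ survives the subsequent steps. Item~(4) in Proposition~\ref{prop.proposition8} lets me force $d_{C^0}(\mathcal{P}^{X_{k-1}}_{\cdot,t}, \mathcal{P}^{X_k}_{\cdot,t})$ as small as I wish (at the price of enlarging the support of the perturbation along the flow direction, which is harmless), and $\log\det P^{\cdot}_{y,n}$ depends continuously on the vector field in the $C^1$-topology away from the support of the perturbation. Choosing the $\eta_j$ inductively from $j = m$ down to $j = 1$, small enough that $\log\det P^{X_k}_{y,n}$ varies by at most $K_0/(4m)$ between consecutive steps $k \geq j$ for each $y \in \Delta_j \cup \{x\}$ and each $n \leq n_0$, then yields $|\log\det P^Y_{x,n_{j,y}} - \log\det P^Y_{y,n_{j,y}}| > K_0/2 > K$ for every $y \in D$, completing the proof.
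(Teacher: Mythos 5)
Your reduction to Proposition \ref{prop.proposition8} is the right starting point, but the scheme you use to combine the finitely many applications does not work, and it is precisely at this point that the paper's proof has its key idea. You perturb sequentially, $X_0=X,\dots,X_m=Y$, over an \emph{overlapping} open cover of $D$, and you try to protect the distortion created at step $j$ from the later steps by choosing the $C^0$-parameters $\eta_k$ small. This cannot succeed: $\log\det P^{\cdot}_{y,n}$ is a first-derivative quantity, and item (4) of Proposition \ref{prop.proposition8} only controls the $C^0$-distance of Poincar\'e maps. The later perturbations have a \emph{fixed} $C^1$-size (of order $\varepsilon/m$), their supports are flow tubes of length $n_0$ over tiles that overlap the tiles already treated, and nothing in your construction prevents the time-$\le n_0$ orbit segments of points $y\in\Delta_i$ (with $i<j$) from crossing those tubes; along such a crossing $\log\det P^{X_j}_{y,n}$ can differ from $\log\det P^{X_{j-1}}_{y,n}$ by an amount of order $n_0\cdot O(\varepsilon/m)$, which is comparable to $K_0$ itself, since $n_0$ is chosen exactly so that an $\varepsilon$-sized perturbation accumulates distortion $K_0$. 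So "continuity away from the support" is unavailable, and no choice of $\eta_j$ can deliver the bound $K_0/(4m)$ you ask for. The paper circumvents this by never perturbing an already-perturbed field: it colours the tiles into $2^d$ classes $J_0,\dots,J_{2^d-1}$ and staggers the perturbation of class $k$ into the time window $[n_0k,n_0(k+1)]$ (the tubes $\mathcal{V}^X_j(n_0k,n_0)$), so that \emph{all} supports are pairwise disjoint and one simultaneous application of Proposition \ref{prop.proposition8} (to $X$, with margin $3K$ and $K>2d\log(2C)$) produces $Y$ in one shot. The distortion, obtained at the time-shifted points, is then transported back to the pair $(x,y)$ not by protecting it but by the cocycle identity $P^Y_{z,n+n_0k}=P^Y_{Y_{n_0k}(z),n}\circ P^Y_{z,n_0k}$ together with a dichotomy (either the distortion at time $n_0k$ already exceeds $K$, or it is $\le K$ and the extra $3K$ created in the window wins); the $C^0$-smallness $\eta_0$ is used only to guarantee, via the estimate on $a_{\eta_0}(N)$ and the Lebesgue-number constant $\lambda$, that the $Y$-orbit of a tile point still lands in the correct transversal piece $\mathcal{P}^X_{p_j,n_0k}(\Delta_j)$ at time $\approx n_0k$ — a shadowing statement, not a determinant estimate.

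There is a second, more elementary gap: your sets $U_p$ are open subsets of the codimension-one sections $\mathcal{N}_{X,p}(\rho_0\|X(p)\|)$, so finitely many of them cannot cover the compact set $D\subset M$, and the sets $\Delta_j=\overline{U_j}\cap D$ do not exhaust $D$. One must tile $D$ by flow boxes $D_j=\{X_t(z):z\in\Delta_j,\ t\in[0,\tau_j]\}$ as in the paper, and then the distortion obtained at the transversal points $z\in\Delta_j$ has to be transferred to an arbitrary $y=X_t(z)\in D$ with $t$ in a bounded range; this transfer again consumes part of the margin (hence the choice $3K$ and $K>2d\log(2C)$) and uses the same cocycle bookkeeping. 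Both missing steps are exactly where the substance of the paper's proof of Proposition \ref{prop sept} lies.
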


\begin{proof}
	Let $X,D,O,x$ be as in the statement of Proposition \ref{prop sept}. Fix $K, \varepsilon>0$. By Lemma \ref{lem.boundedbyc}, we may fix $C>1$ such that the vector field $X$ is bounded by $C$, and let $n_0=n_0(d,C,3K,\varepsilon)$, $\rho_0=\rho_0(d,C,K,\varepsilon)$ be chosen according to Proposition \ref{prop.proposition8}. We set $N:=2^{d} n_0$. Without loss of generality, we also assume that $K$ satisfies $K>2d \log(2C)>0$. 
	
	We fix a finite cover $\mathcal{F}=\{D_1,\dots,D_\ell\}$ of $D$ by compact sets satisfying:
	\begin{enumerate}
		\item\label{point un} $D \subset \bigcup_{j=1}^\ell \mathrm{int}(D_j) \subset O - X_{1}(\overline{O})$; %for some $\tau\in (0,1)$; %as a result, for each $j\in \{1,\dots,\ell\}$, the iterates $X_1(D_j),X_2(D_j),\dots$ are pairwise disjoint;
		\item\label{point deux} for each $j \in \{1,\dots,\ell\}$, there exists a real number $\tau_j \in (0,1)$, a point $p_j \in O - X_{1}(\overline{O})$, an open set $U_j\subset  \mathcal{N}_{X,p_j}(\rho_0|X(p_j)|)$, and a compact set $\Delta_j\subset U_j$,  such that  the following properties hold:
		\begin{enumerate}
			%\item $D_j \subset \{X_t(y): y \in \Delta_j,\ t \in [-1,1]\}$;
			\item %for some $\tau \in (0,1)$, 
			%we have %$D_j':= \mathcal{U}_X(p_j,\Delta_j,1-\tau)$ and 
			% satisfy
			we have 
			\begin{equation}\label{small cubes}
			D_j= \{X_t(y): y \in \Delta_j,\ t \in [0,\tau_j]\},
			\end{equation}
			%with $D_j':= \{X_t(y): y \in \Delta_j, t \in [0,1-\tau]\}$ and $O_j':=\mathrm{int}(\mathcal{U}_X(p_j,U_j,1))$;
			%\item 
			and 
			$$\mathrm{int}(D_j)\subset \{X_t(y) : y \in U_j,\ t \in (0,\tau_j)\} \subset O - X_1(\overline{O});$$
			\item for each $t \in [0,N]$, we have $\mathcal{P}_{p_j,t}^X(U_j) \subset  \mathcal{N}_{X,X_t(p_j)}(\rho_0|X_t(p_j)|)$; %, for all $i \in \{1,\dots,N\}$;
			\item for each $t \in [0,N-1]$, for each $t' \in [0,1]$, and for each $y_1,y_2\in \mathcal{P}_{p_j,t}^X(\Delta_j)$, it holds
			\begin{equation}\label{controle distance p x}
			d(\mathcal{P}_{X_t(p_j),t'}^X(y_1),\mathcal{P}_{X_t(p_j),t'}^X(y_2)) \leq 2C d(y_1,y_2);
			\end{equation}
		\end{enumerate}
		\item\label{point trois} $\mathrm{\orb}(x) \cap \bigcup_{j=1}^\ell  U_j =\emptyset$;
		\item\label{point quatre} for each $j\in \{1,\dots,\ell\}$, the map $(y,t) \mapsto X_t(y)$ is injective restricted to the set $U_j \times [0,1]$, and thus, it is also injective on the whole set $\mathcal{I}^X(p_j,U_j,N)$;\footnote{Indeed, for $t >1$, we have $X_t(U_j)=X_1(X_{t-1}(U_j))$, and  $X_{t-1}(U_j) \subset O-X_1(\overline{O})$.}
		%$\mathrm{int}(\mathcal{U}_X(p_j,U_j,1)) =O_j'\subset O-X_1(\overline{O})$;
		\item\label{point cinq} there exists a partition $\{1,\dots,\ell\}=J_0\sqcup \dots \sqcup J_{2^{d} -1}$ such that for each $k\in \{0,\dots,2^d -1\}$, and for each $j_1\neq j_2 \in J_{k}$, we have
		$$
		\mathcal{U}^X(p_{j_1},U_{j_1},1)\ \cap\ \mathcal{U}^X(p_{j_2},U_{j_2},1)=\emptyset.
		$$
	\end{enumerate}
	
	One can obtain $\mathcal{F}$ by tiling  the compact set $D$ by arbitrarily small cubes as in \eqref{small cubes}, i.e., obtained by flowing small transversals $\Delta_j$ under $X$, for  $j=1,\dots,\ell$. Besides, since we assume that $D \subset M_X$, Properties \eqref{point un}-\eqref{point quatre} are satisfied provided that $D_j$, $U_j$ and $\Delta_j$ are chosen sufficiently small, for all $j\in \{1,\dots,\ell\}$. In particular, \eqref{controle distance p x} is true provided that $D_j$ and $\Delta_j$ are chosen  small enough, for all $j\in \{1,\dots,\ell\}$, since $X$ is bounded by $C$. Moreover, Item \eqref{point cinq}  holds true provided that the diameter of the sets $U_1,\dots,U_\ell$ is small enough,  since $M$ has dimension $d$. 
	
	%For the same $\tau \in (0,1)$ as above, and for each $j \in \{1,\dots,\ell\}$, we set 
	
	%Then, $\mathcal{F}':=\{D_1',\dots,D_\ell'\}$ is still a cover of $D$ by compact sets, with $D \subset \bigcup_{j=1}^\ell \mathrm{int}(D_j') %and for each $j \in \{1,\dots,\ell\}$, we have $D_j'
	%\subset O - X_1(\overline{O})$. 
	For each $j \in \{1,\dots,\ell\}$, and for each $i,m\geq 0$, we set %define an open set  
	$$
	\mathcal{V}_j^X(i,m):=\mathrm{int}(\mathcal{U}^X(X_i(p_j),\mathcal{P}_{ p_j,i}^{X}(U_j),m)).
	$$
	Each set $\mathcal{V}_j^X(i,m)$ is open: it is the interior of the ``tube'' obtained by flowing points in the transversal $\mathcal{P}_{p_j,i}^{X}(U_j)$ under $X$ %for  a time $m$, until %it reaches 
	until they hit the section $\mathcal{P}_{p_j,i+m}^{X}(U_j)$. 
	%Let $\lambda>0$ be the Lebesgue number of the cover $\mathcal{F}$. For any $\eta>0$ we define an increasing sequence $(a_\eta(n))_n$ by the inductive formula:
	%$$
	%a_\eta(0):=0,\qquad a_\eta(n+1):=C a_\eta(n)+\eta. 
	%$$
	%Note that for $n \geq 0$ fixed, we have $\lim_{\eta \to 0} a_\eta(n)=0$. 
	%Let us take $\eta >0$ sufficiently small such that:
	%\begin{itemize}
	%\item for each $D' \in \mathcal{F}$ and $0 \leq i \leq N-1$, the \dots
	%\end{itemize}
	We have the following properties:
	\begin{itemize}
		\item for  each $j \in \{1,\dots,\ell\}$, the sets 
		$\mathcal{V}_j^X(0,1),\mathcal{V}_j^X(1,1),\dots,\mathcal{V}_j^X(N-1,1)$ are pairwise disjoint; 
		%, where for all $i \in \{1,\dots,N-1\}$, we have set  
		%$$
		%O_j'(i):=\mathrm{int}(\mathcal{U}_X(X_i(p_j),\mathcal{P}_{ X,p_j}^{i}(U_j),1));
		%$$
		\item for each $j\in \{1,\dots,\ell\}$, the orbit $\mathrm{\orb}(x)$ is disjoint from  $\mathcal{U}^X(p_j,U_j,N)$;
		\item for each $(k_1,j_1)\neq (k_2,j_2)$ with $k_1,k_2 \in \{0,\dots,2^{d}-1\}$ and $j_1 \in J_{k_1}$, $j_2 \in J_{k_2}$, we have 
		%\begin{equation*}
		%\mathrm{int}(\mathcal{U}_X(X_{n_0 k_1}(p_{j_1}), \mathcal{P}_{X,p_{j_1}}^{n_0 k_1}(U_{j_1}), n_0))
		%\ \cap\ \mathrm{int}(\mathcal{U}_X(X_{n_0 k_2}(p_{j_2}), \mathcal{P}_{X,p_{j_2}}^{n_0 k_2}(U_{j_2}), n_0))=\emptyset. 
		%\end{equation*}
		\begin{equation}\label{disj supporttt}
		\mathcal{V}_{j_1}^X(n_0 k_1,n_0) \cap \mathcal{V}^X_{j_2}(n_0 k_2,n_0) = \emptyset.
		\end{equation}
	\end{itemize}
	Indeed, the first item is a consequence of Point \eqref{point quatre} above, the second one follows from Point \eqref{point trois}  above, and the third one is a consequence of Points \eqref{point quatre}-\eqref{point cinq} above. 
	
	\begin{figure}[H]
		\begin{center}
			\includegraphics [width=11cm]{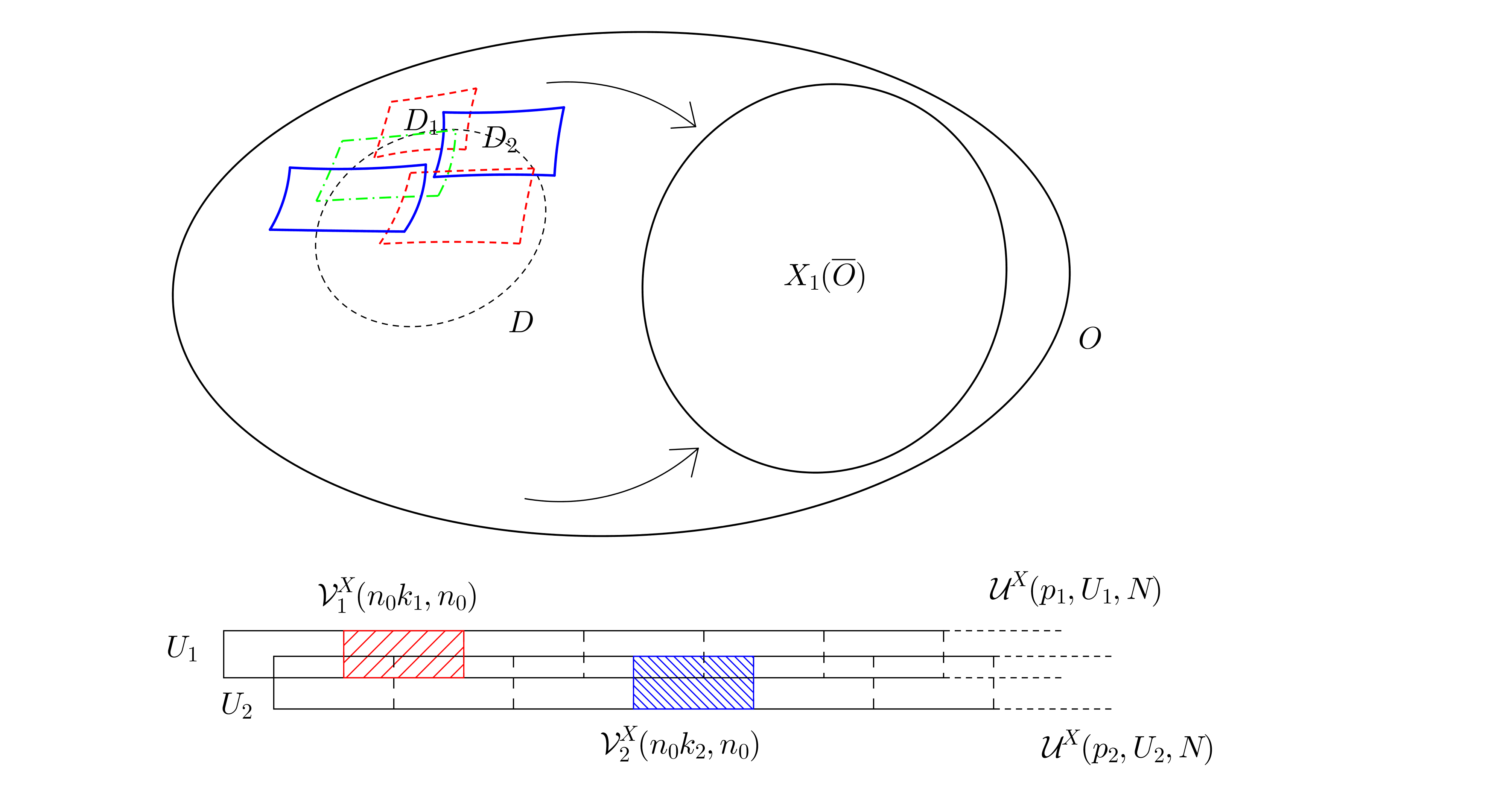}
		\end{center}
		\caption{Selection of the perturbation times for the different tiles.}
	\end{figure}
	
	\begin{claim}\label{claim lebs}
		There exists $\lambda>0$ such that for each $y \in \bigcup_{j=1}^\ell D_j$, there exist $j \in \{1,\dots,\ell\}$, $z\in \Delta_j$ and $u \in [0,1]$ such that $y=X_u(z)$,  and $\mathcal{N}_{X,z}(2\lambda)\subset \Delta_j$. 
	\end{claim}
	
	\begin{proof}
		Let $\lambda_1>0$ be a Lebesgue number of the cover $\mathcal{F}$. We choose  $\lambda_2 >0$ such that $\mathcal{N}_{X,y}(\lambda_2) \subset B(y,\lambda_1)$, for any $y \in \bigcup_{j=1}^\ell D_j$, and we take $\lambda>0$ such that $\mathcal{P}_{z,u}^X(\mathcal{N}_{X,z}(2\lambda))\subset \mathcal{N}_{X,X_u(z)}(\lambda_2)$ for any $z \in \bigcup_{j=1}^\ell \Delta_j$ and $u \in [0,1]$. The existence of $\lambda>0$ follows from the compactness of $\bigcup_{j=1}^\ell \Delta_j$ and from the fact that $X$ is bounded by $C>0$. By the definition of $\lambda_1$ and $D_1,\dots,D_\ell$, for each $y \in \bigcup_{j=1}^\ell D_j$, there exist $j \in \{1,\dots,\ell\}$,  $z\in \Delta_j$, and $u \in [0,1]$ such that $y=X_u(z)$, and $B(y,\lambda_1)\subset D_j$. By the definition of $\lambda_2$,  we also have $\mathcal{N}_{X,y}(\lambda_2) \subset B(y,\lambda_1)$. Then, by the definition of $\lambda$ and $D_j$,  and since $y=X_u(z)\in\mathcal{N}_{X,y}(\lambda_2) \subset D_j$, we deduce that $\mathcal{N}_{X,z}(2\lambda)\subset (\mathcal{P}_{z,u}^X)^{-1}(\mathcal{N}_{X,y}(\lambda_2)) \subset \Delta_j$. 
	\end{proof}
	
	For any $\eta>0$, we define a sequence $(a_\eta(m))_{ m \geq 0}$ inductively as follows:
	$$
	a_\eta(0):=0;\qquad a_\eta(m+1):=2C a_\eta(m)+\eta. 
	$$ 
	Note that $\lim_{\eta \to 0} a_\eta(N)=0$. In the following, 
	%we let $\lambda>0$ be a Lebesgue number of the cover $\mathcal{F}'$, and we let $\lambda'>0$ be such that for any $z \in \bigcup_{j=1}^\ell D_j'$, there exists $j \in \{1,\dots,\ell\}$ such that $\mathcal{N}_{X,z}(\lambda')\subset B(z,\lambda)\subset D_j'$. 
	we fix $\eta_0>0$ small enough that 
	$$
	a_{\eta_0}(N)<(2C)^{-N} \lambda,\qquad \eta_0<\frac{\lambda}{2}. 
	$$

	For each $k \in \{0,\dots,2^d-1\}$ and $j\in J_k$, the set $\mathcal{P}_{p_j,n_0 k}^{X}(\Delta_j)$ and the point $X_{n_0 k}(x)$ satisfy the hypotheses of Proposition  \ref{prop.proposition8}. We obtain a vector field $\widetilde Y\in \mathfrak{X}^1(M)$ such that the support of $X-\widetilde Y$ is contained in $\overline{\mathcal{V}_j^X(n_0 k,n_0)}$. 
	%$\mathcal{U}_X(X_{n_0 k}(p_j), \mathcal{P}_{X,p_j}^{n_0 k}(p_j)(U_j), n_0)$. 
	Moreover, for distinct choices of $(k,j)$,  \eqref{disj supporttt} guarantees that the associated perturbations will be disjointly supported. 
	Hence, applying Proposition  \ref{prop.proposition8}
	over all pairs $(k,j)$ with $k \in \{0,\dots,2^d-1\}$ and $j\in J_k$, we obtain a vector field $Y\in \mathfrak{X}^1(M)$ with the following properties:
	\begin{itemize}
		\item the support of $X-Y$ is contained in 
		\begin{equation*}
		\bigcup_{k=0}^{2^d-1}\bigcup_{j \in J_k}
		%\mathcal{U}_X(X_{n_0 k}(p_j), \mathcal{P}_{X,p_j}^{n_0 k}(U_j), n_0)
		\overline{\mathcal{V}_j^X(n_0k,n_0)}
		\subset \bigcup_{j=1}^\ell \mathcal{U}^X( p_j, U_j, N);
		\end{equation*}
		\item $d_{C^1}(X,Y) < \varepsilon$;
		\item $d_{C^1}(\mathcal{P}_{X_i(p_j),1}^X, \mathcal{P}_{X,X_i(p_j),1}^Y) < \delta(\varepsilon)$, for all $i\in \{0, \dots, N\}$ and $j \in \{1,\dots,\ell\}$;
		\item $d_{C^0}(\mathcal{P}_{X_i(p_j),1}^X, \mathcal{P}_{X,X_i(p_j),1}^Y) < \eta_0$, for all $i\in \{0, \dots, N\}$ and $j \in \{1,\dots,\ell\}$;
		\item for each $k \in \{0,\dots,2^d-1\}$ and for each $z \in \bigcup_{j\in J_k} \Delta_j$, there exists an integer $n \in \{1,\dots,n_0\}$ such that:
		$$
		\left|\log \det P_{X_{n_0 k}(x),n}^Y-\log \det P_{\mathcal{P}_{p_j,n_0 k}^{X}(z),n}^Y\right|>3K.
		$$
	\end{itemize}

	\begin{claim}
		For each $y \in \bigcup_{j=1}^\ell D_j$, there exist $k \in \{0,\dots,2^d-1\}$, $j \in J_k$, and $t \in [0,2]$, such that $y=Y_t(w)$, with $w \in \Delta_j$ and   $\mathcal{P}_{X,p_j,n_0 k}^{Y}(w) \in \mathcal{P}_{p_j,n_0 k}^{X}(\Delta_j)$. 
	\end{claim}
	
	\begin{proof}
		Let $y\in \bigcup_{j=1}^\ell D_j$. By Claim \ref{claim lebs}, there exist $j \in \{1,\dots,\ell\}$, $z\in \Delta_j$ and $u \in [0,\frac 32]$ such that $y=\mathcal{P}_{p_j,u}^{X}(z)$,  and $\mathcal{N}_{X,z}(2\lambda)\subset \Delta_j$. %\marginpar{add this in the previous prop} 
		We have $d_{C^0}((\mathcal{P}_{p_j,u}^{X})^{-1}, (\mathcal{P}_{X,p_j,u}^{Y})^{-1}) < \eta_0<\frac{\lambda}{2}$, hence $y=\mathcal{P}_{X,p_j,u}^{Y}(w)=Y_t(w)$, for some $t \in [0,2]$, and $w \in \Delta_j$ satisfying $\mathcal{N}_{X,w}(\lambda)\subset \Delta_j$. Moreover, $X$ is bounded by $C$, hence $%\mathcal{P}_{X,y}^i(
		\mathcal{N}_{X,\mathcal{P}_{X,p_j}^i(w)}((2C)^{-i}\lambda)\subset \mathcal{P}_{p_j,i}^X(\Delta_j)$, for all $i \in \{0,\dots,N-1\}$. 
		%Since $\lambda$ is a Lebesgue number of $\mathcal{F}'$,  we have $B(z,\lambda)\subset D_j'$, for some  $k \in \{0,\dots,2^d -1\}$ and $j \in J_k$.   
		% By definition of $D_j'$,  there exist $y\in \Delta_j$ and $t \in [0,\tau']$  such that $z=X_t(y)$ for some $t \in [0,1]$. We set $B':=\mathcal{P}_{X,y}^{-t}(B(z,\lambda))\subset \Delta_j$. 
		For any $i \in \{0,\dots,N-1\}$, by \eqref{controle distance p x}, and by the fact that $d_{C^0}( \mathcal{P}_{X_i(p_j),1}^X, \mathcal{P}_{X,X_i(p_j),1}^Y) <\eta_0$, we have the estimate
		\begin{align*}
		d(\mathcal{P}_{p_j,i+1}^X(w),\mathcal{P}_{X,p_j,i+1}^Y(w)) &\leq d(\mathcal{P}_{X_i(p_j),1}^X \circ \mathcal{P}^{X}_{p_j,i}(w),\mathcal{P}_{X_i(p_j),1}^X \circ \mathcal{P}^{Y}_{X,p_j,i}(w))\\
		& + d(\mathcal{P}_{X_i(p_j),1}^X \circ \mathcal{P}_{X,p_j,i}^Y(w),\mathcal{P}_{X,X_i(p_j),1}^Y \circ \mathcal{P}_{X,p_j,i}^Y(w))\\
		&\leq 2C d(\mathcal{P}_{p_j,i}^X(w), \mathcal{P}^{Y}_{X,p_j,i}(w))+ \eta_0.
		\end{align*}
		Thus, for any $i \in \{0,\dots,N-1\}$, we obtain 
		$$
		d(\mathcal{P}_{p_j,i}^X(w),\mathcal{P}^{Y}_{X,p_j,i}(w))\leq a_{\eta_0}(i) <(2C)^{-N}\lambda.
		$$
		Let $k \in \{0,\dots,2^d-1\}$ be such that $j \in J_k$. 
		We conclude that $\mathcal{P}^{Y}_{X,p_j,n_0 k}(w)\in \mathcal{N}_{X,\mathcal{P}_{X,p_j,n_0 k}^Y(w)}((2C)^{-n_0 k}\lambda)\subset \mathcal{P}_{p_j,n_0 k}^{X}(\Delta_j)$, where $Y_{t}(w)=y$. 
	\end{proof}
	We deduce that for each $y \in D \subset\cup_{j=1}^\ell D_j$, there exist $k \in \{0,\dots,2^d-1\}$, $j \in J_k$, $w \in \Delta_j$, $t \in [0,2]$, such that $y=Y_t(w)$, and there exists $n \in \{1,\dots,n_0\}$, such that 
	$$
	|\log \det P_{X_{n_0 k}(x),n}^Y-\log \det P^Y_{\mathcal{P}_{X,p_j,n_0 k}^Y(w),n}|>3K.
	$$
	Since the vector field $X-Y$ has support in $\bigcup_{j=1}^\ell \mathcal{U}^X( p_j, U_j, N)$, which is disjoint from the orbit $\mathrm{\orb}(x)$, we have  $X_{n_0 k}(x)=Y_{n_0 k}(x)$.  Moreover,  there exists $t' \in [n_0 k-2,n_0k+2]$  such that $\mathcal{P}_{X,p_j,n_0 k}^{Y}(w)=Y_{t'}(y)$. We thus have
	$$
	|\log \det P^Y_{Y_{n_0 k}(x),n}-\log \det P_{Y_{t'}(y),n}^Y|>3K.
	$$
	We have $P_{Y_{t'}(y),n}^Y=P_{Y_{n_0 k}(y),n}^Y\circ P^{Y}_{Y_{t'}(y),n_0k -t'}$, with $n_0k -t' \in [-2,2]$. Recall that $K>0$ was  chosen such that $K> 2d \log(2C)$. Since  $Y$ is  close to $X$, we can assume that $Y$ is bounded by $2C$. We thus get%\marginpar{maybe add some details about that before}
	\begin{align*}
	&|\log \det P_{Y_{n_0 k}(x),n}^Y-\log \det P^Y_{Y_{n_0 k}(y),n}|\\
	\geq\ &|\log \det P^Y_{Y_{n_0 k}(x),n}-\log \det P^Y_{Y_{t'}(y),n}|-\max_{u' \in [-2,2]} \max_{y' \in M_X}|\log \det  P_{y',u'}^Y|\\
	>\  &3K- 2d \log (2C) >2K.
	\end{align*}
	Besides, $P^{Y}_{z,n+n_0 k}=P^{Y}_{Y_{n_0 k}(z),n}\circ P^{Y}_{z,n_0 k}$, hence of the following two cases holds:
	\begin{itemize}
		\item $|\log \det P^{Y}_{x,n_0 k}-\log \det P^{Y}_{y,n_0 k}|>K$;
		\item $|\log \det P^{Y}_{x,n+n_0 k}-\log \det P^{Y}_{y,n+n_0 k}|>K$.
	\end{itemize}
	In either case, %it holds 
	%and thus, 
	$|\log \det P^{Y}_{x,n'}-\log \det P^{Y}_{y,n'}|>K$, for some $n'\in \{1,\dots,N\}$, as required. 
	
	By construction, the support of $X-Y$ is contained in at most $N$ iterates of $O - X_1(\overline{O})$ for some trapping region $O$, and thus, the iterates of $O - X_1(\overline{O})$ for $X$ and $Y$ coincide. This implies that the vector fields $X$ and $Y$ have the same chain recurrent set, and they coincide on this set, which concludes the proof. 
\end{proof}

\subsubsection{Proof of Theorem \ref{t.unboundednormalgeneric}}
Let $\mathcal{F}$ be a countable and dense subset of $M$, and let $\mathcal{K} = \{D_n\}_{n\in \N}$ be a countable collection of compact sets $D_n$, that verifies the following conditions:
\begin{itemize}
	\item[--] $\mathrm{diam} D_n \to 0$, as $n\to +\infty$;
	\item[--] for any $n_0 \geq 1$, it holds $\displaystyle \bigcup_{n\geq n_0} D_n = M$.
\end{itemize} 

For each $D \in \mathcal{K}$ we define the following set
\[
\mathcal{O}_{D} := \{ X\in \mathfrak{X}^1(M): \exists \textrm{ open set $U$, } X_1(\overline{U}) \subset U \textrm{ and } D \subset (U-X_1(\overline{U}))\}.
\]
It is easy to see that $\mathcal{O}_{D}$ is open. For any point $x\in \mathcal{F}$ we define
\[
\mathcal{U}_{x,D} = \{ X\in \mathcal{O}_{D} : x\notin \mathrm{Zero}(X) \textrm{ and } \orb(x) \cap D = \emptyset\}.
\]
For a vector field $X\in \mathcal{O}_D$ such that $x\in M_X$, it could happen that the orbit $\orb(x)$ is non compact and it could accumulate on the compact set $D$ without intersecting it. In this case, by a small pertubation we could make the orbit of $x$ intersect $D$. For instance if $D$ is a sink and $x$ is a point in its basin. In particular, the set $\mathcal{U}_{x,D}$ is not open. 
	\begin{figure}[H]
		\begin{center}
			\includegraphics [width=4cm]{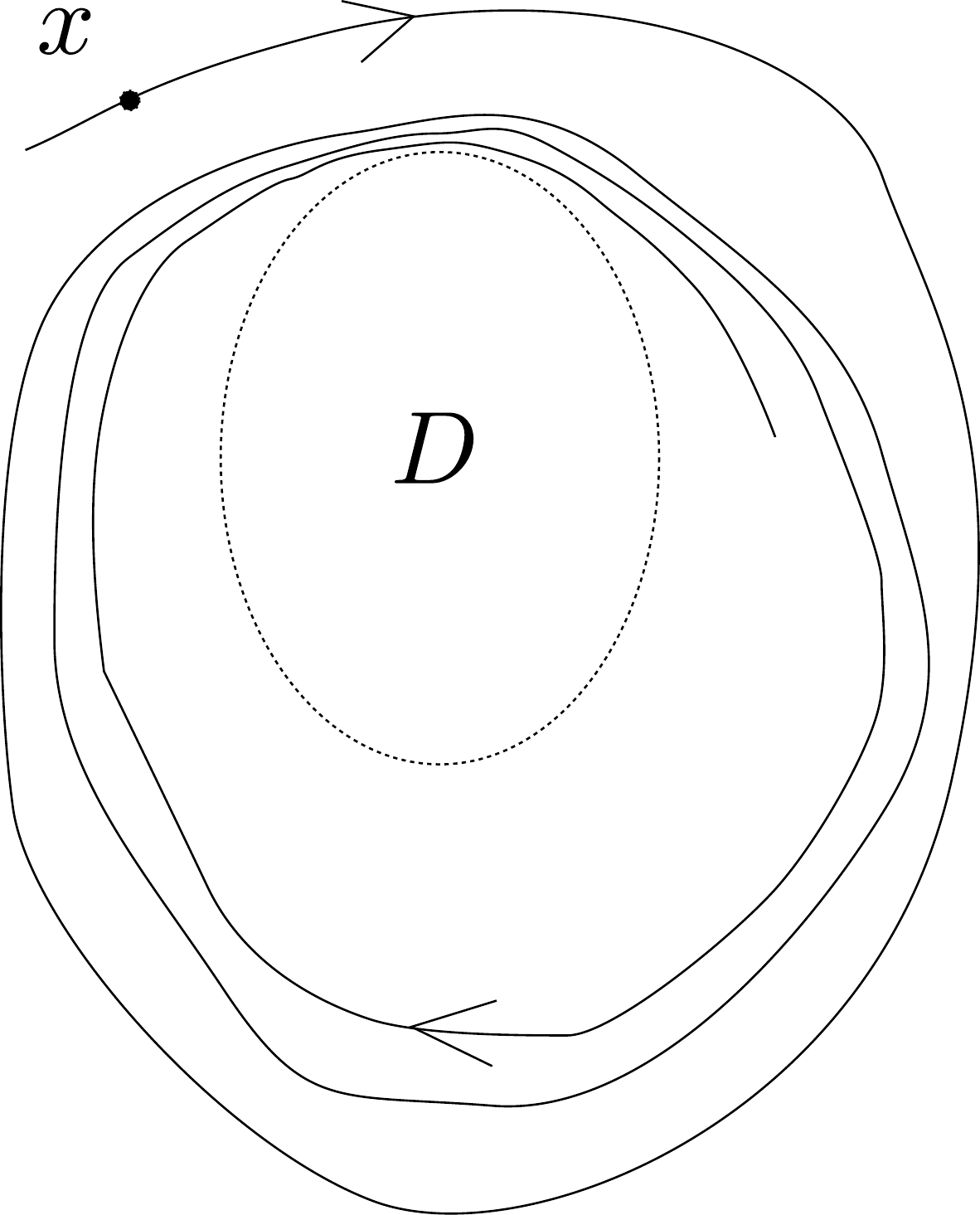}
		\end{center}
		\caption{The orbit $\orb(x)$   accumulates the compact set $D$.}\label{spirale}
	\end{figure}
	
	The next lemma gives a criterion to know that a vector field $X$ belongs to the interior of $\mathcal{U}_{x,D}$.

\begin{lemma}
	\label{lem.criterioninterion}
	Let $X\in \mathcal{U}_{x,D}$ and let $U\subset M$ be an open subset such that $X_1(\overline{U}) \subset U$ and $D \subset (U-X_1(\overline{U}))$. Assume that $\orb(x) \cap (U - X_1(\overline{U})) \neq \emptyset$. Then $X$ belongs to the interior of $\mathcal{U}_{x,D}$, in particular, for any $Y\in \mathfrak{X}^1(M)$ sufficiently close to $X$ it holds that $\mathrm{orb}^Y(x) \cap D = \emptyset$.%\annotation{Make a picture of this lemma}
\end{lemma}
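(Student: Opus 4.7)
The strategy is to combine continuous dependence of the flow on the vector field, over compact time intervals, with the persistence under $C^1$-small perturbations of the trapping structure given by $U$.

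Using the hypothesis, I would first pick a time $t_0\in\R$ with $z_0:=X_{t_0}(x)\in W:=U\setminus X_1(\overline{U})$. The set $W$ is open (as the difference of an open set and a compact set), and automatically $X_{t_0-1}(x)\notin\overline{U}$: otherwise $z_0=X_1(X_{t_0-1}(x))\in X_1(\overline{U})$, contradicting $z_0\in W$. The compact-in-open condition $X_1(\overline{U})\subset U$ persists as $Y_1(\overline{U})\subset U$ for any $Y$ in a small enough $C^1$-neighborhood of $X$, with Hausdorff distance $d_H(Y_1(\overline{U}),X_1(\overline{U}))\to 0$ as $Y\to X$. Since $D$ and $X_1(\overline{U})$ are disjoint compacts, they are separated by some $2\delta_0>0$, hence $d(D,Y_1(\overline{U}))\geq\delta_0$ for $Y$ close enough. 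Iterating the $Y$-trapping yields $Y_{t_0+n}(x)\in Y_1(\overline{U})$ for every integer $n\geq 1$; a symmetric argument using the backward invariance of $M\setminus\overline{U}$ under $Y_1^{-1}$ (which follows from $Y_1^{-1}(M\setminus U)\subset M\setminus\overline{U}$) gives $Y_{t_0-n}(x)\in M\setminus\overline{U}\subset M\setminus D$ for every $n\geq 1$. So at the countable sequence of integer time shifts from $t_0$, the $Y$-orbit of $x$ stays at distance $\geq\delta_0$ from $D$.

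To cover non-integer times, I would apply continuous dependence on a large compact interval $[t_0-T,t_0+T]$: for $Y$ sufficiently close to $X$, $Y_{[t_0-T,t_0+T]}(x)$ is uniformly $C^0$-close to $X_{[t_0-T,t_0+T]}(x)$, which has positive distance to $D$ since $\orb(x)\cap D=\emptyset$. For $|t-t_0|>T$, one would concatenate the integer-time statement with short-time $[0,1]$-flows emanating from points of $Y_1(\overline{U})$ (forward) or from $M\setminus\overline{U}$ (backward), using Hausdorff continuity of the associated compact tubes in $Y$.

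The main obstacle lies precisely in this last step: between two consecutive integer times the orbit may leave $Y_1(\overline{U})$ (the trapping is given only at time $1$), so one must ensure that the short-time $[0,1]$-flow of $Y_1(\overline{U})$ remains disjoint from $D$, uniformly in $Y$. I expect to resolve this by first shrinking $U$ to a slightly smaller open trapping region $U^*$ with $X_1(\overline{U})\subset U^*\subset\overline{U^*}\subset U\setminus D$ and $X_1(\overline{U^*})\subset U^*$, chosen so that the compact tube $X_{[0,1]}(X_1(\overline{U^*}))$ is at positive distance from $D$; by continuity in $Y$, the analogous $Y$-tube is also disjoint from $D$, closing the argument and showing that $X$ belongs to the interior of $\mathcal{U}_{x,D}$.
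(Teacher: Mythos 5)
Your reductions at integer-shifted times and on the compact middle window are correct, and you have put your finger on exactly the delicate point; but the repair you propose for it does not work, so the gap is genuine. An open set $U^*$ with $X_1(\overline{U})\subset U^*\subset\overline{U^*}\subset U\setminus D$, $X_1(\overline{U^*})\subset U^*$ \emph{and} $X_{[0,1]}(X_1(\overline{U^*}))$ at positive distance from $D$ need not exist: since $\overline{U^*}\supset X_1(\overline{U})$, the tube $X_{[0,1]}(X_1(\overline{U^*}))$ always contains $X_{[2,3]}(\overline{U})$, and the time-one trapping hypothesis constrains nothing at non-integer times, so an orbit issued from $\overline{U}$ (other than $\orb(x)$) may leave $\overline{U}$ and cross $D$ on its way back at a time in $(2,3)$. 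Concretely, take on $S^2$ a vector field equal to $-\partial/\partial y$ on a large chart with coordinates $(a,y)$ (a source above, a sink below), and let $U$ be the union of the region $\{y<0.7\}$ (capped off so as to contain the sink) with the boxes $(-11,11)\times(1.35,1.65)$ and $(-10,10)\times(2.4,2.6)$; then $X_1(\overline{U})\subset U$, a small closed disk $D$ centred at $(0,0)$ lies in $U\setminus X_1(\overline{U})$, and the point $x=(5,3)$ satisfies all the hypotheses of the lemma, yet $u=(0,2.5)\in U$ has $X_{2.5}(u)\in D$, so \emph{every} admissible tube meets $D$. Note also that the backward half of your argument has the same defect and is not touched by the fix: since $D\cap X_1(\overline{U})=\emptyset$ one always has $X_{-1}(D)\cap\overline{U}=\emptyset$, so points outside $\overline{U}$ do flow into $D$ in time less than one, and knowing $Y_{t_0-n}(x)\notin\overline{U}$ at integer shifts gives no information at the intermediate times.

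The missing idea is to control the orbit of $x$ itself rather than the forward tube of the whole trapping region. Since the inclusion $D\subset U\setminus Y_1(\overline{U})$ persists for $Y$ close to $X$, the $Y$-orbit of $x$ can meet $D$ only at times when it lies in the shell $U\setminus Y_1(\overline{U})$; the paper's proof consists in fixing $t_1<t_2$ with $\orb(x)\cap\bigl(U\setminus X_1(\overline{U})\bigr)\subset X_{(t_1,t_2)}(x)$ and in observing that this confinement is robust, i.e.\ $\mathrm{orb}^Y(x)\cap\bigl(U\setminus Y_1(\overline{U})\bigr)\subset Y_{[t_1,t_2]}(x)$ for $Y$ sufficiently $C^1$-close to $X$. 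After that only a compact segment matters: $Y_{[t_1,t_2]}(x)$ is uniformly close to $X_{[t_1,t_2]}(x)$, which is at positive distance from the compact set $D$, whence $\mathrm{orb}^Y(x)\cap D=\emptyset$. To complete your argument you would therefore have to replace the tube-confinement step by such a robust bound on the set of times at which the orbit of $x$ visits the shell, which is precisely the paper's route.
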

\begin{proof}
	Observe that the conditions $X_1(\overline{U}) \subset U$ and $D \subset (U - X_1( \overline{U}))$ are open. If $\orb(x) \cap U \neq \emptyset$, we can fix $t_1, t_2 \in \mathbb{R}$ such that $\left(\orb(x) \cap U - X_1( \overline{U})\right) \subsetneq X_{(t_1, t_2)}(x)$. We can also assume that this property is open, that is, for any $C^1$ vector field $Y$ sufficiently $C^1$-close to $X$, it holds 
	\[
	\mathrm{orb}^Y(x) \cap (U - Y_1(\overline{U})) \subsetneq Y_{[t_1,t_2]}(x).
	\]
	
	Since $D$ and $X_{[t_1,t_2]}(x)$ are compact and disjoint, the distance between them is strictly positive. This implies that for any $Y$ sufficiently $C^1$-close to $X$ it holds that $Y_{[t_1, t_2]}(x)$ does not intersect $D$. Since $\mathrm{orb}^Y(x) \cap (U - Y_1(\overline{U})) \subset Y_{[t_1,t_2]}(x)$, we conclude that $\mathrm{orb}^Y(x) \cap D = \emptyset$. In particular, $X$ belongs to the interior of $\mathcal{U}_{x,D}$.
\end{proof}

The proof of the following lemma is the same as Lemma $15$ in \cite{BonattiCrovisierWilkinson}.

\begin{lemma}
	\label{lem.opendense}
	The set $\mathrm{Int}(\mathcal{U}_{x,D}) \cup \mathrm{Int}(\mathcal{O}_{D} - \mathcal{U}_{x,D})$ is open and dense inside $\mathcal{O}_{D}$.
\end{lemma}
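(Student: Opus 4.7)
The set $\mathrm{Int}(\mathcal{U}_{x,D}) \cup \mathrm{Int}(\mathcal{O}_{D} - \mathcal{U}_{x,D})$ is open in $\mathcal{O}_D$ by construction, so it suffices to establish density. Fix $X \in \mathcal{O}_D$ together with an open set $U$ such that $X_1(\overline{U}) \subset U$ and $D \subset U - X_1(\overline{U})$; both conditions are $C^1$-open in $X$ and therefore persist under small perturbations. After an arbitrarily small preliminary perturbation one may assume $x \notin \mathrm{Zero}(X)$, since the condition $x \in \mathrm{Zero}(X)$ is destroyed by generic $C^1$-perturbation. If $\orb(x) \cap \mathrm{int}(D) \neq \emptyset$, say $X_{t_0}(x) \in \mathrm{int}(D)$, joint continuity of $(t,Y)\mapsto Y_t(x)$ gives $Y_{t_0}(x) \in \mathrm{int}(D)$ for all $Y$ in a $C^1$-neighborhood of $X$, placing $X$ in $\mathrm{Int}(\mathcal{O}_D - \mathcal{U}_{x,D})$. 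If $\orb(x)$ meets $D$ only at $\partial D$, a local $C^1$-small perturbation near a tangential intersection either pushes the orbit into $\mathrm{int}(D)$ (reducing to the previous case) or off $D$ entirely (reducing to the main case treated below).

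Assume therefore that $X \in \mathcal{U}_{x,D}$. By Lemma \ref{lem.criterioninterion}, it is enough to find $Y$ arbitrarily $C^1$-close to $X$ lying in $\mathcal{U}_{x,D}$ and such that $\mathrm{orb}^Y(x) \cap (U - Y_1(\overline{U})) \neq \emptyset$. If $\orb(x)$ already meets the fundamental domain $U - X_1(\overline{U})$, take $Y = X$. Otherwise, the trapping structure forces $\orb(x)$ either to lie entirely in the closed set $M - U$ (the orbit never enters the trapping region) or to be trapped in $\bigcap_{n \geq 0} X_n(\overline{U}) \subset X_1(\overline{U})$ (the orbit lies in the maximal forward-invariant subset of $\overline{U}$). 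In both subcases I construct $Y$ by an arbitrarily $C^1$-small local perturbation of $X$, supported in a small flow box around some point $X_{t_0}(x)$ disjoint from the rest of the orbit, which reroutes the orbit of $x$ so that $\mathrm{orb}^Y(x)$ passes through a chosen point $q$ of the non-empty open set $(U - X_1(\overline{U})) \setminus D$; such $q$ exists because $D$ is a compact proper subset of the open fundamental domain.

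It remains to verify that the resulting $Y$ still belongs to $\mathcal{U}_{x,D}$, i.e., $\mathrm{orb}^Y(x) \cap D = \emptyset$. Outside the support of the perturbation, $\mathrm{orb}^Y(x)$ coincides with $\orb(x)$, which is disjoint from $D$ by assumption. On the forward portion past the entry point $q$, within unit time the orbit enters $Y_1(\overline{U})$ and remains there forever after, a region disjoint from $D \subset U - Y_1(\overline{U})$. The only delicate segment is the short transit from $q$ to $Y_1(q) \in Y_1(\overline{U})$, during which the orbit traverses the fundamental domain containing $D$; the main technical obstacle in the proof is to select $q$ together with the perturbation so that this transit segment avoids the compact set $D$, which is always possible thanks to the freedom left by $D$ being a proper compact subset of the open fundamental domain. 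Once $Y$ is so constructed, Lemma \ref{lem.criterioninterion} places $Y$ in $\mathrm{Int}(\mathcal{U}_{x,D})$, concluding the proof.
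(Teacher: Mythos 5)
There is a genuine gap at the heart of your density argument. In your main case you assume that $\orb(x)$ is disjoint from the fundamental domain $U - X_1(\overline{U})$ and then claim that an arbitrarily $C^1$-small perturbation, supported in a small flow box around one point of the orbit, can ``reroute'' $\mathrm{orb}^Y(x)$ so that it passes through a prescribed point $q\in (U - X_1(\overline{U}))\setminus D$. A small, locally supported perturbation can only displace the orbit of $x$ by a small transversal amount; forcing it through a point that the unperturbed orbit never approaches is a connecting-lemma type statement, which needs recurrence hypotheses and is false here in general. Concretely, if $\orb(x)$ is a heteroclinic orbit joining a hyperbolic source to a hyperbolic sink, all at distance $1$ from $\overline{U}$, then every $Y$ that is $C^1$-close to $X$ has $\mathrm{orb}^Y(x)$ contained in a small neighbourhood of the closure of the original orbit, hence far from $U$: no small perturbation makes it enter $U$, let alone hit a chosen $q$ (note that in this example no perturbation is needed, since $X$ already lies in $\mathrm{Int}(\mathcal{U}_{x,D})$, an outcome your case analysis never exploits). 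The closing step is also unjustified: even when the orbit can be made to cross the fundamental domain, the transit segment cannot always be chosen to avoid $D$ ``by the freedom left by $D$ being a proper compact subset''; if $D$ is, say, a topological sphere inside $U - X_1(\overline{U})$ separating $X_1(\overline{U})$ from $\partial U$, every crossing orbit must meet $D$, and density then holds only because such fields lie in $\mathrm{Int}(\mathcal{O}_D - \mathcal{U}_{x,D})$, again a possibility your main case does not allow. (Your preliminary reductions and the dichotomy $\orb(x)\subset M-U$ or $\orb(x)\subset\bigcap_{n}X_n(\overline{U})$ are correct; it is the construction they feed into that cannot work. The treatment of an intersection occurring only on $\partial D$ is likewise asserted rather than proved.)

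The argument the paper refers to (the flow version of Lemma 15 of \cite{BonattiCrovisierWilkinson}) needs no rerouting at all; apart from making $X(x)\neq 0$, it is soft. One distinguishes according to whether the open condition ``$\mathrm{orb}^Y(x)$ meets the open set $U - Y_1(\overline{U})$'' holds for some fields arbitrarily close to $X$ or for none. If it holds for no nearby field, then every nearby $Y$ with $Y(x)\neq 0$ has its (connected) orbit contained in the disjoint union of the closed sets $M-U$ and $Y_1(\overline{U})$, hence contained in one of them and therefore disjoint from $D$; consequently a whole neighbourhood of $X$ (after the trivial perturbation removing a zero at $x$) lies in $\mathcal{U}_{x,D}$, i.e.\ $X$ is approximated by $\mathrm{Int}(\mathcal{U}_{x,D})$. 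If instead some nearby fields do satisfy it, restrict to the open set $\mathcal{W}$ of such fields: there, Lemma \ref{lem.criterioninterion} shows that membership in $\mathcal{U}_{x,D}$ is itself an open condition, and for an open subset $A$ of an open set $\mathcal{W}$ the union $A\cup \mathrm{Int}(\mathcal{W}\setminus A)$ is automatically dense in $\mathcal{W}$; this produces elements of $\mathrm{Int}(\mathcal{U}_{x,D})\cup\mathrm{Int}(\mathcal{O}_D-\mathcal{U}_{x,D})$ arbitrarily close to $X$. So the only dynamical input is the trapping structure together with the compact-passage criterion of Lemma \ref{lem.criterioninterion}, not any connecting-type perturbation.
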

First, observe that if $X\in \mathcal{U}_{x,D}$ then $ D \cup \{x\}$ does not have any singularity of $X$. In particular, the linear Poincar\'e flow is well defined for any point $y\in D \cup \{x\}$. For $x\in \mathcal{F}$, $D \in \mathcal{K}$ and any $K \in \N$, we define:
\[
\mathcal{V}_{x,D, K} := \left\{X\in \mathrm{Int}(\mathcal{U}_{x,D}): \forall\, y \in D, \textrm{ } \exists\, n\geq 1,\ |\log \det P^X_{y,n} - \log\det P^X_{x,n}| > K\right\}.
\]
Using the fact that $D$ is compact, it is easy to see that $\mathcal{V}_{x,D,K}$ is open inside $\mathrm{Int}(\mathcal{U}_{x,D})$. Proposition \ref{prop sept} implies that $\mathcal{V}_{x,D,K}$ is dense in $\mathrm{Int}( \mathcal{U}_{x,D})$. Therefore, the set
\[
\mathcal{W}_{x,D, K} = \mathcal{V}_{x,D,K} \cup \mathrm{Int}(\mathcal{O}_{D} - \mathcal{U}_{x,D}) \cup \mathrm{Int}(\mathfrak{X}^1(M) - \mathcal{O}_{D})
\]
is open and dense in $\mathfrak{X}^1(M)$. Define the set
\[
\mathcal{R}_0 := \displaystyle \bigcap_{x\in \mathcal{F}, D \in \mathcal{K}, K \in \N } \mathcal{W}_{x,D, K}.
\]
By Baire's theorem, this set is residual in $\mathfrak{X}^1(M)$. Let $\mathcal{R} = \mathcal{R}_0 \cap \mathcal{R}_*$, where $\mathcal{R}_*$ is the residual subset given by Theorem \ref{thm.genericbackground}.

Let $X\in \mathcal{R}$. Consider $x\in \mathcal{F}-\mathrm{Zero}(X)$ and $y\in M- \mathrm{CR}(X)$ such that $y\notin \mathrm{orb}^X(x)$. Since $y \notin \mathrm{CR}(X)$, by Conley's theory there exists an open set $U\subset M$ such that $X_1(\overline{U}) \subset U$ and $y\in (U-X_1(\overline{U}))$. This is a direct consequence of the existence of \textit{complete Lyapunov functions} for $X$, which is given by the so called \textit{Fundamental Theorem of dynamical systems} (see Definition $4.7.1$, for the definition of complete Lyapunov function, and Theorem $4.8.1$ in \cite{AlongiNelson}). Observe also that $\mathrm{orb}^X(x) \cap (U - X_1(\overline{U}))$ is either empty or a compact orbit segment. Take $D \in \mathcal{K}$ a compact set that contains $y$. If its diameter is sufficiently small, we have that $D \subset (U - X_1(\overline{U})$ and $\mathrm{orb}^X(x) \cap D = \emptyset $.

Hence $X\in \mathcal{U}_{x,D}$. Since $X\in \mathcal{R}_0$ and by the definition of $\mathcal{R}_0$, for every $K\in \N$, it holds that $X\in \mathcal{W}_{x,D,K}$. By the definition of $\mathcal{W}_{x,D, K}$ and since $X\in \mathcal{U}_{x,D}$, we have that $X\in \mathcal{V}_{x,D, K}$. Therefore, for any $K \in \N$, there exists $n\geq 1$ such that 
\[
|\log \det P^X_{x,n} - \log \det P^X_{y,n} | > K.
\]  
We conclude that $X$ verifies the unbounded normal distortion property.
\qed

%\subsubsection{Triviality of the centralizer in higher regularity}
%
%\begin{theorem}
%Let $M$ be a compact, connected $d$-dimensional Riemannian manifold. Let $\mathcal{R}_1\subset \mathfrak{X}^1(M)$ be the residual set as in Theorem \ref{premier cor}. Then for any $X\in \mathcal{R}_1 \cap \mathfrak{X}^d(M)$, the $C^d$-centralizer $\mathfrak{C}^d(X)$ of $X$ is trivial.
%\end{theorem}
%
%\begin{proof}
%Let $\mathcal{R}_1\subset \mathfrak{X}^1(M)$ be as in Theorem \ref{premier cor}. Then, for any $X\in \mathcal{R}_1$, 
%every singularity and periodic orbit of $X$ is hyperbolic, $\Omega(X) = \overline{\mathrm{Per}(X)}$, and the $C^1$-centralizer of $X$ is quasi-trivial. If in addition $X$ is of class $C^d$, then it satisfies the assumptions of Corollary \ref{corollaire plus regulier}, and thus, its $C^d$-centralizer $\mathfrak{C}^d(X)$  is trivial. 
%\end{proof}

\begin{appendix}
\section{The separating property is not generic}\label{append sep}

In this section we prove that the separating property is not generic. Indeed we will see that it is not even $C^1$-dense. Let $M$ be a compact, connected Riemannian manifold. Take any Morse function  $f\in C^2(M,\mathbb{R})$  and let $X := \nabla f$ be the gradient vector field which is $C^1$. It holds that $X$ has two hyperbolic singularities, $\sigma_s$ and $\sigma_u$ with the following properties:
\begin{itemize}
\item $\sigma_s$ is a hyperbolic sink and $\sigma_u$ is a hyperbolic source;
\item $W^s(\sigma_s) \cap W^u(\sigma_u) \neq \emptyset$;
\item for any $C^1$ vector field $Y$ which is sufficiently $C^1$-close to $X$, then $W^s(\sigma_s(Y)) \cap W^u(\sigma_u(Y)) \neq \emptyset$, where $\sigma_*(Y)$ is the continuation of $\sigma_*$ for the vector field $Y$, for $*=s,u$.
\end{itemize}

We claim that $X$ is $C^1$-robustly not separating. Let $U$ be a compact ball inside $\left(W^s(\sigma_s) \cap W^u(\sigma_u)\right)-\{\sigma_s , \sigma_u \} $. Since compact parts of stable and unstable manifolds vary continuously with the vector field, it holds for any $Y$ sufficiently $C^1$-close to $X$ it holds that $U \subset \left( W^s(\sigma_s(Y)) \cap W^u(\sigma_u(Y)) \right) - \{\sigma_s, B_u \}$.  

Take any $\varepsilon>0$ and consider the balls $B(\sigma_s , \frac{\varepsilon}{2})$ and $B(\sigma_u, \frac{\varepsilon}{2})$. Since $U$ is compact, there exists $T_X= T(\varepsilon)>0$ such that any point $x\in U$ verifies
\begin{equation}
\label{eq.nonseparating}
X_{-t}(x) \in B\left(\sigma_u, \frac{\varepsilon}{2}\right) \textrm{ and } X_t(x) \in B\left(\sigma_s, \frac{\varepsilon}{2}\right), \textrm{ for all $t\geq T$.}
\end{equation}

Notice that for any two points $x,y\in B(\sigma_s, \frac{\varepsilon}{2})$ it holds that $d(X_t(x), X_t(y)) < \varepsilon$, for all $t\geq 0$. Similar statement is true for points in $B(\sigma_u, \frac{\varepsilon}{2})$ and the backward orbit.

Since $T$ that verifies (\ref{eq.nonseparating}) is fixed, there exists $\delta>0$ such that for any $x\in U$ and any $y\in B(x, \delta)\subset U$, it holds that 
\[
d(X_t(x), X_t(y))< \varepsilon, \textrm{ for any $t\in \mathbb{R}$.}
\]
In particular $X$ is not separating. Also, observe that this holds for any $Y$ sufficiently $C^1$-close to $X$. Thus we conclude that $X$ is $C^1$-robustly not separating.

\begin{remark}
It is easy to see that the same type of example proves that the hypothesis of Proposition \ref{thm.expcollinear} is not even $C^1$-dense. We conclude that the hypotheses of Propositions \ref{premier theorem} and \ref{thm.expcollinear} are not $C^1$-dense as well. 
\end{remark}

\section{Periodic orbits and chain-recurrent classes for flows: sketch of the proof of Item \eqref{iitem 3} in Theorem \ref{thm.genericbackground}}\label{append.croflow}

In this appendix, we briefly explain the structure of the proof of Item \eqref{iitem 3} of Theorem \ref{thm.genericbackground} for diffeomorphisms, which was proved by Crovisier in \cite{Crovisier}. Then we explain why the same proof works for vector fields as well.

 As we will see, the proof of this result follows easily from Theorem \ref{thm.thm3} below, which is an easy consequence of Proposition \ref{prop.propcro} below. We emphasize that the proof of Proposition \ref{prop.propcro} has two parts: a perturbation part, which only uses Hayashi's connecting lemma, and a combinatorial part, which has no perturbation at all. Usually in these pertubation lemmas such as the connecting lemma, closing lemma and others, the combinatorial part is the hardest part in the proof. The idea of the combinatorial part is to find the right pieces of orbits that you will connect by some elementary $C^1$-perturbation. The same happens in the proof of Item \eqref{iitem 3} of Theorem \ref{thm.genericbackground}. There is the combinatorial argument that will give which pieces of orbits are good to connect, and instead of some elementary perturbation you use Hayashi's connecting lemma to connect these pieces of orbits. We remark that the combinatorial argument will be the same for diffeomorphisms and flows, and that Hayashi's connecting lemma is available for flows, so the perturbative tool is also available for flows.

Let us give some of the dynamical background used in the proof. Below, we follow the notation in \cite{Crovisier}. 

Let $M$ be a compact, connected, Riemannian manifold, and let $\mathrm{Diff}^1(M)$ be the set of $C^1$-diffeomorphisms of $M$. A diffeomorphism $f\in \mathrm{Diff}^1(M)$ verifies \textit{Condition (A)} if for any $n\in \N$, every periodic orbit of period $n$ is isolated  in $M$. Observe that by Kupka-Smale's Theorem (see Theorem $3.1$ in \cite{PalisdeMelo}), Condition (A) is a $C^r$-generic condition.

Let $f\in \mathrm{Diff}^1(M)$. We say that a compact and invariant set $\mathcal{X}$ is \textit{weakly transitive} if for any non-empty open sets $U$ and $V$ that intersect $\mathcal{X}$, and any neighborhood $W$ of $\mathcal{X}$, there exists a segment of orbit $\{x, f(x), \cdots, f^n(x)\}$ contained in $W$ and such that $x$ belongs to $U$ and $f^n(x)$ belongs to $V$, and $n\geq 1$.

The main perturbation technique we want to describe is given by Theorem $3$ from \cite{Crovisier} which states the following:

\begin{theorem}[Theorem $3$ in \cite{Crovisier}]\label{thm.thm3}
Let $f$ be a  $C^1$ diffeomorphism that satisfies Condition (A), let $\mathcal{U}$ be a $C^1$-neighborhood of $f$ in $\mathrm{Diff}^1(M)$ and let $\mathcal{X}$ be a weakly transitive set of $f$. Then, for any $\eta>0$, there exists $g\in \mathcal{U}$ and a periodic orbit $\mathcal{O}$ of $g$ such that $\mathcal{O}$ is $\eta$-close to $\mathcal{X}$ for the Hausdorff distance.
\end{theorem}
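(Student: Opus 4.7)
The plan is to combine a combinatorial construction that produces a finite family of orbit segments shadowing $\mathcal{X}$ with Hayashi's connecting lemma, which will glue these segments into a single periodic orbit of a nearby diffeomorphism. First, since $\mathcal{X}$ is compact, fix a finite cover of $\mathcal{X}$ by open balls $B_1,\dots,B_N$ of radius smaller than $\eta/4$, centered at points $x_1,\dots,x_N\in\mathcal{X}$, arranged so that any compact invariant set meeting every $B_i$ and contained in an $\eta/4$-neighborhood $W$ of $\mathcal{X}$ is automatically $\eta$-close to $\mathcal{X}$ in the Hausdorff distance. The target periodic orbit will be forced to enter every $B_i$ and stay in $W$.

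Second, the combinatorial step. The weakly transitive property gives, for each pair $(B_i,B_{i+1})$ (indices mod $N$), an orbit segment $\{y_i,f(y_i),\dots,f^{n_i}(y_i)\}$ contained in $W$ with $y_i\in B_i$ and $f^{n_i}(y_i)\in B_{i+1}$. Concatenating, one obtains a pseudo-orbit that visits every $B_i$ in cyclic order and stays in $W$, with transitions occurring inside the small balls $B_{i+1}$. The length of each segment is finite but not controlled, so Condition (A) must be exploited to avoid having short periodic orbits of $f$ interfering with the gluings: precisely, one chooses the jump points in $B_{i+1}$ to lie away from the finitely many periodic orbits of bounded period whose presence would block an application of the connecting lemma.

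Third, the perturbation step. I would apply Hayashi's connecting lemma successively at each of the $N$ jump points: the lemma allows, by an arbitrarily small $C^1$ perturbation of $f$ supported in a small neighborhood of each jump point, to replace a pair of orbit segments passing close to the same point by a single orbit segment. After $N$ such perturbations, all supported in pairwise disjoint small boxes contained in $W$, one obtains a diffeomorphism $g\in\mathcal{U}$ together with a single orbit segment $\{z,g(z),\dots,g^m(z)\}$ entirely contained in $W$ that meets every $B_i$ and returns to $B_1$; a final application of the connecting lemma at the endpoint produces a periodic orbit $\mathcal{O}$ of $g$ still meeting every $B_i$ and contained in $W$. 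By the choice of the $B_i$'s this gives $d_H(\mathcal{O},\mathcal{X})<\eta$, as required.

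The main obstacle is the perturbation step: Hayashi's connecting lemma requires a quantitative control relating the size of the admissible perturbation, the size of the perturbation box, and the number of iterates during which the orbit is allowed to revisit that box. One therefore has to organize the combinatorial construction so that the chosen orbit segments do not return too often to the perturbation boxes before the desired gluings — this is precisely where Condition (A) enters, since it rules out accumulations of periodic points that would force unavoidable recurrences and hence too-large perturbations. Passing from the diffeomorphism statement to the vector-field analogue is essentially cosmetic: the connecting lemma is available for flows (Wen–Xia), and the combinatorial argument above uses only weak transitivity of $\mathcal{X}$ together with the genericity of hyperbolic critical elements, both of which have exact counterparts in $\mathfrak{X}^1(M)$.
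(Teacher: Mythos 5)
Your scheme is exactly the ``naive'' gluing strategy: take, for each consecutive pair of balls, a segment given by weak transitivity, then apply Hayashi's connecting lemma once at each of the $N$ jump points and close up. The genuine gap is in the third step: to iterate the connecting lemma you need each connecting segment (and each piece of orbit created by an earlier gluing) to stay out of the perturbation supports $U_j\cup f(U_j)\cup\cdots\cup f^{N-1}(U_j)$ of all the \emph{other} gluings, since the lemma requires the map to coincide with the original one on the support being used and requires the entry/exit points to lie outside it. Weak transitivity gives you no control whatsoever on this: the segment joining $B_i$ to $B_{i+1}$ may wander through any of the other boxes, so after the first perturbation it need not be an orbit segment of the perturbed map, and the subsequent applications of the lemma are not justified. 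Your attempted fix --- invoking Condition (A) to ``rule out recurrences'' --- does not work: Condition (A) only says that periodic orbits of each period are isolated, which is used to guarantee that the points at which one connects are not periodic of low period (a hypothesis of Hayashi's lemma) and to select finitely many points with pairwise disjoint orbits; it says nothing about non-periodic connecting segments re-entering other perturbation boxes, which is the actual obstruction.

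This obstruction is precisely what the delicate part of the argument is about, and it is not a quantitative refinement of your step but a different, inductive combinatorial selection: one must replace the original family of points by a sub-family $\mathcal{X}'\subset\mathcal{X}$, with new neighborhoods $V'_i\subset U'_i$ and new connecting points $z_i$, chosen so that each connecting segment from $V'_i$ to $V'_{i+1}$ avoids every $U'_j$ with $j\neq i,i+1$, while the union of the segments still meets all the balls $B(p,\eta)$, $p\in\mathcal{X}$ (so the resulting periodic orbit remains Hausdorff-close to $\mathcal{X}$, not merely to the smaller set $\mathcal{X}'$). This selection (Proposition 8 in Crovisier's paper) is the key lemma from which the theorem follows by the reduction you correctly outline in your first step; without it, the chain of $N$ applications of the connecting lemma in your proposal cannot be carried out.
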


Theorem \ref{thm.thm3} is the main perturbation technique used to prove Item \eqref{iitem 3} in Theorem \ref{thm.genericbackground} (which is given by Theorem $4$ in \cite{Crovisier}). The key ingredient in the proof of this theorem  is the following proposition:

\begin{proposition}[Proposition $8$ in \cite{Crovisier}] \label{prop.propcro}
Let $f$ be a $C^1$ diffeomorphism and $\mathcal{U}$ a $C^1$-neighborhood of $f$. Then, there exists $N\geq 1$ with the following property: \\ \indent if $W\subset M$ is an open set and $\mathcal{X}$ a finite set of points inside $W$ such that:
\begin{enumerate}
\item the points $f^j(x)$ for $j\in \{1, \cdots, N\}$ are two-by-two distinct and contained in $W$, for $x\in \mathcal{X}$;
\item for any two points $x,x' \in \mathcal{X}$, for any neighborhood $U$ and $V$ of $x$ and $x'$, respectively, there is a point $z\in U$ and $f^n(z) \in V$, with $n>1$, such that $\{z, \cdots, f^n(z)\} \subset W$;
\end{enumerate}
then, for any $\eta>0$ there exists a perturbation $g \in \mathcal{U}$ of $f$ with support in the union of the open sets $f^j(B(x,\eta))$, for $x\in \mathcal{X}$ and $j\in \{0,\cdots, N-1\}$, and a periodic orbit $\mathcal{O}$ of $g$ contained in $W$, which crosses all the balls $B(x,\eta)$, for $x\in \mathcal{X}$.  
\end{proposition}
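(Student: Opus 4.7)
The plan is to apply Hayashi's connecting lemma $k=|\mathcal{X}|$ times, combining the resulting elementary perturbations into a single $g\in\mathcal{U}$, in order to close up a cyclic itinerary $x_1\to x_2\to\cdots\to x_k\to x_1$ on $\mathcal{X}$ into a single periodic orbit. First, fix $N\geq 1$ as the integer furnished by Hayashi's connecting lemma applied to the $C^1$-neighborhood $\mathcal{U}$: this $N$ is the uniform number of iterates over which each elementary connecting perturbation will be supported. Condition (1) then guarantees that for each $x\in\mathcal{X}$ the points $x,f(x),\dots,f^N(x)$ are all distinct and inside $W$, so for $\eta>0$ small enough the tubes $T_x:=\bigcup_{j=0}^{N-1}f^j(B(x,\eta))$ are pairwise disjoint, entirely contained in $W$, and each one satisfies the geometric hypotheses of the connecting lemma.

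Enumerate $\mathcal{X}=\{x_1,\dots,x_k\}$ with indices taken mod $k$. For each $i$, apply condition (2) to the pair $(x_i,x_{i+1})$ with small round neighborhoods $U_i\subset B(x_i,\eta)$ and $V_{i+1}\subset B(x_{i+1},\eta)$ to produce a point $z_i\in U_i$, an integer $n_i\geq 1$, and an orbit segment $\{z_i,f(z_i),\dots,f^{n_i}(z_i)\}\subset W$ with $f^{n_i}(z_i)\in V_{i+1}$. Now apply Hayashi's connecting lemma at $x_{i+1}$: this yields a perturbation $h_{i+1}\in\mathcal{U}$ of $f$, supported in $T_{i+1}$, such that under $h_{i+1}$ a forward iterate of $f^{n_i}(z_i)$ (after at most $N$ further steps inside $T_{i+1}$) lands on the forward orbit of $z_{i+1}$. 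Since the tubes $T_1,\dots,T_k$ are pairwise disjoint, the $k$ elementary perturbations have disjoint supports and can be combined into a single $g\in\mathcal{U}$ whose support lies in $\bigcup_{x\in\mathcal{X}}\bigcup_{j=0}^{N-1}f^j(B(x,\eta))$, as required.

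With $g$ in hand, one obtains a closed $g$-orbit that visits each ball $B(x_i,\eta)$ in turn: starting from (a slight modification of) $z_1$, the first connection sends it, after $n_1$ $g$-iterates followed by at most $N$ iterates inside $T_2$, to (a slight modification of) $z_2$; continuing cyclically, after $k$ such segments we return to $z_1$, yielding a periodic orbit $\mathcal{O}$ of $g$ contained in $W$ and crossing every $B(x_i,\eta)$. This is the statement of Proposition \ref{prop.propcro}.

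The main obstacle, and the reason the combinatorial step is non-trivial, is ensuring that the $k$ perturbations are genuinely compatible: each connecting lemma application is formulated relative to the unperturbed $f$, but the intermediate orbit segments $\{z_i,\dots,f^{n_i}(z_i)\}$ could a priori traverse other tubes $T_j$, in which case $g$ would no longer agree with $f$ along them. The remedy is to exploit the flexibility in condition (2), which allows the neighborhoods $U,V$ to be taken arbitrarily small, to arrange that each segment enters a tube $T_j$ only through its initial ball $B(x_j,\eta)$ and only during the short window where the perturbation $h_j$ is designed to act on it; the finiteness of $\mathcal{X}$ together with the disjointness of the tubes then lets one carry out this bookkeeping uniformly. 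Once the itinerary has been arranged so that $g$ reroutes each segment's endpoint to the next segment's start, the periodic orbit closes up automatically.
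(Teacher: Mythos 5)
Your argument is precisely the ``naive'' scheme that the paper (following Section 4.0 of Crovisier's article) describes and then explains is insufficient: order $\mathcal{X}=\{x_1,\dots,x_k\}$, use hypothesis (2) to get segments from $z_i$ near $x_i$ to $f^{n_i}(z_i)$ near $x_{i+1}$, and apply Hayashi's connecting lemma once per pair, with the $k$ elementary perturbations supported in the pairwise disjoint tubes $T_{x}$. You correctly identify the obstruction — a connecting segment $\{z_i,\dots,f^{n_i}(z_i)\}$ may cross other tubes $T_j$, $j\neq i,i+1$ — but the remedy you offer (shrink $U_i,V_i$ and ``bookkeeping'') is not an argument and does not close the gap. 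Hypothesis (2) gives no control whatsoever on where the segment travels inside $W$: in the intended application $\mathcal{X}$ sits inside a single weakly transitive set, so the segment can be forced to pass arbitrarily close to the other points $x_j$, no matter how small you take the neighborhoods at its endpoints. Shrinking the supports does not help, because the segment you obtain from (2) depends on the neighborhoods you chose, so there is a circular dependence between the choice of segments and the placement of the supports; and once a segment enters the support of another elementary perturbation it is no longer an orbit segment of the partially perturbed map, so the hypotheses of the connecting lemma (the points to be connected must be joined by orbit segments of a diffeomorphism coinciding with $f$ on the perturbation boxes) fail, the itinerary gets rerouted, and the resulting closed orbit may be a short cycle missing some of the balls $B(x,\eta)$.

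What is actually needed — and what constitutes the heart of Crovisier's Proposition 8 — is a delicate inductive, purely combinatorial selection (no perturbation involved at this stage) of a sub-family $\mathcal{X}'=\{p_1',\dots,p_s'\}\subset\mathcal{X}$, of nested neighborhoods $V_i'\subset U_i'$, and of connecting segments such that each segment meets no perturbation box other than the two at its endpoints, while the union of the selected segments still visits every ball $B(x,\eta)$ for every $x\in\mathcal{X}$ (so the final periodic orbit crosses all the balls even though the connections are made only at the points of $\mathcal{X}'$). Only after this selection is Hayashi's lemma applied, and then the disjointness of supports and the closing of the cycle go through as in your sketch. In particular, one cannot in general keep all points of $\mathcal{X}$ as connection sites, contrary to what your construction assumes. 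Since this selection argument is entirely absent from your proposal, the proof has a genuine gap at exactly the step the paper flags as the delicate one.
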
 

The proof of Theorem \ref{thm.thm3} using Proposition \ref{prop.propcro} follows from a short argument. Let us explain the main steps of the argument. Suppose that $f$ is a diffeomorphism verifying Condition (A) and $\mathcal{X}$ is a weakly transitive set. We may suppose also that $\mathcal{X}$ is not a periodic orbit, otherwise there would be nothing to prove. We use Condition (A) to find a finite set $\hat{X} \subset \mathcal{X}$ such that any point $x \in \hat{X}$ is not a periodic point, any two different points in $\hat{X}$ have disjoint orbits, and any point $z\in \mathcal{X}$ belongs to $B(f^k(x),\eta_0)$, for some $k\in \Z$ and $x\in \hat{X}$. We then can fix some $\eta\in (0,\eta_0)$ to be small enough such that any compact invariant set $K$ which is contained in a $\eta_0$-neighborhood of $\mathcal{X}$ and intersecting all the balls $B(x,\eta)$ for $x\in \hat{X}$, is $\eta_0$-close to $\hat{X}$ in the Hausdorff topology. One can then apply Proposition \ref{prop.propcro} and obtain a periodic orbit that verifies the conclusion of the theorem (see Section $2.4$ in \cite{Crovisier}).

Let us now explain the structure of the proof of Proposition \ref{prop.propcro}. We also refer the reader to Section $4.0$ in \cite{Crovisier}, where the structure and difficulties  of the proof of Proposition \ref{prop.propcro} are explained very clearly. The proof has two parts: the actual perturbation part, which only uses Hayashi's connecting lemma; and a combinatorial part, which is the most delicate part.

 Let us recall Hayashi's connecting lemma. The original proof was given by Hayashi in \cite{Hayashi}. Some other references are given in \cite{Arnaud, WenXia, BonattiCrovisier}.

\begin{theorem}[Hayashi's connecting lemma, \cite{Hayashi, Arnaud, WenXia, BonattiCrovisier}]
Let $f_0$ be a diffeomorphism of a compact manifold $M$, and $\mathcal{U}$ a $C^1$-neighborhood of $f_0$. Then there exists $N\geq 1$ such that for any $z\in M$ which is not a periodic point of period less than or equal to $N$, any two open neighborhoods $V$ and $U$ of $z$ such that $V\subset U$ has the following property. 

For any diffeomorphism $f$ that coincides with $f_0$ in $U\cup \cdots \cup f^{N-1}_0(U)$, for any two points $p,q\in M - \left(U \cup \cdots \cup f^{N}_0(U)\right)$ and any integers $n_p, n_q \geq 1$ such that $f^{n_p}(p)$ belongs to $V$ and $f^{-n_q}(q) \in V$ there is a diffeomorphism $g$ in $\mathcal{U}$ such that:
\begin{itemize}
\item $g$ coincides with $f$ on $M - \left(U \cup \cdots \cup f^{N}_0(U)\right)$;
\item there exists $m\geq 1$ such that $g^m(p) = q$;
\item the piece of orbit $\{p, \cdots, g^m(p)\}$ can be cut into three parts:
\begin{itemize}
\item the beginning $\{p, \cdots, g^{m'}(p)\}$, for some $m' \in \N$, is contained in 
\[
\{p, \cdots, f^{n_p}(p)\} \cup U \cup \cdots \cup f^N_0(U);
\]
\item the central part $\{g^{m'}(p) , \cdots, g^{m'+N}(p)\}$ is contained in 
\[
U \cup \cdots \cup f_0^N(U);
\]
\item the end $\{g^{m'+N}(p), \cdots, g^m(p)\}$ is contained in 
\[
U \cup \cdots \cup f_0^N(U) \cup \{f^{-n_q}(q), \cdots, q\}.
\]
\end{itemize}
\end{itemize} 
\end{theorem}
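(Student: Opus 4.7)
The plan is to prove Hayashi's connecting lemma via the classical strategy: combine a local $C^1$ perturbation lemma in charts with a combinatorial pigeonhole argument that selects a matching pair of orbit fragments of $p$ and $q$ to glue inside a tube around the orbit of $z$.

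The first step is to set up the tube of support. Since $z$ is not periodic of period at most $N$, and we are allowed to shrink $U$, I can arrange that the sets $U, f_0(U), \dots, f_0^N(U)$ are pairwise disjoint and each lies inside a local chart. The integer $N$ is chosen, depending only on the $C^1$-neighborhood $\mathcal{U}$ of $f_0$, so that concatenating $N$ elementary $C^1$-small perturbations --- one inside each copy $f_0^k(U)$ --- yields a total perturbation whose $C^1$-size remains within $\mathcal{U}$. This is the mechanism by which a single $C^1$-small diffeomorphism reroutes orbits across the tube. In each copy $f_0^k(U)$, the basic local tool is a standard bump-function estimate: two points sitting in a common ball of radius $r$ at mutual distance less than $\delta r$ (with $\delta$ fixed in terms of $\mathcal{U}$) can be matched by a diffeomorphism supported in that ball and $C^1$-close to the identity. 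This is the same isotopy tool underlying Pugh's closing lemma, applied independently in each of the $N+1$ slots of the tube, with the hypothesis $f = f_0$ on $U \cup \cdots \cup f_0^{N-1}(U)$ ensuring compatibility.

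The combinatorial heart of the proof is the selection step. Given $p$ with $f^{n_p}(p) \in V$ and $q$ with $f^{-n_q}(q) \in V$, one must locate an index $k \in \{0,\dots,N\}$, a forward iterate $f^i(p) \in f_0^k(U)$ with $0 \leq i \leq n_p$, and a backward iterate $f^{-j}(q) \in f_0^k(U)$ with $0 \leq j \leq n_q$, close enough in the local chart of $f_0^k(U)$ --- within $\delta$ times the chart radius --- to be matched by the local perturbation lemma. The strategy, following Wen--Xia and Arnaud, is to introduce a nested family $V = V_0 \supset V_1 \supset \dots \supset V_N \ni z$ and apply a pigeonhole argument over the $N+1$ slots in the tube and the nesting levels: many visits of the two orbit fragments to the nested $V_\ell$'s, combined with the disjointness of the $f_0^k(U)$'s, force a matching pair when $N$ is large enough.

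Once the matching pair is selected, a single local perturbation inside $f_0^k(U)$ sends $f^i(p)$ to $f^{-j}(q)$ and yields the required $g \in \mathcal{U}$. The resulting orbit $\{p, \dots, g^m(p)\}$ decomposes naturally into the three parts in the statement: an initial segment contained in $\{p, \dots, f^{n_p}(p)\} \cup U \cup \dots \cup f_0^N(U)$, a central passage of length $N$ through the tube around the perturbation time, and a terminal segment ending at $q$. The main obstacle is the combinatorial selection: all the quantitative ingredients above are routine, but ensuring uniformly in $(p, q, n_p, n_q)$ the existence of a matching pair close enough in some slot, while keeping $N$ depending only on $\mathcal{U}$ and $f_0$, requires the careful visit-counting argument that forms the genuinely difficult core of Hayashi's original proof and its later simplifications.
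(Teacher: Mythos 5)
You should first note that the paper does not prove this statement at all: it is quoted as Hayashi's connecting lemma with citations to Hayashi, Arnaud, Wen--Xia and Bonatti--Crovisier, and is used as a black box in Appendix B. So the relevant comparison is with those references, and against them your proposal is an outline of the well-known strategy rather than a proof. The step you yourself call ``the combinatorial heart'' --- producing, uniformly in $(p,q,n_p,n_q)$ and with $N$ depending only on $\mathcal{U}$, a pair of orbit fragments that can be glued --- is precisely the content of the theorem, and you defer it explicitly (``requires the careful visit-counting argument that forms the genuinely difficult core''). Invoking nested neighborhoods $V_0\supset\cdots\supset V_N$ and ``a pigeonhole argument'' is a description of where the proof lives, not an argument; in particular nothing in your sketch explains why $N$ can be chosen independently of $z$, $U$, $V$ and of the lengths $n_p$, $n_q$, which is the whole point.

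There is also a concrete step in the part you do sketch that would fail as written. You claim that once a matching pair $f^i(p),f^{-j}(q)\in f_0^k(U)$ is found, ``a single local perturbation inside $f_0^k(U)$ sends $f^i(p)$ to $f^{-j}(q)$ and yields the required $g$.'' This is not enough: the orbit segment $\{p,\dots,f^{n_p}(p)\}$ may enter the boxes $U,\dots,f_0^N(U)$ many times before time $i$, and the same perturbation that is supposed to act at time $i$ also acts on those earlier visits, so the $g$-orbit of $p$ may be diverted long before it reaches $f^i(p)$ (and symmetrically for the backward orbit of $q$). The actual proofs handle this by selecting several candidate points in the tube and \emph{shortcutting} intermediate pieces of orbit --- which is exactly why the conclusion of the lemma only asserts that the three pieces of the $g$-orbit are \emph{contained} in the stated sets, rather than that the $g$-orbit follows the $f$-orbit of $p$ up to time $n_p$. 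Relatedly, you assume you may shrink $U$ so that $U,\dots,f_0^N(U)$ are pairwise disjoint charts, but in the statement the pair $V\subset U$ is part of the data; the smallness must be built into the auxiliary constructions (the choice of perturbation boxes inside $U$), not imposed on $U$ itself. Without the self-interference/shortcutting mechanism and the uniform choice of $N$, the proposal does not establish the lemma.
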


Let $\mathcal{X}$ be a finite set which is weakly transitive, and let $W$ be a neighborhood of $\mathcal{X}$, and fix some $\eta>0$. Fix some order in $\mathcal{X}= \{p_1, \cdots, p_k\}$ and for each $p_i$ we can associate two neighborhoods $V_i\subset U_i$ contained in $W$, where we will apply the connecting lemma. Observe that we may consider these neighborhoods to be arbitrarily small. The weak transitivity implies that for each $i$, there is a point $z_i$ in $V_i$ whose future orbit intersects $V_{i+1}$ and it is contained in $W$. The first naive approach then would be to simply use the connecting lemma, and connect the orbit of each $z_i$ with $z_{i+1}$, where one would connect the future orbit of $z_k$ with $z_0$. Applying the connecting lemma $k$ times, we would hope to have created a periodic orbit for a diffeomorphism $g\in \mathcal{U}$, and this periodic orbit contained in $W$. This is a perturbation of $f$ whose support is contained in the union of the first $N-1$ iterates of $U_i$, for every $i=1, \cdots, k$. This approach would only work if we had the following ``ideal'' picture (see Figure \ref{figure.below} below).

\begin{figure}[H]
\begin{center}
    \includegraphics [width=9cm]{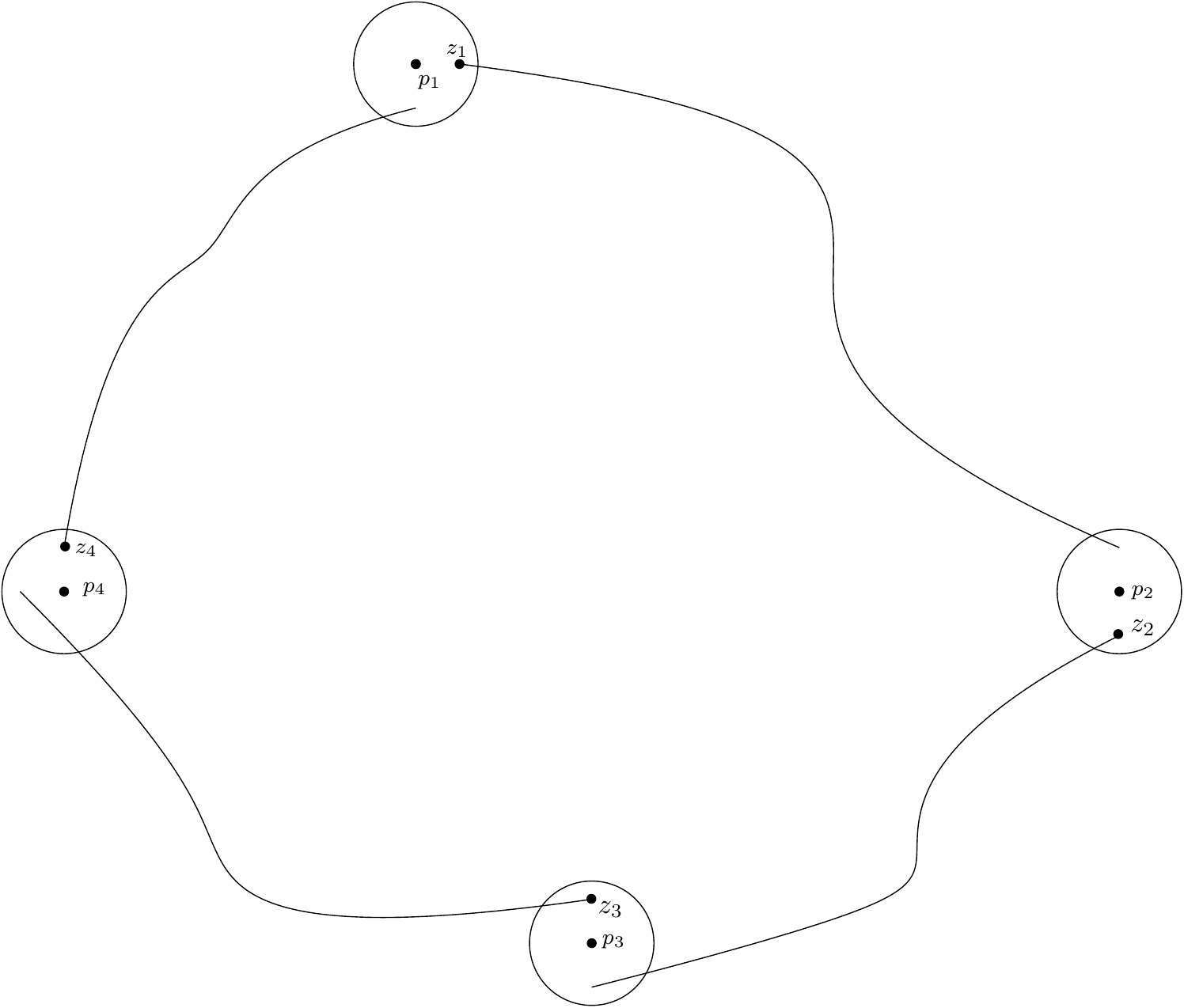}
\end{center}
\caption{``Ideal'' picture.}\label{figure.below}
\end{figure}

The problem is that with this approach one cannot guarantee that the piece of future orbit of $z_i$ connecting $V_i$ and $V_{i+1}$ does not intersect any other $U_j$, for $j\neq i,i+1$ (see  Figure \ref{figure.below 2} below).
\begin{figure}[H]
	\begin{center}
		\includegraphics [width=7cm]{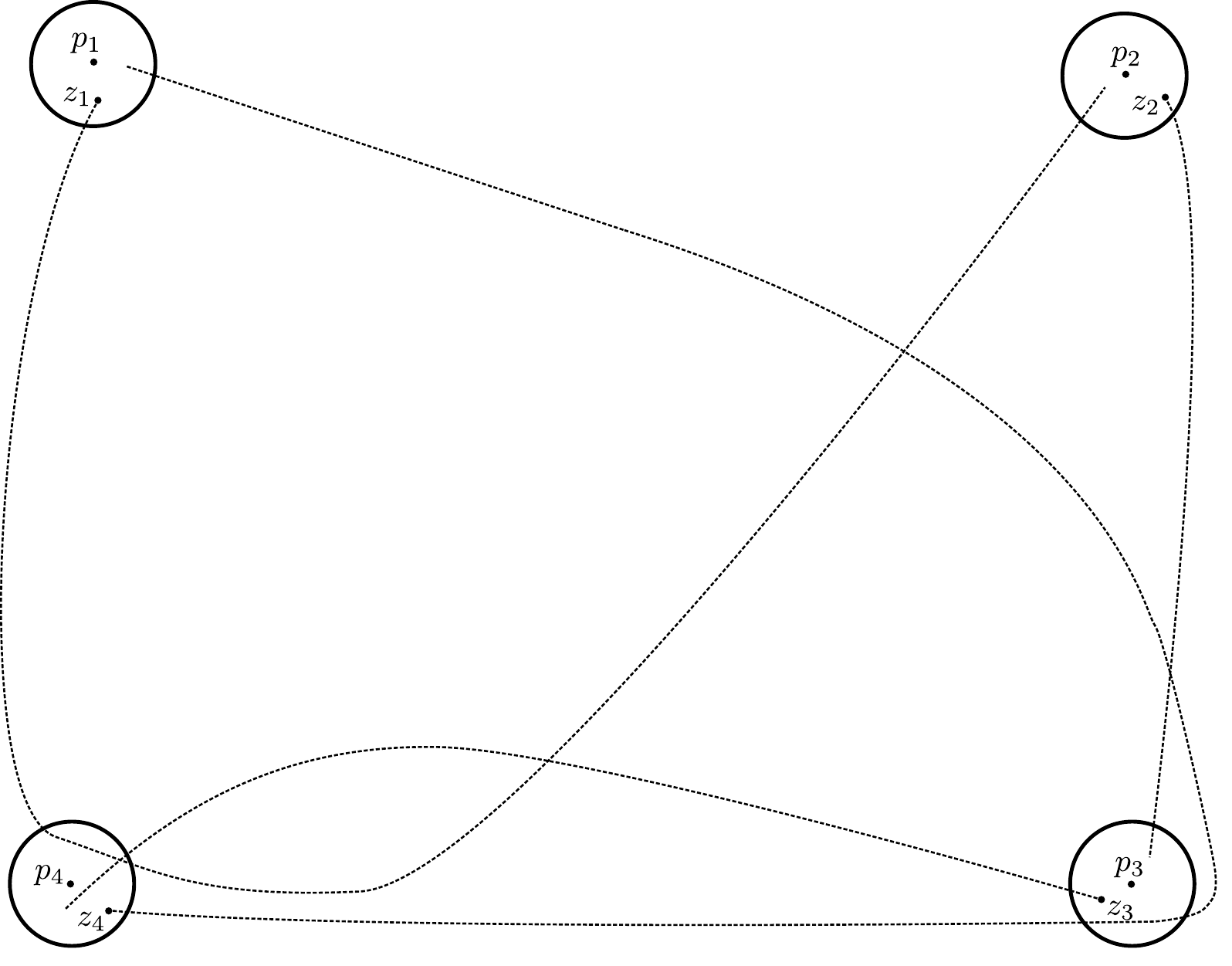}
	\end{center}
	\caption{``General'' picture.}\label{figure.below 2}
\end{figure} 

There is a delicate inductive and ``combinatorial'' argument that allows one to find a smaller set $\mathcal{X}' \subset \mathcal{X}$, with $\mathcal{X}' = \{p_1', \cdots, p_s'\}$, neighborhoods for the connecting lemma $V_i' \subset U_i'$, and points $\{z_1, \cdots, z_s\}$ that verify  the following: each $z_i$ connects $V'_i$ to $V'_{i+1}$, the piece of future orbit of $z_i$ connecting these two neighborhoods does not intersect any other $U'_j$, for $j\neq i, i+1$, also the union of these pieces of future orbits of the $z_i$'s intersects every $B(p, \eta)$, for every $p\in \mathcal{X}$. Hence, one can apply Hayashi's connecting lemma around the points of the set $\mathcal{X}'$, connecting the orbits of the points $z_i$ mentioned above, and obtain a perturbation $g$ of $f$ with a periodic orbit that verifies the conclusion of Proposition \ref{prop.propcro}. We refer the reader to Section $4.0$ in \cite{Crovisier} for more details.  

There is no perturbation in the combinatorial part, even though it is the most delicate part. Observe that the points in $\mathcal{X}$ are not fixed points, and if they are periodic they must have the period larger than $N$. 

Since Hayashi's connecting lemma is also available for flows \cite{Hayashi}, and the points where we are using the connecting lemma are far from the singularities (it is a finite fixed set of non-singular points), one can obtain the following result for flows:
\begin{proposition}
Let $X$ be a $C^1$ vector field on $M$ and let $\mathcal{U}$ be a $C^1$-neighborhood of $X$. There exists $T\geq 1$ with the following property: if $W\subset M$ is an open set and $\mathcal{X}$ is a finite set of points inside $W$ such that:
\begin{enumerate}
\item for each $x\in \mathcal{X}$ the map $t\mapsto X_t(x)$ is injective for $t\in [0,T]$ and $X_{[0,T]}(x)$ is contained in $W$;
\item for any two points $x,x' \in \mathcal{X}$, for any neighborhood $U$ and $V$ of $x$ and $x'$, respectively, there is a point $z\in U$ and $T'>1$ such that $X_{T'}(z) \in V$ and $X_{0,T'}(z) $ is contained in $W$;
\end{enumerate}
then, for any $\eta>0$ there exists a perturbation $Y$ of $X$ in $\mathcal{U}$ with support in the union of the open sets $\cup_{t\in [0,T-1]}X_t(B(x,\eta))$, with $x\in \mathcal{X}$, and a periodic orbit $\mathcal{O}$ for $Y$ contained in $W$ and intersects all the balls $B(x,\eta)$, for $x\in \mathcal{X}$.
\end{proposition}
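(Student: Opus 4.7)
The plan is to mimic Crovisier's proof of Proposition 8 in \cite{Crovisier} by transposing every discrete-time step to the continuous-time setting, taking advantage of the fact that both inputs of that proof — Hayashi's connecting lemma and the combinatorial selection argument — are available in the flow setting. I would first fix $T\geq 1$ to be the time constant produced by Hayashi's connecting lemma for flows \cite{Hayashi} applied to $X$ and $\mathcal{U}$: for any non-singular point $p$ which is not periodic of period $\leq T$, any pair of nested neighborhoods $V\subset U$ small enough, and any two orbit segments of $X$ which enter and exit $V$ respectively, there exists a perturbation of $X$ inside $\mathcal{U}$, supported in $\bigcup_{t\in[0,T]}X_t(U)$, whose flow concatenates the two segments into a single orbit whose middle part is confined to $\bigcup_{t\in[0,T]}X_t(U)$.

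Next, writing $\mathcal{X}=\{p_1,\dots,p_k\}$ and given $\eta>0$, I would shrink $\eta$ if necessary and choose, for each $p_i$, nested open neighborhoods $V_i\subset U_i\subset B(p_i,\eta)$ which are so small that the tubes $\bigcup_{t\in[0,T]}X_t(U_i)$ are contained in $W$ and pairwise disjoint. Condition (1) of the statement implies, via the flow-box theorem, that each $V_i$ can be chosen so that $(y,t)\mapsto X_t(y)$ is an embedding on $V_i\times[0,T]$; in particular the points of $\mathcal{X}$ are non-singular. Condition (2) then provides, for every pair $(i,i+1)$, a point $z_{i,i+1}\in V_i$ and a time $T_{i,i+1}>1$ with $X_{T_{i,i+1}}(z_{i,i+1})\in V_{i+1}$ and $X_{[0,T_{i,i+1}]}(z_{i,i+1})\subset W$.

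The main step is the combinatorial reduction, which is the most delicate part of \cite{Crovisier}. A priori, the connecting segments $X_{[0,T_{i,i+1}]}(z_{i,i+1})$ may enter other tubes $\bigcup_{t\in[0,T]}X_t(U_j)$, which would spoil a direct iterative application of Hayashi's lemma, since each application modifies the vector field inside its prescribed tube and later applications must operate outside of that tube. Crovisier's induction on the number of such ``unwanted crossings'' would produce a subset $\mathcal{X}'=\{p_{i_1},\dots,p_{i_s}\}\subset\mathcal{X}$ together with an order on it, new nested neighborhoods $V'_j\subset U'_j\subset U_{i_j}$, and new connecting points $z'_j\in V'_j$ with times $T'_j>1$ such that $X_{[0,T'_j]}(z'_j)$ goes from $V'_j$ to $V'_{j+1}$, avoids every tube $\bigcup_{t\in[0,T]}X_t(U'_\ell)$ for $\ell\neq j,j+1$, and stays in $W$; moreover $\bigcup_j X_{[0,T'_j]}(z'_j)$ still intersects every ball $B(p,\eta)$, $p\in\mathcal{X}$. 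This step is purely orbit-theoretic bookkeeping — no vector field is ever perturbed — so it transfers verbatim from the diffeomorphism setting. The discrete iteration $n\mapsto f^n(z)$ is replaced by the continuous flow $t\mapsto X_t(z)$; any entry of an orbit into $\bigcup_{t\in[0,T]}X_t(U_i)$ is collapsed to its first-entry time into a transversal through $p_i$, whose existence is guaranteed by condition (1) and the fact that $p_i$ is non-singular.

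Finally I would apply Hayashi's connecting lemma $s$ times, once per pair $(j,j+1)$, using the tube over $p'_{i_j}$ to connect the exit of the orbit of $z'_{j-1}$ with the entry of the orbit of $z'_j$. Because the tubes are disjoint and each reduced connecting segment avoids all the other tubes, the $s$ perturbations can be realized simultaneously and their supports combined into a single vector field $Y\in\mathcal{U}$ whose total support is contained in $\bigcup_{j=1}^s\bigcup_{t\in[0,T]}X_t(U'_j)\subset\bigcup_{x\in\mathcal{X}}\bigcup_{t\in[0,T-1]}X_t(B(x,\eta))$, after a harmless enlargement of $T$ if necessary. By construction, the concatenation of the pieces $X_{[0,T'_j]}(z'_j)$ with the short arcs provided by Hayashi's lemma inside each tube yields a periodic orbit $\mathcal{O}$ of $Y$ which is contained in $W$ and meets every ball $B(p,\eta)$ for $p\in\mathcal{X}$. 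The hard part will be the combinatorial reduction: while its logic is identical to \cite{Crovisier}, the continuous-time bookkeeping is noticeably heavier, as the preimages of a tube under the flow form a family of time intervals rather than isolated instants, and one has to choose consistently first-entry times in order to reduce the situation to the discrete one.
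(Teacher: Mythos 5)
Your proposal is correct and follows essentially the same route as the paper: the paper's Appendix B argument likewise takes $T$ from Hayashi's connecting lemma for flows, observes that Crovisier's combinatorial selection of sub-orbits and reduced neighborhoods involves no perturbation and so transfers verbatim to continuous time (the connecting points being non-singular and non-periodic of small period), and then applies the connecting lemma in the disjoint tubes to close up the periodic orbit. The minor bookkeeping points you flag (shrinking the neighborhoods inside $B(p_i,\eta)$, adjusting $T$ so the supports fit in the prescribed tubes, reducing flow entries to first-hitting times on transversals) are exactly the adaptations the paper has in mind.
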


Obseve that for flows we can also define an analogous of Condition (A), which we will call (Af). We say that a vector field verifies Condition (Af) if for any $T > 0$ every periodic orbit of period less than or equal to $T$ is isolated in $M$. As explained above, this proposition implies the following theorem:

\begin{theorem}
Let $X$ be a $C^1$ vector field verifying Condition (Af), let $\mathcal{U}$ be a $C^1$-neighborhood of $X$ and let $\mathcal{X}$ be a weakly transitive set for $X$. Then, for any $\eta>0$, there exists $Y$ in $\mathcal{U}$ and a periodic orbit $\mathcal{O}$ for $Y$ which is $\eta$-close to $\mathcal{X}$ for the Hausdorff topology.
\end{theorem}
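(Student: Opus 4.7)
The plan is to deduce the result from the flow version of Proposition~\ref{prop.propcro} stated just above, by mirroring the short reduction given for diffeomorphisms in \cite{Crovisier} (and summarised after Proposition~\ref{prop.propcro} in the appendix). First, if $\mathcal{X}$ itself is a periodic orbit of $X$ there is nothing to prove, so we may assume $\mathcal{X}$ is not reduced to a single periodic orbit. Let $T\geq 1$ be the constant provided by the flow version of Proposition~\ref{prop.propcro} applied to $X$ and $\mathcal{U}$, and let $W$ be an open neighborhood of $\mathcal{X}$, to be chosen below, so that $W$ is contained in a prescribed $\eta$-neighborhood of $\mathcal{X}$ for the Hausdorff distance.

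Next, using that $X$ satisfies Condition~(Af), I would extract a finite set $\hat{\mathcal{X}}\subset \mathcal{X}$ with the following three properties: (i) every point $x\in\hat{\mathcal{X}}$ is either non-periodic or has period strictly larger than $T$, so that the map $t\mapsto X_t(x)$ is injective on $[0,T]$ and the segment $X_{[0,T]}(x)$ lies in $W$; (ii) the full orbits of two distinct points of $\hat{\mathcal{X}}$ are disjoint; (iii) for some $\eta_0\in (0,\eta)$, $\hat{\mathcal{X}}$ is $\eta_0$-dense in $\mathcal{X}$ in the sense that every point of $\mathcal{X}$ is $\eta_0$-close to some point of an orbit $X_{[0,T]}(x)$ with $x\in\hat{\mathcal{X}}$. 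The existence of such $\hat{\mathcal{X}}$ follows from the compactness of $\mathcal{X}$ together with the fact that Condition~(Af) guarantees that within any small radius there are only finitely many periodic orbits of period $\leq T$ (so generic points of $\mathcal{X}$ can be chosen outside of them). Shrinking $\eta$ if necessary, one then chooses $W$ small enough that any compact $X$-invariant subset $K\subset W$ intersecting each ball $B(x,\eta)$, $x\in\hat{\mathcal{X}}$, is $\eta$-close to $\mathcal{X}$ for the Hausdorff distance; here property~(iii) is crucial.

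Now I would verify the hypotheses of the flow version of Proposition~\ref{prop.propcro} for $(W,\hat{\mathcal{X}})$: assumption (1) holds by property (i) above, and assumption (2), which requires that for any two points $x,x'\in\hat{\mathcal{X}}$ and any neighborhoods $U\ni x$, $V\ni x'$ there exists $z\in U$ and $T'>1$ with $X_{T'}(z)\in V$ and $X_{[0,T']}(z)\subset W$, is exactly the weak transitivity of $\mathcal{X}$ restricted to the neighborhood $W$. Applying the proposition with the fixed parameter $\eta$ then yields a vector field $Y\in\mathcal{U}$, which differs from $X$ only in a small tube over the $\eta$-balls around $\hat{\mathcal{X}}$, together with a periodic orbit $\mathcal{O}$ of $Y$ contained in $W$ and meeting every ball $B(x,\eta)$ with $x\in\hat{\mathcal{X}}$. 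By the defining property of $W$, $\mathcal{O}$ is then $\eta$-close to $\mathcal{X}$ in the Hausdorff topology, which is the desired conclusion.

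The main obstacle, as in the diffeomorphism setting, is not the perturbative step (which is black-boxed in the flow version of Proposition~\ref{prop.propcro} and ultimately rests on Hayashi's connecting lemma for flows), but rather the combinatorial extraction of $\hat{\mathcal{X}}$ and the careful choice of $W$ and $\eta$ so that all the ingredients fit together: one needs $\hat{\mathcal{X}}$ dense enough in $\mathcal{X}$ to force Hausdorff-closeness of the output periodic orbit, yet sparse enough (with disjoint orbits of injective length $T$) to fall into the scope of the flow Proposition. Condition~(Af) is precisely what makes this balance achievable, playing the same role for flows as Condition~(A) does for diffeomorphisms.
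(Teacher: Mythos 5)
Your overall route --- extract a finite set $\hat{\mathcal{X}}\subset\mathcal{X}$ avoiding singularities and low-period orbits, choose $W$ inside an $\eta$-neighborhood of $\mathcal{X}$, check the hypotheses of the flow version of Proposition~\ref{prop.propcro} via weak transitivity, and read off Hausdorff closeness from the periodic orbit it produces --- is exactly the reduction the paper intends (Crovisier's short argument transposed to flows). But two of your steps do not work as written. First, your conditions (ii) and (iii) can be incompatible: you demand that the points of $\hat{\mathcal{X}}$ have pairwise disjoint orbits \emph{and} that the time-$T$ segments $X_{[0,T]}(x)$, $x\in\hat{\mathcal{X}}$, be $\eta_0$-dense in $\mathcal{X}$, where $T$ is fixed in advance by the proposition. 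If $\mathcal{X}$ is, for instance, a weakly transitive homoclinic loop of a hyperbolic saddle singularity $\sigma$, then $\mathcal{X}\setminus\{\sigma\}$ is a single orbit, so (ii) forces $\hat{\mathcal{X}}$ to consist of a single regular point, and one arc of time-length $T$ cannot be $\eta_0$-dense in the loop once $\eta_0$ is small (the loop is traversed in infinite time). This is precisely why the paper, following Crovisier, only asks that every point of $\mathcal{X}$ be $\eta_0$-close to the \emph{full} orbit of some point of $\hat{\mathcal{X}}$, a condition that is always realizable.

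Second, your final Hausdorff estimate invokes a criterion formulated for compact $X$-invariant subsets $K\subset W$, but the periodic orbit $\mathcal{O}$ is invariant under $Y$, not under $X$, so the criterion does not apply to it; and replacing the $X$-flow by the $Y$-flow is not innocuous, since $Y$ is only $\mathcal{U}$-close to $X$ (not arbitrarily close), so the two flows need not stay close over times of order $T$. The cleanest repair is to drop (ii) --- the flow version of Proposition~\ref{prop.propcro} only requires injectivity of $t\mapsto X_t(x)$ on $[0,T]$ for each $x$ separately --- and to choose $\hat{\mathcal{X}}$ to be $\eta_0$-dense in $\mathcal{X}$ \emph{as a point set}, with its points taken in $\mathcal{X}$ minus the finitely many periodic orbits of period at most $T$ (this complement is dense in $\mathcal{X}$ because, under Condition (Af), those orbits are isolated, while a weakly transitive set of a flow is connected since orbit segments are connected). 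Then no invariance or shadowing is needed: every $z\in\mathcal{X}$ lies within $\eta_0$ of some $x\in\hat{\mathcal{X}}$, and $\mathcal{O}$ meets $B(x,\eta)$, so $d(z,\mathcal{O})<\eta_0+\eta$, while $\mathcal{O}\subset W$ gives the other direction. If instead you keep density along full orbits as in the paper's sketch, you must add an argument that the $Y$-orbit of the point of $\mathcal{O}\cap B(x,\eta)$ tracks the corresponding $X$-orbit over the finitely many bounded times involved, e.g.\ by demanding a $C^0$-control on the perturbation in the flow proposition, in the spirit of item (4) of Proposition~\ref{prop.proposition8}; this control does not follow from $Y\in\mathcal{U}$ alone.
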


Below, we explain how Crovisier uses Theorem \ref{thm.thm3} to conclude Item \eqref{iitem 3} in Theorem \ref{thm.genericbackground}. The same proof works for flows.

First, given a metric space $H$ we denote by $\mathcal{K}(H)$ the set of all compact sets of $H$ with the Hausdorff distance.  For each diffeomorphism $f$ we let $K_{per}(f)$ be the closure in $\mathcal{K}(M)$ of the set of periodic points of $f$. This is a set in $\mathcal{K}(\mathcal{K}(M))$. It is known that the the points of continuity of the function $f\mapsto K_{per}(f)$ contains a $C^1$-residual subset of $\mathrm{Diff}^1(M)$, see for instance \cite{Takens}. Let us denote this residual set by $\mathcal{R}_1$. 

For each diffeomorphism $f$, consider also $K_{WT}(f)$ to be the closure in $\mathcal{K}(M)$ of the set of weak transitive sets of $f$. Also, let $K_{CT}(f)$ to be the closure in $\mathcal{K}(M)$ of the set of \textit{chain transitive} sets of $f$. Both of these sets belong to $\mathcal{K}(\mathcal{K}(M))$. It is known that there exists a $C^1$-residual set $\mathcal{R}_2$ such that for any $f\in \mathcal{R}_2$ we have $K_{WT}(f) = K_{CT}(f)$. 

We claim that if $f\in \mathcal{R}_1 \cap \mathcal{R}_2$ then $K_{per}(f) = K_{WT}(f) = K_{CT}(f)$. Suppose that $K_{per}(f) \subsetneq K_{WT}(f)$. Then there exists a weak transitive set $\mathcal{X}$ and some open neighborhood $W$ of $\mathcal{X}$ in $\mathcal{K}(M)$, such that $K_{per}(f)$ does not intersect $W$. By the continuity of the function $f\mapsto K_{per}(f)$, at the point $f$, we know that this property holds in a neighborhood of $f$. However, Theorem \ref{thm.thm3} implies that there is a diffeomorphism $g$ which is $C^1$-close to $f$ such that $g$ has a periodic orbit $\mathcal{O}$ which is close in the Hausdorff distance to $\mathcal{X}$. This is a contradiction.

This implies that $C^1$-generically any chain transitive set is the Hausdorff limit of periodic orbits. Since a chain recurrent class is a chain transitive set, we conclude Item \eqref{iitem 3} of Theorem \ref{thm.genericbackground} for diffeomorphisms. We remark that the same proof also holds for flows. As we mentioned before, the main perturbative tool used is given by Theorem \ref{thm.thm3}.

\end{appendix}

\bibliographystyle{alpha}

\information

\end{document}